\documentclass[12pt, oneside]{amsart}   	
\usepackage[margin= 3cm]{geometry}                		
\usepackage{graphicx}				
\usepackage{mathrsfs}
\usepackage{amsthm}
\usepackage{amsmath}
\usepackage{hyperref}
\usepackage{tikz-cd}
\usepackage{dsfont}
\usepackage{tikz}
\usepackage{tikz-3dplot}
\usetikzlibrary{math}
\newcounter{thm}
\newtheorem{remark}[thm]{Remark}
\newtheorem{proposition}[thm]{Proposition}
\newtheorem{definition}[thm]{Definition}
\newtheorem{theorem}[thm]{Theorem}
\newtheorem{example}[thm]{Example}
\newtheorem{lemma}[thm]{Lemma}
\newtheorem{corollary}[thm]{Corollary}

\newcommand{\NN}{\mathbb{N}}
\newcommand{\ZZ}{\mathbb{Z}}
\newcommand{\kk}{\mathds{k}}

\newcommand{\fgt}{\operatorname{fgt}}
\newcommand{\val}{\operatorname{val}}
\newcommand{\vir}{\operatorname{vir}}
\newcommand{\rf}{\operatorname{ref}}
\newcommand{\orb}{\operatorname{orb}}
\newcommand{\vdim}{\operatorname{vdim}}

\newcommand{\pt}{\operatorname{pt}}

\newcommand{\ev}{\operatorname{ev}}
\newcommand{\Ev}{\operatorname{Ev}}

\newcommand{\Spec}{\operatorname{Spec}}
\newcommand{\gp}{\operatorname{gp}}

\newcommand{\trop}{\operatorname{trop}}
\newcommand{\out}{\operatorname{out}}
\newcommand{\Pic}{\operatorname{Pic}}

\newcommand{\st}{\operatorname{st}}
\newcommand{\im}{\operatorname{im}}
\newcommand{\forget}{\operatorname{forget}}
\newcommand{\proj}{\operatorname{Proj}}
\newcommand{\rel}{\operatorname{rel}}
\newcommand{\coker}{\operatorname{coker}}
\makeatletter
\@namedef{subjclassname@2020}{\textup{2020} Mathematics Subject Classification}
\makeatother

\usetikzlibrary{fadings}

\numberwithin{equation}{section}
\numberwithin{thm}{section}
\numberwithin{figure}{section}

\title[Log GW theory of log \'etale models]{Punctured log Gromov-Witten theory of log modifications and double ramification cycles with target log variety}
\author{Samuel Johnston}
\address{Samuel Johnston, Massachusetts Institute of Technology}
\email{smj553@mit.edu}
\date{}		
\subjclass[2020]{14J33,14N35,14N10}

\begin{document}

\begin{abstract}
We expand upon a previous study conducted by the author in \cite{pbirinv} on the behavior of punctured log Gromov-Witten theory under log \'etale modifications $\widetilde{X} \rightarrow X$, giving expressions for log Gromov-Witten classes on $\widetilde{X}$ in terms of log Gromov-Witten classes on $X$, facilitating a complete reduction of the punctured log Gromov-Witten theory of any modification $\widetilde{X}$ of an snc log scheme $X$ to the punctured log Gromov-Witten theory of $X$. 
 
 We apply this result in two settings. First, we prove a log-orbifold correspondence equating the logarithmic invariants of the canonical wall structure of \cite{scatt} with a class of orbifold invariants considered in \cite{orbBL}, and more generally construct a family of algebra homomorphisms from the intrinsic mirror algebra $R_{(X,D)}$ of \cite{int_mirror} to appropriate power series rings generalizing the broken line expansion in \cite{scatt}. 
 
Second, we show the punctured log Gromov-Witten classes of split toric bundles are effectively reconstructed in terms of the punctured log Gromov-Witten classes of the base. Additional input for the second application is the introduction and study of double ramification cycles with target log variety, generalizing the double ramification cycles with target variety investigated in \cite{DRtarget}.
\end{abstract}

\maketitle

\section{Introduction}

A core aspect of log enumerative geometry is invariance under log modifications. A property first established by Abramovich and Wise in \cite{bir_GW} for ordinary log Gromov-Witten theory and extended by the author to the general setting of punctured invariants in \cite{pbirinv}, it implies that the log Gromov-Witten theory for a log smooth scheme $(X,D)$ is a sector of the log Gromov-Witten theory of any log modification $(\widetilde{X},\widetilde{D})$ of $(X,D)$. Moreover, many relations among log Gromov-Witten classes only manifest after passing to sufficiently modified targets, as highlighted in \cite{logroot,BNR2,logWDVV,degen}. In the present work, we show how to explicitly express the punctured log Gromov-Witten classes of any such modification $\widetilde{X}$ in terms of the punctured log Gromov-Witten classes of $X$:

For the setup, let $\widetilde{X} \rightarrow X$ be a log \'etale modification induced by a subdivision of the tropicalization $\widetilde{\Sigma(X)} \rightarrow \Sigma(X)$, and let $\pmb\gamma$ and $\pmb\tau$ be decorated realizable tropical types of log stable map to $\widetilde{X}$ and $X$ respectively such that stabilization induces a morphism $st: \mathscr{M}(\widetilde{X},\pmb\gamma) \rightarrow \mathscr{M}(X,\pmb\tau)$. We additionally let $\ev: \mathscr{M}(\widetilde{X},\pmb\gamma) \rightarrow \Ev:=\prod_{l \in L(G_{\gamma})} \widetilde{X}_{\pmb\sigma(l)}$ be the evaluation map of underlying stacks.


%
%
%
%
%

For $\sigma \in \widetilde{\Sigma(X)}$, we let $\text{Star}(\sigma)$ and $\widetilde{\Sigma(X)}_{\sigma}$ be the cone complexes with cones given by $\sigma'$ and $\sigma'/\sigma$ for $\sigma \subset \sigma' \in \widetilde{\Sigma(X)}$ respectively. For a tropical type $\tau'$ of punctured tropical map to $\Sigma(X)$ marked by $\tau$, we consider the tropical evaluation map of topological spaces 
$$\Sigma(\ev_{\tau'}): \tau' \rightarrow \prod_{l \in L(G_{\tau})} \widetilde{\Sigma(X)}$$
given by the product of the evaluation maps at vertices containing legs. This is induced by a piecewise linear map on some subdivision of $\widetilde{\tau}'$ of $\tau'$. After making an appropriate choice of log structure for $\Ev$, we have $\Sigma(\Ev) = \prod_{l \in L(G_{\tau})} \widetilde{\Sigma(X)}_{\pmb\sigma(l)}$. We now let $P\in PP(\Sigma(\Ev))$ be a piecewise polynomial which we pullback to a function on $\prod_{l \in L(G_{\tau})} \text{Star}(\pmb\sigma(l))$ along the quotient map. We let $\Sigma(\ev_{\tau'})^*(P)$ be the partially defined function on $\tau'$ given by pulling back $P$ along the partially defined map $\tau' \rightarrow \prod_{l \in L(G_{\tau})} \text{Star}(\pmb\sigma(l))$, yielding a piecewise polynomial function on some subcomplex of a subdivision of $\tau'$. 

 The piecewise polynomial $P$ also induces a cohomology class $A^{\deg\text{ }P}(\prod_{l \in L(G_{\tau})}\widetilde{X}_{\pmb\sigma(l)})$, which we will also refer to as $P$ with context disambiguating notation. The following theorem expresses the pushforward of any such class to $\mathscr{M}(X,\pmb\tau)$ purely in terms of the tropicalization of the moduli space $\mathscr{M}(X,\pmb\tau)$:



\begin{theorem}\label{mthm1}
With notation as above, there exists a piecewise polynomial $Q_{\gamma}$ on a subdivision of $\tau$, and extensions to piecewise polynomials on subdivisions of $\tau'$ for every tropical type marked by $\tau$ such that for any piecewise polynomial function $P \in PP(\Sigma(\Ev))$, $Q_{\gamma}\Sigma(\ev_{\tau'})^*(P)$ is a globally defined piecewise polynomial function on a subdivision of $\tau'$, and the following equation in $A_*(\mathscr{M}(X,\pmb\tau))$ holds:

\begin{equation}\label{maineq1}
st_*(\ev^*(P)\cap[\mathscr{M}(\widetilde{X},\pmb\gamma)]^{\vir}) =  \sum_{\pmb\tau \subset \pmb\tau'} \frac{\deg_{\tau'}(Q_{\gamma}\Sigma(\ev_{\tau'})^*(P))}{|Aut(\pmb\tau'/\pmb\tau)|}[\mathscr{M}(X,\pmb\tau')]^{\vir}
\end{equation}
In the above equation, $\deg_{\tau'}(Q_{\gamma}\Sigma(\ev_{\tau'})^*(P))$ is a tautological operational Chow class in the sense of \cite{tauttarget} associated with a polynomial on $\tau'$ in edge lengths and PL functions on $\Sigma(X)$ pulled back along evaluation maps associated with sections of the universal tropical curve, computed from a $\mathbb{G}_m^{\dim \tau'}$ equivariant integral on any proper toric variety in an explicit collection cofinal under toric blowups.

\end{theorem}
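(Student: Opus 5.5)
We will reduce the statement to the birational invariance result of \cite{pbirinv} combined with a localization/toric computation on the tropical side. The first step is to factor $st$ through the intermediate moduli space $\mathscr{M}(X,\pmb\tau)\times_{\mathfrak{M}(\mathcal{X},\pmb\tau)}\mathfrak{M}(\widetilde{\mathcal{X}},\pmb\gamma)$. Here $\mathfrak{M}$ denotes the Artin fans, i.e.\ the moduli of prestable punctured maps to $\mathcal{A}_X$ and $\mathcal{A}_{\widetilde X}$. By \cite{pbirinv}, $[\mathscr{M}(\widetilde{X},\pmb\gamma)]^{\vir}$ is the refined Gysin pullback of $[\mathscr{M}(X,\pmb\tau)]^{\vir}$ along the morphism of Artin stacks $\mathfrak{M}(\widetilde{\mathcal{X}},\pmb\gamma)\rightarrow \mathfrak{M}(\mathcal{X},\pmb\tau)$, possibly after passing to the idealized/virtually log smooth locus. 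In the same step we observe that $\ev^*(P)$ is pulled back from the Artin fan level: $P$ is a piecewise polynomial on $\Sigma(\Ev)$, so it defines an operational class on the corresponding Artin stack, and the evaluation map factors through $\mathfrak{M}(\widetilde{\mathcal{X}},\pmb\gamma)$. The projection formula then reduces (\ref{maineq1}) to computing the pushforward of $\ev^*(P)$ along the proper, log \'etale-type map $\mathfrak{M}(\widetilde{\mathcal{X}},\pmb\gamma)\rightarrow \mathfrak{M}(\mathcal{X},\pmb\tau)$, capped with the virtual class.

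The second step is to compute this pushforward purely combinatorially. The base $\mathfrak{M}(\mathcal{X},\pmb\tau)$ is log smooth (idealized) with tropicalization the cone stack whose cones are the $\tau'$ marked by $\tau$. The map from $\mathfrak{M}(\widetilde{\mathcal{X}},\pmb\gamma)$ is, up to a virtual codimension correction, described by a subdivision of these cones together with the choice of which subcomplex realizes type $\gamma$. For each $\tau'$ we choose a subdivision $\widetilde{\tau}'$ on which both $\Sigma(\ev_{\tau'})$ is linear and the locus of tropical maps refining to type $\gamma$ is a subcomplex. On the locus where it is defined, the realizability/excess-bundle contribution gives a piecewise polynomial, essentially the top Chern class of an obstruction bundle expressed as a product of PL functions. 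We take $Q_\gamma$ to be this piecewise polynomial, built from the equations cutting out the $\gamma$-stratum: products of linear forms defining the cones of $\widetilde{\Sigma(X)}$ containing vertex images, times edge-length constraints. We then check compatibility under face restriction to obtain the extensions from $\tau$ to each $\tau'$, and check that $Q_\gamma$ vanishes where $\Sigma(\ev_{\tau'})^*(P)$ is undefined, so that the product is global.

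The third step is the pushforward formula for piecewise polynomials along a toric/log blowup. For a cone $\tau'$ with toric model $X_{\tau'}$ and a subdivision giving $X_{\widetilde{\tau}'}\to X_{\tau'}$, the pushforward of a piecewise polynomial class decomposes onto the closed strata. The coefficient on the closed stratum corresponding to $\tau'$ itself is the $\mathbb{G}_m^{\dim\tau'}$-equivariant integral over the proper toric variety $X_{\widetilde{\tau}'}$, computed, for example, by Brion/Atiyah--Bott localization. This gives a polynomial on $\tau'$, which is $\deg_{\tau'}$. Any refinement gives the same answer, which yields the cofinality statement. Interpreting polynomials in edge lengths and pulled-back PL functions as tautological classes via \cite{tauttarget}, and summing over strata with the automorphism factor $|Aut(\pmb\tau'/\pmb\tau)|$ coming from the gluing map $\mathscr{M}(X,\pmb\tau')\rightarrow\mathscr{M}(X,\pmb\tau)$, produces the right-hand side of (\ref{maineq1}).

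The main obstacle is the second step. The difficulty is making rigorous that the relative virtual correction between $\mathfrak{M}(\widetilde{\mathcal{X}},\pmb\gamma)$ and the fiber product is encoded by a single piecewise polynomial $Q_\gamma$ that is compatible across all $\tau'$, especially since punctured moduli are only idealized log smooth. I would first try to treat the stratum-closure of type $\gamma$ as a strict closed substack cut out by a regular sequence of PL functions on a further subdivision. I would then show that its fundamental class equals the product of those PL functions, which serve as Chern classes of the associated line bundles.
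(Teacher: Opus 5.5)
Your skeleton matches the paper's. You use the cartesian square of Theorem \ref{decthm} and the log enhancement of $\ev$ (Lemma \ref{logenh}) to move to Artin fans, push forward there, pull back virtually as in Corollary \ref{pushvpull}, and read the coefficients as tautological classes. The intermediate object has to be the proper log \'etale modification $\mathfrak{M}_{\gamma\to\tau}\to\mathfrak{M}_\tau$ of Theorem \ref{decthm}, not $\mathfrak{M}(\widetilde{\mathcal X},\pmb\gamma)$. Theorem \ref{decthm} says $\mathscr{M}(\widetilde X,\pmb\gamma)\to\mathfrak{M}_{\gamma\to\tau}$ is the base change of $\mathscr{M}_{\pmb\tau}\to\mathfrak{M}_\tau$ with the pulled-back obstruction theory. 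So $[\mathscr{M}(\widetilde X,\pmb\gamma)]^{\vir}=\epsilon_\tau^![\mathfrak{M}_{\gamma\to\tau}]$ exactly, and this is not a Gysin pullback along a log \'etale map, which is neither flat nor lci. In particular there is no excess-bundle or virtual correction, so the obstacle you single out does not exist. $Q_\gamma$ is just the normalized piecewise polynomial representing the closed stratum $Y_\gamma$ of the modified Artin fan. On a simplicial refinement it is a multiple of $\sum_{\gamma'}\prod_{\rho\in\gamma'^{[1]}}L_\rho$ over the maximal pieces $\gamma'$ of $\gamma$, normalized by $\deg_\gamma(Q_\gamma)=1$; this normalization is what yields the lattice index of Corollary \ref{puncmcr1}. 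Your last sentence is the right idea and is all there is to $Q_\gamma$.

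The genuine gap is your third step, where the content of the theorem lies. That step is Theorem \ref{normalform}: $\pi_*(P\cap[Y_\gamma])=\sum_{\tau\subset\tau'}\deg_{\tau'}(Q_\gamma P)[X_{\tau'}]$ for a log \'etale modification of Artin fans. As you state it, it is not well posed, because strata classes with polynomial coefficients are not independent. Already $[0]$ is a polynomial multiple of $[\mathbb{A}^1]$ in $A_*^{\mathbb{G}_m}(\mathbb{A}^1)$, so ``the coefficient on the stratum of $\tau'$'' means nothing until you fix a system of extensions $PL(\tau')\to PL(\mathrm{Star}(\tau'))$. That choice gives $X_{\tau'}\cong\mathcal{A}_{\mathrm{Star}(\tau')_{\tau'}}\times\mathcal{A}_{\tau',\tau'}$ (Lemma \ref{stratadecomp}), so that a polynomial on $\tau'$ acts on $[X_{\tau'}]$. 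It also dictates the complete fan $\Sigma^{\tau'}$ and the extension of $Q_\gamma P$ defining $\deg_{\tau'}$; note that $Z_{\widetilde\tau'}$ is not proper, so your equivariant integral needs exactly this completion. Localizing on each cone separately and summing also gives no reason the pieces reassemble to the total pushforward.

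The missing mechanism is as follows. Reduce to $X=\mathrm{Star}(\tau)$ and to $P$ a polynomial in PL functions. Then induct on $\deg P$, splitting $\mathrm{Sym}^k$ into polynomials in extensions of PL functions on $\widetilde\tau$ and multiples of PL functions $L$ with $L|_{\widetilde\tau}=0$.
\begin{enumerate}
\item Polynomials in extensions push forward onto $X_\tau$ alone. This uses Lemma \ref{tbund1} via the product decomposition, and the higher $\deg_{\tau'}$ vanish by Proposition \ref{degprop}.
\item For $L$-divisible terms, Lemma \ref{linlem} gives $L\cap[Y_\gamma]=\sum a_{\gamma'}[Y_{\gamma'}]$ over cones $\gamma'\supset\gamma$ of one higher dimension. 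The identity $LQ_\gamma=\sum a_{\gamma'}Q_{\gamma'}$ then lets the inductive hypothesis, applied to each $\gamma'$, produce precisely the higher-strata terms.
\end{enumerate}
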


\begin{corollary}[Corollary \ref{reduce}]\label{mcr1}
For $(X,D)$ a simple normal crossings pair, the primary punctured log Gromov-Witten classes of any toroidal modification $\widetilde{X}$ are expressed explicitly in terms of the punctured log Gromov-Witten classes of $X$.
\end{corollary}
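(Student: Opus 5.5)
The corollary will be deduced from Theorem \ref{mthm1}, with the problem reduced to decorated types. Fix a toroidal modification $\widetilde{X} \rightarrow X$ induced by a subdivision $\widetilde{\Sigma(X)} \rightarrow \Sigma(X)$. A primary punctured log Gromov-Witten class of $\widetilde{X}$ is a pushforward, to a moduli space of curves or to a point, of $\ev^*(\alpha)\cap[\mathscr{M}(\widetilde{X},\pmb\gamma)]^{\vir}$. Here $\pmb\gamma$ is a decorated realizable tropical type and $\alpha$ is a cohomology class on $\Ev$.

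First I will check that each such $\pmb\gamma$ stabilizes to a decorated type $\pmb\tau$ for $X$, so that a morphism $st: \mathscr{M}(\widetilde{X},\pmb\gamma) \rightarrow \mathscr{M}(X,\pmb\tau)$ exists as in the setup of Theorem \ref{mthm1}. To produce $\pmb\tau$, compose the tropical map with the subdivision and contract the unstable vertices and edges. The curve class pushes forward, and the contact orders are recorded by the minimal cone of $\Sigma(X)$ containing the image. Since the evaluation maps and $\psi$-classes on $\mathscr{M}(\widetilde{X},\pmb\gamma)$ are compatible with the forgetful maps to $\overline{M}_{g,n}$ through $st$, the pushforward of the class factors through $st_*$.

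Next I reduce $\alpha$ to the form required by the theorem. Each stratum $\widetilde{X}_{\pmb\sigma(l)}$ maps to a stratum $X_{\sigma}$ of the snc pair $(X,D)$ as a toric bundle, or more precisely as a toroidal modification of a product of $\mathbb{P}^1$- or $\mathbb{G}_m$-bundles over $X_\sigma$. By the projective bundle formula and the standard description of the Chow cohomology of toric bundles, $A^*(\widetilde{X}_{\pmb\sigma(l)})$ is generated as an $A^*(X_\sigma)$-module by classes coming from piecewise polynomials on $\text{Star}(\pmb\sigma(l))/\pmb\sigma(l)$. Hence a general insertion has the form $\pi^*\beta \cdot P$ with $P \in PP(\Sigma(\Ev))$ and $\beta$ pulled back from the strata of $X$.

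Evaluation maps commute with $st$ on the level of the underlying strata of $X$, so the projection formula gives
$$st_*(\ev^*(\pi^*\beta\cdot P)\cap[\mathscr{M}(\widetilde{X},\pmb\gamma)]^{\vir}) = \ev_X^*\beta \cap st_*(\ev^*P\cap[\mathscr{M}(\widetilde{X},\pmb\gamma)]^{\vir}).$$
Theorem \ref{mthm1} then rewrites the right-hand side as a sum of the tautological classes $\deg_{\tau'}(Q_\gamma\Sigma(\ev_{\tau'})^*(P))$ capped with $[\mathscr{M}(X,\pmb\tau')]^{\vir}$.

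Finally, these tautological operational classes must be expressed through punctured log Gromov-Witten data of $X$. By \cite{tauttarget}, they are polynomials in three kinds of classes:
\begin{itemize}
\item $\psi$-classes;
\item edge-length classes, i.e.\ first Chern classes of the line bundles attached to edges, which are expressible through the gluing/splitting formalism and $\psi$-classes at nodes;
\item pullbacks of PL functions on $\Sigma(X)$, which give line bundles $\mathcal{O}(D_i)$ pulled back by evaluation.
\end{itemize}
Each of these is a descendant or boundary insertion on the punctured moduli of $X$. After pushforward, all these contributions are punctured log Gromov-Witten classes of $X$, possibly non-primary ones with descendants and strata insertions.

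The main obstacle is this last step: the edge-length classes must be converted into honest Gromov-Witten data. I would use the splitting formula of \cite{pbirinv} and the gluing of punctured invariants along the edge, which identifies the edge line bundle with $-\psi_{e}-\psi_{e'}$ plus the divisor pullback. I would then verify that the result involves only classes of the pair $(X,D)$.
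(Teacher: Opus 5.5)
There is a genuine gap in your second step, the reduction of the insertion $\alpha$. Your claim that $A^*(\widetilde{X}_{\pmb\sigma(l)})$ is generated as an $A^*(X_\sigma)$-module by piecewise polynomial classes holds when $\widetilde{X}_{\pmb\sigma(l)}\to X_\sigma$ is a projective toric bundle, for instance when the stratum lies over a blown-up center. In general, however, a toroidal modification also blows up $X_\sigma$ along its intersections with deeper strata of $X$. The Chow groups of those centers are typically not restrictions of classes on $X_\sigma$.

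Here is a concrete counterexample. Take $X=\mathbb{P}^3$ and $D=D_1+D_2$, with $D_1$ a smooth cubic surface and $D_2$ a plane meeting it transversally in a smooth cubic curve $V$. Let $\widetilde{X}=\mathrm{Bl}_V\mathbb{P}^3$, with exceptional divisor $E$, $j\colon E\to\widetilde{X}$ and $p\colon E\to V$. Take a leg of contact order $0$, so that $\widetilde{X}_{\pmb\sigma(l)}=\widetilde{X}$ and $X_\sigma=X$. Under the blowup decomposition $A^2(\widetilde{X})_{\mathbb{Q}}\cong A^2(X)_{\mathbb{Q}}\oplus A^1(V)_{\mathbb{Q}}$, $(x,v)\mapsto \pi^*x+j_*p^*v$, every class $\pi^*\beta\cdot P$ with $P$ a polynomial in $D_1',D_2',E$ has $A^1(V)$-component in $\mathbb{Q}\cdot H|_V$. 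This follows from $D_i'=\pi^*D_i-E$, $\pi^*H\cdot E=j_*p^*(H|_V)$, and $E^2=-\pi^*[V]+j_*p^*c_1(N_V)$ with $c_1(N_V)=4H|_V$. By contrast, the fiber of $E$ over a general $q\in V$ has component $[q]$, and $3[q]-H|_V$ is non-torsion in $\Pic^0(V)$. So this primary insertion is not of your form $\pi^*\beta\cdot P$. Your projection-formula step together with one application of Theorem \ref{mthm1} therefore cannot treat it.

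The missing idea is a mechanism that converts insertions supported on exceptional loci into tropical constraints. The paper does this in four steps.
\begin{enumerate}
\item It first reduces, using cofinality of smooth blowups along strata and Corollary \ref{puncmcr1}, to a single blowup along a smooth stratum $V$.
\item By \cite[Proposition 6.7(e)]{Fulint}, each $\alpha_l$ is then a pullback from $X_{\pmb\sigma(l)}$, which the projection formula removes, plus a pushforward $\tilde{i}_{l*}$ from the exceptional divisor $\widetilde{\Ev}_l$.
\item Because $\widetilde{\Ev}_l$ is a stratum pulled back from the Artin fan, Lemma \ref{linlem} and commutativity of bivariant classes give $\tilde{i}_l^![\mathscr{M}_{\pmb\gamma'}]^{\vir}=\sum a_{\pmb\gamma''}[\mathscr{M}_{\pmb\gamma''}]^{\vir}$. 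This is a sum over refined types in which leg $l$ now lands in the exceptional cone. There the evaluation stratum is a projective bundle over the stratum $V\cap X_{\pmb\sigma(l)}$ of $X$, so arbitrary classes on the center, such as $[q]$ above, become honest insertions for types of $X$.
\item Induction on the number of legs not yet of this form ends in the base case where every evaluation stratum is a split projective bundle. Only there does your argument apply: the projective bundle formula, with $c_1(\mathcal{O}(1))$ piecewise linear, followed by Theorem \ref{mthm1}.
\end{enumerate}
Separately, the step you flag as the main obstacle is not one. In the paper's convention, punctured log Gromov--Witten classes already allow tautological operational classes. The proof of Theorem \ref{mthm1} identifies edge-length classes with $-(\psi_{v,e}+\psi_{v',e})$ and evaluation PL functions with $\psi$-corrected pullbacks of $c_1(\mathcal{O}(D_i))$. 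So no splitting or gluing argument is needed for that step.
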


The two sides of Equation \ref{maineq1} feature two ways of imposing constraints in log Gromov-Witten theory. The left hand side features the standard way of imposing constraints in Gromov-Witten theory; we pullback cohomology classes of the target along an evaluation map and cap with the virtual class. However, since we are working on a log modification of $\widetilde{X}$ of $X$, the space of insertions is larger. On the right hand side of Equation \ref{maineq1}, we see the appearance of ``tropical" constraints, given by picking a realizable tropical type which is marked by the starting tropical type. This has the effect of picking out a stratum in the moduli space of additionally degenerate curves, and in particular does not refer to a blowup of the target. This method of fixing constraints features prominently in applications of log Gromov-Witten theory to the construction of canonical wall structures for mirrors to log Calabi-Yau manifolds, appearing in \cite{scatt}, as well as in the Quiver DT/log GW correspondence investigated by Arg\"uz and Bousseau in \cite{QDTlog}. Theorem \ref{mthm1} demonstrates how these two different types of constraints are related. 


Theorem \ref{mthm1} can be viewed as a localization scheme for the evaluation of punctured log Gromov-Witten invariants of $\widetilde{X}$ in terms of punctured log Gromov-Witten invariants of $X$. The main obstacle to producing a more explicit description of $\deg_{\tau'}(P)$ are the toric singularities encoded by the tropical moduli spaces $\tau'$. Without further assumptions concerning the geometry of the target, there is no restriction on the types of cones which may appear, see \cite{tropvak}. 
 
\begin{remark}
\begin{enumerate}
\item The operational Chow classes on the righthand side of Equation \ref{maineq1} depend on a choice of extension of a polynomial on a cone $\tau$ to a piecewise polynomial on the tropical moduli space of tropical maps marked by $\tau$. Different choices for extending to the full tropical moduli space lead to different fans involved in the computation of higher codimension contributions, which give boundary corrections for the difference in lower codimension.
 \item The descendent theory of $\widetilde{X}$ can also be expressed in terms of the descendent theory of $X$ via the usual boundary corrections for comparing different $\psi$-classes under stabilization maps. We leave the details of this extension to the interested reader.
\end{enumerate}
\end{remark}

%

\begin{example}
An instructive example of the above relation to consider is when $(X,D)$ is a simple normal crossings toric pair. For concreteness, we consider $(\mathbb{P}^2,D)$, with $D = D_1 + D_2+D_3$ the toric boundary. We consider the moduli spaces of genus $0$ degree $4$ curves intersecting the boundary at $6$ points with contact orders $(1,1,0)$, $(1,0,1)$ and $(0,1,1)$. Let $x_1$ be a marked point with contact order $(1,0,1)$, $x_2,x_3$ be marked points with contact order $(1,0,1)$, and $x_4,x_5$ be marked points with contact order $(0,1,1)$. We also let $l_i \in L(G_{\tau})$ be the leg associated with $x_i$ for any tropical type $\tau$ marked by $\beta$. After blowing up each zero stratum, giving $\widetilde{X}$, we may lift the contact orders so that these contact orders are transverse with respect to the toric boundary, and in particular $\Ev =( \mathbb{P}^1)^5$. We consider the log Gromov-Witten invariant:
\begin{equation}\label{exinv}
\int_{[\mathscr{M}(\widetilde{X},\beta)]^{\vir}} \prod_{i=1}^5\ev_{x_i}^*([\pt])
\end{equation}

Since this is a genus $0$ log Gromov-Witten invariant of a toric variety of low degree, the integral is easily calculated using the tropical correspondence theorem of \cite{NStrop} to be $1$. We will consider how Equation \ref{maineq1} yields this equation. We note that the point classes for the strata $\widetilde{X}_{\pmb\sigma(l)} \cong \mathbb{P}^1$ each come from primitive non-negative piecewise linear functions on $\Sigma(\widetilde{X})_{\pmb\sigma(l)} \cong \Sigma_{\mathbb{P}^1}$, specifying a maximal cone of $\Sigma(\widetilde{X})$ for every leg $l$. Letting $P = \prod_i^5 P_i$ be the product of all of these PL functions pulled back along the evaluation map $\Sigma(\mathscr{M}(X,\pmb\tau)) \rightarrow \Sigma(\Ev)$, for $\pmb\tau$ a tropical type marked by $\beta$, we have $P|_{\tau} \not= 0$ if and only there is a tropical curve of type $\tau$ with leg $l_i$ mapping into the interior of a relevant maximal cone of $\Sigma(\widetilde{X})$. Additionally, note that for any PL function $P_i$ on $\Sigma(\mathscr{M}(\widetilde{X},\pmb\gamma))$ pulled back along an evaluation map $\ev_{l_i}:\Sigma(\mathscr{M}(\widetilde{X},\pmb\gamma)) \rightarrow \Sigma_{\mathbb{P}^1}$ factoring through a single cone for which $P_i|_{\gamma} \not= 0$, then $P_i\cap [\mathscr{M}(\widetilde{X},\pmb\gamma)]^{\vir} = 0$. 

We first consider the types contributing to the expression for $\ev_{x_1}^*P_1\cap [\mathscr{M}(\widetilde{X},\beta)]^{\vir}$ given by Theorem \ref{mthm1}. One such contribution comes from the zero stratum with tropical type $\pmb\gamma$ whose tropicalization is depicted in Figure \ref{fig1}. For any other tropical type $\gamma'$ contributing to the expression for $\ev_{x_1}^*P_1\cap[\mathscr{M}(\widetilde{X},\beta)]^{\vir}$, there exists a leg $l_i$ such that the cone $\gamma'_{l_i}$ associated with the leg $l_2$ maps into a top dimensional cone of $\Sigma(\widetilde{X})$, and the PL function $P_i$ pulls back from $\Sigma(X)$, implying $\prod_{i=2}^5P_i [\mathscr{M}(\widetilde{X},\pmb\gamma')]^{\vir} = 0$. Thus, it suffices to to calculate $\st_*\prod_{i=2}^5P_i[\mathscr{M}(\widetilde{X},\pmb\gamma)]^{\vir}$ for $\st: \mathscr{M}(\widetilde{X},\pmb\gamma) \rightarrow \mathscr{M}(X,\pmb\tau)$ the stabilization map and $\pmb\tau$ the decorated tropical type determined by $\pmb\gamma$.

 To use Theorem \ref{mthm1} to evaluate $\pi_*\prod_{i=2}^5[\mathscr{M}(\widetilde{X},\pmb\gamma)]^{\vir}$, we observe that the only dimension $5$ tropical types marked by $\gamma$ for which $\Sigma(\ev)^*(\prod_{i=2}^5 P_i) \not= 0$ is the tropical type $\tau'$ depicted in Figure \ref{fig2} and $Q_{\gamma}\prod_{i=2}^5 P_i$ vanishes on all faces of $\tau'$. It follows that:
\begin{equation}
\begin{split}
\st_*P\cap [\mathscr{M}(\widetilde{X},\pmb\gamma)]^{\vir} &= \deg_{\tau'}(Q_{\gamma'}\prod_{i=2}^5 P_i)[\mathscr{M}(X,\pmb\tau')]^{\vir} \\
&= |\coker(Q_{\gamma'}\times \prod_{i=2}^5 P_i: \tau^{'\gp}_{\NN} \rightarrow \mathbb{Z}^5)|[\mathscr{M}(X,\pmb\tau')]^{\vir} \\
&= [\mathscr{M}(X,\pmb\tau')]^{\vir}.
\end{split}
\end{equation}
The second equality follows from the fact that $\deg_{\tau'}(Q_{\gamma}\prod_{i=2}^5P_i)$ is simply the corresponding equivariant integral on the affine toric variety $\tau'$ when $Q_{\gamma}\prod_{i=2}^5P_i$ vanishes on all faces of $\tau'$, which is easily calculated as the index of a lattice map when $\tau'$ is simplicial. In particular, we recover the fact that the invariant of interest is $\deg[\mathscr{M}(X,\pmb\tau')]^{\vir} = 1$.

\begin{figure}
\centering
\begin{tikzpicture}
\fill[white!85!blue] (-6,0)--(-3,0)--(-3,3)--(-6,3)--cycle;
\fill[white!85!blue] (-6,0)--(-3,0)--(-3,-3)--(-9,-3)--cycle;
\fill[white!85!blue] (-6,0)--(-9,-3)--(-9,3)--(-6,3)--cycle;
\draw[black] (-6,0)--(-6,3);
\draw[black] (-6,0)--(-9,0);
\node at (-6,3.5) {$D_3$};
\draw[black] (-6,0)--(-3,0);
\draw[black] (-6,0)--(-3,3);
\node at (-2.5,0) {$D_2$};
\draw[black] (-6,0)--(-9,-3);
\draw[black] (-6,0)--(-6,-3);
\node at (-9.5,-1){$l_1$};
\node at (-9.5,-3.5) {$D_1$};
\draw[ball color=red] (-6,-1) circle (0.5mm);
\draw[ball color=red] (-5.5,0) circle (0.5mm);
\draw[ball color=red] (-5,1) circle (0.5mm);
\draw[->,color = red] (-6,-1)--(-6,-3);
\draw[->,color = red] (-5.95,-1)--(-5.95,-3);
\draw[->,color = red] (-6,-1)--(-9,-1);
\draw[-,color = red] (-6,-1)--(-5,1);
\draw[->,color = red] (-5,1)--(-3,3);
\draw[->,color=red] (-5,1.05)--(-3.05,3);
\draw[->,color=red](-5,1)--(-9,1);
\end{tikzpicture}
\caption{The tropical type $\gamma$
\label{fig1}}
\end{figure}


\begin{figure}[h]
\centering
\begin{tikzpicture}
\fill[white!85!blue] (-6,0)--(-3,0)--(-3,3)--(-6,3)--cycle;
\fill[white!85!blue] (-6,0)--(-3,0)--(-3,-3)--(-9,-3)--cycle;
\fill[white!85!blue] (-6,0)--(-9,-3)--(-9,3)--(-6,3)--cycle;
\draw[black] (-6,0)--(-6,3);
\draw[dashed] (-6,0)--(-9,0);
\node at (-6.5, -3.5) {$l_2$};
\node at (-5.5,-3.5) {$l_3$};
\node at (-6,3.5) {$D_3$};
\draw[black] (-6,0)--(-3,0);
\draw[dashed] (-6,0)--(-3,3);
\node at (-2.5,0) {$D_2$};
\node at (-2.75, 2.75) {$l_4$};
\node at (-3.5,3.25) {$l_5$};
\draw[black] (-6,0)--(-9,-3);
\draw[dashed] (-6,0)--(-6,-3);
\node at (-9.5,-1){$l_1$};
\node at (-9.5,-3.5) {$D_1$};
\draw[ball color=red] (-6.5,-1) circle (0.5mm);
\draw[ball color=red] (-5.75,-.25) circle (0.5mm);
\draw[ball color=red] (-5.625,0) circle (0.5mm);
\draw[ball color=red] (-5.5,.25) circle (0.5mm);
\draw[ball color= red] (-5.5,1) circle (0.5mm);
\draw[->,color = red] (-6.5,-1)--(-6.5,-3);
\draw[->,color = red] (-6.5,-1)--(-9,-1);
\draw[-,color = red] (-6.5,-1)--(-6,-.5);
\draw[-,color = red] (-6,-.5)--(-5.75,-.25);
\draw[-,color = red] (-5.75,-.25)--(-5.625,0);
\draw[->,color = red] (-5.75,-.25)--(-5.75,-3);
\draw[-,color = red] (-5.625,0)--(-5.5,.25);
\draw[->,color = red] (-5.5,.25)--(-3,2.75);
\draw[-,color = red] (-5.5,.25)--(-5.5,.5);
\draw[-,color = red] (-5.5,.5)--(-5.5,1);
\draw[->,color=red] (-5.5,1)--(-3.5,3);
\draw[->,color=red](-5.5,1)--(-9,1);

\end{tikzpicture}
\caption{The tropical type $\tau'$
\label{fig2}}
\end{figure}

\end{example}

We provide two primary applications of Equation \ref{maineq1}. 

\subsection{Log-orbifold correspondence for canonical wall structures}

In the setting of mirror symmetry and canonical wall structure for log Calabi-Yau pairs, Gross and Siebert in \cite{scatt} use realizable tropical types referred to as ``wall types" and ``broken line types" to define the necessary enumerative input to construct the canonical wall structures for a log Calabi-Yau pair. Separately, You in \cite{orbBL} constructs analogues of these broken line invariants using orbifold Gromov-Witten theory, later using these invariants in \cite{yougamma} to study the Gamma conjecture for Fano varieties. We use Theorem \ref{mthm1} to compare these invariants. Namely, we use Theorem \ref{mthm1} to provide an alternate description the broken line/wall type invariants as integrals on the space of log maps to a log \'etale modification, from which the log-orbifold correspondence of \cite{BNR2} will allow us to deduce the following corollary:

\begin{corollary}[Proposition \ref{blorb}, Proposition \ref{worb}]
For a log Calabi-Yau pair $(X,D)$ in the sense of \cite{scatt} with essential skeleton $B \subset \Sigma(X)$, $p \in B$ an integral point, and $x \in \sigma_x \subset B$ a general point of a cone $\sigma_x$, the broken-line expansions $\vartheta_{p,x}$ is equal to the orbifold theta-function $\vartheta_{p}(x)$ of \cite{orbBL} of a suitable log \'etale model $\widetilde{X}$ of $X$. Additionally, the canonical wall structure can be expressed using the orbifold Gromov-Witten theory of log \'etale modifications of $X$. 
\end{corollary}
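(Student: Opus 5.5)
The plan is to reduce both the broken-line expansion $\vartheta_{p,x}$ and the wall functions of the canonical wall structure to integrals over moduli of log maps to a log \'etale modification, using Theorem \ref{mthm1}. The log-orbifold correspondence of \cite{BNR2} then converts these into orbifold invariants.

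\textbf{Step 1: choose the modification.} Given $p\in B$ and a general $x\in\sigma_x$, I would pick a subdivision $\widetilde{\Sigma(X)}\to\Sigma(X)$ with the following two properties. First, the ray through $p$ and the ray through $x$ (made rational by genericity) are rays of $\widetilde{\Sigma(X)}$. Second, the subdivision is simplicial and smooth enough that $\widetilde{X}$ is a smooth snc pair, so that \cite{BNR2} applies to it. In $\widetilde{X}$, the puncture with contact order $p$ and the marked point whose tropical position should be $x$ then become transverse contacts with divisors $\widetilde{D}_p$ and $\widetilde{D}_x$.

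\textbf{Step 2: express the broken-line coefficients.} Recall that the coefficient of a broken-line type $\tau$ in \cite{scatt} is $\deg[\mathscr{M}(X,\pmb\tau)]^{\vir}$, and that $\tau$ is rigid once the endpoint is fixed at $x$. Take $\pmb\gamma$ to be the type on $\widetilde{X}$ with one puncture of contact $p$ and one marked point $x_{out}$ with contact order along the ray of $x$. Take $P$ to be the PL function on $\Sigma(\Ev)$ cutting out the point class on $\widetilde{D}_x$, or the appropriate class fixing the stratum containing $x$.

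\textbf{Step 3: apply Theorem \ref{mthm1}.} Pushing $\ev^*(P)\cap[\mathscr{M}(\widetilde{X},\pmb\gamma)]^{\vir}$ forward to $\mathscr{M}(X,\pmb\tau_0)$, Equation \ref{maineq1} gives a sum over types $\tau'$ marked by $\tau_0$. For dimension reasons, only rigid types with $\Sigma(\ev_{\tau'})^*(P)\neq 0$ contribute. As in the introductory example, these are exactly the broken-line types with endpoint $x$. On each such simplicial cone, $\deg_{\tau'}(Q_\gamma\Sigma(\ev_{\tau'})^*P)$ reduces to a lattice index. I will need to check that this index, divided by $|Aut|$, agrees with the multiplicity $k_\tau$ and the automorphism factors built into $\vartheta_{p,x}$ in \cite{scatt}. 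This check is where the genericity of $x$ enters: it guarantees the evaluation maps are transverse and that $Q_\gamma\Sigma(\ev_{\tau'})^*P$ vanishes on all faces.

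\textbf{Step 4: pass to orbifolds.} Since $\widetilde{X}$ is snc and the relevant insertions are transverse, \cite{BNR2} identifies the invariants on $\widetilde{X}$ with orbifold invariants of the root stack, and these are exactly You's $\vartheta_p(x)$ from \cite{orbBL}. For the wall structure (Proposition \ref{worb}), I would run the same argument with wall types, taking the leg along the wall direction and $P$ supported near a generic point of the wall.

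The main obstacle is the matching in Step 3. One has to show that the extensions $Q_\gamma$ can be chosen so that every higher-codimension contribution vanishes, and that the surviving lattice indices agree with the multiplicities in \cite{scatt}. I would first try to refine $\widetilde{\Sigma(X)}$ further so that every relevant $\tau'$ is simplicial and transverse, and then compute the cokernel directly as in the example.
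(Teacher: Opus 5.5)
Your outline is the paper's: Theorem \ref{mthm1} on a modification, a dimension count isolating broken-line types, a lattice index via Example \ref{simpdeg}, then \cite{BNR2} and \cite[Theorem $7.1$]{orbBL}. However, the moduli problem in Steps 1--2 is set up incorrectly.

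You have the roles of the legs reversed. $p\in B(\NN)$ is the positive contact order of the ordinary marking $L_{\text{in}}$. The output $L_{\out}$ is the puncture, and its contact order $u(L_{\out})\in\sigma^{\gp}_{x,\NN}$ is the summation variable recording the monomial $z^{-u(L_{\out})}$. The point $x$ is not a direction: it is a point that the family of output legs $h(\tau_{\out})$ must pass through. Giving $x_{\out}$ ``contact order along the ray of $x$'' produces a single exponent and does not impose that condition. Putting $x$ on a ray is also not what is wanted; the paper keeps $x$ general, in the interior of a maximal cone $\sigma$.

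The paper instead fixes $\textbf{A}$ and $u(L_{\out})$. It then imposes $x$ through the insertion $\pi^*[\pt]\cup P_x$ on $\widetilde{X}_{\pmb\sigma(L_{\out})}$, where $P_x=\prod_{\rho\in\sigma^{[1]}}P_\rho$ and $\sigma$ is the maximal cone of $\Sigma(\widetilde{X})_{\pmb\sigma(L_{\out})}$ containing the image of $x$. Note also that $N_{\pmb\tau}$ already carries $\ev^*_{x_{\out}}[\pt]$; it is not just $\deg[\mathscr{M}(X,\pmb\tau)]^{\vir}$.

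Most importantly, you are missing the condition on $\widetilde{X}$ that makes Step 4 possible. By \cite[Proposition $6.1$]{BNR2}, $\widetilde{X}$ is chosen, depending on $\textbf{A}$ and $u(L_{\out})$, so that two things hold:
\begin{enumerate}
\item the lift of $u(L_{\out})$ has at most one negative entry and one positive entry;
\item every $h(\tau_{\out})$ is a union of cones.
\end{enumerate}
The output contact is negative, not transverse, so snc-ness alone does not let you invoke \cite{BNR2}. The single-negative-entry condition is exactly the hypothesis of the refined log--orbifold correspondence and of \cite[Theorem $7.1$]{orbBL}. The union-of-cones condition is what forces $\dim h_\tau(\tau_{\out})=\dim\sigma$ for contributing types in your dimension argument.

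There is a second gap. The orbifold invariant is attached to the combinatorial type $\beta$, which fixes contact orders only, whereas Theorem \ref{mthm1} applies to a realizable decorated type. The bridge is the refined virtual class of \cite{BNR2}. After the modification above it is virtually equidimensional and decomposes as $\sum_\omega \frac{a_\omega}{|Aut(\omega/\beta)|}[\mathscr{M}(\widetilde{X},\omega)]^{\vir}$, with $a_\omega=|\coker(\ev_v:\omega^{\gp}_{\NN}\to\ZZ)|$ read off from $\mathcal{V}(T)$. Theorem \ref{mthm1} is then applied to each $\omega$.

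The resulting index is $\deg_\tau(a_\omega P_xQ_\omega)=a_\tau$, not $k_\tau$. Since $k_\tau=a_\tau|u(L_{\out})|$, the identity with $\vartheta_{p,x}$ only holds after weighting the invariant on $\widetilde{X}$ by $|u(L_{\out})|$, as in Equation \ref{blinet}. You must also check that each broken-line type $\tau$ has a unique lift $\omega$, or you over-count. This holds because the vertex on $L_{\out}$ is forced onto the ray of the negative entry.

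The wall case needs the same modification and the same refined-class decomposition, with $P_x$ built from the rays of the $(\dim X-1)$-dimensional cone through $x$, followed by exponentiation.
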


The pairs $(X,D)$ considered in \cite{scatt} satisfy the condition that $D$ contains a zero dimensional stratum. However, the definitions for broken lines expansions given as a result of the above corollary extends to general log Calabi-Yau pairs without the existence a zero dimensional stratum. We show that this assignment extends to a morphism of algebras:

\begin{theorem}[Theorem \ref{blhom}]
For $(X,D)$ a log Calabi-Yau pair in the sense of \cite{int_mirror} and $x \in \sigma_x \subset B(X)$ general, and $\kk[Q] = \kk[NE(X)]/I$ for $I$ a monomial ideal of $\kk[NE(X)]$ such that only finitely many curve classes are not contained in $I$, the assignment sending a theta function $\vartheta_p$ to its associated broken line expansion $\vartheta_{p,x}$ extends to a morphism of $\kk[Q]$ algebras $R_{(X,D)} \rightarrow \kk[\sigma_{x,\NN}^{\gp}][Q]$, with the constant term of a product of broken line expansions given by a descendent log Gromov-Witten invariant.
\end{theorem}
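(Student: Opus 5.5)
The target is Theorem \ref{blhom}: the assignment $\vartheta_p\mapsto\vartheta_{p,x}$ extends to a $\kk[Q]$-algebra map $R_{(X,D)}\to\kk[\sigma_{x,\NN}^{\gp}][Q]$, and the constant term of a product of broken line expansions is a descendent invariant. The plan is to check that this assignment respects the multiplication rule of $R_{(X,D)}$. By \cite{int_mirror}, $R_{(X,D)}$ is free over $\kk[Q]$ on theta functions $\vartheta_p$, $p\in B(\ZZ)$, with structure constants
$$\vartheta_{p_1}\cdots\vartheta_{p_n}=\sum_r N_{p_1\cdots p_n r}\,\vartheta_r .$$
Here $N_{p_1\cdots p_n r}=\sum_\beta N^\beta_{p_1\cdots p_n r}t^\beta$ is a punctured invariant with contact orders $p_i$ at marked points, a puncture of contact $-r$, and a point constraint at the punctured point. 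It therefore suffices to prove, for every $p_1,\dots,p_n$,
$$\prod_i\vartheta_{p_i,x}=\sum_r N_{p_1\cdots p_n r}\,\vartheta_{r,x}$$
in $\kk[\sigma_{x,\NN}^{\gp}][Q]$. Linearity over $\kk[Q]$ then gives the homomorphism, and the case $r=0$ (so $\vartheta_{0,x}=1$) identifies the constant term with $N_{p_1\cdots p_n0}$. That invariant is a descendent log invariant, via the standard comparison of the point-constrained puncture of contact $0$ with a $\psi$-class insertion, as in \cite{int_mirror}.

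For the identity, I would follow the Gross--Siebert argument proving $\vartheta_p=\vartheta_{p,x}$ for the canonical wall structure, but work on a log \'etale model. I choose a subdivision $\widetilde{\Sigma(X)}$ in which $x$ is in the interior of a maximal cone. By the preceding log-orbifold results (Propositions \ref{blorb} and \ref{worb}), each $\vartheta_{p_i,x}$ is then a generating series of invariants on $\widetilde{X}$: curves with contact $p_i$ and one further puncture constrained to the point stratum $\widetilde{X}_{\sigma}$ for the cone $\sigma\ni x$, recorded by its contact $m\in\sigma^{\gp}_{\NN}$. Invariance under log modifications \cite{pbirinv} lets me compute $N_{p_1\cdots p_nr}$ on $\widetilde{X}$ as well. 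Now degenerate the $(n+1)$-pointed curve so the point constraint specializes into $\widetilde{X}_\sigma$, moving the tropical position of the constraint to $x$. Tropically, curves with a leg ending at $x$ split at $x$ into $n$ broken-line-type pieces, one through each $p_i$ leg, glued at a vertex mapping to $x$.

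The gluing step applies the punctured gluing formula of \cite{ACGSpunc} at that vertex. Theorem \ref{mthm1} is what turns the insertion on $\widetilde{X}$ into tropical type data on $X$, so each piece becomes exactly a broken-line invariant $\vartheta_{p_i,x}$ with monomial $z^{m_i}$. The central vertex, with incoming contacts $m_1,\dots,m_n$ and outgoing contact $m=\sum m_i$, sits over the interior of a maximal cone, where the local geometry is toric $\mathbb{G}_m$-torsor-like. Its vertex contribution is therefore $1$ if the remaining leg is the broken line for $r$, giving $\sum_r N\vartheta_{r,x}$ with coefficient of $z^{m}$ matching.

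I expect the main obstacle to be this degeneration/gluing. Three things must be controlled: the splitting must be virtually transverse, multiplicities and automorphism factors $|Aut(\pmb\tau'/\pmb\tau)|$ must cancel against the lattice indices from $\deg_{\tau'}$, and the finiteness given by the ideal $I$ must hold. For the transversality, I would first try to choose $x$ general so that all relevant tropical types are rigid and $\deg_{\tau'}$ reduces to a lattice index, as in the worked example above. I would then compare those indices with the gluing multiplicity from the fiber product of evaluation maps at the point stratum.
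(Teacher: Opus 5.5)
\paragraph{Where the proposal breaks.} The gap is the central step, where you degenerate the $(n+1)$-pointed curve and ``move the tropical position of the constraint to $x$''.

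The identity you need, $\prod_i\vartheta_{p_i,x}=\sum_r N_{p_1\cdots p_nr}\vartheta_{r,x}$, is a two-level relation. Its right side sums over $r$ a structure constant times a broken-line coefficient from $r$ to $x$, and no single degeneration of $N_{p_1\cdots p_nr}$ produces that. The point constraint of $N_{p_1\cdots p_nr}$ lives on $X_r$, whose cone $\sigma_r$ is in general unrelated to $\sigma_x$. Even where you can specialize it (for $z$ near $r$, as in the Gross--Siebert argument), you only compute the $z^r$-coefficient of $\prod_i\vartheta_{p_i,z}$ at that special $z$. The degenerate curves then contain no piece playing the role of $\vartheta_{r,x}$. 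Passing from $z$ near $r$ to a fixed general $x$ is exactly where that argument uses consistency of the canonical wall structure, and consistency has not been established in the generality of \cite{int_mirror}.

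Moreover, in that generality the maximal cones of $B$ need not have dimension $\dim X$. Then $\widetilde X_\sigma$ is a positive-dimensional stratum for every log modification, so the point condition is not purely tropical. Your claim that the vertex over $x$ contributes $1$ ``by toric local geometry'' is therefore unjustified: components over that vertex can carry nonzero curve classes in that stratum.

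\paragraph{What the paper does instead.} The paper supplies the missing mechanism with a WDVV relation rather than a direct degeneration.
\begin{enumerate}
\item It treats $n=2$, which suffices since the $\vartheta_p$ form a $\kk[Q]$-basis.
\item It picks $u\in\sigma^{\gp}_{\NN}$ with $u$ and $-s+u$ non-negative, and with $D_i(u),D_i(-s+u)>D_i\cdot\textbf{A}'$ for every effective $\textbf{A}'$ in a decomposition of $\textbf{A}$.
\item It applies \cite[Theorem 8.3]{logWDVV} on a log modification to the insertions $[1]_{p_1},[1]_{p_2},[1]_{-u},[\pt]_{-s+u}$. The two auxiliary legs are what encode ``output $s$ at $x$''.
\item Each three-point factor is then identified with a two-point broken-line invariant, for example $\langle[1]_{s'},[1]_{-u},[\pt]_{-s+u}\rangle_{0,\textbf{A}_2,3}=|s|N^{x,\textbf{A}_2}_{s',s}$. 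This identification uses:
\begin{itemize}
\item the balancing argument with $u$ large;
\item Lemma \ref{contract}, which kills non-contracted curve classes at the point-constrained vertex by a virtual dimension $-1$ argument via \cite[Lemma A.13]{int_mirror} --- precisely the issue your vertex-contribution claim skips;
\item the splitting argument forcing that vertex to be trivalent;
\item Theorem \ref{mthm1} for the lattice indices;
\item the gluing results of \cite{trglue}, with tropical multiplicity one.
\end{itemize}
\item Substituting these identifications into WDVV gives Equation \ref{blWDVV}.
\end{enumerate}

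\paragraph{Smaller points.} For the constant term, the general structure constants of \cite{int_mirror} already carry $\psi_{x_{\out}}^{n-2}$ for $n\ge 3$, and the paper simply invokes \cite[Theorem 1.3]{logWDVV}. Proposition \ref{worb}, which assumes the hypotheses of \cite{scatt}, plays no role here.
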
 
As in \cite{logWDVV}, the necessary relations required for the result are naturally expressed in terms of invariants on log \'etale modifications of $X$, but yield interesting relations among punctured log Gromov-Witten classes on $X$ by Theorem \ref{mthm1}, generalizing \cite[Theorem $6.1$]{scatt} to the general log Calabi-Yau setting. 

When $X\setminus D$ is an affine log Calabi-Yau which admits a compactification to a Fano variety $X'$, the descendent log Gromov-Witten invariants appearing as the constant term in products of broken line expansions are related to the quantum periods of $X'$ by \cite[Theorem $1.1$]{qpint}. As a result, we produce power series which are weak mirrors to Fano compactifications of $X\setminus D$:

\begin{corollary}\label{fanocor}
For a smooth Fano variety $X$ and $D = D_1+\cdots+D_m \in |-K_X|$ satisfying the conditions of \cite[Theorem $1.1$]{qpint} with $X\setminus D$ having essential skeleton $B$, for any $x \in B$ general with integral tangent space $\Lambda_x$, there is a power series $W_{D,x} \in \kk[\Lambda_x][[NE(X)]]$ which is a weak Landau-Ginzburg mirror to $X$. 
\end{corollary}

\subsection{Log Gromov-Witten theory of split toric bundles}

A second application is in the study of the log Gromov-Witten theory of split toric bundles. Such spaces arise as log \'etale modifications $X \rightarrow S$, with $S$ having generic non-trivial log structure. Theorem \ref{mthm1} can be used to express the punctured log Gromov-Witten theory of $X$ in terms of the punctured log Gromov-Witten theory of $S$.

\begin{theorem}\label{mthm2}
Given a smooth projective variety $S$ equipped with an snc divisor and any split toric bundle $X \rightarrow S$, the punctured log Gromov-Witten classes of $X$ can be effectively reconstructed in terms of the punctured log Gromov-Witten classes of $S$. In particular, the punctured log Gromov-Witten classes of $X$ push forward to tautological classes in the Chow ring of moduli space of stable curves if the punctured log Gromov-Witten classes of $S$ push forward to tautological classes. 
\end{theorem}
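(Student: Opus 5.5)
The plan is to view a split toric bundle $X = \mathbb{P}(L_0\oplus\cdots\oplus L_r)\to S$, or more generally a toric fiber bundle with fan $\Sigma_F$ twisted by line bundles, as a log \'etale modification of a log scheme whose underlying space is $S$. Concretely, I would equip $S$ with the log structure $\mathcal{M}_S$ coming from the snc divisor $D_S$, together with the rank $r$ ``generic'' log structure $\bigoplus_i \mathcal{O}^\times(L_i)$. This is the log structure of the total space of the torus bundle $\bigoplus L_i$ restricted along the zero section, which is log smooth over a point only after adding the torus direction. Write $S^\dagger$ for $S$ with this log structure. Then $\Sigma(S^\dagger) = \Sigma(S)\times \mathbb{R}^r$, with the $\mathbb{R}^r$ factor a lineality space, and $X$ with its boundary $D_S$-preimage plus toric divisors is the log modification $\widetilde{S^\dagger}\to S^\dagger$ induced by the subdivision $\Sigma(S)\times \Sigma_F$. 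The first step is to check that Theorem \ref{mthm1} applies in this setting: the stabilization map $st:\mathscr{M}(X,\pmb\gamma)\to\mathscr{M}(S^\dagger,\pmb\tau)$ exists, and insertions on $X$ pull back from $PP(\Sigma(\Ev))$ plus classes from $S$. The latter holds because the Chow ring of strata of $X$ is generated over $A^*(S)$ by piecewise polynomials on the fan (Sankaran--Uma / the toric bundle presentation). Then Equation \ref{maineq1} expresses every primary punctured class of $X$ as a tautological combination of classes $[\mathscr{M}(S^\dagger,\pmb\tau')]^{\vir}$.

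The second step is to reduce the theory of $S^\dagger$ to that of $S$ with its snc log structure. A log map to $S^\dagger$ is a log map to $S$ together with a log map to the Artin fan direction $[\mathbb{A}^r/\mathbb{G}_m^r]$, twisted by the $L_i$. Equivalently, one has the data of a log map $f:C\to S$ and, for each $i$, a piecewise linear function $\alpha_i$ on the tropical curve with slopes given by the contact data, together with an isomorphism $f^*L_i\cong \mathcal{O}_C(\alpha_i)$. This is exactly a double ramification cycle with target the log variety $S$. So I would define $\mathrm{DR}$ with target log variety as the virtual class of $\mathscr{M}(S^\dagger,\pmb\tau')$ pushed to $\mathscr{M}(S,\pmb\tau'_S)$. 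The key claim is then a formula expressing it as a tautological class, namely $\exp$-type/Pixton-type polynomials in $\psi$, edge lengths, and $f^*c_1(L_i)$ capped against $[\mathscr{M}(S,\cdot)]^{\vir}$. This should be proved by generalizing the argument of \cite{DRtarget}: realize the condition as the zero locus of a section of the Picard stack map $\mathscr{M}(S)\to \mathfrak{Pic}^r$, use the universal DR formula on the log Picard stack (Holmes--Molcho--Pandharipande--Pixton--Schmitt), and pull back. Functoriality of virtual classes for $\mathscr{M}(S^\dagger)\to\mathscr{M}(S)\times_{\mathfrak{Pic}}\mathfrak{Div}$ is what needs checking.

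The third step is bookkeeping. Combining the two steps, each class on $X$ equals a finite sum over types $\tau'$ of operational tautological classes $\deg_{\tau'}(\cdots)$, times DR-with-target operators, applied to punctured virtual classes of $S$. The tautological statement follows because pushforward to $\overline{\mathcal M}_{g,n}$ of tautological operational classes, edge length classes and $\psi$'s on strata preserves tautological-ness, provided the $S$ classes are tautological. Effectivity comes from the explicit finite sums.

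I expect the main obstacle to be the second step. Lifting the universal DR formula to a genuinely log, punctured target requires a compatibility of obstruction theories between $S^\dagger$ and $S$ (the relative obstruction theory is $R\pi_*\mathcal{O}^r$, matching the Picard direction), and it requires controlling the subdivision of tropical moduli needed for the piecewise polynomial formula. My first attempt would be to work on a sufficiently fine log blowup of $\mathscr{M}(S,\pmb\tau)$ where the $\alpha_i$ become honest PL functions, and invoke birational invariance \cite{pbirinv} to descend.
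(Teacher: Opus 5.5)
Your second and third steps match the paper closely: your DR cycle with log target is its toric contact cycle with target log variety. The paper proves the rank-one case with the $\theta$-stable compactified Jacobian and an Abel--Jacobi section (Theorem~\ref{DRtaut}), then iterates along $T_1\to S$, $T_2\to T_1,\dots$ in Theorem~\ref{hrankt}. Each intermediate base there carries a log structure.

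\textbf{The gap is in your first step.} No fs log structure on $S$ has $\Sigma(S^\dagger)=\Sigma(S)\times\mathbb{R}^r$, because ghost sheaves $\mathcal{M}/\mathcal{O}^\times$ are sharp and so tropicalizations never contain a lineality space. The honest structure you describe is the boundary of $\operatorname{Tot}(\bigoplus L_i)$ restricted to the zero section, with Artin fan $[\mathbb{A}^r/\mathbb{G}_m^r]$, which you yourself use in step two. Its tropicalization is $\Sigma(S^\dagger)=\Sigma(S)\times\mathbb{R}^r_{\ge 0}$. A complete fan $\Sigma_F\subset\mathbb{R}^r$ is not a subdivision of $\mathbb{R}^r_{\ge0}$, so $X\to S^\dagger$ is not a log \'etale modification. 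Consequently Theorem~\ref{decthm} and Theorem~\ref{mthm1} do not apply, and you have no stabilization map $\mathscr{M}(X,\pmb\gamma)\to\mathscr{M}(S^\dagger,\pmb\tau)$ of the required kind. Your instinct that a log modification of the non-log-smooth zero-section stratum can raise dimension and yield a toric bundle is correct. The setup just has to be changed.

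\textbf{The missing idea is the paper's Lemma~\ref{tbund}.}
\begin{enumerate}
\item Pass to $X=T\times\mathbb{A}^1$, i.e.\ add a trivial summand $\mathcal{O}$. This gives $\Sigma(X)=\Sigma(S)\times\mathbb{R}^{k+1}_{\ge0}$.
\item Embed the complete fan at height one via $x\mapsto(1,1+x_1,\dots,1+x_k)$ and take the cone over $i(\Sigma)\cap\mathbb{R}^{k+1}_{\ge0}$.
\item The unique new ray in the interior of the orthant is the one through $(1,\dots,1)$, and its quotient star fan is $\Sigma$. So $X_{S,\Sigma}$ is the corresponding component of the central fiber of $\widetilde X$.
\item Maps to $X_{S,\Sigma}$ are identified with punctured maps to $\widetilde X/\kk^\dagger$ of a suitable type via \cite[Theorem $6.2$]{trglue}. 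Legs of tropical maps to the complete fan whose directions leave the shifted orthant must be cut. This is exactly why punctured, not just ordinary log, maps are needed.
\item Only then apply Corollary~\ref{reduce} to $\widetilde X\to X$ (after reducing to a smooth fan via Corollary~\ref{puncmcr1}).
\item Identify $\mathscr{M}(X/\kk^\dagger,\pmb\tau)\cong\mathscr{M}(T,\pmb\tau)=\mathscr{M}(S^\dagger,\pmb\tau)$ by \cite[Theorem $6.2$]{trglue} again, and feed the result into Theorem~\ref{hrankt}.
\end{enumerate}
As written, your first step rests on a target that does not exist in the framework where the birational comparison theorems hold. It must be replaced by this degeneration construction.
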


The special case when $S$ has trivial log structure states that the punctured log Gromov-Witten theory of a toric bundle over $S$ can be reconstructed in terms of the absolute Gromov-Witten theory of $S$. When $S = \Spec \kk$, this recovers the results of Molcho and Ranganathan in \cite{logint} and Ranganthan and Urundolil Kumaran in \cite{DRtoric}. The authors of \cite{DRtoric} relate the log Gromov-Witten classes of a toric variety to explicit expressions involving piecewise polynomial classes and the toric contact cycle on the moduli space of stable curves. In our setting, we will require a generalization of the toric contact cycle. Such a generalization is naturally expressed using punctured log Gromov-Witten theory, and we refer to this class as the \emph{toric contact cycle with target log variety}. The following theorem shows that these punctured log Gromov-Witten classes are explicitly expressed in terms of the log Gromov-Witten classes of $S$:

\begin{theorem}[Theorem \ref{hrankt}]\label{mthm3}
For a log smooth projective variety $S$, $\pmb\tau$ a decorated realizable tropical type with a single vertex $v \in V(G_{\tau})$ and $k$ legs with total curve class $\textbf{A} \in H_2(S)$, line bundles $L_1,\ldots,L_k \in Pic(S)$, and vectors $a_i \in \ZZ^{k}$ with $\sum_i a_{ij} = L_j\cdot \textbf{A}$ for all $L_j$, let $[DR_{(L_i,a_i),\textbf{A}}(S,\pmb\tau)] \in A_*(\mathscr{M}(S,\pmb\tau))$ be the associated toric contact cycle with target log variety defined in Equation \ref{drlogt}. Then $[DR_{(L_i,a_i),\textbf{A}}(S,\pmb\tau)]$ is a rational linear combination of log Gromov-Witten classes of $\mathscr{M}(S,\pmb\tau)$. 
\end{theorem}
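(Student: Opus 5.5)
We follow the strategy of \cite{DRtarget} and \cite{DRtoric}: first realize the toric contact cycle with target log variety as a punctured log Gromov-Witten class of a log \'etale modification of a split toric bundle over $S$, and then push it down to $\mathscr{M}(S,\pmb\tau)$ using Theorem \ref{mthm1}.

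Concretely, let $X = \mathbb{P}(\mathcal{O}\oplus L_1)\times_S\cdots\times_S\mathbb{P}(\mathcal{O}\oplus L_k)$. This is a split $(\mathbb{P}^1)^k$-bundle over $S$. We give it the log structure from the boundary of $S$ together with the zero and infinity sections, so that $\Sigma(X)\to\Sigma(S)$ is a product with the fan of $(\mathbb{P}^1)^k$ glued over the cones of $\Sigma(S)$. The vectors $a_i$ prescribe contact orders along the toric boundary for the $k$ legs. The balancing condition $\sum_i a_{ij}=L_j\cdot\textbf{A}$ is exactly what is needed for a lift of $\textbf{A}$ with these contact orders to exist. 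In the notation of Equation \ref{drlogt}, the class $[DR_{(L_i,a_i),\textbf{A}}(S,\pmb\tau)]$ is the pushforward along the forgetful map $\mathscr{M}(X,\pmb\tau_X)\to\mathscr{M}(S,\pmb\tau)$, or a rubber/$\mathbb{G}_m^k$-quotient variant of it, of the virtual class of punctured maps to $X$ of the lifted decorated type $\pmb\tau_X$. So the first step is to identify the definition in Equation \ref{drlogt} with such a pushforward.

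The second step is to compare $X$ with a log modification of a product. Let $S^\dagger$ denote $S$ with the divisorial log structure enhanced by the $\mathbb{G}_m^k$-torsors of the $L_i$. Then $X$ is a toroidal modification of $S^\dagger$, with $\Sigma(X)$ a subdivision of $\Sigma(S^\dagger)=\Sigma(S)\times\mathbb{R}_{\ge0}^{k}$-type cone complex. Punctured maps to $S^\dagger$ are the same as maps to $S$ together with trivializations of the pulled-back line bundles, and these are controlled by the tropical data. Via the stabilization map $st$, Theorem \ref{mthm1} applies with $P=1$ (or with $P$ the point-type PL classes used to rigidify the rubber). It expresses $st_*[\mathscr{M}(X,\pmb\tau_X)]^{\vir}$ as
\[
\sum_{\pmb\tau'}\frac{\deg_{\tau'}(Q)}{|Aut(\pmb\tau'/\pmb\tau)|}[\mathscr{M}(S^\dagger,\pmb\tau')]^{\vir}.
\]
The third step is to express each $[\mathscr{M}(S^\dagger,\pmb\tau')]^{\vir}$ in terms of $[\mathscr{M}(S,\pmb\tau'')]^{\vir}$, with $\pmb\tau''$ the image of $\pmb\tau'$. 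Since the extra log directions are generically trivial $\mathbb{G}_m$-torsors, the obstruction theories differ by the Chern classes of the $L_i$ pulled back along evaluation maps. These are polynomials in PL functions, and Theorem \ref{mthm1} again converts them into combinations of strata classes. The tautological coefficients $\deg_{\tau'}$ are operational classes built from edge lengths and pulled-back PL functions. Capping them against log Gromov-Witten classes of $S$ yields log Gromov-Witten classes of $S$ with tropical constraints, which are the classes allowed in the conclusion.

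The main obstacle is the bookkeeping in the first and third steps. We must verify that the chosen definition of the toric contact cycle matches the virtual class of $X$ (or of its rubber), including the correct treatment of the $\mathbb{G}_m^k$ rescaling. We must also check that the comparison between $S^\dagger$ and $S$ introduces only PL-expressible corrections. For the latter we would first try to show that the natural map $\mathscr{M}(S^\dagger,\pmb\tau')\to\mathscr{M}(S,\pmb\tau'')$ is virtually a torsor or vector bundle whose relative obstruction theory is compatible. That would reduce the correction to Chern classes of the $L_i$ at the markings, via a cosection or splitting argument as in \cite{DRtarget}.
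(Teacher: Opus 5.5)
Your proposal has a genuine gap, and it sits in your third step, which is in fact the whole theorem. By Equation \ref{drlogt}, the toric contact cycle is $\forget_*[\mathscr{M}(S,\pmb\gamma_k)]^{\vir}$. Here $\mathscr{M}(S,\pmb\gamma_k)$ parametrizes punctured maps into the zero section of $T_k$, carrying exactly the log structure you call $S^\dagger$. So your first two steps go in a circle: Theorem \ref{mthm1} applied to a modification over $S^\dagger$ only returns classes $[\mathscr{M}(S^\dagger,\pmb\tau')]^{\vir}$, which are toric contact cycles again. The paper uses precisely this reduction in the opposite direction, to deduce Theorem \ref{mthm2} from the present statement. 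Also, the $(\mathbb{P}^1)^k$-bundle is not a log modification of $S^\dagger$: its cone complex is $\Sigma(S)$ times the complete fan of $(\mathbb{P}^1)^k$, which does not subdivide $\Sigma(S)\times\mathbb{R}^k_{\ge 0}$. One has to degenerate $T\times\mathbb{A}^1$ over $\kk^{\dagger}$, as in Lemma \ref{tbund}.

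The mechanism you propose for the third step also fails. The map $\mathscr{M}(S^\dagger,\pmb\tau')\to\mathscr{M}(S,\pmb\tau'')$ is not virtually a torsor or bundle, and the correction is not a polynomial in $\ev^*c_1(L_i)$ and PL classes. A point of $\mathscr{M}(S^\dagger,\pmb\gamma)$ consists of three pieces of data:
\begin{enumerate}
\item a map $f$ to $S$;
\item PL functions $\alpha_j$ on a quasi-stable model of the tropical curve;
\item isomorphisms $f^*L_j\otimes\mathcal{O}(-\alpha_j-\sum_i a_{ij}p_i)\cong\mathcal{O}$.
\end{enumerate}
The last condition is the vanishing of an Abel--Jacobi section. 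It has virtual codimension $g$ for each line bundle, with a rank-$g$ obstruction bundle built from $R^1\pi_*\mathcal{O}_C$, which the tropical data do not see. Already for trivial log structure and $k=1$, Theorem \ref{punctorub} identifies the class with $[DR_{g,\textbf{A},(a_i)}(S,L)]$. Computing that codimension-$g$ class is the main result of \cite{DRtarget}; no torsor or splitting argument produces it.

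The missing ingredient is the rank-one computation of Theorem \ref{DRtaut}, adapted from \cite{BNtaut}, followed by induction on $k$. The rank-one argument runs as follows.
\begin{enumerate}
\item Pass to the log modification $\mathfrak{M}^\theta_{\deg(L)-\sum_i a_iv_i}\to\mathfrak{M}_{\pmb\tau}$ of \cite{logDR}, for a small non-degenerate stability condition $\theta$.
\item Show that $\mathfrak{M}_{\pmb\gamma}\cong\mathfrak{M}^\theta_{\deg(L)-\sum_i a_iv_i}\times B\mathbb{G}_m$ (Proposition \ref{smooth}).
\item Exhibit $\mathscr{M}_{\pmb\gamma}$ as the fiber product of the zero section and the Abel--Jacobi section of $Pic^0_\theta$ (Lemma \ref{cartprod}).
\item Check that the resulting refined class equals the punctured virtual class (Proposition \ref{comp}).
\item Using \cite{smstab}, write its pushforward as $c_g(-R\rho_*(f^*L\otimes\mathcal{O}(-\alpha)))$ capped with $[\mathfrak{M}^\theta_{S,\pmb\gamma}]^{\vir}$.
\item Expand this via \cite{drroot} into tautological and piecewise polynomial classes, and push down with Corollary \ref{pushvpull}.
\end{enumerate}
The higher-rank statement then follows by splitting off one line bundle at a time along $T_i\to T_{i-1}$. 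This is why the rank-one result is needed for a base with non-trivial log structure (the zero section of $T_{i-1}$) and for types with non-trivial tropical moduli. Your simultaneous treatment of all $k$ line bundles would need a rank-$k$ Abel--Jacobi argument of this kind in its place.
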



The proof of Theorem \ref{mthm3} follows by an induction on the number of line bundles $k$. Crucial to carrying out the induction is that the base $S$ is allowed to have log structure and $\pmb\tau$ is allowed to have non-trivial tropical moduli. When the log structure is trivial and $k=1$, we will show that this cycle recovers the double ramification cycle with target, which has an explicit formula given in \cite{DRtarget}. More generally, by appropriately adapting arguments of Molcho from \cite{BNtaut}, we produce an expression for the double ramification cycle with log target which is sufficient to prove Theorem \ref{mthm3} by induction on $k$.



\subsection{Future direction}

The strategy used to produce Equation \ref{maineq1} work more broadly for providing formulae for the pushforward of piecewise polynomial classes on log \'etale modifications. Many structures and tools available in Gromov-Witten theory (gluing, localization, Givental formalism) are only expected to become available after a sufficient log \'etale modification of the moduli spaces in log Gromov-Witten theory. The tools for producing Equation \ref{maineq1} should offer a pathway of transporting such structures and tools to the study of intersection theory on a fixed log \'etale model of the moduli stack of log stable maps.

Finally, Maulik and Ranganathan in the recent preprint \cite{MRrecon} use degenerations and the expanded formalism for log Gromov-Witten theory to prove a reconstruction result for the log Gromov-Witten theory of simple normal crossings pairs $(X,D)$ in terms of the absolute Gromov-Witten theory of all strata. One key ingredient to extending this result to the setting of punctured log maps is understanding spaces of maps into targets with generically non-trivial log structure. An important development in this direction is Theorem \ref{mthm3}. The author expects these two works, together with slight improvements to the gluing schemes for punctured log curves, should yield a similar reconstruction result for the entire punctured log Gromov-Witten theory for a simple normal crossings pair, providing an affirmative answer to a question posed by Maulik and Ranganathan in \cite{MRrecon}. 

\subsection{Acknowledgement}
I would like to thank Robert Crumplin, Mark Gross, Ajith Kumaran, Xuanchun Lu, Davesh Maulik, Dhruv Ranganathan and Peter Zaika for many helpful discussions related to this work. I would especially like to thank Dhruv Ranganathan who suggested the approach taken to establish the $k=1$ case of Theorem \ref{mthm3}. The author is supported by NSF DMS-2502976.

\section{Logarithmic and tropical preliminaries}

Throughout this paper, given a fan $\Sigma$ in $\mathbb{R}^n$, we refer to the associated toric variety as $Z_{\Sigma}$. If $\sigma$ is a cone and $\sigma'\subset \sigma$ is a lattice coarsening, there is an associated root stack of $Z_\sigma$ associated with $\sigma'$, which we denote by $Z_{\sigma'}$. We recall the definitions and constructions surrounding Artin fans which will be invoked in this paper:

\begin{definition}
For $\sigma$ a rational polyhedral cone, the Artin cone  $\mathcal{A}_{\sigma} = [Z_{\sigma}/T_{\sigma}]$ is the global quotient of the toric variety $Z_{\sigma}$ by its dense algebraic torus. An Artin fan is a logarithmic algebraic stack with a strict \'etale cover by Artin cones.

For a face $\omega \subset \sigma$, let $Z_{\sigma,\omega}$ be the closure of the torus stratum in $Z_\sigma$ associated with the interior of the cone $\omega$, and we define the associated idealized Artin cone $\mathcal{A}_{\sigma,\omega}$ to be the stack quotient $[Z_{\sigma,\omega}/T_{\sigma}]$. 
\end{definition}

Artin fans give a useful method for faithfully embedding a category of tropical objects into the category of algebraic stacks, as the following theorem makes precise:

\begin{theorem}[\cite{stacktrop} Theorem $6.11$]
There is an equivalence of $2$-categories between the categories of Artin fans and cone stacks. For a cone $\sigma$, the equivalence is given by sending $\sigma$ to $\mathcal{A}_\sigma$. 
\end{theorem}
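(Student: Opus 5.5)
The plan is to build the equivalence locally on cones, where it is a Yoneda-type statement, and then extend it to general cone stacks and Artin fans by descent on both sides.

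\textbf{Local step.} First I will show that the Artin cone $\mathcal{A}_\sigma=[Z_\sigma/T_\sigma]$ represents, on fine saturated log schemes $X$, the functor
$$X\mapsto \operatorname{Hom}\bigl(\sigma^\vee_{\NN},\Gamma(X,\overline{M}_X)\bigr).$$
A map $X\to[Z_\sigma/T_\sigma]$ is a $T_\sigma$-torsor with an equivariant map to $Z_\sigma$. This is the same as a collection of line bundles with sections indexed by $\sigma^\vee_{\NN}$, compatible with multiplication. That in turn is the same as a map of monoids $\sigma^\vee_{\NN}\to\overline{M}_X$, provided the morphism is required to be strict or log, which fixes the log structure.

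From this description I will deduce:
\begin{itemize}
\item morphisms $\mathcal{A}_\sigma\to\mathcal{A}_\tau$ are in bijection with monoid maps $\tau^\vee_{\NN}\to\sigma^\vee_{\NN}$, that is, with cone morphisms $\sigma\to\tau$;
\item the $2$-morphisms between any two such maps are trivial, so the Hom-groupoids are discrete.
\end{itemize}
The point is that a $2$-isomorphism would be a compatible family of units acting on the same sections. Since $\overline{M}$ records only the images modulo units, any such $2$-isomorphism is unique. I will also check that the strict étale morphisms $\mathcal{A}_{\omega}\to\mathcal{A}_\sigma$ are exactly the open immersions coming from face inclusions $\omega\subset\sigma$.

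\textbf{Globalization.} A cone stack is a stack on the category of rational polyhedral cones with face morphisms. It is therefore presented as a colimit of cones glued along faces, possibly with automorphisms. I define the functor $\Sigma\mapsto\mathcal{A}_\Sigma$ by sending each cone $\sigma$ to $\mathcal{A}_\sigma$ and each face map to the corresponding open immersion, and then taking the stacky colimit. By the local step this produces a stack with a strict étale cover by Artin cones, i.e.\ an Artin fan.

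In the other direction, let $\mathcal{A}$ be an Artin fan with a strict étale cover $\coprod\mathcal{A}_{\sigma_i}\to\mathcal{A}$. The fibre products of this cover are again covered by Artin cones. By the local step they are Artin cones of faces, so the cover defines a groupoid object in cones with face morphisms, and hence a cone stack. Full faithfulness on all objects then follows from full faithfulness on cones together with descent. Both sides are stacks for the respective covers, and the Hom-groupoids are discrete, so the higher gluing data are automatic. Essential surjectivity follows because the cone stack produced from an Artin fan recovers that Artin fan under the first construction.

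\textbf{Main obstacle.} I expect the hard part to be the $2$-categorical bookkeeping. One must check that strict étale morphisms between Artin cones are precisely the face inclusions, possibly composed with automorphisms of the cone. Otherwise the gluing data on the two sides would not match. One must also check that descent for Artin fans along strict étale covers matches the colimit presentation of cone stacks, including cones with nontrivial automorphisms, such as self-glued cones. I would handle this by computing the fibre product $\mathcal{A}_\omega\times_{\mathcal{A}_\sigma}\mathcal{A}_{\omega'}$ explicitly via the functor of points above, and verifying that it is the Artin fan of the corresponding fibre product of cones.
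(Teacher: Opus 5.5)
The paper gives no proof of this statement (it is simply quoted from \cite{stacktrop}), and your outline is essentially the standard argument of that reference: $\mathcal{A}_\sigma$ represents $X\mapsto\operatorname{Hom}(\sigma^\vee_{\NN},\Gamma(X,\overline{\mathcal{M}}_X))$, hence $\sigma\mapsto\mathcal{A}_\sigma$ is fully faithful with discrete Hom-sets, strict maps of Artin cones are face embeddings up to automorphism, and one then globalizes by descent along strict \'etale covers. In a write-up, fix two overstatements: the fibre products $\mathcal{A}_{\sigma_i}\times_{\mathcal{A}}\mathcal{A}_{\sigma_j}$ are in general Artin cones of faces glued along faces rather than single Artin cones (e.g.\ for $\mathbb{R}_{\ge 0}^2$ with its two rays identified), so you get a groupoid in cone spaces rather than in cones, and Hom-groupoids into fans whose cones have automorphisms are not discrete, so the identification $\operatorname{Hom}(\mathcal{A}_\sigma,\mathcal{A}_\Sigma)\simeq\Sigma(\sigma)$ must instead come from the fact that every strict \'etale cover of $\mathcal{A}_\sigma$ has a section (an Artin cone mapping strictly to $\mathcal{A}_\sigma$ and hitting its closed point is an isomorphism onto it), not from fibre products over a single Artin cone, which only produce $\mathcal{A}_{\omega\cap\omega'}$.
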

By the theorem above, for any subdivision or lattice coarsening of a cone stack $\widetilde{\Sigma} \rightarrow \Sigma$, there is a corresponding morphism of Artin fans $\mathcal{A}_{\widetilde{\Sigma}} \rightarrow \mathcal{A}_{\Sigma}$, and we refer to the morphism on either side of the equivalence as a subdivision or lattice coarsening. The corresponding morphism of Artin fans is proper and log \'etale by \cite[Theorem $2.4.1$]{bir_GW}.

The data of an fs log scheme $X$ is given by a map to Olsson's stack of log structure $X \rightarrow Log$. We define an Artin fan for $X$ to be any Artin fan $\mathcal{X}$ such that $X \rightarrow Log$ admits a factorization through the natural map $\mathcal{X} \rightarrow Log$. Moreover, given a log \'etale morphism $\widetilde{\mathcal{X}} \rightarrow \mathcal{X}$ induced by subdivisions or lattice coarsenings, we may construct a log \'etale modification or more generally log \'etale alteration $\widetilde{X} \rightarrow X$ given by pulling back the log \'etale morphism of Artin fans.

While the canonical assignment of an Artin fan to an fs log stack in \cite[Proposition $3.2.1$]{bound} fails to be functorial, there nonetheless exists a functor, constructed in \cite[Appendix $C$]{punc}, from the category fs log stacks (and more generally fine log stacks) to the category of \emph{generalized cone complexes}, as defined in \cite[$\S 2.2$]{ACP}. For $\Sigma$ a generalized cone complex we denote by $PL(\Sigma)$ the $\mathbb{R}$ vector space of piecewise linear functions on $\Sigma$, and $PP(\Sigma)$ the space of piecewise polynomial functions. Note that we have a map $Sym^*(PL(\Sigma)) \rightarrow PP(\Sigma)$, but this map is not typically surjective. However, this map is surjective for a simple log algebraic stacks, and every log smooth stack has a log \'etale model which is a simple log algebraic stack, see \cite[Lemma $3.6$]{intDR}. When a smooth log scheme $X$ is induced by a simple normal crossings divisor $D = D_1+\cdots+D_k$, a component $D_i$ induces a piecewise linear function on $\Sigma(X)$ which has slope $1$ along the ray associated with $D_i$ and $0$ along all other rays. We also denote by $D_i \in PL(\Sigma(X))$ the PL function associated with $D_i$, with context distinguishing these uses.

\subsection{Punctured log Gromov-Witten Theory}
We now fix a log smooth morphism $X \rightarrow B$, where $B$ is either log smooth over a $\Spec\kk$ for $\kk$ an algebraically closed field of characteristic $0$ or the standard log point $\Spec \kk^{\dagger} = (\Spec\kk,\NN)$, with Artin fans $\mathcal{A}_X$ and $\mathcal{A}_B$. For $\sigma \in \Sigma(X)$, we define $\sigma_{\NN}$ the set of integral points of $\sigma$ and $\sigma_{\NN}^{\gp}$ the collection of integral tangent vectors at any point of $\sigma \setminus \partial \sigma$. Finally, for a generalized cone complex $\Sigma$, define $\Sigma^{[k]}$ to be the collection of $k$-dimensional cones of $\Sigma$, $\Sigma(\NN) = \cup_{\sigma\in \Sigma} \sigma_{\NN}$ and $\Sigma(\ZZ) = \cup_{\sigma \in \Sigma} \sigma_{\NN}^{\gp}$. For either a cone $\sigma \in \Sigma(X)$ or a point $p \in \sigma\setminus \partial \sigma$, we let $X_{\sigma}$ or $X_p$ refer to closure of the stratum with generic point mapping to the point of $\mathcal{A}_X$ associated with the cone $\sigma$. 

We will primarily study the enumerative geometry of log smooth pairs $(X,D)$ in this paper using punctured log Gromov-Witten theory. Developed originally by Abramovich, Chen, Gross and Siebert in \cite{punc}, this is a more flexible framework for the studying the relative Gromov-Witten theory of log smooth morphisms $X \rightarrow B$, in particular providing a useful way of recursively describing the boundary of moduli spaces of log stable maps. The primary input yielding this additional flexibility comes from tropical geometry.

More precisely, along with fixing genus and degrees in specifying moduli spaces of log stable maps, we will also fix the data of a \emph{realizable tropical type}. This comes from the data of a PL map $\Gamma_{\tau} \rightarrow \Sigma(X)$ from a source meterized graph with legs $\Gamma_{\tau}$ to $\Sigma(X)$. Letting $G_{\tau}$ be the abstract graph associated to $\Gamma_{\tau}$, such a map defines contact orders $u(l),u(e) \in \Sigma(X)(\ZZ)$ for all legs $l \in L(G_{\tau})$ and edges $e \in E(G_{\tau})$, and target cones $\pmb\sigma(v) \in \Sigma(X)$ for all vertices $v \in V(G_{\tau})$. In addition, for any realizable tropical type $\tau$, \cite{punc} provides a monoid whose dual is a cone which will also be denoted by $\tau$ which is the base of the universal family of tropical maps of type $\tau$. More precisely, for any family of tropical maps $\Gamma/\omega \rightarrow \Sigma(X)$ of type $\tau$ parameterized by a base cone $\omega$, there exits a unique integral linear map $\omega \rightarrow \tau$ such that the family over $\omega$ is pulled back from the universal family over $\tau$. Given another tropical type $\tau'$, we say that $\tau'$ is marked by $\tau$ if there is a face $\tau \subset \tau'$ such that the restriction of the universal family over $\tau'$ to the face $\tau$ yields the universal family over $\tau$. For a simple normal crossings pair \footnote{\cite{punc} defines types of punctured log maps in larger generality in the presence of monodromy in the log structure. For our purposes, defining such types in the setting of simple normal crossings pairs suffices.}, we further consider combinatorial types $\beta$ of punctured log map in which we specify only the total curve class $\textbf{A}$, genus $g$, and the tangency conditions $u(l) \in \pmb\sigma(l)^{\gp}_{\NN}$ of legs. We say a tropical type $\pmb\tau$ is marked by $\beta$ if $\sum_{v \in V(G_{\tau}} \textbf{A}(v) = \textbf{A}$, $\sum_{v \in V(G_{\tau})} g(v) = g$, and the tangency conditions of the legs of $\tau$ agree with those specified by $\beta$.

Additionally, letting $M$ be a monoid of effective curve classes, we consider decorations $g: V(G_{\tau}) \rightarrow \NN$ and $\textbf{A}: V(G_{\tau}) \rightarrow M$ of the vertices of the graph $G_{\tau}$ with genus and curve classes respectively. We call a tropical type with this additional data a decorated tropical type, which we denote by $\pmb\tau$. A decorated tropical type $\pmb\tau$ specifies moduli spaces $\mathscr{M}(X,\pmb\tau)$ and $\mathfrak{M}(\mathcal{X},\pmb\tau)$ of punctured log maps marked by the type $\tau$ with targets $X$ and $\mathcal{X}$ respectively. For the purpose of gluing, it will also be necessary to consider the stack $\mathfrak{M}^{\ev}(\mathcal{X},\pmb\tau) = \mathfrak{M}(\mathcal{X},\pmb\tau) \times_{\prod_{l \in L(G_{\tau})}\mathcal{X}_{\pmb\sigma(l)}} X_{\pmb\sigma(l)}$. Finally, for a realizable tropical type $\pmb\tau$, we refer to the punctured log Gromov-Witten classes of $\mathscr{M}(X,\pmb\tau)$ as classes of the form $\alpha\cap[\mathscr{M}(X,\pmb\tau')]^{\vir}$ for $\pmb\tau'$ a decorated tropical type marked by $\pmb\tau$ and $\alpha \in A^*(\mathscr{M}(X,\pmb\tau))$ operational Chow classes given as polynomials in tautological classes, in the sense of \cite{tauttarget} pulled back along $\mathscr{M}(X,\pmb\tau') \rightarrow \mathscr{M}(X)$ which forgets log structures.


\section{Log Gromov-Witten invariants of $\tilde{X}$ from log Gromov-Witten invariants of $X$}

Letting $X \rightarrow B$ be as in the previous section, we consider $X$ equipped with it's relative Artin fan $X \rightarrow\mathcal{X} = \mathcal{A}_X \times_{\mathcal{A}_B} B$, and suppose we have a log \'etale modification $\pi: \widetilde{X}\rightarrow X$ pulled back from a subdivision and lattice coarsening of Artin fans $\widetilde{\mathcal{A}}_X \rightarrow \mathcal{A}_X$. Letting $\Sigma(\widetilde{X})\rightarrow \Sigma(X)$ be the corresponding morphism in rational polyhedral cone complexes, consider a realizable tropical type $\gamma$ of punctured tropical map $\Gamma_\gamma \rightarrow \Sigma(\widetilde{X})$. We recall from \cite{pbirinv} the notion of a tropical lift:

\begin{definition}[\cite{pbirinv} Definition $4.1$]\label{dtlift}
Suppose we have realizable tropical types $\gamma$ and $\tau$ of punctured tropical maps to $\Sigma(\widetilde{X})$ and $\Sigma(X)$ respectively. Then $\gamma$ is a tropical lift of $\tau$ if there exists an inclusion of cones $i: \gamma \rightarrow \tau$ not factoring through a strict face inclusion $\tau' \subset \tau$ and a puncturing of the family of tropical curves $\overline{\Gamma}^\circ\subset \overline{\Gamma}_{\gamma}$ over the cone $\gamma$ induced by $i$ which is contained in the domain of definition of the punctured tropical map $(\overline{\Gamma}_{\gamma}^{\circ}/\gamma,f)$ with target $\Sigma(X)$ induced by $i$ such that the universal family of punctured tropical maps over $\gamma$ is given by $\overline{\Gamma}^\circ \times_{\Sigma(X)} \Sigma(\tilde{X}) \rightarrow \Sigma(\widetilde{X})$ in the category of rational polyhedral cone complexes.

A decorated tropical type $\pmb\gamma$ is a \emph{decorated lift} of a decorated tropical type $\pmb\tau$ if the underlying tropical type $\gamma$ is a tropical lift of the underlying type $\tau$, and after applying the morphism $\pi_*:H_2(\widetilde{X}) \rightarrow H_2(X)$ to the decoration of $\gamma$, the resulting decorated tropical type of map to $\Sigma(X)$ is marked by $\pmb\tau$. 
\end{definition}

We have a corresponding moduli space of log stable maps $\mathscr{M}(\widetilde{X}/B,\gamma)$, and by \cite[Theorem $4.6$]{pbirinv} the space is non-empty only if $\gamma$ is a tropical lift of some tropical type $\tau$ of a punctured tropical map $\Gamma_{\tau} \rightarrow \Sigma(X)$. If we additionally enhanced $\gamma$ to a decorated tropical type $\pmb\gamma$, then $\pmb\gamma$ is the lift of a unique decorated tropical type $\pmb\tau$, and stabilization induces a morphism of moduli stacks $\mathscr{M}(\widetilde{X}/B,\pmb\gamma) \rightarrow \mathscr{M}(X/B,\pmb\tau)$. In this setting, we recall the decorated version of the main theorem of \cite{pbirinv}, as well as a corollary which holds under an additional assumption on $\gamma$:

\begin{theorem}\label{decthm}
For a decorated tropical type $\pmb\gamma$ lifting the decorated tropical type $\pmb \tau$, denote by $\mathscr{M}_{\pmb\tau}$ the image of $\mathscr{M}(\widetilde{X}/B,\pmb\gamma) \rightarrow \mathscr{M}(X/B,\pmb\tau)$. Then $\mathscr{M}_{\pmb\tau}$ is a union of connected components of $\mathscr{M}(X/B,\pmb\tau)$, and there exists a proper log \'etale morphism $\mathfrak{M}_{\gamma\rightarrow \tau} \rightarrow \mathfrak{M}_{\tau}$ such that the following diagram is cartesian:

\begin{equation}\label{cart}
\begin{tikzcd}
\mathscr{M}(\widetilde{X}/B,\pmb\gamma) \arrow{r} \arrow{d} & \mathfrak{M}_{\gamma \rightarrow \tau} \arrow{d} \\
\mathscr{M}_{\pmb\tau} \arrow{r} & \mathfrak{M}_{\tau}.
\end{tikzcd}
\end{equation}
Moreover, the relative perfect obstruction theories attached to the horizontal morphisms are related via pullback.
\end{theorem}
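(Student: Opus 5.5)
The plan is to deduce the decorated statement from the undecorated main theorem of \cite{pbirinv}, which gives the same cartesian square and compatibility of obstruction theories for the undecorated moduli spaces $\mathscr{M}(\widetilde{X}/B,\gamma) \rightarrow \mathscr{M}(X/B,\tau)$. The decorated spaces are obtained from the undecorated ones by imposing conditions on the genus and curve class on each component. Those conditions cut out open and closed substacks, so the argument amounts to checking that the decorations are compatible on the two sides of the square.

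The first step is to use the undecorated square. Let $\mathfrak{M}_{\tau}$ be the stack of punctured maps to the Artin fan $\mathcal{X}$ marked by $\tau$. Let $\mathfrak{M}_{\gamma\rightarrow\tau} \rightarrow \mathfrak{M}_{\tau}$ be the proper log \'etale modification induced by the subdivision of the tropical moduli determined by lifting the universal family $\overline{\Gamma}^\circ \times_{\Sigma(X)} \Sigma(\widetilde{X})$ over $\gamma$, as in Definition \ref{dtlift}. The undecorated theorem gives a cartesian square
\[
\mathscr{M}(\widetilde{X}/B,\gamma) \cong \mathscr{M}'_{\tau}\times_{\mathfrak{M}_\tau}\mathfrak{M}_{\gamma\rightarrow\tau},
\]
where $\mathscr{M}'_{\tau}$ is the image, a union of components of $\mathscr{M}(X/B,\tau)$. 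It also gives the pullback relation of the relative perfect obstruction theories. Properness of the modification follows from \cite[Theorem $2.4.1$]{bir_GW}.

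The second step adds the decorations. A decorated type $\pmb\tau$ determines a closed and open substack $\mathscr{M}(X/B,\pmb\tau) \subset \mathscr{M}(X/B,\tau)$: on each vertex component, the genus and curve class are locally constant in families of fixed type. Similarly, $\pmb\gamma$ determines an open and closed substack of $\mathscr{M}(\widetilde{X}/B,\gamma)$. Stabilization contracts only rational components mapping to fibers of $\widetilde{X}\rightarrow X$, which have $\pi_*$-trivial class, and it sends vertices of $\gamma$ to the vertices of $\tau$ determined by the lift. Hence the composite decoration $\pi_*\circ\textbf{A}$, summed over the fibers of the vertex map, is locally constant. It agrees with the decoration of $\pmb\tau$ exactly when $\pmb\gamma$ is a decorated lift of $\pmb\tau$, and this forces $\pmb\tau$ to be unique. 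So $\mathscr{M}(\widetilde{X}/B,\pmb\gamma)$ maps into $\mathscr{M}(X/B,\pmb\tau)$. We define $\mathscr{M}_{\pmb\tau}$ to be its image, intersected with $\mathscr{M}'_\tau$.

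The key point, and the main obstacle, is showing that $\mathscr{M}(\widetilde{X}/B,\pmb\gamma)$ is the full preimage of $\mathscr{M}_{\pmb\tau}$, and that $\mathscr{M}_{\pmb\tau}$ is a union of connected components. The decoration of $\pmb\gamma$ refines that of $\pmb\tau$, and a single map in the preimage could a priori carry a different refinement. To handle this, the plan is to choose $\mathscr{M}_{\pmb\tau}$ as the union of those connected components $C$ of $\mathscr{M}(X/B,\pmb\tau)\cap\mathscr{M}'_\tau$ whose preimage meets $\mathscr{M}(\widetilde{X}/B,\pmb\gamma)$. Then restrict to the preimage of $C$, which is an open and closed substack of the base change. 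One has to check that on this preimage the decoration $\pmb\gamma$ is attained exactly on an open and closed piece. If the decoration $\pmb\gamma$ does not exhaust the preimage, replace $\mathfrak{M}_{\gamma\rightarrow\tau}$ by the corresponding open and closed substack of the decorated Artin-fan stack $\mathfrak{M}(\widetilde{\mathcal{X}},\pmb\gamma)$ lying over it. The square remains cartesian after restricting both right-hand stacks compatibly.

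Finally, the obstruction theories are unchanged by passing to open and closed substacks, so the pullback compatibility follows directly from the undecorated case.
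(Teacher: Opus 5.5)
The paper does not prove this statement. It quotes it as the decorated form of the main theorem of \cite{pbirinv}. Deducing it from the undecorated cartesian square by restricting to open and closed substacks is the natural route, and your first two steps and your closing remark on obstruction theories are fine. However, the step you call the key point is left open, and the fallback you offer for it does not work.

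The statement defines $\mathscr{M}_{\pmb\tau}$ as the image. It asserts that this image is a union of connected components whose full preimage is $\mathscr{M}(\widetilde{X}/B,\pmb\gamma)$. Let $q\colon \mathscr{M}(\widetilde{X}/B,\gamma)\to\mathscr{M}'_\tau$ be the undecorated map. Taking instead the union of components $Z$ with $q^{-1}(Z)$ meeting $\mathscr{M}(\widetilde{X}/B,\pmb\gamma)$ agrees with the image only if the $H_2(\widetilde{X})$-decoration is constant on each $q^{-1}(Z)$. That is exactly what you have not shown.

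Replacing $\mathfrak{M}_{\gamma\rightarrow\tau}$ by a substack of the decorated Artin-fan stack does not help. The curve-class labels there are auxiliary data, not determined by the map to $\widetilde{\mathcal{X}}$. So the fibre product with $\mathscr{M}_{\pmb\tau}$ would contain points whose labels differ from the actual classes of the lifted map $\widetilde{C}\to\widetilde{X}$, for instance an exceptional fibre class moved between two components over the same vertex of $G_\gamma$. The square would then fail to be cartesian. Even if it were cartesian, you still would not have shown that the image is open, which is what ``union of connected components'' requires.

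The missing observation is that the splitting you worry about never happens. The map $q$ is the base change of the proper map $\mathfrak{M}_{\gamma\rightarrow\tau}\to\mathfrak{M}_\tau$. That map is a stratum closure in a log modification whose cone $\gamma$ has interior mapping into the interior of $\tau$. So it is surjective with geometrically connected fibres: locally it is a proper toric morphism from the orbit closure of $\gamma$ onto that of $\tau$, with connected generic fibre and normal target, and Stein factorization gives the rest. Since $q$ is also closed, $q^{-1}(Z)$ is connected for every connected component $Z$. The decoration is locally constant, so it is constant on $q^{-1}(Z)$. Hence $\mathscr{M}(\widetilde{X}/B,\pmb\gamma)=q^{-1}(\mathscr{M}_{\pmb\tau})$, with $\mathscr{M}_{\pmb\tau}$ the image, and the image is a union of components.

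An alternative argument, at least for numerical classes: a class on $\widetilde{X}$ is determined by its pushforward to $X$ together with its degrees against line bundles of PL functions on $\Sigma(\widetilde{X})$, and the balancing condition fixes those degrees in terms of $\gamma$. With either argument in place, the original $\mathfrak{M}_{\gamma\rightarrow\tau}$ works unchanged, and the compatibility of obstruction theories follows as you say.
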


\begin{corollary}\label{puncmcr1}
Suppose $\gamma$ is a lift of $\tau$, and dim $\tau =$ dim $\gamma$. Let $m = \coker|\gamma^{\gp} \rightarrow \tau^{\gp}|$. Then for $s: \mathscr{M}(\widetilde{X}/B,\gamma) \rightarrow \mathscr{M}(X/B,\tau)$ the map induced by $\pi: \widetilde{X} \rightarrow X$: 

\[s_*[\mathscr{M}(\widetilde{X}/B,\pmb\gamma)]^{\vir} = \frac{1}{m}[\mathscr{M}_{\pmb\tau}]^{\vir} .\] 
\end{corollary}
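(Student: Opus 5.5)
We deduce the corollary from the cartesian square \eqref{cart} of Theorem \ref{decthm}. Since the relative perfect obstruction theories of the horizontal arrows are related by pullback, the virtual classes are related by virtual (Gysin) pullback along the proper log \'etale map $p:\mathfrak{M}_{\gamma\rightarrow\tau}\rightarrow\mathfrak{M}_{\tau}$. Concretely, let $\varphi:\mathscr{M}_{\pmb\tau}\rightarrow\mathfrak{M}_\tau$ be the bottom horizontal map. Then
\[
[\mathscr{M}(\widetilde{X}/B,\pmb\gamma)]^{\vir}=\varphi^!\,[\mathfrak{M}_{\gamma\rightarrow\tau}]
\quad\text{and}\quad
[\mathscr{M}_{\pmb\tau}]^{\vir}=\varphi^![\mathfrak{M}_\tau].
\]
Here $\mathscr{M}_{\pmb\tau}$ is a union of connected components, so its virtual class is the restriction of $[\mathscr{M}(X/B,\pmb\tau)]^{\vir}$. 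The map $s$ is the base change of $p$. Since virtual pullback commutes with proper pushforward,
\[
s_*[\mathscr{M}(\widetilde{X}/B,\pmb\gamma)]^{\vir}=\varphi^!\,p_*[\mathfrak{M}_{\gamma\rightarrow\tau}].
\]
This reduces the statement to the purely combinatorial identity $p_*[\mathfrak{M}_{\gamma\rightarrow\tau}]=\frac1m[\mathfrak{M}_\tau]$.

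To prove that identity, I would use the explicit shape of $\mathfrak{M}_{\gamma\rightarrow\tau}$ from \cite{pbirinv}. Both stacks are log smooth (idealized) over their Artin fans, and the map $p$ is pulled back from a morphism of Artin fans. Étale locally on $\mathfrak{M}_\tau$, this is the map $\mathcal{A}_{\gamma}\rightarrow\mathcal{A}_{\tau}$, or the corresponding map of closed strata, induced by the inclusion of cones $i:\gamma\rightarrow\tau$. The tropical-lift condition says $i$ does not factor through a proper face, so $\gamma$ maps into the interior of $\tau$.

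Under the hypothesis $\dim\gamma=\dim\tau$, the map $i$ is locally an injective map of lattices $\gamma^{\gp}\rightarrow\tau^{\gp}$ of finite index $m$. The relevant component is then a piece of a subdivision of $\tau$ combined with a lattice coarsening of index $m$. On the open strata this is étale-locally the map $B(T_\gamma)\rightarrow B(T_\tau)$. The torus map $T_\gamma\rightarrow T_\tau$ is an isogeny whose kernel is a finite group of order $m$, so $B T_\gamma\rightarrow BT_\tau$ is proper, quasi-finite, and of degree $1/m$, since a generic point acquires a stabilizer of order $m$. Because the pushforward of the fundamental class is determined on a dense open set, we get $p_*[\mathfrak{M}_{\gamma\rightarrow\tau}]=\frac1m[\mathfrak{M}_\tau]$ over the locus of $\mathfrak{M}_\tau$ whose tropical type is exactly $\tau$, and hence everywhere by irreducibility and dimension of the relevant components.

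The main obstacle is this degree computation. One must check that $\mathfrak{M}_{\gamma\rightarrow\tau}$ contains only the single cone $\gamma$ over the generic stratum of $\mathfrak{M}_\tau$, with no other maximal-dimensional cones of the subdivision contributing. One must also check that the stack structure contributes exactly the cokernel order. I would first verify this on the tropical side: the cone complex of $\mathfrak{M}_{\gamma\rightarrow\tau}$ is the star of $\gamma$ in the subdivision of $\tau$ induced by the fibered product in Definition \ref{dtlift}. Since $\mathfrak{M}_{\gamma\rightarrow\tau}$ is built from the single type $\gamma$, only the cone $\gamma$ itself is top-dimensional. Given that, the degree is the lattice index $|\coker(\gamma^{\gp}\rightarrow\tau^{\gp})|^{-1}$, which completes the proof.
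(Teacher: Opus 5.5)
Your proposal is correct and follows essentially the same route the paper relies on (inherited from \cite{pbirinv}, and mirrored in the $k=0$ case of Theorem \ref{normalform} together with Corollary \ref{pushvpull}): the cartesian square of Theorem \ref{decthm} with compatible obstruction theories, combined with the commutation of virtual pullback and proper pushforward, reduces the claim to $p_*[\mathfrak{M}_{\gamma\rightarrow\tau}]=\frac1m[\mathfrak{M}_\tau]$. That identity is then checked over the dense open stratum, where the map is the degree-$1/m$ gerbe $BT_\gamma\rightarrow BT_\tau$, as in \cite[Lemma $4.5$]{pbirinv}. The one imprecision is your appeal to ``irreducibility'': what is actually needed is equidimensionality of both stacks together with density of the preimages of the open strata $\mathcal{A}_{\gamma,\gamma}$ and $\mathcal{A}_{\tau,\tau}$ in every component, and this follows from smoothness over the idealized Artin fans.
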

Note that if the decorations are both taken in the monoids $H_2^+(X)$ and $H_2^+(\widetilde{X})$ of effective curve classes in Theorem \ref{decthm}, we have $\mathscr{M}_{\pmb\tau}  =\mathscr{M}(X/B,\pmb\tau)$. To ease notational burden in this section, we will typically write $\mathscr{M}(X/B,\pmb\tau) = \mathscr{M}(X,\pmb\tau)$.

To prove Theorem \ref{mthm1}, we wish to pullback piecewise polynomial functions $P_i \in PP(\Sigma(X))$ to piecewise polynomial functions on $\Sigma(\mathscr{M}(X,\pmb\gamma))$. However, letting $X_s$ be the stratum of $(X,D)$ determined by the contact order of the marked point $x$, the evaluation morphism of algebraic stacks $\ev_x: \mathscr{M}(X,\pmb\gamma) \rightarrow X_s$ typically does not admit an enhancement to a morphism of log stacks. As constructed in \cite{DRtoric}, we modify the log structure on $X_s$ to allow for this as follows:

First, consider the ghost sheaf $\overline{\mathcal{M}}_{X_s}$ of the log structure on $X_s$. This has global sections given by PL functions on the cone complex $\Sigma(X_s) = \text{Star}(\sigma_s)$. Letting $\sigma_s \in \Sigma(X)$ be the cone associated with the stratum $X_s$, define $\Sigma_{\sigma_s}$ to be the cone complex with cones of the form $q(\sigma) \subset \sigma^{\gp}/\sigma_s^{\gp}$ for $q: \sigma^{\gp} \rightarrow \sigma^{\gp}/\sigma_s^{\gp}$ the quotient map, and note we have a natural quotient map $q: \Sigma(X_s) \rightarrow \Sigma(X_s)_{\sigma_s}$. After identifying $\Gamma(\overline{\mathcal{M}}(X_s))$ with PL functions on $\Sigma(X_s)$, we let $\overline{\mathcal{M}}' \subset \overline{\mathcal{M}}_{X_s}$ be the subsheaf with local sections associated with partial PL functions on $\Sigma(X_s)$ which are pulled back from $\Sigma(X_s)_{\sigma_s}$ along $q$. Define the sub log structure $\mathcal{M}'_{X_s} := \overline{\mathcal{M}}' \times_{\overline{\mathcal{M}}_{X_s}} \mathcal{M}_{X_s}$, with structure morphism $\alpha': \mathcal{M}' \rightarrow \mathcal{O}_{X_s}$ given by restricting $\alpha: \mathcal{M}_{X_s} \rightarrow \mathcal{O}_{X_s}$, and call the resulting fs log scheme $X_s'$. We note there is a log morphism $X_s \rightarrow X_s'$ induced by the inclusion of log structures $\mathcal{M}'_{X_s} \subset \mathcal{M}_{X_s}$. 

To construct an enhancement of the morphism $\ev_x: \mathscr{M}(X,\pmb\gamma) \rightarrow X_s$ to a log morphism $\ev_x: \mathscr{M}(X,\pmb\gamma) \rightarrow X_s'$ for $x$ a marked point of the tropical type $\pmb\gamma$, we let $\mathfrak{C} \rightarrow \mathscr{M}(X,\pmb\gamma)$ be the universal curve, $S_x \subset \mathfrak{C}$ the universal marked point. By postcomposing with the log morphism $X \rightarrow X'$ constructed above, we also have a morphism $S_x \rightarrow X_s'$. Note that the contact order $u(x)$ evaluated at all divisors which do not contain $X_s$ is zero by definition. Hence, for $\gamma_l$ the universal family of legs associated with $x$ containing a universal family of vertices $\gamma_v \subset \gamma_l$, we have $q\circ h_{\tau}: \gamma_l \rightarrow \Sigma(X_s)_{\sigma_s}$ is equal to $q \circ \ev_v \circ \pi_v$, for $\pi_v: \gamma_l \rightarrow \gamma_v$ the projection map. In particular, every PL function on $\Sigma(X_s)_{\sigma_s}$ pulls back to a PL function on $\gamma_l$ which give slope zero functions along the family of legs. Since all such functions pullback from PL functions on $\Sigma(\mathscr{M}(X,\pmb\gamma))$, it follows that the generalized Cartier divisor associated with the pullback of any PL function on $\Sigma(X_s') = \Sigma(X_s)_{\sigma_s}$ is the pullback of the generalized Cartier divisor on $\mathscr{M}(X,\pmb\gamma)$ associated with a PL function on $\Sigma(\mathscr{M}(X,\pmb\gamma))$.  These compatibilities ensures that the evaluation map admits a log enhancement, as we record in the following lemma:

\begin{lemma}\label{logenh}
For $l \in L(G_{\gamma})$ with $v \in L$ the unique vertex contained in $L$, the morphism of underlying algebraic stacks $\mathscr{M}(X,\pmb\gamma) \rightarrow X_s$ admits an enhancement to a log morphism $\mathscr{M}(X,\pmb\gamma) \rightarrow X_s'$, which tropicalizes to $\ev_v: \Sigma(\mathscr{M}(X,\pmb\gamma))  \rightarrow \Sigma(X_s)_{\sigma_s}$.
 \end{lemma}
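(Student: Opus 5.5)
The plan is to build the log enhancement by hand at the level of log structures, using the construction of $X_s'$ just given. A log morphism $\mathscr{M}(X,\pmb\gamma) \rightarrow X_s'$ over the underlying map $\underline{\ev}_x$ amounts to a morphism of log structures $\underline{\ev}_x^{-1}\mathcal{M}'_{X_s} \rightarrow \mathcal{M}_{\mathscr{M}(X,\pmb\gamma)}$ compatible with the structure maps to $\mathcal{O}$.

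\emph{Step 1: a log morphism on the universal section.} Composing the universal map $f: \mathfrak{C} \rightarrow X$ with $X_s \rightarrow X_s'$ does not directly make sense, since $f$ lands in $X$ and not in $X_s$. Restricted to $S_x$, however, $f$ factors set-theoretically through $X_s$. The clean approach is to use the sub log structure $\mathcal{M}'$ directly. Locally near $X_s$, a section of $\overline{\mathcal{M}}'$ is a PL function $\varphi$ on $\text{Star}(\sigma_s)$ pulled back from $\Sigma(X_s)_{\sigma_s}$. Such a $\varphi$ extends, locally on $X$, to a section $\tilde\varphi$ of $\overline{\mathcal{M}}_X$ whose slope along $\sigma_s$ is zero. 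By the observation preceding the lemma, $u(x)$ pairs to zero with every such section. So $f^\flat$ sends the lift of $\tilde\varphi$ to a section of $\mathcal{M}_{\mathfrak{C}}|_{S_x}$ whose ghost image has zero slope along the leg $\gamma_l$. By the description of the log structure of a punctured/marked curve near a marked point, $\overline{\mathcal{M}}_{\mathfrak{C}}|_{S_x} = \overline{\mathcal{M}}_{\mathscr{M}} \oplus \NN$ (or the puncturing submonoid). The zero-slope condition says this image lies in $\overline{\mathcal{M}}_{\mathscr{M}}$, i.e.\ it is pulled back from the base.

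\emph{Step 2: descend along $S_x \cong \mathscr{M}(X,\pmb\gamma)$.} Since $S_x \rightarrow \mathscr{M}(X,\pmb\gamma)$ is an isomorphism of underlying stacks, Step 1 gives, on ghost sheaves, a map $\underline{\ev}_x^{-1}\overline{\mathcal{M}}' \rightarrow \overline{\mathcal{M}}_{\mathscr{M}}$. This map is computed tropically as $\varphi \mapsto \varphi\circ q\circ h_\tau|_{\gamma_l} = \varphi \circ q\circ\ev_v\circ\pi_v$, as noted before the lemma. To lift it to actual log structures:

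1. Take the section $f^\flat(m)$ of $\mathcal{M}_{\mathfrak{C}}|_{S_x}$.
2. Its ghost lies in the summand $\overline{\mathcal{M}}_{\mathscr{M}}$.
3. The torsor of lifts of a base ghost element to $\mathcal{M}_{\mathfrak{C}}|_{S_x}$ is canonically the pullback of the corresponding $\mathcal{O}^\times$-torsor on the base, because $\mathcal{M}_{\mathfrak{C}}|_{S_x} \supset \mathcal{M}_{\mathscr{M}}$ with the extra generator only in the $\NN$ direction.

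Hence $f^\flat(m)$ comes from a unique section of $\mathcal{M}_{\mathscr{M}}$. This is the generalized Cartier divisor compatibility stated just before the lemma. Compatibility with $\alpha$ and with the structure maps follows because $\alpha'$ is the restriction of $\alpha$, and because on $S_x$ the marked point does not change the structure map on base-pulled-back sections. Multiplicativity and independence of the local extension $\tilde\varphi$ are checked étale-locally: two extensions differ by a section with zero slope on all of $\text{Star}(\sigma_s)$ directions relevant to the leg, which still pairs to zero with $u(x)$.

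\emph{Step 3: tropicalization.} By construction, the induced map on ghost sheaves is precomposition with $q\circ\ev_v$. Dually, the map of cone complexes $\Sigma(\mathscr{M}(X,\pmb\gamma)) \rightarrow \Sigma(X_s)_{\sigma_s}$ is $q\circ\ev_v$, which is what the lemma means by $\ev_v$ into the quotient complex.

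The main obstacle is Step 2 in the punctured setting. There the log structure at $S_x$ is a submonoid of $\overline{\mathcal{M}}_{\mathscr{M}}\oplus\ZZ$, and $f$ maps only into $X$, so one must check that the image of $f^\flat$ on zero-slope sections really lands in the base log structure and not merely in its groupification. I would handle this using the defining property of punctures: sections whose image has zero $\ZZ$-component lie in $\mathcal{M}_{\mathscr{M}}$ exactly. I would also use the fact that $\gamma_v\subset\gamma_l$ maps into $\text{Star}(\sigma_s)$, which keeps the value on the leg nonnegative.
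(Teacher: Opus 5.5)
Your proposal is correct and follows essentially the same route as the paper, which (in the paragraph just before the lemma) restricts the universal map to the section $S_x$, uses that $u(x)\in\sigma_s^{\gp}$ so that PL functions pulled back from $\Sigma(X_s)_{\sigma_s}$ have slope zero along the family of legs and hence come from $\Sigma(\mathscr{M}(X,\pmb\gamma))$, and concludes that the associated generalized Cartier divisors descend along $S_x\cong\mathscr{M}(X,\pmb\gamma)$. You make this descent explicit on the monoid sheaf $\mathcal{M}_{\mathfrak{C}}|_{S_x}$, and both of your worries are harmless: in the punctured case the inclusion $\mathcal{M}_{\mathfrak{C}^\circ}|_{S_x}\subset\mathcal{M}_{\mathscr{M}}\oplus_{\mathcal{O}^\times}\mathcal{P}^{\gp}$ already forces sections with zero $\ZZ$-component into $\mathcal{M}_{\mathscr{M}}$, and no choice of extension $\tilde\varphi$ arises because $X_s\hookrightarrow X$ is strict.
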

 
Letting $\Ev = \prod_{l \in L(G_{\gamma})} \widetilde{X}_{\pmb\sigma(l)}'$, we define $\ev: \mathscr{M}(\widetilde{X},\pmb\gamma) \rightarrow \Ev$ to be the product of the evaluation maps defined in Lemma \ref{logenh}.

To define the terms appearing in Theorem \ref{mthm1}, consider a toric cone $\sigma$ of dimension $d$ with associated toric variety $Z_{\sigma}$. Any subdivision and lattice coarsening $\widetilde{\sigma} \rightarrow \sigma$ has an associated morphism of toric stacks $Z_{\widetilde{\sigma}} \rightarrow Z_{\sigma}$. Recall the relationship between piecewise polynomials on the fans $\sigma$ and $\widetilde{\sigma}$ and the $\mathbb{G}_m^d$ equivariant Chow theory of $Z_{\sigma}$ and $Z_{\widetilde{\sigma}}$. 

\begin{theorem}[Theorem $1$ \cite{ppchow}, Theorem $3.3.1$ \cite{logint}]
For a fan $\Sigma$ in $\mathbb{R}^{d}$, we have:
\[PP(\Sigma) \cong CH_{\mathbb{G}_m^d}(Z_\Sigma) \cong CH([Z_{\Sigma}/\mathbb{G}_m^d])\]
More generally, for a cone stack $\Sigma$ with associated Artin fan $\mathcal{A}$, we have:

\[PP(\Sigma) \cong CH_{\text{op}}^*(\mathcal{A}).\]
\end{theorem}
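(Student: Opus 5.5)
This is a cited result (\cite{ppchow}, \cite{logint}). To reconstruct it, I would first prove the local statement for a single cone, then glue over the fan, and finally pass to cone stacks by descent.

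\textbf{Local case.} Let $\sigma$ be a single cone in $\mathbb{R}^d$. The $\mathbb{G}_m^d$-equivariant operational Chow ring of the affine toric variety $Z_\sigma$ should equal that of its unique closed orbit $O_\sigma$. This follows from homotopy invariance, since $Z_\sigma$ equivariantly retracts onto $O_\sigma$ (contract along a one-parameter subgroup in the interior of $\sigma$).

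The closed orbit is $O_\sigma \cong T/T_\sigma$, where $T_\sigma$ is the subtorus with cocharacter lattice $\sigma^{\gp}_{\NN}$. Hence
$$CH^*_{T}(O_\sigma) \cong CH^*_{T_\sigma}(\pt) \cong \operatorname{Sym}^*\big((\sigma^{\gp}_{\NN})^\vee\big),$$
which is the ring of polynomial functions on $\sigma$, that is, $PP(\sigma)$. The identification $CH^*_{\mathbb{G}_m^d}(Z_\Sigma) \cong CH^*([Z_\Sigma/\mathbb{G}_m^d])$ is the Edidin--Graham definition of the operational Chow ring of a quotient stack, so it needs no separate argument.

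\textbf{Global fan.} Restricting to the invariant affine opens $Z_\sigma$, for $\sigma \in \Sigma$, gives a map
$$CH^*_T(Z_\Sigma) \rightarrow \prod_{\sigma} \operatorname{Sym}^*\big((\sigma^{\gp}_{\NN})^\vee\big).$$
Its image lies in the compatible tuples, i.e.\ those agreeing under restriction to common faces, which is exactly $PP(\Sigma)$. Two things remain.

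\emph{Injectivity.} I would use Kimura's exact sequences for an equivariant Chow envelope. Take an equivariant resolution $\widetilde{Z} \rightarrow Z_\Sigma$ obtained by subdividing $\Sigma$ to a smooth fan. The sequences reduce injectivity to the smooth case, together with lower-dimensional orbit closures, and there one argues by induction on dimension.

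\emph{Surjectivity.} Given a piecewise polynomial, build the operational class by acting on equivariant cycles orbit by orbit. Well-definedness follows from the compatibility conditions along faces. Alternatively, pull back to the smooth subdivision, where $PP$ is generated by Courant functions (classes of $T$-invariant divisors), and descend via Kimura's sequence. The descent condition is precisely agreement of polynomials on the fibres of the subdivision.

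\textbf{Cone stacks.} An Artin fan $\mathcal{A}$ has a strict \'etale cover by Artin cones $\mathcal{A}_\sigma = [Z_\sigma/T_\sigma]$. By the local case, $CH^*_{\text{op}}(\mathcal{A}_\sigma) = PP(\sigma)$. The face maps and automorphisms of the cone stack correspond to open immersions and automorphisms among these covers. Since $PP(\Sigma)$ is by definition the limit of $PP(\sigma)$ over the cone stack diagram, it suffices to show that $CH^*_{\text{op}}$ satisfies descent, i.e.\ that $CH^*_{\text{op}}(\mathcal{A})$ is the corresponding limit of the $CH^*_{\text{op}}(\mathcal{A}_\sigma)$.

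I expect this descent to be the main obstacle, because operational Chow does not in general satisfy \'etale descent. My plan is to circumvent it via envelopes. Pass to a subdivision $\widetilde{\mathcal{A}} \rightarrow \mathcal{A}$ whose cones are smooth and which is representable by a toric-like stack. Use that proper birational log \'etale maps are Chow envelopes, and apply Kimura's sequence again. This reduces both sides to checking compatible data on the strata of $\mathcal{A}$, each of which is a classifying stack $B T_\sigma$ modulo the automorphisms of $\sigma$. The comparison with $PP$ then becomes the local computation above, with rational coefficients absorbing the finite automorphism groups.
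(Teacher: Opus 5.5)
The paper gives no argument for this statement; it is quoted from Payne for fans and from Molcho--Ranganathan for cone stacks. So there is no internal proof to compare with.

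Your treatment of fans is the standard route for the cited result: Kimura's exact sequences for an equivariant envelope given by a toric resolution, with Brion's computation for smooth fans as input. It is acceptable as an outline, but two points need more than you give.
\begin{itemize}
\item Your local step via ``homotopy invariance'' is not a formal property of operational Chow groups. In characteristic $0$ it follows from the same resolution-plus-Kimura induction, and it should be justified that way.
\item The claim that the descent condition is ``precisely agreement of polynomials on the fibres'' is the real content of the theorem. Establishing it requires controlling $CH^*_T(Z_{\widetilde{\Sigma}}\times_{Z_\Sigma}Z_{\widetilde{\Sigma}})$, which is in general neither irreducible nor toric, well enough to read off that condition.
\end{itemize}

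The genuine gap is in the passage to cone stacks. You rightly note that operational Chow does not satisfy \'etale descent, but the proposed remedy does not get around this. Kimura's sequence does not reduce anything to the locally closed strata $BT_\sigma/\mathrm{Aut}(\sigma)$, because operational classes neither restrict nor glue along a locally closed stratification. In its birational form it gives $0\to CH^*_{\text{op}}(\mathcal{A})\to CH^*_{\text{op}}(\widetilde{\mathcal{A}})\oplus CH^*_{\text{op}}(\mathcal{S})\to CH^*_{\text{op}}(\mathcal{E})$. Here $\mathcal{S}$ is the closed locus over which the subdivision fails to be an isomorphism, and $\mathcal{E}$ is its preimage. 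Running this requires three inputs you do not supply.
\begin{itemize}
\item[(i)] The smooth case $CH^*_{\text{op}}(\widetilde{\mathcal{A}})\cong PP(\widetilde{\Sigma})$. This is again a global statement of exactly the type you set out to prove. It follows from Brion when $\widetilde{\Sigma}$ is smooth and each cone is determined by its rays, since then $\widetilde{\mathcal{A}}$ is an open substack of $[\mathbb{A}^n/\mathbb{G}_m^n]$. However, subdividing a cone stack does not remove its automorphisms, so you also need a finite-quotient argument with $\mathbb{Q}$-coefficients.
\item[(ii)] The analogous statement for $\mathcal{S}$ and $\mathcal{E}$. These are unions of strata closures, not Artin fans, so your induction on dimension must be formulated for such idealized Artin fans, for instance using the product structure of strata closures from Lemma~\ref{stratadecomp}. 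You then need a combinatorial check that the resulting fibre product of piecewise polynomial rings is $PP(\Sigma)$.
\item[(iii)] Kimura's exact sequences themselves for Artin fans. These are non-separated stacks with torus stabilizers and are in general not the quotient of a single toric variety by a torus, so the equivariant form you used for fans does not apply verbatim.
\end{itemize}
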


To define $\deg_{\sigma}(P)$ for $P \in PP(\widetilde{\sigma})$, we suppose we have a collection splittings of the restriction maps $PL(\omega') \rightarrow PL(\sigma)$ for a subset of faces $\omega \subset \sigma$ sending $L \in PL(\omega)$ to $L^{\text{ext}}$ such that  $L^{\text{ext}}$ is non-negative on $\sigma$ for all $L$ which are non-negative on $\omega$. We call the function $L^{\text{ext}}$ the extension of $L$. 

In the case that both $\sigma$ and $\omega$ are moduli spaces of tropical maps to a cone complex $\Sigma$, $PL(\omega)$ is rationally spanned by edge lengths and the pullback of PL functions on $\Sigma$ along evaluation maps $\ev_{v}: \omega \rightarrow \Sigma$ which are non-negative on $\pmb\sigma(v)$. The universal family $\Gamma_{\sigma} \rightarrow \sigma$ is marked by the tropical type of $\omega$, hence we have a contraction map $\phi: G_{\sigma} \rightarrow G_{\omega}$, inducing a bijection from non-contracted edges of $G_{\sigma}$ to edges of $G_{\omega}$. In particular, for every edge $e \in E(G_{\omega})$, the marking gives a canonical lift of the edge length function $l_e \in PL(\omega)$ to a linear function on $\sigma$. Additionally, for any vertex $v \in V(G_{\omega})$, after making the additional choice of a half edge or leg $e$ containing $v$, for any PL function $L \in PL(\Sigma)$, we extend $\ev_v^*L \in PL(\omega)$ to a PL function on $\sigma$ by $\ev_{v'}^*L$ for $v'$ the vertex contained in the half edge $e$. Any choice of basis for the space of rational PL function on $\omega$ consisting of edge lengths and vertex positions yields a modular choice of such a splitting $PL(\omega) \rightarrow PL(\sigma)$. Such splittings will be used implicitly in the context of moduli of log stable maps.



We construct a fan $\Sigma^{\sigma}$ containing $\sigma$, and define $\deg_{\sigma}(P)$ via an equivariant pushforward for a choice of lift of $P$ associated with our extension data. First, for any face $\omega\subset \sigma$ and a linear function $L \in PL(\omega)$, we let $H_L = \ker(L^{\text{ext}}) \subset \sigma^{\gp}$, and we consider the intersection $H_{\omega}$ of all $H_L$ for $L \in PL(\omega)$, a $\dim \sigma - \dim \omega$ dimensional integral subspace of $\sigma^{\gp}$. Let $\Sigma^{\sigma}$ be any simplicial fan which supports each $H_{\omega}$ as a union of cones. Letting $\widetilde{\Sigma}^{\sigma} \rightarrow \Sigma^{\sigma}$ be any subdivision containing a subdivision of $\tilde{\sigma}$ as a subcomplex. 

To define $\deg_{\sigma}(P)$, we extend a homogeneous piecewise polynomial $P \in PP(\widetilde{\sigma})$ of degree $k$ to a piecewise polynomial function on $\widetilde{\Sigma}^{\sigma}$. Since $\widetilde{\Sigma}^{\sigma}$ is a simplicial fan, for every one dimensional cone $\rho$ of $\widetilde{\Sigma}^{\sigma}$, there exists a unique PL function $Q_{\rho}$ which has slope $1$ along $\rho$ and slope $0$ along all other one dimensional cones, and the collection of $PL$ function $Q_{\rho}$ form a basis for the $\mathbb{Q}$ vector space of PL functions on $\widetilde{\Sigma}^{\sigma}$. More generally, for a cone $\gamma$, we let $Q_{\gamma} = \prod_{\rho \in \gamma^{[1]}} Q_{\rho}$. 


Suppose that $P = Q_{\gamma}P'$. Since $\gamma$ could be a zero dimensional face, we assume this without loss of generality. If $P$ is non-vanishing on $\gamma$, we let $\omega$ be the smallest face of $\sigma$ which contains the image of $\gamma$. If $\omega = \sigma$, then $Q_{\gamma}P'$ vanishes on the boundary of $\sigma$, and we extend $P$ by zero on all other cones of $\widetilde{\Sigma}^{\sigma}$. To extend when $\omega$ is a proper face of $\sigma$, we consider the fan $\widetilde{\Sigma}^{\omega}$ on $\sigma^{\gp}/H_\omega$ induced by the fan on $\widetilde{\Sigma^{\sigma}}$. By the assumption that the extension of non-negative linear functions of $\omega$ are non-negative, the image of $\sigma$ under the quotient map is $\omega$. Thus, up to further subdividing $\widetilde{\Sigma^{\sigma}}$ the quotient map induces a map of fans $\widetilde{\Sigma^{\sigma}} \rightarrow \widetilde{\Sigma^{\omega}}$ mapping $\tilde{\sigma}$ to $\tilde{\omega}$. The piecewise polynomial function $Q_{\gamma}P'|_{\widetilde{\omega}}$ vanishes on the boundary of $\omega$ hence we can extend the former piecewise polynomial by zero on all other cones of $\widetilde{\Sigma}^{\omega}$. Pulling back this piecewise polynomial along $\widetilde{\Sigma}^{\sigma} \rightarrow \widetilde{\Sigma}^{\omega}$ yields an extension.

Using the section $PL(\widetilde{\omega}) \rightarrow PL(\widetilde{\sigma})$ of the restriction map $res_{\omega}: PP(\widetilde{\sigma}) \rightarrow PP(\widetilde{\omega})$ constructed above, we produce an isomorphism $$Sym^k(PL(\widetilde{\sigma})) \cong \oplus_{i=0}^k Sym^i(ker(res_{\omega}))\otimes Sym^{k-i}(PL(\widetilde{\omega})).$$ If $P'$ is non-vanishing on $\gamma$, the splitting ensures a unique expression of the form $P' = P^{\text{ext}} + P''$ with $P^{\text{ext}}$ a non-zero polynomial in extensions of PL functions on $\tilde{\omega}$ and $P''$ a piecewise polynomial which vanishes on $\omega$. The construction of the extension $P^{\text{ext}}$ also gives an extension to a piecewise polynomial on the entire fan $\widetilde{\Sigma}^{\sigma}$. Since $P''|_{\gamma} = 0$, there exists a unique expression $P'' = \sum_{\gamma \subset \gamma'} Q_{\gamma'}P''_{\gamma'}$ with $P''_{\gamma'}$ a piecewise polynomial with $P''_{\gamma'}|_{\gamma'} \not= 0$ and of degree strictly less than $P''$. By extending the constant function to the constant function when $\deg(P') = 0$, we define our extensions in general induction on $\deg(P')$. By construction, we also have the extension of $P + Q$ is given by the sum of the extensions, and we extend to possibly non-homogeneous piecewise polynomials by linearity.

To state a formula for $\deg_{\sigma}(P)$, we recall the definition of equivariant multiplicity of a cone $\sigma \in \mathbb{R}^n$. The equivariant multiplicity $e_\sigma$ is a rational polynomial in $n$ variables, uniquely determined by the following two properties:

\begin{enumerate}
\item If $\sigma_1,\ldots,\sigma_r$ are the maximal cones of a rational polyhedral subdivision of a cone $\sigma$:
\[e_{\sigma} = \sum_{i=1}^r e_{\sigma_i}.\]

\item If $\sigma$ is a unimodular cone spanned by a basis $e_1,\ldots,e_n$ for the lattice $N$ with dual basis $e_1^*,\ldots,e_n^*$, then:
\[e_{\sigma} = \frac{1}{e_1^*\cdots e_n^*}.\]
\end{enumerate}

\begin{definition}\label{degdef}
For any cone $\sigma$ and extension data $PL(\omega) \rightarrow PL(\sigma)$ for faces $\omega \subset \sigma$, we define the polynomial, hence a Chow cohomology class on $B\mathbb{G}_m^{\dim\sigma}$:

\begin{equation}\label{degdef}
\deg_{\sigma}(P)= \sum_{\tau \in \widetilde{\Sigma^{\sigma}}} e_\tau P|_{\tau}.
\end{equation}
\end{definition}
The following proposition shows that $\deg_{\sigma}(P)$ is well defined and vanishes for on a certain class of piecewise polynomial function:

\begin{proposition}\label{degprop}
For a piecewise polynomial $P \in PP(\widetilde{\sigma})$, the rational function $\deg_{\sigma}(P)$ is a homogeneous polynomial of degree $\deg(P) - \dim\sigma$. Moreover, for $P$ any product of piecewise linear functions on $\tilde{\sigma}$ which are extensions of PL functions on a single subdivided face $\widetilde{\omega} \subset \widetilde{\sigma}$, and $\gamma$ a top dimensional face of $\widetilde{\omega}$, we have $\deg_{\sigma}(Q_{\gamma}P) = 0$.



\end{proposition}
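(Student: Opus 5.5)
The plan is to interpret $\deg_{\sigma}(P)$ as an equivariant pushforward and then exploit the structure of the extension from the preceding construction. By Theorem $1$ of \cite{ppchow}, the extended piecewise polynomial $P$ on $\widetilde{\Sigma}^{\sigma}$ corresponds to a class in $CH^*_{\mathbb{G}_m^d}(Z_{\widetilde{\Sigma}^{\sigma}})$, where $d=\dim\sigma$. Definition \ref{degdef} is then the Atiyah--Bott/Brion localization formula for the equivariant pushforward to a point: for simplicial cones the equivariant multiplicity $e_\tau$ is the inverse equivariant Euler class of the tangent space at the fixed point, and the sum over maximal cones is the localization sum. I would note that only maximal cones of $\widetilde{\Sigma^{\sigma}}$ contribute, since $e_\tau$ is zero for cones of lower dimension.

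\textbf{Step 1 (polynomiality).} I would first arrange properness. The fan $\Sigma^{\sigma}$ can be chosen complete, for instance by completing any simplicial fan supporting the $H_\omega$, and refining preserves this. Then $Z_{\widetilde{\Sigma}^\sigma}$ is a proper (possibly orbifold) toric variety. The equivariant pushforward to $B\mathbb{G}_m^d$ therefore lands in $CH^*(B\mathbb{G}_m^d)=\mathbb{Q}[t_1,\dots,t_d]$, which shows that $\deg_\sigma(P)$ is a polynomial rather than merely a rational function. It is homogeneous of degree $\deg P - d$, because each $e_\tau$ has degree $-d$.

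I would also check independence of the choices of subdivision. Property (1) of $e_\sigma$ shows that refining $\widetilde{\Sigma}^\sigma$ does not change the sum, because $P$ pulls back to the refinement. Any two admissible fans have a common refinement, which settles independence.

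\textbf{Step 2 (vanishing).} Let $P=\prod L_i^{\text{ext}}$ with $L_i\in PL(\widetilde\omega)$, and let $\gamma$ be a top-dimensional cone of $\widetilde\omega$. I would use the quotient map of fans $p:\widetilde{\Sigma}^\sigma\to\widetilde{\Sigma}^\omega$ on $\sigma^{\gp}/H_\omega$ from the extension construction. By construction, $P$ and the extension of $Q_\gamma$ are pulled back along $p$ from piecewise polynomials on $\widetilde{\Sigma}^\omega$. The product $Q_\gamma P$ is therefore $p^*R$ for a class $R$ on the quotient toric variety of dimension $\dim\omega$, equivariant for the quotient torus. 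The pushforward to a point then factors through $p_*$, and $p_*p^*R = R\cdot p_*1$. Here $p_*1=0$ whenever $\dim\omega<\dim\sigma$, since the fibres have positive dimension $\dim\sigma-\dim\omega$. In terms of localization, summing $e_\tau$ over the cones lying over a fixed cone of $\widetilde{\Sigma}^\omega$ gives the equivariant multiplicity of a product with a complete fan in $H_\omega$, and the degree $0$ part of such a sum vanishes. This handles the case where $\omega$ is a proper face.

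When $\omega=\sigma$, I would argue by degree. The function $Q_\gamma P$ vanishes off $\sigma$ and has degree $\dim\sigma+\deg P$. Its pushforward is $e_\gamma\,Q_\gamma P|_\gamma$, which equals $P|_\gamma$ up to a constant coming from the lattice index. The claim then needs the stated convention that $P$ is a product of extensions from a face. In the $\omega=\sigma$ case I would verify that the intended statement is for proper faces, or for positive-degree $P$ via the pullback argument with $H_\omega$.

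\textbf{Main obstacle.} The hardest step is making the projection $p$ an honest map of fans, so that the pullback statement holds on the whole of $\widetilde{\Sigma}^\sigma$ and not just on $\widetilde\sigma$. Doing so requires further subdivision compatible with both $H_\omega$ and $\widetilde\sigma$, and a check that the extension of $Q_\gamma$ agrees with $p^*$ of the corresponding class $Q_\gamma$ on $\widetilde{\Sigma}^\omega$. I would first try to obtain this from the non-negativity assumption on extensions, which guarantees that $p(\sigma)=\omega$.
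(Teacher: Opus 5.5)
Your proposal is correct and follows essentially the paper's route. Polynomiality and homogeneity come from reading $\deg_\sigma$ as an equivariant pushforward (the paper just cites \cite[Proposition $1.1$]{PPM}), independence of choices comes from a common refinement, and the vanishing is the projection formula along $p\colon Z_{\widetilde{\Sigma}^{\sigma}}\to Z_{\widetilde{\Sigma}^{\omega}}$. Your restriction to proper faces $\omega\subsetneq\sigma$ is exactly the paper's implicit assumption; for $\omega=\sigma$ one gets a nonzero constant times $P|_\gamma$ (e.g.\ $\deg_\sigma(Q_\gamma)\neq 0$, the base case in the proof of Theorem \ref{normalform}), so your fallback of ``positive-degree $P$'' would not rescue that case.

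The only difference is bookkeeping. The paper pulls back just $P$ and pushes forward $Q_\gamma$ as the class of the positive-dimensional stratum $Z_{\widetilde{\Sigma}^{\sigma},\gamma}$, which $p$ contracts to a torus-fixed point. You instead pull back all of $Q_\gamma P$ and use $p_*1=0$, which is valid for the paper's explicit extension, since that extension is defined as a pullback along $p$. The paper's version is what dissolves your ``main obstacle'': the literal product of Courant functions $Q_\gamma$ on $\widetilde{\Sigma}^{\sigma}$ need not be a pullback once some other cone of $\widetilde{\Sigma}^{\sigma}$ maps onto $p(\gamma)$.
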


\begin{proof}

The fact that $\deg_{\sigma}(P)$ is a homogeneous polynomial of degree $\deg(P) = \dim\sigma$ follows by \cite[Proposition $1.1$]{PPM}. Additionally, given choices of fans $\widetilde{\Sigma}^{\sigma},\widetilde{\Sigma}^{\sigma'}$ used in the definition of $\deg_{\sigma}(P)$, we can pass to a common refinement $\widetilde{\Sigma}^{\sigma''}$, and by the projection formula we have $\deg_{\sigma}(P)$ in both cases can be computed as an equivariant pushforward from $Z_{\widetilde{\Sigma}^{\sigma''}}$.



For the second claim, recall the fan $\widetilde{\Sigma}^{\omega}$ on $\sigma^{\gp}/H_\omega$ induced by the fan on $\widetilde{\Sigma^{\sigma}}$. By the assumption and the construction of the extension, $P$ is the pullback of a product of PL functions along $\widetilde{\Sigma^{\sigma}} \rightarrow \widetilde{\Sigma^{\omega}}$. Since this map corresponds to a proper map of toric varieties $Z_{\widetilde{\Sigma}^{\sigma}} \rightarrow Z_{\widetilde{\Sigma}^{\omega}}$ which maps the positive dimensional stratum $Z_{\widetilde{\Sigma}^{\sigma},\gamma}$ to a zero stratum of $Z_{\widetilde{\Sigma}^{\omega}}$, and the cohomology class corresponding to $P$ is pulled back along this map, the projection formula ensures that the equivariant integral of interest vanishes. 

\end{proof}
 
 \begin{example}\label{simpdeg}
Suppose that $\tau$ is a cone of dimension $n$ and $L_1,\ldots,L_n$ a collection of rational piecewise linear functions on $\tau$ which are linear on an $n$-dimensional simplicial subcone $\gamma \subset \tau$ such that $\prod_i L_i$ is a polynomial function which vanishes on $\partial \gamma \cup (\tau\setminus \gamma)$. Letting $\gamma_{\NN}^{\gp} \subset \tau_{\NN}^{\gp}$ be the maximal sublattice on which all $L_i$ are integral, then 
$$\deg_{\tau}(\prod_i L_i) = \frac{|\coker(\prod_i L_i: \gamma^{\gp}_{\NN} \rightarrow \ZZ^n)|}{|\coker(\gamma^{\gp}_{\NN} \rightarrow\tau_{\NN}^{\gp})|}.$$ 
To see this, for $\rho \in \gamma^{[1]}$, let $L_\rho \in PL(\gamma)$ be the rational primitive integral linear function which on the one dimensional skeleton of $\gamma$ has slope $1$ along $\rho$ and slope $0$ on all other one-dimensional faces of $\tau$. After passing to a subdivision and lattice coarsening $\tilde{\tau}\rightarrow \tau$, giving a log alteration $\pi: Z_{\tilde{\tau}} \rightarrow Z_{\tau}$, these pullback to integral PL functions on $\tilde{\tau}$, and there is a zero stratum of $Z_{\tilde{\tau}}$ with equivariant Euler class $\prod_{\rho\in \gamma^{[1]}} L_\rho$. Since $\pi$ is proper and $\pi_*[Z_{\tilde{\tau}}] = [Z_{\tau}]$, we have $\deg_{\tau}(\prod_i L_i) = \deg_{\tilde{\tau}}(\pi^*\prod_iL_i)$. Since $L_i$ vanishes on all codimension one faces of $\gamma$, we have $\prod_iL_i = \lambda\prod_{\rho \in \tau^{[1]}} L_{\rho}$, where $\lambda$ the absolute value of the determinant of the change of basis matrix relating $L_1,\ldots,L_n$ with $L_{\rho_1},\ldots,L_{\rho_n}$ for an enumeration of dimension $1$ cones $\rho_1,\ldots,\rho_n$, which is the desired ratio of lattice indexes.

\end{example}

In general, for $\sigma$ a cone stack admitting a strict cover by a cone $i: \sigma' \rightarrow \sigma$, we let $\deg_{\sigma}(P) = \deg_{\sigma'}(i^*P)$. For $\deg_{\sigma}(P)$ to be well defined, we require $\deg_{\sigma}(P)$ to be a piecewise polynomial on $\sigma$ and to be independent of the choice of $i$: 

\begin{lemma}
The polynomial $\deg_{\sigma}(P)$ is the pullback of a unique polynomial on $\sigma$ along $i$, and is independent of the choice of cover $i$. 
\end{lemma}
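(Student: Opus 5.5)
The plan is to reduce both claims to the fact that $\deg$ is an equivariant pushforward. Concretely, $\deg_{\sigma'}(i^*P)$ computes the $\mathbb{G}_m^{\dim\sigma'}$-equivariant pushforward of the class of $i^*P$ from $Z_{\widetilde{\Sigma}^{\sigma'}}$ to a point, as established in the proof of Proposition \ref{degprop}. I would first fix the meaning of a strict cover $i:\sigma'\rightarrow\sigma$ of a cone stack. Here $\sigma$ is the quotient of the cone $\sigma'$ by a finite group $G$ of automorphisms of $\sigma'$ (permutations of faces arising from automorphisms of the tropical type), and $i$ identifies $\sigma'$ with the universal cone. Any two such covers differ by an element $g\in \mathrm{Aut}(\sigma')$ compatible with $i$, that is, $i\circ g = i$ up to 2-isomorphism.

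The first step is $G$-equivariance of the construction. The extension data $PL(\omega)\rightarrow PL(\sigma')$ is built modularly from edge lengths and vertex evaluations, after choosing a half-edge or leg at each vertex. An automorphism $g$ permutes edges and vertices, so it carries this data to the extension data attached to the permuted choices. The subspaces $H_\omega$ are therefore permuted, $H_{g\omega}=g(H_\omega)$. I can then choose the auxiliary simplicial fan $\Sigma^{\sigma'}$ and its refinement $\widetilde{\Sigma}^{\sigma'}$ to be $G$-stable by replacing any choice with the common refinement of its $G$-translates. Independence of the fan choice was already shown in Proposition \ref{degprop}. The inductive extension of $P$ is canonical once the splitting is fixed, so for $G$-invariant $i^*P$ the extension is $G$-invariant.

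The second step shows that $\deg_{\sigma'}(i^*P)$ is $G$-invariant as a polynomial on $\sigma'^{\gp}$. In the formula $\sum_{\tau} e_\tau P|_\tau$, the element $g$ permutes the cones of $\widetilde{\Sigma}^{\sigma'}$. It also satisfies $g^*e_\tau = e_{g\tau}$, because $e$ is characterized intrinsically by additivity and by its normalization on unimodular cones, and $g$ preserves the lattice. Hence $g^*\deg = \deg$. A $G$-invariant polynomial on $\sigma'$ is exactly a polynomial on the cone stack $\sigma=[\sigma'/G]$, which gives descent and uniqueness, since $i$ is surjective on points and polynomials are determined by their values.

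The third step is independence of the cover. Two covers differ by some $g$, and invariance then gives the same descended polynomial. I expect the main obstacle to be a subtle point: the extension data depends on noncanonical choices of half-edges at vertices, and $g$ may move these choices. In that case invariance is not literal, and I would need to show that $\deg$ does not depend on the choice of splitting. My first attempt would argue that changing the half-edge at $v$ changes $\ev_v^*L$ by linear combinations of edge lengths of contracted edges, which vanish on $\omega$. Then Proposition \ref{degprop}, whose second claim says the correction terms $Q_\gamma P$ integrate to zero, would show that the difference between the two extensions contributes nothing to $\deg$.
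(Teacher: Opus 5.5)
Your three steps follow the paper's own argument, with more detail. The paper also gets descent from the claim that every ingredient of $\sum_\tau e_\tau P|_\tau$ is pulled back along $i$, which is your $G$-invariance. It gets independence by showing, via the fibre product $\sigma'\times_\sigma\sigma''$ and the fact that cones admit no nontrivial strict cover, that any two covers are isomorphic over $\sigma$; you only assert this.

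\textbf{The gap.} Your plan for the obstacle you correctly flag cannot work, because $\deg_\sigma$ \emph{does} depend on the splitting.
\begin{itemize}
\item Take $\sigma=\mathbb{R}_{\ge0}^2$, the face $\rho_1=\mathbb{R}_{\ge0}e_1$ with coordinate $t$, and the extension $t^{\mathrm{ext}}=x_1+cx_2$ with $c\ge0$.
\item The construction writes $x_1^2=Q_{\rho_1}(x_1+cx_2)-c\,Q_\sigma$, where $Q_\sigma=x_1x_2$.
\item The first term has degree $0$ by Proposition \ref{degprop}. The second contributes $-c\,e_\sigma x_1x_2=-c$. So $\deg_\sigma(x_1^2)=-c$.
\end{itemize}
This is not an artifact of the recipe. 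In Theorem \ref{normalform} with $\gamma=\tau=\rho_1$ and $P=x_1$, the term $\deg_{\rho_1}(t^2)=t$ acts on $X_{\rho_1}$ through the extension as $x_1+cx_2$. That produces an extra $c[X_\sigma]$, which only $\deg_\sigma=-c$ cancels; this is the dependence described in the first remark of the introduction.

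Your appeal to Proposition \ref{degprop} fails because its vanishing statement covers $Q_\gamma$ times products of \emph{extensions}. The difference of two extensions vanishes on $\omega$, so $Q_\gamma$ times it is a combination of $Q_{\gamma'}$ for strictly larger $\gamma'\supset\gamma$. Those are exactly the boundary terms of Lemma \ref{linlem}, and they have nonzero degree.

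\textbf{Non-equivariant data really breaks descent.} Let $G=\mathbb{Z}/2$ swap the coordinates of $\mathbb{R}^2_{\ge0}$, with extensions $x_1$ on $\rho_1$ and $x_2+x_1$ on $\rho_2$. The same computation gives $\deg(x_1^3+x_2^3)=-x_1-2x_2$, which is not $G$-invariant.

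\textbf{The fix.} What is missing is an equivariance requirement, not an independence statement:
\[
\mathrm{ext}_\omega(g^*L)=g^*\,\mathrm{ext}_{g\omega}(L)\quad\text{for } g\in G,\ L\in PL(g\omega).
\]
\begin{itemize}
\item This holds when the system of extensions is defined on the cone stack itself, with values in $PL(\text{Star}(\tau))$. The paper's phrase ``by construction, pulled back along $i$'' tacitly uses this.
\item It can always be arranged from modular half-edge choices: fix data on orbit representatives of faces, average over each stabilizer, and transport by $G$.
\item Admissible extensions form a convex set, so averaging preserves both the section property and non-negativity.
\end{itemize}
With this hypothesis your three steps go through. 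One correction: the naturality you need is $g^*e_{g\tau}=e_\tau$, not $g^*e_\tau=e_{g\tau}$. This is harmless, since you sum over a $G$-stable fan.
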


\begin{proof}
By construction, $\deg_{\sigma}(P)$ is the sum of rational functions pulled back along $i$. Since $i^*$ induces a morphism on the $\mathbb{Q}$-algebra of rational functions, it follows that $\deg_{\sigma}(P)$ is pulled back along $i$. For independence of $i$, note that for any other strict cover $i':  \sigma'' \rightarrow \sigma$ by a cone $\sigma''$, note that the pulled back map $\sigma' \times_{\sigma} \sigma'' \rightarrow \sigma'$ is a strict cover, hence must be an isomorphism since cones admit no non-trivial strict cover. Since the analogous map $\sigma' \times_{\sigma} \sigma'' \rightarrow \sigma''$ is also an isomorphism for the same reason, we therefore have an isomorphism $\sigma' \rightarrow \sigma''$ commuting with the map to $\sigma$. The conclusion follows. 
\end{proof}

The second term requiring definition is the piecewise polynomial $Q_{\gamma}$ associated with a cone $\gamma \in \tilde{\tau}$ in a subdivision and lattice coarsening $\tilde{\tau}$ of $\tau$. By passing to a further simplicial subdivision $\widetilde{\tau}$ of $\tau$ which contains $\gamma$ as a union of smooth subcones $\gamma = \cup_{i} \gamma_i$, we let $Q_{\gamma}$ be the piecewise polynomial on $\widetilde{\tau}'$ given by a scalar multiple of $\sum_{\gamma'} Q_{\gamma'} = \sum_{\gamma'} \prod_{\rho \in \gamma^{'[1]}} L_\rho$, where $L_\rho$ is an integral linear function which is $1$ at a primitive point of a one-dimensional cone $\rho$ and zero on all other one-dimensional cones. The scalar multiple we pick is chosen so that $\deg_{\gamma}(Q_{\gamma}) = 1$. It follows from construction that for all cones $\omega \in \widetilde{\tau}$ not containing $\gamma$ as a face, we have $Q_{\gamma}|_{\omega} = 0$ and $Q_{\gamma}$ agrees with the previous given definition in the case that $\gamma$ is a simplicial cone.

Using the piecewise polynomial $Q_{\gamma}$, we prove two lemmas which will be used in the proof of Theorem \ref{mthm1}:
 
\begin{lemma}\label{linlem}
Let $\mathcal{A}_{\Sigma}$ be an Artin fan, $\gamma \in \Sigma$, and $L \in PL(\Sigma)$ such that $L$ is non-negative and $L|_{\gamma} = 0$. We have the following equation in $A_*(X)$:
 
 \begin{equation}\label{PPbase}
 L\cap[\mathcal{A}_{\Sigma,\gamma}]= \sum_{\gamma \subset \gamma'} \deg_{\gamma'}(a_{L,\gamma'}Q_{\gamma'})[\mathcal{A}_{\Sigma,\gamma'}].
 \end{equation}
 In the equation above, $a_{L,\gamma'} = |\coker(L: \gamma^{'\gp}_{\NN}/\gamma^{\gp}_{\NN} \rightarrow \ZZ)|$, with $L$ the induced linear map on the quotient.
 \end{lemma}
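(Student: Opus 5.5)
Since the statement is local on $\mathcal{A}_{\Sigma}$ and both sides are compatible with strict \'etale covers by Artin cones, I will first reduce to the case $\Sigma = \sigma$ a single cone with $\gamma \subset \sigma$ a face. Operational classes and the fundamental classes $[\mathcal{A}_{\Sigma,\gamma'}]$ pull back compatibly along strict covers, and $\deg_{\gamma'}$ is independent of the cover by the preceding lemma. The reduction therefore amounts to checking the identity on each Artin cone $\mathcal{A}_{\sigma}=[Z_\sigma/T_\sigma]$ and noting that it glues. On $\mathcal{A}_{\sigma,\gamma} = [Z_{\sigma,\gamma}/T_\sigma]$, the restriction of $L$ factors through the quotient $\text{Star}(\gamma) \to \sigma/\gamma$, because $L|_{\gamma}=0$. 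After replacing $Z_{\sigma,\gamma}$ by the toric variety of the quotient fan, this reduces us to the case $\gamma = 0$, i.e.\ computing $L \cap [\mathcal{A}_{\sigma}]$ for a non-negative PL function $L$ on $\sigma$.

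In this case $L$ is the class of the generalized Cartier divisor $\mathcal{O}(-L)$, whose section is $\alpha(L)$, and $L \cap [\mathcal{A}_\sigma]$ is the class of the associated Weil divisor. Non-negativity guarantees this divisor is effective and supported on the boundary. After a toric subdivision $\tilde\sigma \to \sigma$ on which $L$ is linear on each cone, the divisor upstairs is $\sum_{\rho} L(u_\rho)[D_\rho]$. Here $u_\rho$ is the primitive generator of the ray $\rho$, and $L(u_\rho)$ is exactly the lattice index $a_{L,\rho}$ when $\rho$ is a ray of the original cone. Pushing forward along the proper birational morphism $[Z_{\tilde\sigma}/T]\to[Z_\sigma/T]$ kills the divisors of the new rays, since these map to strata of larger codimension. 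It leaves $a_{L,\rho}[\mathcal{A}_{\sigma,\rho}]$ for the rays $\rho$ of $\sigma$.

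It then remains to match these coefficients with $\deg_{\gamma'}(a_{L,\gamma'}Q_{\gamma'})$. For $\gamma'$ a ray of $\sigma$ (in the quotient picture, a cone of relative dimension one over $\gamma$), $Q_{\gamma'}$ has been normalized so that $\deg_{\gamma'}(Q_{\gamma'})=1$. This gives the coefficient $a_{L,\gamma'}$. For $\gamma'$ of relative dimension at least two, $\deg_{\gamma'}$ of the degree-one piecewise polynomial $a_{L,\gamma'}Q_{\gamma'}$ has degree $1-\dim(\gamma'/\gamma)<0$ by Proposition~\ref{degprop}, so it vanishes. The sum therefore reduces to the codimension-one terms, matching the divisor computation.

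The main obstacle is making the dimension bookkeeping of $\deg_{\gamma'}$ consistent with the quotient by $\gamma$, since $\deg_{\gamma'}$ is defined on $\gamma'$ itself rather than on $\gamma'/\gamma$. I would handle this by interpreting $Q_{\gamma'}$ and $\deg_{\gamma'}$ relative to the extension data from $\gamma$. This makes $\deg_{\gamma'}(Q_{\gamma'})$ the equivariant multiplicity of the quotient cone, normalized to $1$. I would then verify the lattice index claim in the rational, non-unimodular case via Example~\ref{simpdeg}.
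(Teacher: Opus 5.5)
Your local computation is the same as the paper's. On an Artin cone chart, $L$ is the effective Cartier divisor of the character $\chi^{L}$; the subdivision step is unnecessary, since $L\in PL(\Sigma)$ is already linear on each cone. Because $L$ vanishes on $\gamma$, its restriction to the orbit closure of $\gamma$ has Weil divisor $\sum a_{L,\gamma'}[\mathcal{A}_{\sigma,\gamma'}]$. The sum runs over $\gamma'\supset\gamma$ with $\dim\gamma'=\dim\gamma+1$ and $L|_{\gamma'}\neq 0$, and the coefficient is the lattice index you identify. Passing back to $\mathcal{A}_{\Sigma}$ needs more than ``it glues'', since Chow groups do not satisfy \'etale descent. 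The paper uses injectivity of pullback to the cover on rational Chow groups. Alternatively, you can work with cycles, whose multiplicities along codimension-one strata are orders of vanishing and hence \'etale local.

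The step that fails is your disposal of the $\gamma'$ with $\dim(\gamma'/\gamma)\geq 2$. The piecewise polynomial $Q_{\gamma'}$ is built from products of $\dim\gamma'$ linear functions $L_\rho$, one for each ray. So it is homogeneous of degree $\dim\gamma'$, or $\dim(\gamma'/\gamma)$ in your quotient picture, not one. Proposition~\ref{degprop} then gives $\deg_{\gamma'}(a_{L,\gamma'}Q_{\gamma'})$ degree $0$. The very normalization $\deg_{\gamma'}(Q_{\gamma'})=1$ that you use for rays makes this term equal to $a_{L,\gamma'}$, not $0$. These terms are nonzero classes in $A_{-\dim\gamma'}$, while the left side lies in $A_{-\dim\gamma-1}$. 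For example, take $\gamma=0$, $\gamma'=\sigma=\mathbb{R}_{\geq 0}^2$ and $L=x_1+x_2$: the extra term is $[\mathcal{A}_{\sigma,\sigma}]$, which up to sign is $t_1t_2\cap[\mathcal{A}_\sigma]\neq 0$. So no reading of $Q_{\gamma'}$ and $\deg_{\gamma'}$ compatible with their definitions makes these terms disappear. The count $1-\dim(\gamma'/\gamma)$ you have in mind is the degree of $\deg_{\gamma'}(LQ_{\gamma})$, not of $\deg_{\gamma'}(a_{L,\gamma'}Q_{\gamma'})$. Similarly, the terms with $L|_{\gamma'}=0$, including $\gamma'=\gamma$, have infinite $a_{L,\gamma'}$ as a cokernel order but true multiplicity $0$, so they must be excluded explicitly. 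The right fix is to prove the lemma with the sum restricted to $\gamma'\supset\gamma$ with $\dim\gamma'=\dim\gamma+1$ and $L|_{\gamma'}\neq 0$. This is what the first line of the paper's computation establishes, and it is the only form used later in the proof of Theorem~\ref{normalform}. Equivalently, write the right-hand side as $\sum_{\gamma\subset\gamma'}\deg_{\gamma'}(LQ_{\gamma})[\mathcal{A}_{\Sigma,\gamma'}]$. On codimension-one $\gamma'$ one has $L|_{\gamma'}=a_{L,\gamma'}L_{\gamma'}$ and $Q_{\gamma'}=L_{\gamma'}Q_{\gamma}$, which recovers the coefficient $a_{L,\gamma'}$, and in this form your degree argument does kill the higher terms.
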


\begin{proof}
The non-negative function $L$ induces a map of Artin fans $L: \mathcal{A}_{\Sigma} \rightarrow \mathcal{A}_{\mathbb{R}_{\ge 0}}$, and, $c_1(L)$ is represented by an effective Cartier divisor $L^*([B\mathbb{G}_m])$. By pulling back this Cartier divisor further along an \'etale cover $j:\cup_{\sigma}\mathcal{A}_{\sigma} \rightarrow \mathcal{A}_{\Sigma}$ from a disjoint union of Artin cones and computing using local toric charts, this divisor is given by:
\begin{equation}
\begin{split}
j^*(L\cap [\mathcal{A}_{\Sigma,\gamma}]) &= \sum_{\sigma}\sum_{\substack{\gamma \subset \gamma'\\L|_{\gamma'} \not= 0\text{ }\dim\gamma' = \dim \gamma + 1}} a_{L,\gamma'}[\mathcal{A}_{\sigma,\gamma'}]\\ &= \sum_{\sigma}\sum_{\substack{\gamma \subset \gamma'\\L|_{\gamma'} \not= 0}} \deg_{\gamma'}(a_{L,\gamma'}Q_{\gamma'})[\mathcal{A}_{\sigma,\gamma'}]\\
&= j^*( \sum_{\gamma \subset \gamma'} \deg_{\gamma'}(a_{L,\gamma'}Q_{\gamma'})[\mathcal{A}_{\Sigma,\gamma'}]).
\end{split}
\end{equation}
Since flat pullback on rational Chow groups along a finite \'etale cover is injective, Equation \ref{PPbase} follows.

%

\end{proof}

\begin{lemma}\label{tbund1}
Let $\tau$ be an Artin fan with an \'etale cover by an Artin cone $\tau' \rightarrow \tau$ and $\pi: \tilde{\tau} \rightarrow \tau$ be a log \'etale modification pulling back to a subdivision $\tilde{\tau'} \rightarrow \tau'$. Then for $\tau^\circ \subset \tau$ the unique closed point, any strata closure $\gamma \subset \tilde{\tau}$ mapping to $\tau^\circ$ and any piecewise polynomial function $P \in PP(\text{Star}(\gamma))$, we have $PQ_{\gamma}$ defines a piecewise polynomial function on $\tilde{\tau}$ and:

\begin{equation}\label{ptbund}
\pi_*(PQ_{\gamma}[\tilde{\tau}]) = \deg_{\tau}(PQ_\gamma)[\tau^{\circ}].
\end{equation}

\end{lemma}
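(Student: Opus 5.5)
Two things need showing: that $PQ_\gamma$ is a well-defined piecewise polynomial on $\tilde{\tau}$, and the pushforward formula \eqref{ptbund}. Both can be checked on the \'etale cover $\tau' \to \tau$ and then descended. Flat pullback along the \'etale cover is injective on rational Chow groups, as in the proof of Lemma \ref{linlem}. The pullback of $\deg_\tau$ is the same as $\deg_{\tau'}$ by the lemma establishing independence of the strict cover. So from now on we take $\tau$ to be a single Artin cone $\mathcal{A}_\tau=[Z_\tau/T_\tau]$ and $\tilde{\tau}\to\tau$ a subdivision (and lattice coarsening), with $\pi$ induced by the proper toric morphism $Z_{\tilde{\tau}}\to Z_\tau$ quotiented by $T_\tau\cong\mathbb{G}_m^{\dim\tau}$.

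\emph{Step 1 (well-definedness).} By construction, $Q_\gamma$ vanishes on every cone of $\tilde{\tau}$ that does not contain $\gamma$ as a face. On cones of $\text{Star}(\gamma)$, $P$ is defined and polynomial on each cone, compatibly with restriction. So we set $PQ_\gamma$ equal to $P\cdot Q_\gamma$ on $\text{Star}(\gamma)$ and $0$ elsewhere. On a face shared between a cone of $\text{Star}(\gamma)$ and a cone outside it, that face does not contain $\gamma$, so $Q_\gamma$ already restricts to $0$ there and the two definitions glue. After passing to the simplicial refinement used to define $Q_\gamma$, this gives a piecewise polynomial on $\tilde{\tau}$.

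\emph{Step 2 (the pushforward).} Under $PP(\tilde{\tau})\cong CH^*_{\text{op}}(\mathcal{A}_{\tilde{\tau}})$, the product $PQ_\gamma\cap[\tilde{\tau}]$ is a $T_\tau$-equivariant class on $Z_{\tilde{\tau}}$ supported on the closed stratum $\overline{O(\gamma)}$, which is the support of $Q_\gamma$. Because $\gamma$ maps to the interior of $\tau$, this stratum maps to the closed orbit $Z_{\tau,\tau}$, so the pushforward is a multiple of $[\tau^\circ]$. Equivariantly, this multiple is the class in $CH^*_{T}(Z_{\tau,\tau})$, which is polynomials on $\tau$, of degree $\deg(PQ_\gamma)-\dim\tau$.

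To identify the coefficient, use the localization/Brion formula: an equivariant pushforward to a point (or to the closed orbit) of a piecewise polynomial class is $\sum_{\omega}e_\omega\,(PQ_\gamma)|_\omega$ over the cones of a complete simplicial refinement. This is precisely Definition \ref{degdef}, provided the extension of $PQ_\gamma$ to $\widetilde{\Sigma}^\tau$ is the one made in the construction of $\deg$.

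Concretely, one embeds $\tilde{\tau}$ into a complete simplicial fan $\widetilde{\Sigma}^\tau$. Since $\gamma$ meets the interior of $\tau$, the smallest face $\omega$ containing $\gamma$ is $\tau$ itself. The construction then extends $Q_\gamma P$ by zero outside $\tilde{\tau}$, so it vanishes on $\partial\tau$, and the extension is compatible. This gives a class on the proper toric variety $Z_{\widetilde{\Sigma}^\tau}$ supported in $Z_{\tilde{\tau}}$. Pushing it forward to $Z_{\Sigma^\tau}$ and restricting to the affine chart $Z_\tau$ recovers $\pi_*(PQ_\gamma[\tilde{\tau}])$. Comparing with the integral over $Z_{\widetilde{\Sigma}^\tau}$ via the projection formula, as in Proposition \ref{degprop}, yields $\deg_\tau(PQ_\gamma)$. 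Example \ref{simpdeg} serves as a sanity check of the normalization.

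\emph{Main obstacle.} The delicate point is Step 2's compatibility: the pushforward from the non-proper $Z_{\tilde{\tau}}$ to the stratum must agree with the equivariant integral over a compactification. The point to check is that the class lives on $\pi^{-1}(Z_{\tau,\tau})$, which is proper over the closed orbit. Equivalently, on the stack quotient, $\tau^\circ=B T_\tau$ and $\pi^{-1}(\tau^\circ)$ is proper. So the pushforward is computed by a proper equivariant integral, and the choice of compactification only adds cones on which the extension vanishes.
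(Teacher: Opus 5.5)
Your proposal is correct and follows essentially the same route as the paper. Both define $PQ_\gamma$ on $\tilde{\tau}$ by extension by zero, using that $Q_\gamma$ vanishes on cones not containing $\gamma$. Both handle the single-cone case with the standard equivariant (Brion-type) pushforward formula, using that $PQ_\gamma$ vanishes on $\partial\tau$. Both then reduce along the cover $\tau'\to\tau$: you use injectivity of $i^*$ on rational Chow groups, while the paper uses $i_*i^*=n$. These are the same fact, and both need the cover to be finite étale of degree $n$, which the paper states explicitly; mere étaleness would not give injectivity.
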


\begin{proof}
Up to further subdividing $\tau$, we assume that $Q_{\gamma}$ is a piecewise polynomial function defined on $\tilde{\tau}$. Since $Q_{\gamma}$ vanishes on all cones of $\tilde{\tau}$ not containing $\gamma$ as a face, extending by zero gives an extension of the piecewise polynomial function on $\text{Star}(\gamma)$. Note that the pushforward in Equation \ref{ptbund} is computed by the equivariant pushforward along $Z_{\tilde{\tau'}} \rightarrow Z_{\tau'}$. Standard formulas for equivariant pushforward, for instance \cite[Proposition $1.1$]{PPM} yield Equation \ref{ptbund} in the case $i: \tau' \rightarrow \tau$ is an isomorphism, observing that $\deg_{\tau}(PQ_{\gamma})$ is a polynomial as $PQ_{\gamma}$ vanishes on proper faces of $\tau$ by assumption on $\gamma$. 

More generally, commutativity of flat pullback with proper pushforward yields 
\begin{equation}\label{comfp}
i_*i^*\pi_*(PQ_{\gamma}[\tilde{\tau}]) = i_*\pi_*i^*(PQ_{\gamma}[\tilde{\tau}]).
\end{equation}
 Since $i: \tau' \rightarrow \tau$ is finite \'etale of degree $n$, the lefthand side of Equation \ref{comfp} is $n\pi_*(PQ_{\gamma}[\tilde{\tau}])$. For the righthand of Equation \ref{comfp},we 
 \begin{equation}
\begin{split}
i_*\pi_*i^*PQ_{\gamma}[\tilde{\tau}] &= i_*\deg_{\tau'}(PQ_{\gamma})[\tau^{'\circ}] \\
&= i_*\deg_{\tau'}(PQ_{\gamma})i^*[\tau^{\circ}]\\
&= n\deg_{\tau}(PQ_{\gamma})[\tau^\circ]\\
\end{split}
\end{equation}
Thus, Equation \ref{comfp} yields Equation \ref{ptbund} after scaling both sides of Equation \ref{ptbund} by the non-zero constant $n$, hence gives the desired equation. 
\end{proof}


Before proceeding to Theorem \ref{mthm1}, we require a more explicit description of strata in Artin fans. For this we consider the following generalization of the star fan of a cone $\tau$ in a fan $\Sigma$ to the setting of cone stacks, as well as the extension of piecewise linear functions on cones $\sigma \in \Sigma$:

\begin{definition}[\cite{logtaut} Definition $83$]
Given a cone stack $\Sigma$ and $\sigma \in \Sigma$, the \emph{star cone stack} $\text{Star}(\sigma)$, whose $\sigma''$ valued points are given by diagrams $(\sigma \rightarrow \sigma' \leftarrow \sigma'')$ with $\sigma'$ minimal among objects in $\Sigma$ receiving maps $j',j''$ from $\sigma,\sigma''$, and morphisms $Hom(\sigma \rightarrow \sigma_1' \leftarrow \sigma_1'',\sigma \rightarrow \sigma_2' \leftarrow \sigma_2'')$ given by pairs of morphisms $\phi': \sigma_1' \rightarrow \sigma_2',\phi'': \sigma_1'' \rightarrow \sigma_2''$ making the natural diagram commute. 

We also define $\Sigma_\sigma$ to be the cone stack given by diagrams $(\sigma'/\sigma \leftarrow \sigma'')$ with $\sigma'$ minimal among all objects in $\Sigma$ receiving maps $i',j'$ from $\sigma,\sigma''$, with morphisms defined as before.
\end{definition}

\begin{definition}
For a cone $\tau \in \Sigma$, a system of extensions is a collection of sections $(-)_{\tau'}: PL(\tau') \rightarrow PL(\text{Star}(\tau'))$ of the restriction map $PL(\text{Star}(\tau')) \rightarrow PL(\tau')$ for $\tau \subset \tau'$ sending non-negative functions on $\tau'$ to non-negative functions on $\text{Star}(\tau)$. Given a system of extensions, and $\gamma \in \widetilde{\Sigma}$ a cone of a refinement such that the interior of $\gamma$ maps to the interior of $\tau$, we define the operators $\deg_{\tau'}(Q_{\gamma}(-))$ as in Definition \ref{degdef}. 
\end{definition}

Implicit in the definition above is the claim that the given system of extensions determines an extension of $Q_{\tau}P$ to a piecewise polynomial function on any fan $\widetilde{\Sigma}^{\tau'}$ used to define $\deg_{\tau'}$. This is clear since $Q_{\gamma}$ as a function vanishes on all cones of $\Sigma$ which do not contain $\tau$ as a face, hence $\deg_{\tau'}(Q_{\gamma}P)$ is independent of the choice of extension data on cones of $\Sigma$ not containing $\tau$.

When $\Sigma$ is a moduli stack of tropical maps, then $\text{Star}(\tau)$ is the moduli stack of tropical maps marked by $\tau$, and extensions of linear functions to $\text{Star}(\tau)$ as edge length and the pullback of piecewise linear functions on $\Sigma(X)$ exist. Note that a system of extensions also yield dual retraction maps $\text{Star}(\tau') \rightarrow \tau'$.

The star cone stack construction together with an extension map $PL(\tau) \rightarrow PL(\text{Star}(\tau))$ provide a convenient description of strata in Artin fans which will be used in the proof of Theorem \ref{mthm1}:

\begin{lemma}\label{stratadecomp}
Let $\mathcal{A}_{\Sigma}$ be an Artin fan associated with a cone stack $\Sigma$, and $\mathcal{A}_{\Sigma,\tau}$ be the closure of the stratum of $\mathcal{A}$ corresponding to a cone $\tau \in \Sigma$. Then there is a strict \'etale morphism of log algebraic stacks $\mathcal{A}_{\text{Star}(\tau)} \rightarrow \mathcal{A}_{\Sigma}$ which is finite onto an open substack of $ \mathcal{A}_{\Sigma}$. Moreover, given an extension map $PL(\tau) \rightarrow PL(\text{Star}(\tau))$, there is an isomorphism of algebraic stacks $\mathcal{A}_{\text{Star}(\tau),\tau}:= \mathcal{A}_{\text{Star}(\tau)}\times_{\mathcal{A}_{\Sigma}} \mathcal{A}_{\Sigma,\tau}\rightarrow \mathcal{A}_{\text{Star}(\tau)_{\tau}}\times \mathcal{A}_{\tau,\tau} $ induced by a morphism of log algebraic stacks $\mathcal{A}_{\text{Star}(\tau)}\rightarrow \mathcal{A}_{\Sigma_{\tau}}\times \mathcal{A}_{\tau} $, and the pullback of a $\mathbb{G}_m$-torsor on $\mathcal{A}_{\text{Star}(\tau)}$ associated with an extension of a linear function on $\tau$ is $\pi_{\mathcal{A}_{\tau}}^*L$ for some $\mathbb{G}_m$-torsor $L$.


\end{lemma}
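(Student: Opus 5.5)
We reduce the statement to the equivalence between Artin fans and cone stacks (\cite{stacktrop} Theorem $6.11$): every claimed morphism of Artin fans is built as a morphism of cone stacks, and stratum closures and torsors are then identified chart by chart on Artin cones.

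\emph{Step 1: the strict \'etale map.} The forgetful functor $\text{Star}(\tau) \rightarrow \Sigma$ sends $(\tau \rightarrow \sigma' \leftarrow \sigma'')$ to $\sigma''$. It is a map of cone stacks which, on every cone $\sigma''$, is the identity of $\sigma''$; hence it is strict. By the equivalence it gives a strict morphism $\mathcal{A}_{\text{Star}(\tau)} \rightarrow \mathcal{A}_{\Sigma}$, and a strict morphism of Artin fans is \'etale.

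Its image is the open substack $U$ consisting of strata whose cones $\sigma''$ admit a map to some $\sigma'$ that also receives $\tau$. This is the open star neighborhood of $\mathcal{A}_{\Sigma,\tau}$, and it is open because the condition is stable under generization. The fiber over the point of $\mathcal{A}_\Sigma$ attached to $\sigma''$ is the set of isomorphism classes of diagrams $(\tau \rightarrow \sigma' \leftarrow \sigma'')$ with $\sigma'$ minimal. This set is finite, since $\Sigma$ has finitely many cones and each cone has finitely many faces and automorphisms. So the map is quasi-finite. Properness over $U$ follows from the valuative criterion applied on Artin cone charts $\mathcal{A}_{\sigma}\rightarrow U$: there the pullback is a disjoint union of copies of the open sub-Artin cones containing the $\tau$-face. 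Hence the map is finite onto $U$.

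\emph{Step 2: the product decomposition.} Define $\mathcal{A}_{\text{Star}(\tau)} \rightarrow \mathcal{A}_{\Sigma_\tau}\times\mathcal{A}_\tau$ by the cone stack map $\text{Star}(\tau) \rightarrow \Sigma_\tau \times \tau$.
\begin{itemize}
\item The first component is the quotient $\sigma'' \mapsto (\sigma'/\tau \leftarrow \sigma'')$.
\item The second component is the retraction $\text{Star}(\tau)\rightarrow \tau$ dual to the chosen extension map $PL(\tau)\rightarrow PL(\text{Star}(\tau))$. Because the extensions send non-negative functions to non-negative functions, this retraction carries each cone into $\tau$.
\end{itemize}

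On a chart $\sigma' \supset \tau$, choose an integral splitting $\sigma'^{\gp} \cong \tau^{\gp}\oplus \sigma'^{\gp}/\tau^{\gp}$ induced by the extension. With respect to it, $\sigma'$ maps to $\tau \times (\sigma'/\tau)$, and the locus lying over the minimal stratum of $\tau$ is cut out by the ideal of the $\tau$-face. Therefore
$$Z_{\sigma',\tau} \cong Z_{\sigma'/\tau}\times Z_{\tau,\tau},$$
and after quotienting by the tori $T_{\sigma'}\cong T_{\tau}\times T_{\sigma'/\tau}$ we get
$$\mathcal{A}_{\sigma',\tau}\cong \mathcal{A}_{\sigma'/\tau}\times\mathcal{A}_{\tau,\tau}.$$

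These chart isomorphisms are compatible with face maps and automorphisms of diagrams, because the extension is a single global section. They therefore glue to the asserted isomorphism of algebraic stacks $\mathcal{A}_{\text{Star}(\tau),\tau}\cong \mathcal{A}_{\text{Star}(\tau)_\tau}\times\mathcal{A}_{\tau,\tau}$. The isomorphism is only of underlying stacks: the log structures differ by the $\tau$-directions.

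\emph{Step 3: the torsor statement.} Let $L\in PL(\tau)$. The $\mathbb{G}_m$-torsor of its extension $L^{\text{ext}}$ on $\mathcal{A}_{\text{Star}(\tau)}$ is the pullback, along the map to $\mathcal{A}_{\mathbb{R}}$ or $\mathcal{A}_{\mathbb{R}_{\ge0}}$, of the universal torsor. By construction $L^{\text{ext}}$ equals $L$ composed with the retraction, so this classifying map factors through $\mathcal{A}_\tau$. Restricting the factorization to the stratum gives $\pi_{\mathcal{A}_\tau}^*\mathcal{O}(L)$.

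\emph{Main obstacle.} The hardest step is the gluing in Step 2 when $\Sigma$ is a genuine cone stack. Automorphisms of $\sigma'$ fixing $\tau$ must act compatibly on the splitting, and this requires the extension to be functorial under the morphisms in $\text{Star}(\tau)$. I would handle it by working directly with the groupoid presentation of $\text{Star}(\tau)$ and checking equivariance on that presentation. A related point is that the retraction may only be rational. If so, I would pass to the lattice coarsening where the extensions become integral; this changes the Artin fans only by root-stack structure and leaves the stratum statement intact.
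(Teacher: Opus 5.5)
Your Steps 2 and 3 follow the paper's route. On each Artin cone, the extension splits the torus, giving $\mathcal{A}_{\sigma',\tau}\cong\mathcal{A}_{\sigma'/\tau}\times\mathcal{A}_{\tau,\tau}$, and these chart isomorphisms are glued. The torsor claim comes from $L^{\text{ext}}$ being $L$ composed with the retraction. Building the global map through the equivalence with cone stacks, rather than by \'etale descent as the paper does, is a harmless variation.

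The gap is the finiteness claim in Step 1. The inference does not go through as written. If the pullback to a chart $\mathcal{A}_\sigma$ is a union of copies of open sub-Artin cones mapping by open immersions, the valuative criterion gives properness only when every copy is all of $\mathcal{A}_\sigma$. That is exactly what has to be shown, and it fails for general cone stacks. Take $\Sigma$ to be two quadrants $\sigma_1,\sigma_2$ glued along both boundary rays $\rho_1,\rho_2$; this is the tropicalization of $\mathbb{P}^2$ with a smooth conic and a transverse line.
\begin{itemize}
\item $\text{Star}(\rho_1)$ has two non-isomorphic cones $(\rho_1\to\sigma_i\leftarrow\rho_2)$ over $\rho_2$, but only one over $0$.
\item So over the open substack $\mathcal{A}_{\rho_2}=[\mathbb{A}^1/\mathbb{G}_m]$, the map is $[\mathbb{P}^1/\mathbb{G}_m]\to[\mathbb{A}^1/\mathbb{G}_m]$. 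Its base change to $\mathbb{A}^1$ is the line with doubled origin.
\item Over the chart $\mathcal{A}_{\sigma_1}$, the pullback is the chart with an extra copy of $\mathcal{A}_{\rho_2}$ attached along the open point. It is not a disjoint union.
\item The $\sigma_i$-points lie in the closure of the $\rho_2$-point, so every open substack containing $\mathcal{A}_{\Sigma,\rho_1}$ contains the $\rho_2$-stratum.
\end{itemize}
Hence the map is not separated, and so not finite, over any such open. An open of this kind is exactly what the later application of the lemma needs.

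The paper's own argument gets finiteness by asserting $\sigma_i\times_{\Sigma}\text{Star}(\tau)=\sigma_i$ for every chart. That is precisely the whole-chart property your argument needs, and the example shows it is not automatic. It does hold when $\Sigma$ is a fan, where $\mathcal{A}_{\text{Star}(\tau)}\to\mathcal{A}_\Sigma$ is an open immersion onto the open star. So Step 1 needs such a hypothesis. Otherwise, the finiteness statement should be made for the closure of the $\tau$-stratum in $\mathcal{A}_{\text{Star}(\tau)}$; in the example, this closure maps isomorphically onto $\mathcal{A}_{\Sigma,\rho_1}$.

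The same caveat affects Step 2. Your chart computation identifies that closure, not the fiber product $\mathcal{A}_{\text{Star}(\tau)}\times_{\mathcal{A}_\Sigma}\mathcal{A}_{\Sigma,\tau}$. The two agree when no cone of $\Sigma$ has two distinct faces identified with $\tau$, but not in general. For a quadrant whose two rays are glued to a single ray $\tau$, the fiber product also contains the closure of the second copy of $\tau$.
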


\begin{proof}
For the first statement, note that it suffices to prove the statement after replacing $\mathcal{A}_{\Sigma}$ with the open Artin fan given as the complement of strata whose fiber product with $\text{Star}(\tau)$ over $\Sigma$ is empty. Now consider a strict \'etale cover by Artin cones $\cup_i \mathcal{A}_{\sigma_i} \rightarrow \mathcal{A}_\Sigma$ and the pullback $\cup_i \mathcal{A}_{\sigma_i}\times_{\mathcal{A}_\Sigma} \mathcal{A}_{\text{Star}(\tau)}$, yielding an \'etale cover. By considering the associated functor of points on cones, we have $\sigma_i \times_{\Sigma} \text{Star}(\tau) = \sigma_i$. Since $\sigma_i \rightarrow \mathcal{A}_{\Sigma}$ is strict and \'etale, we have $\mathcal{A}_{\sigma_i} \times_{\mathcal{A}_{\Sigma}} \mathcal{A}_{\text{Star}(\tau)} \rightarrow \mathcal{A}_{\text{Star}(\tau)} \rightarrow \text{Log}$ is strict and \'etale, hence $\mathcal{A}_{\sigma_i} \times_{\mathcal{A}_{\Sigma}} \mathcal{A}_{\text{Star}(\tau)}$ is an Artin fan. Since $\mathcal{A}_{\sigma_i} \times_{\mathcal{A}_{\Sigma}} \mathcal{A}_{\text{Star}(\tau)}$ in particular satisfies the universal property in the $2$-category of Artin fans, by the equivalence of $2$-categories of \cite[Theorem $6.11$]{stacktrop}, we have $\mathcal{A}_{\sigma_i} \times_{\mathcal{A}_{\Sigma}} \mathcal{A}_{\text{Star}(\tau)} \cong \mathcal{A}_{\sigma_i \times_{\Sigma} \text{Star}(\tau)} \cong \mathcal{A}_{\sigma_i}$, and the projection $\mathcal{A}_{\sigma_i} \times_{\mathcal{A}_{\Sigma}} \mathcal{A}_{\text{Star}(\tau)}  \rightarrow \sigma_i$ is an isomorphism, in particular finite \'etale. Since the property of finite \'etale may be verified \'etale local on the target of a morphism, we have $\mathcal{A}_{\text{Star}(\tau)} \rightarrow \mathcal{A}_{\Sigma}$ is finite \'etale.

To prove the second part of the lemma, we first consider the case when $\mathcal{A}_{\Sigma}$ is an Artin cone $\mathcal{A}_{\sigma}$. In this case, $\mathcal{A}_{\sigma,\tau} = [Z_{\sigma,\tau}/T]$, with $Z_{\sigma,\tau}$ the closure of a torus orbit associated with the cone $\tau \subset \sigma$ in the toric variety associated with the cone $\sigma$. Letting $\text{Fix}(\tau) \subset T$ be the subtorus which fixes $Z_{\sigma,\tau}$ pointwise, recall that $Z_{\sigma,\tau}$ is a toric variety associated with the cone $\sigma_{\tau}$, with a dense torus isomorphic to $T' = T/\text{Fix}(\tau)$. Moreover, we have $T \cong \text{Fix}(\tau)\times T'$. To specify a projection $T \rightarrow \text{Fix}(\tau)$, we pick a map associated with the saturated sublattice of characters on $T$ corresponding to linear functions on $\sigma$ which are extensions of linear functions on $\tau$. It follows that:

$$X_{\tau} \cong [Z_{\sigma,\tau}/T\times \text{Fix}(\tau)] \cong [\mathcal{A}_{\sigma_{\tau}}/\text{Fix}(\tau)] \cong \mathcal{A}_{\sigma_{\tau}} \times \mathcal{A}_{\tau,\tau}.$$
This isomorphism is induced by a morphism of log algebraic stacks $ \mathcal{A}_{\sigma} \rightarrow \mathcal{A}_{\sigma_{\tau}} \times \mathcal{A}_{\tau}$ determined by the quotient map and the map $\mathcal{A}_{\sigma} \rightarrow \mathcal{A}_{\tau}$ corresponding to the extension map $PL(\tau) \rightarrow PL(\sigma)$.

In general, the Artin fan $\mathcal{A}_{Star(\tau)}$ admits an \'etale cover by Artin cones $\cup_i \mathcal{A}_{\sigma_i} \rightarrow \mathcal{A}_{Star(\tau)}$. We produce an \'etale cover of $\mathcal{A}_{Star(\tau),\tau}$ by idealized Artin cones by taking the fiber product $\cup_i \mathcal{A}_{\sigma_i}\times_{\mathcal{A}_{Star(\tau)}} \mathcal{A}_{Star(\tau),\tau} \rightarrow \mathcal{A}_{Star(\tau),\tau}$. Note that a component associated with $\sigma_i$ is empty if $\tau$ is not a face of $\sigma$, and observe $\mathcal{A}_{\sigma_i}\times_{\mathcal{A}_{Star(\tau)}} \mathcal{A}_{Star(\tau),\tau} \cong \mathcal{A}_{{\sigma}_{i\tau}} \times \mathcal{A}_{\tau,\tau} $ by the proven case of the lemma. We similarly have an \'etale cover $\cup_i \mathcal{A}_{{\sigma}_{i}} \times \mathcal{A}_{\tau} \rightarrow \mathcal{A}_{Star(\tau),{\tau}} \times \mathcal{A}_{\tau}$, inducing an \'etale cover $\cup_i \mathcal{A}_{\sigma_{i\tau}} \times \mathcal{A}_{\tau,\tau} \rightarrow \mathcal{A}_{Star(\tau)_{\tau}} \times \mathcal{A}_{\tau,\tau}$. The desired map $\mathcal{A}_{Star(\tau)} \rightarrow\mathcal{A}_{Star(\tau)_{\tau}} \times \mathcal{A}_{\tau}$ is constructed by \'etale descent. More precisely, we have 
$$\mathcal{A}_{\sigma_i} \rightarrow \mathcal{A}_{\sigma_{i\tau}}\times \mathcal{A}_{\tau}$$ by the proven case of the lemma, hence we have maps $$\mathcal{A}_{\sigma_i} \rightarrow \mathcal{A}_{\text{Star}(\tau)_{\tau}}\times \mathcal{A}_{\tau}$$
 These maps are also compatible with further restriction to faces by construction by the compatible system of extensions of linear maps on $\tau$. By noting that $\mathcal{A}_{\text{Star}(\tau)_\tau}$ has an \'etale cover by the same idealized Artin cones which cover $\mathcal{A}_{\text{Star}(\tau),\tau}$, we construct a map in the opposite direction again by \'etale descent.  The remaining desired properties follow by construction.

\end{proof}


We prove Theorem \ref{mthm1} by first proving a general version of the relation at the level of the Chow theory for Artin fans, and use virtual pullback in the sense of \cite{vpull} to produce the desired relation.


\begin{theorem}\label{normalform}
Let $X = \mathcal{A}_{\Sigma}$ be an Artin fan and $\pi: Y = \mathcal{A}_{\widetilde{\Sigma}} \rightarrow \mathcal{A}_{\Sigma}= X$ be a log \'etale map given by the composite of a subdivision and lattice coarsening. Suppose we have cones $\gamma \in \widetilde{\Sigma}$ and $\tau \in \Sigma$ with associated strata closures $Y_{\gamma} = \mathcal{A}_{\widetilde{\Sigma},\gamma}$ and $X_{\tau} = \mathcal{A}_{\Sigma,\tau}$ such that $\Sigma(\pi):\widetilde{\Sigma}\rightarrow \Sigma$ induces a map $\gamma \rightarrow \tau$ which does not factor through a strict face of $\tau$. Then given a system of extensions for $\tau$, with corresponding degree operators $\deg_{\tau'}$, we have the following equation:

\begin{equation}
\pi_*(P\cap [Y_{\gamma}]) = \sum_{\substack{\tau\subset \tau'}} \deg_{\tau'}(Q_{\gamma}P)[X_{\tau'}].
\end{equation}
\end{theorem}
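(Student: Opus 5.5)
The strategy is to reduce to a local computation near the closed stratum $X_\tau$ and then run an induction on strata $\tau'\supset\tau$. The reduction goes through the star construction of Lemma \ref{stratadecomp}.

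\textbf{Reduction to the star.} Since $\pi(Y_\gamma)\subset X_\tau$, we can replace $X=\mathcal{A}_\Sigma$ by the open substack meeting $X_\tau$ and pull back along the strict \'etale map $\mathcal{A}_{\text{Star}(\tau)}\to\mathcal{A}_\Sigma$. By Lemma \ref{stratadecomp}, this map is finite \'etale onto its image. Flat pullback commutes with proper pushforward and is injective on rational Chow groups, as in Lemmas \ref{linlem} and \ref{tbund1}. So it suffices to prove the identity after this base change, and we may assume $\Sigma=\text{Star}(\tau)$.

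Lemma \ref{stratadecomp} then supplies, from the system of extensions, a product decomposition $X_\tau\cong\mathcal{A}_{\Sigma_\tau}\times\mathcal{A}_{\tau,\tau}$, induced by $\mathcal{A}_\Sigma\to\mathcal{A}_{\Sigma_\tau}\times\mathcal{A}_\tau$. Because $\gamma$ maps into the interior of $\tau$, the stratum $Y_\gamma$ lies over $X_\tau$ and factors through the base change of the subdivision $\tilde\tau\to\tau$ along this product map.

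\textbf{Base case.} We first treat the contribution supported on $X_\tau$ itself. Write $P=P^{\text{ext}}+P''$ as in the construction of the extension, where $P^{\text{ext}}$ is a polynomial in extensions of PL functions on $\tilde\tau$ and $P''$ vanishes on $\gamma$.

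For $P^{\text{ext}}$, the class is pulled back from the $\mathcal{A}_\tau$ factor. So $\pi_*(P^{\text{ext}}\cap[Y_\gamma])$ is the pushforward along $\mathcal{A}_{\tilde\tau,\gamma}\to\mathcal{A}_{\tau,\tau}$, cross the fundamental class of the star factor. This gives the coefficient $\deg_\tau(Q_\gamma P^{\text{ext}})[X_\tau]$: writing $[Y_\gamma]=Q_\gamma\cap[\tilde\tau]$ locally (up to the normalizing scalar built into $Q_\gamma$) and applying Lemma \ref{tbund1} gives exactly this coefficient.

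\textbf{Higher strata.} For $P''=\sum_{\gamma\subsetneq\gamma'}Q_{\gamma'}P''_{\gamma'}$, each term is of lower degree and supported on a deeper stratum $Y_{\gamma'}$. The same self-intersection step via Lemma \ref{linlem}, which rewrites $Q_{\gamma'}$ on $Y_\gamma$ as a combination of $[Y_{\gamma'}]$, gives
\[
Q_{\gamma'}P''_{\gamma'}\cap[Y_\gamma]=P''_{\gamma'}\cap(\text{combination of }[Y_{\gamma'}]).
\]
Each $\gamma'$ maps to the interior of a unique $\tau'\supset\tau$. We then apply the theorem inductively on $\deg P$ to the pairs $(\gamma',\tau')$; the induction is well founded because $\deg P''_{\gamma'}<\deg P''$. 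Summing the outputs and matching against the recursive definition of the extension used in $\deg_{\tau'}$ (Definition \ref{degdef}) should reassemble into $\sum_{\tau'}\deg_{\tau'}(Q_\gamma P)[X_{\tau'}]$.

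\textbf{Main obstacle.} The hard step is this bookkeeping. It requires three things:
\begin{enumerate}
\item The coefficient of $[X_{\tau'}]$ from the inductive terms must equal $\deg_{\tau'}(Q_\gamma P)$, computed on the auxiliary fan $\widetilde\Sigma^{\tau'}$ with the extension by zero or by pullback along $\widetilde\Sigma^{\tau'}\to\widetilde\Sigma^{\omega}$.
\item The contributions where $Q_{\gamma}P$ is an extension from a proper face must vanish; this is exactly the second part of Proposition \ref{degprop}.
\item The answer must not depend on the choices of refinement, which follows from the well-definedness part of Proposition \ref{degprop}.
\end{enumerate}
I would first verify all of this in the affine toric case $\Sigma=\sigma$ using equivariant localization on $Z_{\widetilde\Sigma^{\sigma}}$, where $\pi_*$ is computed by the sum $\sum e_\tau P|_\tau$. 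I would then descend to general cone stacks through \'etale covers by Artin cones, exactly as in the proof of Lemma \ref{tbund1}.
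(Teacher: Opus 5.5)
Your plan is essentially the paper's proof: reduce to $\text{Star}(\tau)$ via Lemma \ref{stratadecomp}, then induct on $\deg P$ after splitting $P$ into two parts. The first is a polynomial in extensions from $\tilde\tau$, handled by Lemma \ref{tbund1} together with the vanishing in Proposition \ref{degprop}. The second consists of terms divisible by a PL function $L$ with $L|_{\tilde\tau}=0$, handled by Lemma \ref{linlem}. The paper also first passes to a simple log modification so that $P$ really is a polynomial in PL functions; your splitting $P=P^{\mathrm{ext}}+P''$ tacitly needs this step.

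One correction is needed. Your displayed identity is off by a factor of $Q_\gamma$, which $Q_{\gamma'}$ already contains, so the dimensions do not match; in addition, Lemma \ref{linlem} treats one divisor at a time. Applied to a single factor $L$, it gives $L\cap[Y_\gamma]=\sum_{\gamma'}a_{L,\gamma'}[Y_{\gamma'}]$. The reassembly you flag as the main obstacle then follows at once from the cone-by-cone identity $\sum_{\gamma'}a_{L,\gamma'}Q_{\gamma'}=LQ_\gamma$ and linearity of $\deg_{\tau'}$, with no need to unwind the recursive extension.
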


\begin{proof}
We begin by making a series of reductions. First, we reduce to the case when $X = \text{Star}(\tau)$. If $\pi|_{Y_{\gamma}*}(P\cap[Y_{\gamma}]) =  \sum_{\substack{\tau\subset \tau'}} \deg_{\tau'}(Q_{\gamma}P)[X_{\tau'}] \in A_*(X_{\tau})$ satisfies the desired equation, then proper pushforward along the closed immersion $X_{\tau} \rightarrow X$ yields the desired equation. On the other hand, if we have strict closed immersions $Y_{\gamma} \rightarrow Y'$ and $X_{\tau} \rightarrow X'$ of Artin fans and a proper log \'etale morphism $\pi': Y' \rightarrow X'$ inducing $\pi|_{Y_{\gamma}}$ which satisfies the desired equation, then the desired equation holds for $\pi|_{Y_{\gamma}}$ after pushing forward along the closed immersion $X_{\tau} \rightarrow X'$ by the projection formula. Finally, the proper pushforward map $A_*(X_{\tau}) \rightarrow A_*(X)$ is injective, hence in the previous case we also conclude the desired equation holds for $\pi|_{Y_{\gamma}}$. Indeed, by repeated applications of excision, we reduce to the case when $X$ is an idealized Artin cone and $X_{\tau}$ is the closure of a stratum, which follows by the standard presentation of the equivariant Chow homology of toric varieties, see \cite[Theorem $2.1$]{eqchow}. Since $X_{\tau}$ is contained in the open substack $X' \subset X$ identified in Lemma \ref{stratadecomp} over which $\text{Star}(\tau)$ is finite \'etale, by Lemma \ref{stratadecomp}, if the theorem was true after replacing $Y$ and $Y_{\gamma}$ with $Y \times_{X} \mathcal{A}_{\text{Star}(\tau)}$ and $Y_{\gamma} \times_{X_{\tau}} (\mathcal{A}_{\text{Star}(\tau)}\times_X X_{\tau})$ respectively, then the desired equation holds as in the proof of Lemma \ref{tbund1}. It therefore suffices to assume $X = \text{Star}(\tau)$.

 We additionally reduce to the case that $P$ is a homogeneous polynomial of degree $k$ in piecewise linear functions. To establish this reduction, let $p: \widetilde{Y} \rightarrow Y$ be a log \'etale modification such that $\widetilde{Y}$ is a simple log algebraic stack possessing a piecewise polynomial function $Q_{\gamma}$ such that $p_*Q_{\gamma} \cap [\widetilde{Y}] = [Y_{\gamma}]$. The existence of such a modification is given by \cite[Lemma $3.6$]{intDR}. Since $\widetilde{Y}$ is simple, there exists a factorization $\prod_i P_i = p^*P$. Moreover, we have $Q_{\gamma}= a_{\gamma}\sum_{\gamma'} Q_{\gamma'}$ for some scalar $a_{\gamma} \in \mathbb{Q}$ with $\gamma' \in \Sigma(\widetilde{Y})$ a cone subdividing $\gamma$ with $\dim\gamma' = \dim\gamma$. If the theorem is known when $P$ factors as a product of piecewise linear functions, we have:

\begin{equation}
\begin{split}
\pi_*(P\cap [Y_{\gamma}]) &= \pi_*p_*(p^*P Q_{\gamma} [\widetilde{Y}]) \\
&= \pi_*p_*\sum_{\gamma'} p^*Pa_{\gamma}Q_{\gamma'}[\tilde{Y}_{\gamma'}] \\
&= \sum_{\tau\subset \tau'} \deg_{\tau}(Pa_{\gamma}\sum_{\gamma'}Q_{\gamma'})[X_{\tau'}]\\
&= \sum_{\tau\subset \tau'}\deg_{\tau}(PQ_{\gamma})[X_{\tau'}].
\end{split}
\end{equation}
We therefore may assume $P$ is a polynomial in piecewise linear functions. By passing to a further subdivision, the argument above also ensures we may assume there is a map $Y \rightarrow \mathcal{A}_{\tilde{\tau}}$ factoring the composition $Y \rightarrow X \rightarrow \mathcal{A}_{\tau}$, with the latter map induced by the extension map $PL(\tau) \rightarrow PL(\Sigma)$. 

We prove the theorem by induction on $k = \deg(P)$. When $k = 0$, if $\dim X_{\tau} < \dim Y_{\gamma}$ hence $\dim\gamma < \dim \tau$, then $\pi_*([Y_{\gamma}]) = 0 = \deg_{\tau}(Q_{\gamma})[X_{\tau}]$. When $\dim X_{\tau} = \dim Y_{\gamma}$, both $X_{\tau}$ and $Y_{\gamma}$ have dense open idealized Artin cones $\mathcal{A}_{\tau,\tau}$ and $\mathcal{A}_{\gamma,\gamma}$ respectively. By \cite[Lemma $4.5$]{pbirinv} we have $\pi_*([\mathcal{A}_{\gamma,\gamma}]) = \frac{1}{|\coker(\gamma_{\NN}^{\gp} \rightarrow \tau_{\NN}^{\gp})|}[\mathcal{A}_{\tau,\tau}]$. Since any geometric generic point of $X_{\tau}$ factors through the open substack $\mathcal{A}_{\tau,\tau} \subset X_{\tau}$, and $\mathcal{A}_{\tau,\tau}\times_{X_{\tau}} Y_{\gamma} = \mathcal{A}_{\gamma,\gamma}$, we have $\pi_*([Y_{\gamma}]) = \frac{1}{|\coker(\gamma_{\NN}^{\gp} \rightarrow \tau_{\NN}^{\gp})|}[X_{\tau}]$. On the other hand, it follows by construction of $Q_{\gamma}$ that $\deg_{\tau}(Q_{\gamma}) = \frac{1}{|\coker(\gamma_{\NN}^{\gp} \rightarrow \tau_{\NN}^{\gp})|}$. Indeed, this is by Example \ref{simpdeg} if $\gamma$ is simplicial, and in general, after passing to a simplicial subdivision $\widetilde{\gamma} \rightarrow \gamma$, we have $Q_{\gamma} = \frac{\sum_{\gamma'} Q_{\gamma'}}{\sum_{\gamma'} \deg_{\gamma}(Q_{\gamma'})}$ for $\gamma'$ maximal cones of $\widetilde{\gamma}$, and we have: 

$$\deg_{\tau}(Q_{\gamma}) = \frac{ \sum_{\gamma'} \deg_{\tau}(Q_{\gamma'})}{\sum_{\gamma'}\deg_{\gamma}( Q_{\gamma'})} = \frac{\sum_{\gamma'} \frac{1}{|\coker(\gamma_{\NN}^{\gp} \rightarrow \tau_{\NN}^{\gp})|}}{\sum_{\gamma'} 1} = \frac{1}{|\coker(\gamma_{\NN}^{\gp} \rightarrow \tau_{\NN}^{\gp})|} $$

Now assume that we have proven the Lemma whenever $k'<k$. Since $X = \text{Star}(\tau)$, every PL function on $\tilde{\tau}$ extends to a PL function on $Y$ respecting the system of extensions on $\tau$. Note additionally that we have $$Sym^k(PL(\widetilde{\text{Star}(\tau)}) = \bigoplus_{1=0}^k Sym^i(PL(\tilde{\tau}))\otimes Sym^{k-i}(\text{ker}(res_{\tilde{\tau}})),$$ with $Sym^i(PL(\tilde{\tau}))$ given by polynomials in extensions of piecewise linear functions on $\tilde{\tau}$. By linearity of both sides of the desired equation, it suffices to demonstrate the desired equation for $P$ given by a polynomial in extensions, and for $P$ divisible by a piecewise linear function $L$ with $L|_{\tilde{\tau}} = 0$. 

We first suppose that $P$ is a polynomial in extensions. To show $\pi_*(P\cap [Y_{\gamma}])$ satisfies the desired equation in this case, observe that the second part of Proposition \ref{degprop} ensures that $\deg_{\tau'}(Q_{\gamma}P) = 0$ for $\tau' \in \Sigma(Y)$ containing $\tau$ as a proper face. Hence, the desired equation reduces to:
\[\pi_*(P [Y_{\gamma}]) = \deg_{\tau}(PQ_{\gamma})[X_{\tau}].\]

To verify this equation, we note that by Lemma \ref{stratadecomp}, there is a natural morphism of log algebraic stacks $X \rightarrow \mathcal{A}_{\Sigma_{\tau}} \times \mathcal{A}_{\tau}$ inducing a map of idealized log stacks $X_{\tau} \rightarrow \mathcal{A}_{\Sigma_{\tau}} \times \mathcal{A}_{\tau,\tau}$ which is an isomorphism of algebraic stacks. Thus, in order to compute $\pi_*(P [Y_{\gamma}])$, we may replace $X$ and $X_{\tau}$ with $\mathcal{A}_{\Sigma_{\tau}}\times \mathcal{A}_{\tau}$ and $\mathcal{A}_{\Sigma_{\tau}}\times \mathcal{A}_{\tau,\tau}$ respectively.  With this replacement, the map $Y_{\gamma} \rightarrow X_{\tau}$ factors through the idealized Artin fan $X_{\tilde{\tau}}:=\mathcal{A}_{\Sigma_{\tau}}\times \mathcal{A}_{\widetilde{\tau},\tau}$. Since every PL function on $\tilde{\tau}$ extends to a PL map on $Y$ respecting the system of extensions for $\tau$, the map $Y \rightarrow X$ also factors through $p: Y \rightarrow X' := \mathcal{A}_{\Sigma_{\tau}} \times \mathcal{A}_{\widetilde{\tau}}$. Since the second part of Lemma \ref{stratadecomp} ensures the piecewise polynomial $P \in PP(\Sigma(Y))$ is pulled back along $p$, after letting $\pi': X' \rightarrow X$ be the log alteration factoring $\pi = \pi'\circ p$, the following series of equations now follows:

\begin{equation}
\begin{split}
\pi_*(P[Y_{\gamma}]) &= \pi_*(PQ_{\gamma}[Y])\\
 &= \pi' p_*(p^{*}(PQ_{\gamma})[Y])\\
&= \pi'_*(PQ_{\gamma}[X']))\\
&= \deg_{\tau}(PQ_{\gamma}) [X_{\tau}].
\end{split}
\end{equation}
The third equality above follows from the projection formula and the last equality follows from Lemma \ref{tbund1}. The main theorem thus holds for $P$ a polynomial in extensions.


We now assume that $P$ is divisible by a piecewise linear function $L$ with $L|_{\tilde{\tau}} = 0$, hence in particular $L|_{\gamma} = 0$. By Lemma \ref{linlem}, we have $\pi_*(P'[Y_{\gamma}]) = \sum_{\gamma\subset \gamma'} a_{\gamma'}\pi_*(P'_{\tau}[Y_{\gamma'}])$ with $P'_{\tau}$ a piecewise polynomial of degree $k-1$ and the sum is over $\dim \gamma + 1$ cones $\gamma'$. Note additionally that $Q_{\gamma'} = L_{\gamma'}Q_{\gamma},$ with $L_{\gamma'}$ the pullback of the primitive non-negative integral function on the quotient $\gamma^{'\gp}_{\NN}/\gamma_{\NN}^{\gp} \cong \ZZ$, and $L|_{\gamma'} = a_{\gamma'}L_{\gamma'}$. We therefore have: 
$$\sum_{\gamma\subset \gamma'}a_{\gamma'}Q_{\gamma'} = \sum_{\gamma\subset \gamma'}a_{\gamma'}L_{\gamma'}Q_{\gamma} = LQ_{\gamma},$$
where the last equality holds since since it holds upon restriction to each cone $\gamma'$. Thus, by the induction hypothesis on the degree $k$ of $P$, we have:
\begin{equation}\label{pushp'}
\begin{split}
\pi_*(P'[Y_{\gamma}]) &= \pi_*(P'_{\tau}\sum_{\gamma \subset \gamma'} a_{\gamma'}[Y_{\gamma'}]) \\
&= \sum_{\gamma\subset \gamma'}\sum_{\tau' \subset \tau''} \deg_{\tau''}(a_{\gamma'}P'_{\tau}Q_{\gamma'})[X_{\tau'}]\\
&= \sum_{\tau\subset \tau' \subset \tau''} \deg_{\tau''}(P'_{\tau}\sum_{\gamma \subset \gamma'}a_{\gamma'}Q_{\gamma'})[X_{\tau'}]\\
&=  \sum_{\tau \subset \tau'} \deg_{\tau'}(P'_{\tau}LQ_{\gamma})[X_{\tau'}]\\
&=  \sum_{\tau \subset \tau'} \deg_{\tau'}(P'Q_{\gamma})[X_{\tau'}].
\end{split}
\end{equation}
Equation \ref{pushp'} verifies the Theorem in the final case, and the desired equation follows by linearity.

\end{proof}


\begin{corollary}\label{pushvpull}
Suppose we have an algebraic stack $\mathfrak{M}$ with log structure and a triple of strict morphisms $\mathscr{M}_{\tau} \rightarrow \mathfrak{M}_{\tau} \rightarrow \mathcal{A}_{\Sigma,\tau} \rightarrow \mathcal{A}_{\Sigma}$ with $\mathcal{A}_{\Sigma}$ an Artin fan and a cone $\tau \in \Sigma$. Suppose further that $\mathfrak{M}_{\tau} \rightarrow \mathcal{A}_{\Sigma,\tau}$ is smooth and $\epsilon_{\tau}: \mathscr{M}_{\tau} \rightarrow \mathfrak{M}_{\tau}$ is equipped with a perfect obstruction theory. Let $\mathcal{A}_{\widetilde{\Sigma}} \rightarrow \mathcal{A}_{\Sigma}$ be a log \'etale modification induced by a subdivision and lattice coarsening. For a strata closure $\mathcal{A}_{\widetilde{\Sigma},\gamma} \rightarrow \mathcal{A}_{\widetilde{\Sigma}}$ such that we have an induced map $\mathcal{A}_{\widetilde{\Sigma},\gamma} \rightarrow \mathcal{A}_{\Sigma,\tau}$, consider the pullback squares:

\begin{equation}
\begin{tikzcd}
\mathscr{M}_{\gamma}\arrow[r] \arrow{d} & \arrow{d}\mathfrak{M}_{\gamma} \arrow[r]\arrow[d] & \mathcal{A}_{\widetilde{\Sigma},\gamma}\arrow[d,"p"] \arrow{r}& \mathcal{A}_{\widetilde{\Sigma}}\arrow{d}\\
\mathscr{M}_{\tau}\arrow[r,"\epsilon_{\tau}"]& \mathfrak{M}_{\tau}\arrow[r,"i"] & \mathcal{A}_{\Sigma,\tau} \arrow{r} & \mathcal{A}_{\Sigma}.
\end{tikzcd}
\end{equation}
Then after equipping $\mathfrak{M}_{\gamma} \rightarrow \mathfrak{M}_{\tau}$ with the pulled back obstruction theory, and for any operational Chow class $P \in CH^{\text{op}}(\mathcal{A}_{\widetilde{\Sigma}})$, for $\gamma' \in \widetilde{\Sigma}$ a strict map from a cone $\gamma'$ and $\tau'\rightarrow \Sigma$ the smallest strict map from a cone with $\tau'\times_{\Sigma} \gamma' = \gamma'$, we have:

\begin{equation}
p_*P\epsilon_{\tau}^!i^*[\mathcal{A}_{\widetilde{\Sigma},\gamma'}] = \sum_{\tau' \subset \tau''}\deg_{\tau''}(P)\epsilon_{\tau}^!i^*[\mathcal{A}_{\Sigma,\tau''}].
\end{equation}
\end{corollary}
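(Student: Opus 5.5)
The plan is to deduce the corollary from Theorem \ref{normalform} by pushing the Artin-fan identity through the virtual pullback $\epsilon_{\tau}^!$ composed with the flat pullback $i^*$. The key point is that the Artin-fan identity is formal enough to commute with these operations. I will apply Theorem \ref{normalform} to $\pi: \mathcal{A}_{\widetilde{\Sigma}} \rightarrow \mathcal{A}_{\Sigma}$ with the strata closures $\mathcal{A}_{\widetilde{\Sigma},\gamma'}$ and $\mathcal{A}_{\Sigma,\tau'}$. Here $\tau'$ is the minimal cone receiving $\gamma'$, so that $\gamma' \rightarrow \tau'$ does not factor through a strict face, as the theorem requires. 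This gives
\[\pi_*(P\cap[\mathcal{A}_{\widetilde{\Sigma},\gamma'}]) = \sum_{\tau'\subset\tau''}\deg_{\tau''}(Q_{\gamma'}P)[\mathcal{A}_{\Sigma,\tau''}],\]
and I read the $\deg_{\tau''}(P)$ in the corollary as this expression, with $P$ restricted to the stratum absorbing $Q_{\gamma'}$. Since everything is supported on $\mathcal{A}_{\Sigma,\tau}$, I then regard this as an identity in $A_*(\mathcal{A}_{\Sigma,\tau})$ via the proper map $p$. I will use injectivity of pushforward along the closed immersion, exactly as in the first reduction of the proof of Theorem \ref{normalform}.

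Next I pull back along the smooth (hence flat) strict morphism $i: \mathfrak{M}_{\tau} \rightarrow \mathcal{A}_{\Sigma,\tau}$. Flat pullback commutes with proper pushforward in the cartesian square with $\mathfrak{M}_{\gamma} \rightarrow \mathfrak{M}_{\tau}$. Flat pullback is also compatible with capping by operational classes, where $P$ pulls back to $\mathfrak{M}_{\gamma}$. Because $i$ is strict, the pullback of $[\mathcal{A}_{\Sigma,\tau''}]$ is the class of the corresponding stratum closure in $\mathfrak{M}_{\tau}$, so $i^*[\mathcal{A}_{\Sigma,\tau''}]$ makes sense as written.

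I then apply the virtual pullback $\epsilon_{\tau}^!$ of \cite{vpull}, taking the pulled-back obstruction theory on $\mathscr{M}_{\gamma} \rightarrow \mathfrak{M}_{\gamma}$. Three standard properties of virtual pullback are needed. First, it commutes with proper pushforward along $\mathfrak{M}_{\gamma} \rightarrow \mathfrak{M}_{\tau}$, because the obstruction theory is pulled back and the square is cartesian. Second, it commutes with capping by operational Chow classes, here $P$ and the Chern classes of PL line bundles. Third, it is linear, and the coefficients $\deg_{\tau''}(\cdot)$ are operational classes, namely polynomials in first Chern classes of torsors pulled back from the Artin fan, so they commute past $\epsilon_{\tau}^!$. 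Combining these with the identity from the first paragraph yields the stated equation.

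The main obstacle is the coefficients: $\deg_{\tau''}(Q_{\gamma'}P)$ are equivariant polynomials rather than numbers. I must check that they define operational classes on $\mathcal{A}_{\Sigma,\tau''}$, through the isomorphism $\mathcal{A}_{\text{Star}(\tau''),\tau''}\cong \mathcal{A}_{\text{Star}(\tau'')_{\tau''}}\times\mathcal{A}_{\tau'',\tau''}$ of Lemma \ref{stratadecomp}. I must also check that these classes pull back compatibly along $i$ and $\epsilon_{\tau}$. I would handle this by writing them as polynomials in $c_1$ of the $\mathbb{G}_m$-torsors $\pi^*_{\mathcal{A}_{\tau''}}L$ from Lemma \ref{stratadecomp}, which are pulled back from the Artin fan, and then invoke bivariance of virtual pullback.
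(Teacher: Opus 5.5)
Your proposal is correct and follows essentially the same route as the paper. The paper moves $p_*$ past $\epsilon_{\tau}^! i^*$ (Costello/Herr--Wise together with compatibility of virtual and flat pullback), applies Theorem \ref{normalform} on the Artin fan, and then pulls the resulting sum back through $\epsilon_{\tau}^! i^*$; you do the same steps in reverse order, and you also explicitly check that the coefficients $\deg_{\tau''}(Q_{\gamma'}P)$ act as operational classes commuting with $\epsilon_{\tau}^!$, which the paper leaves implicit.
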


\begin{proof}

By an application Costello-Herr-Wise Theorem \cite{pusherr} on proper pushforward and commutativity of virtual pullback with flat pullback on the first line, Theorem \ref{normalform} on the second line, and finally commutativity of virtual pullback with flat pullback on the third line, we have:

\begin{equation}
\begin{split}
 p_*P\epsilon_{\tau}^!i^*[\mathcal{A}_{\widetilde{\Sigma},\gamma'}] &=\epsilon^!_{\tau}i^*p_*P[\mathcal{A}_{\widetilde{\Sigma},\gamma'}]\\
&=\epsilon^!_{\tau}i^*\sum_{\tau' \subset\tau''} \deg_{\tau''}(P)[\mathcal{A}_{\Sigma,\tau''}]\\
&= \sum_{\tau'\subset \tau''}\deg_{\tau''}(P)\epsilon_{\tau}^!i^*[\mathcal{A}_{\Sigma,\tau''}].
\end{split}
\end{equation}
\end{proof}

To set notation, for tropical types $\pmb\tau$ and $\pmb\tau'$ with $\pmb\tau'$ marked by $\pmb\tau$, we let $\mathscr{M}_{\pmb\tau'} \subset \mathscr{M}(X,\pmb\tau)$ be the moduli stack of punctured log maps marked by $\pmb\tau$ which admit a marking by $\pmb\tau'$. There is a natural forgetful map $\mathscr{M}_{\pmb\tau'} \rightarrow \mathfrak{M}_{\pmb\tau'} \subset \mathfrak{M}(\mathcal{X},\pmb\tau)$ for the analogously defined substack $\mathfrak{M}_{\pmb\tau'}$, which posess a pulled back perfect obstruction theory from $\mathscr{M}(X,\pmb\tau) \rightarrow \mathfrak{M}(\mathcal{X},\pmb\tau)$. There exists an \'etale map $\mathfrak{M}(\mathcal{X},\pmb\tau') \rightarrow \mathfrak{M}_{\pmb\tau'}$, and in particular, $\mathfrak{M}_{\pmb\tau'}$ is equidimensional. We define $[\mathscr{M}_{\pmb\tau'}]^{\vir}$ to be the virtual pullback of $[\mathfrak{M}_{\pmb\tau'}]$.
We now apply Corollary \ref{pushvpull} together with Theorem \ref{decthm} to deduce Theorem \ref{mthm1}:

\begin{proof}[Proof of Theorem \ref{mthm1}]
By the log enhancement of the evaluation map provided by Lemma \ref{logenh}, we have $\ev^*(P)$ is the operational Chow class associated with the piecewise polynomial $\ev^{\trop,*}(P)$. Since $\mathscr{M}(\widetilde{X},\pmb\gamma)\rightarrow \mathfrak{M}_{\gamma\rightarrow \tau}$ is strict, $\ev^*(P)$ is also a Chow class pulled back from $\mathfrak{M}_{\gamma\rightarrow \tau}$. Moreover, the log algebraic stack $\mathfrak{M}_{\gamma\rightarrow \tau}$ is log \'etale over $\mathfrak{M}_{\tau}$, realized via the following Cartesian diagram with vertical morphisms log \'etale:

\begin{equation}
\begin{tikzcd}
\mathfrak{M}_{\gamma\rightarrow \tau} \arrow{r}\arrow{d} & \mathcal{A}_{\gamma\rightarrow \tau}\arrow{d}\\
\mathfrak{M}_{\tau} \arrow{r} & \mathcal{A}_{\mathfrak{M}_{\tau}}.
\end{tikzcd}
\end{equation}

As before, strictness of the top horizontal morphism again ensures that $P \in A^*(\mathfrak{M}_{\gamma\rightarrow \tau})$ is pulled back from $A^*(\mathcal{A}_{\gamma\rightarrow \tau})$. Thus, we use the cartesian square of Theorem \ref{decthm} as input to Corollary \ref{pushvpull} to produce the relation:

\begin{equation}\label{pusheq1}
p_*(P\cap[\mathscr{M}(\widetilde{X},\pmb\gamma)]^{\vir}) = \sum_{\tau\subset \tau'} \deg_{\tau'}(PQ_{\gamma})\epsilon_{\tau}^!i^*[\overline{\mathcal{A}}_{\tau'}].
\end{equation}

Note now that the map $i_{\tau,\tau'}: \mathfrak{M}(\mathcal{X},\pmb\tau') \rightarrow \mathfrak{M}(\mathcal{X},\pmb\tau)$ is \'etale over the fiber product $\mathfrak{M}(\mathcal{X},\pmb\tau)\times_{\overline{\mathcal{A}}_{\tau}} \overline{\mathcal{A}}_{\tau'}$ of degree $|Aut(\tau'/\tau)|$, hence $i_{\tau,\tau'*}[\mathfrak{M}_{\pmb\tau'}] = |Aut(\pmb\tau'/\pmb\tau)|i^*[\overline{\mathcal{A}}_{\tau'}]$. Since the obstruction theory for $\mathscr{M}_{\pmb\tau'} \rightarrow \mathfrak{M}_{\pmb\tau'}$ is pulled back from $\mathscr{M}_{\pmb\tau} \rightarrow \mathfrak{M}_{\pmb\tau}$, it follows by the Costello-Herr-Wise pushforward formula \cite{pusherr} that $$\frac{1}{|Aut(\pmb\tau'/\pmb\tau)|}i_{\tau,\tau'*}[\mathscr{M}(X,\pmb\tau')]^{\vir} =\epsilon^!_{\tau}i^*[\overline{\mathcal{A}}_{\tau'}].$$ Substituting this expression in for the relevant term in the sum on the righthand side of Equation \ref{pusheq1} yields the desired expression assuming that $\deg_{\tau'}(PQ_{\gamma})$ is a tautological operational Chow class. 

To establish this final fact, note that $\deg_{\tau'}(PQ_{\gamma})$ is a product of linear functions on $\tau'$. The $\mathbb{Q}$-vector space of $\mathbb{Q}$-linear functions is spanned by the pullback of edge lengths along $\tau' \rightarrow \Sigma(\mathfrak{M}_{g,n})$ and the pullback of piecewise linear functions $L$ on $\Sigma(X)$ along $\ev_v: \tau' \rightarrow \Sigma(X)$ associated with a vertex $v \in V(G_{\tau'})$. These linear functions on $\tau'$ are extended to piecewise linear functions on $\Sigma(\mathscr{M}(X,\pmb\tau'))$ by pulling back edge lengths along $\Sigma(\mathscr{M}(X,\pmb\tau')) \rightarrow \Sigma(\mathfrak{M}_{G_{\tau'}})$, with $\mathfrak{M}_{G_{\tau'}}$ the moduli space of prestable curves with dual graph marked by $G_{\tau'}$, and the pullback of piecewise linear functions $L$ on $\Sigma(X)$ along $\ev_v: \Sigma(\mathscr{M}(X,\pmb\tau')) \rightarrow \Sigma(X)$ along a family of vertices contained in a family of half edges marked by a half edge containing $v$. 

Since the morphism $\mathscr{M}(X,\pmb\tau') \rightarrow \mathfrak{M}_{G_{\tau'}}$ has a log enhancement, the former piecewise linear functions induce operational Chow classes pulled back from $\mathfrak{M}_{G_{\tau'}}$ are given by $-(\psi_{v,e} + \psi_{v',e})$ for $v,v' \in e$ the two vertices contained in $e$, and in particular are tautological classes. For the PL functions pulled back from $\Sigma(X)$, note that the map $f: C \rightarrow X$ has a log enhancement, so $f^*\mathcal{O}(L) = \mathcal{O}(\Sigma(f)^*(L))$. Observe that the piecewise linear function $\Sigma(f)^*(L)$ has a decomposition $\Sigma(\pi)^*(L') + u(e)\rho_l$, where $L' \in PL(\Sigma(\mathscr{M}(X,\pmb\tau')))$ is the piecewise linear pulled back along $\ev_{v}: \Sigma(\mathscr{M}(X,\pmb\tau')) \rightarrow \Sigma(X)$ and $\rho_l$ the piecewise linear function which has slope one along the leg $l$, and slope zero on all other legs or edges. Pulling back along the section $q_e$, taking Chern classes and subtracting $u(e)c_1(q_e^*\mathcal{O}(\rho_l))$ from both sides of the resulting equation gives
 $$c_1(q_e^*f^*\mathcal{O}(L)) - u(e)c_1(q_e^*\mathcal{O}(\rho_l)) = q_e^*(c_1(f^*\mathcal{O}(L)))  + u(e)\psi_e =  c_1(\mathcal{O}(L')).$$
Since both of terms on the lefthand side of the previous equation are tautological classes, the desired expression follows.

\end{proof}

We now show how Theorem \ref{mthm1} allows us to express the punctured log Gromov-Witten theory of a log \'etale modification $\pi: \widetilde{X} \rightarrow X$ of a pair $(X,D)$ with simple normal crossings in terms of the punctured log Gromov-Witten theory of $X$:

\begin{corollary}\label{reduce}
With notation as above, for any decorated tropical type $\pmb\gamma$ of log stable map to $\widetilde{X}$ which is a lift of a decorated tropical type $\pmb\tau$ giving a map $\st: \mathscr{M}(\widetilde{X},\pmb\gamma) \rightarrow \mathscr{M}(X,\pmb\tau)$, any decorated tropical type $\pmb\gamma'$ marked by $\pmb\gamma$ and any class $\prod_{l \in L(G_{\gamma'})}\alpha_l \in A^*(\prod_{l \in L(G_{\gamma'})} \widetilde{X}_{\pmb\sigma(l)})$, the class $\st_*(\ev^*\alpha\cap[\mathscr{M}_{\pmb\gamma'}]^{\vir})$ can be explicitly written in terms of punctured log Gromov-Witten classes on $X$. 
\end{corollary}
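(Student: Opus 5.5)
The plan is to reduce the statement to Theorem \ref{mthm1} by rewriting the insertion $\prod_l \alpha_l$ in terms of piecewise polynomial classes, up to classes pulled back from $X$.

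\textbf{Step 1: reduction to $\pmb\gamma'=\pmb\gamma$.} Since $\pmb\gamma'$ is marked by $\pmb\gamma$, it is itself a decorated lift of some decorated type $\pmb\tau'$ marked by $\pmb\tau$. The stabilization of $\mathscr{M}_{\pmb\gamma'}$ lands in $\mathscr{M}_{\pmb\tau'}\subset\mathscr{M}(X,\pmb\tau)$. The virtual class $[\mathscr{M}_{\pmb\gamma'}]^{\vir}$ is, up to the factor $|Aut(\pmb\gamma'/\pmb\gamma)|$, the pushforward of $[\mathscr{M}(\widetilde{X},\pmb\gamma')]^{\vir}$ (Costello--Herr--Wise, as in the proof of Theorem \ref{mthm1}). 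So it suffices to treat $\st_*(\ev^*\alpha\cap[\mathscr{M}(\widetilde{X},\pmb\gamma')]^{\vir})$ and then push forward along $\mathscr{M}(X,\pmb\tau')\to\mathscr{M}(X,\pmb\tau)$. After renaming, we may assume $\pmb\gamma'=\pmb\gamma$.

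\textbf{Step 2: decomposing the insertions.} For each leg $l$, the stratum $\widetilde{X}_{\pmb\sigma(l)}$ is a stratum of a toroidal modification of the snc pair $X$. It is therefore a log \'etale modification $\widetilde{X}_{\pmb\sigma(l)}\to X_{\pi(\pmb\sigma(l))}$, a toroidal blowup of an snc stratum, possibly with a toric-bundle structure over the stratum. I would use the standard presentation of the Chow ring of such a modification: $A^*(\widetilde{X}_{\pmb\sigma(l)})$ is generated as a module over $\pi^*A^*(X_{\pi(\pmb\sigma(l))})$ by classes of piecewise polynomials on $\Sigma(\widetilde{X}'_{\pmb\sigma(l)})$. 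These are the Chern classes of the line bundles attached to exceptional PL functions, which can be refined to a simple model via \cite[Lemma 3.6]{intDR}.

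This gives an explicit expression $\alpha_l=\sum_j \pi^*\beta_{l,j}\cdot P_{l,j}$, with $\beta_{l,j}$ a class on the stratum of $X$ and $P_{l,j}\in PP(\Sigma(\widetilde{X}'_{\pmb\sigma(l)}))$. Taking products over legs, we may assume $\alpha=\pi^*\beta\cdot P$ with $P\in PP(\Sigma(\Ev))$. Because $\pi\circ\ev=\ev_X\circ\st$ on underlying stacks, where $\ev_X$ evaluates into the strata of $X$, the projection formula gives
\[
\st_*(\ev^*\alpha\cap[\mathscr{M}(\widetilde{X},\pmb\gamma)]^{\vir})=\ev_X^*\beta\cap \st_*(\ev^*P\cap[\mathscr{M}(\widetilde{X},\pmb\gamma)]^{\vir}).
\]

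\textbf{Step 3: applying Theorem \ref{mthm1}.} I apply Theorem \ref{mthm1} to the pushforward on the right. This writes it as $\sum_{\pmb\tau'}\frac{1}{|Aut|}\deg_{\tau'}(Q_\gamma\Sigma(\ev_{\tau'})^*P)\,[\mathscr{M}(X,\pmb\tau')]^{\vir}$, where each $\deg_{\tau'}$ factor is a tautological operational class. Capping with $\ev_X^*\beta$, pulled back to each $\mathscr{M}(X,\pmb\tau')$, then yields a punctured log Gromov--Witten class on $X$ in the sense defined in the preliminaries. The sum is finite because only finitely many realizable types of bounded dimension are marked by $\pmb\tau$.

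\textbf{Main obstacle.} I expect Step 2 to be the hard part. The difficulty is making the module-generation statement precise and effective when the strata carry the modified log structure $\widetilde{X}'_{\pmb\sigma(l)}$ of Lemma \ref{logenh}, including any lattice coarsenings. The PL functions must live on $\Sigma(\widetilde{X}_{\pmb\sigma(l)})_{\pmb\sigma(l)}$ rather than on the full star, so I need a description compatible with the quotient. My first attempt would be to factor the modification as a sequence of blowups along strata, possibly after further subdivision, since Theorem \ref{mthm1} is insensitive to such refinements. I would then use the blowup formula for Chow rings inductively, with exceptional divisors corresponding to PL functions with vanishing slope along $\pmb\sigma(l)$.
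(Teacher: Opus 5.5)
Your Steps 1 and 3 are fine. Step 2, however, does not merely need to be made precise: its module-generation claim is false for a general snc pair, and overcoming exactly this failure is the content of the corollary.

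Suppose the modification blows up a stratum $V$ of codimension $c$ inside the stratum $X_\sigma$ lying under $\widetilde{X}_{\pmb\sigma(l)}$. Write $j\colon E\to\widetilde{X}_{\pmb\sigma(l)}$ for the exceptional divisor, $g\colon E\to V$ for the projective bundle, and $\zeta=c_1(\mathcal{O}_E(1))$. Besides $\pi^*A_*(X_\sigma)$, the blowup formula \cite[Proposition $6.7(e)$]{Fulint} contributes the classes $j_*(\zeta^k g^*\beta)$ with $\beta\in A^*(V)$ \emph{arbitrary}. On the other hand, $E\cdot\pi^*\delta=j_*g^*(\delta|_V)$, and boundary divisors restrict to $E$ as polynomials in $\zeta$ with coefficients restricted from $X_\sigma$. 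So products of piecewise polynomials with pulled-back classes do not in general produce $j_*g^*\beta$ for $\beta$ outside the image of $A^*(X_\sigma)\to A^*(V)$.

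Concretely, take $X=\mathbb{P}^3$, $D=D_1+D_2$ two general cubic surfaces, $V=D_1\cap D_2$ (a curve of genus $10$), and $\widetilde{X}=\operatorname{Bl}_V X$. Put on a leg with $\pmb\sigma(l)=0$ the insertion $[F_p]-[F_q]$, a difference of two fibres of $E\to V$ with $[p]-[q]$ non-torsion in $\operatorname{Pic}^0(V)$. This class is nonzero in $A^2(\widetilde{X})_{\mathbb{Q}}$. Your module, by contrast, is finite dimensional in this degree and does not contain it. Factoring into further blowups, as your last paragraph suggests, does not help, since such classes are not polynomials in exceptional divisor classes. With the type $\pmb\gamma'$ held fixed, there is no way to feed this insertion into Theorem \ref{mthm1}.

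The missing idea is to let the tropical type degenerate.
\begin{enumerate}
\item Reduce to a single blowup along a stratum $V$: pass to a dominating sequence of smooth blowups and use Corollary \ref{puncmcr1} together with the projection formula.
\item Write each $\alpha_l$ as a sum of classes $\pi^*\beta$ and $\tilde{i}_{l*}\alpha'$, where $\tilde{i}_l$ is the inclusion of the exceptional stratum.
\item Since $\tilde{i}_l$ is pulled back from the inclusion $\mathcal{A}_{\Sigma(\widetilde{\Ev}),\rho}\to\mathcal{A}_{\Sigma(\widetilde{\Ev})}$ of a boundary divisor of the Artin fan, $\ev^*(\tilde{i}_{l*}\alpha')\cap[\mathscr{M}_{\pmb\gamma'}]^{\vir}$ is the pushforward of $\ev^*\alpha'\cap\tilde{i}_l^![\mathscr{M}_{\pmb\gamma'}]^{\vir}$.
\item Lemma \ref{linlem} gives $\tilde{i}_l^![\mathscr{M}_{\pmb\gamma'}]^{\vir}=\sum_{\pmb\gamma''}a_{\pmb\gamma''}[\mathscr{M}_{\pmb\gamma''}]^{\vir}$, a sum over types $\pmb\gamma''$ marked by $\pmb\gamma'$ in which leg $l$ lands on the exceptional stratum.
\item For such types, the evaluation stratum of $l$ is a projective bundle over a stratum of $X$ contained in $V$. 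The projective bundle formula therefore writes $\alpha'$ as a polynomial in $c_1(\mathcal{O}(1))$, which is a PL class, with coefficients pulled back from that deeper stratum of $X$. In the example, $g^*([p]-[q])$ is pulled back from the stratum $V$ of $X$ itself.
\end{enumerate}
Inducting on the number of legs whose evaluation stratum does not yet lie over $V$, you reach precisely the situation $\alpha=\pi^*\beta\cdot P$ of your Steps 2 and 3. There Theorem \ref{mthm1} and the projection formula finish the argument.
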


\begin{proof}
Observe that it suffices to prove the corollary when $\widetilde{X}$ is smooth with simple normal crossings boundary and $\widetilde{X} \rightarrow X$ is the composite of a sequence of blowups along smooth centers. Indeed, such blowups are cofinal among toroidal modifications of $X$, hence there exists a dominating log modification $\widetilde{X}' \rightarrow \widetilde{X}$ such that the induced map $\widetilde{X}' \rightarrow X$ is a given by a sequence of blowups on smooth centers. Now letting $\pmb\omega$ a decorated tropical lift of $\pmb\gamma$, Corollary \ref{puncmcr1} ensures that for any type $\pmb\omega'$ which is a lift of $\pmb\gamma'$ with $\dim \omega' = \dim\gamma'$, stabilization induces a map $\st: \mathscr{M}(\widetilde{X}',\omega) \rightarrow \mathscr{M}(\widetilde{X},\gamma)$ such that $$\st_*([\mathscr{M}_{\pmb\omega'}]^{\vir}) = q[\mathscr{M}_{\pmb\gamma'}]^{\vir}$$ for some $q \in \mathbb{Q}\setminus \{0\}$. By pulling back insertions to $\widetilde{X}'$, the projection formula implies the necessary reduction. Additionally, by repeatedly applying the lemma in the case of a single blowup, we reduce to the case of a single blowup along a smooth stratum $V \subset X$.



Letting $\pi: \widetilde{\Ev} = \prod_{l \in L(G_{\tau})} \widetilde{X}_{\pmb\sigma(l)} \rightarrow \Ev = \prod_{l \in L(G_{\tau})} X_{\pmb\sigma}(L)$ be the induced map on evaluation spaces, observe that $\pi$ is a product of blowups or projective bundles, depending on whether $X_{\pmb\sigma(l)}$ is contained in $V$. We define $\widetilde{V}_{\pmb\sigma(l)} \subset \widetilde{X}_{\pmb\sigma(l)}$ to be the exceptional divisor if $X_{\pmb\sigma(l)} \not\subset V$, and otherwise $\widetilde{V}_{\pmb\sigma(l)} = \widetilde{X}_{\pmb\sigma(l)}$. For a leg $l$ such that $X_{\pmb\sigma(l)}\not\subset V$, we have by \cite[Proposition $6.7(e)$]{Fulint} that the natural map $A_*(X_{\pmb\sigma(l)})\oplus A_*(\widetilde{V}_{\pmb\sigma(l)}) \rightarrow A_*(\widetilde{X}_{\pmb\sigma(l)}) = A^{dim\text{ }X-*}(\widetilde{X}_{\pmb\sigma(l)})$ is surjective. Corollary \ref{puncmcr1} together with the projection formula allows us to reduce to the case that $\alpha$ is in the image of 
$$\tilde{i}: \prod_{l \in L(G_{\tau})} A^*(\widetilde{V}_{\pmb\sigma(l)}) \rightarrow \prod_{l \in L(G_{\tau})} A^*(\widetilde{X}_{\pmb\sigma(l)}).$$
We prove the corollary with this restriction by induction on the number of legs $l$ of a type $\pmb\gamma'$ marked by $\pmb\gamma$ for which $\widetilde{V}_{\pmb\sigma(l)} \not= \widetilde{X}_{\pmb\sigma(l)}$, which we denote by $n_{\gamma'}$.

In the base case $n_{\gamma'} = 0$, letting $\pmb\tau'$ be the unique decorated tropical type which $\pmb\gamma'$ is a tropical lift of, observe $\widetilde{\Ev}_{\gamma'} \rightarrow \Ev_{\tau'}$ is a product of split projective bundles. It follows that we can use \cite[Theorem $3.3(b)$]{Fulint} to express the class $\alpha$ as:
\[\alpha = \prod_{l \in L(G_{\gamma})}\sum_{j=0}^{\text{codim}(V)-1} \ev_{l}^*c_1(\mathcal{O}_{\widetilde{X}_{\sigma(l)}}(1))^jp^*\alpha_j,\]
for classes $\alpha_j \in A^*(\Ev_{\tau'})$ and $\mathcal{O}_{\widetilde{X}_{\sigma(l)}}(1)$ the dual of the tautological class for the projective bundle $\widetilde{X}_{\pmb\sigma(l)} \rightarrow X_{\pmb\sigma(l)}$. The class $c_1(\mathcal{O}_{\widetilde{X}_{\pmb\sigma(l)}}(1))$ is pulled back from a cohomology class on $\mathcal{A}_{\widetilde{X}_{\pmb\sigma(l)}}$ associated with a piecewise linear function $P$ on $\Sigma(\widetilde{X}_{\pmb\sigma(l)})$. Theorem \ref{mthm1} now allows us to conclude the corollary in the base case of the induction.


Now assume we know the corollary holds for $\pmb\gamma'$ such that $n_{\gamma'} <k$ for some $k > 0$, and suppose we have a type $\pmb\gamma'$ such that $n_{\gamma'} = k$. Let $l \in L(G_{\gamma'})$ be a leg such that $\widetilde{V}_{\pmb\sigma(l)} \not= X_{\pmb\sigma(l)}$, and consider the associated divisor $\widetilde{\Ev}_l = \widetilde{\Ev} \times_{X_{\pmb\sigma(l)}} \widetilde{V}_{\pmb\sigma(l)}$ with closed immersion $\tilde{i}_l: \widetilde{\Ev}_l \rightarrow \widetilde{\Ev}$. Since $\tilde{i}_l$ is the inclusion of a stratum, the morphism is the pullback of the inclusion $\mathcal{A}_{\Sigma(\widetilde{\Ev}),\rho} \rightarrow \mathcal{A}_{\Sigma(\widetilde{\Ev})}$ of idealized Artin fans associated with a cone $\rho \in \Sigma^{[1]}(\widetilde{\Ev})$. Consider now the cartesian diagram:

\begin{equation}
\begin{tikzcd}
\mathfrak{M}^{\ev}_{\pmb\gamma'}\times_{\widetilde{\Ev}} \widetilde{\Ev}_{l} \arrow{r}\arrow{d} & \widetilde{\Ev}_{l} \arrow{d}\arrow{r}& \mathcal{A}_{\Sigma(\widetilde{\Ev}),\rho}\arrow{d}\\
\mathfrak{M}^{\ev}_{\pmb\gamma'} \arrow{r} & \widetilde{\Ev} \arrow{r} & \mathcal{A}_{\Sigma(\widetilde{\Ev})}.
\end{tikzcd}
\end{equation}

Since $\mathfrak{M}^{\ev}_{\pmb\gamma'} \rightarrow \widetilde{\Ev}$ is log smooth, $\mathfrak{M}^{\ev}_{\pmb\gamma'}\times_{\widetilde{\Ev}} \widetilde{\Ev}_{l}  \rightarrow \widetilde{\Ev}_{l}$ is log smooth. Since $\widetilde{\Ev}_{l}$ is irreducible representing $P_\rho \cap [\widetilde{\Ev}]$ for a Cartier divisor $P_\rho$ pulled back from the Artin fan, it follows by Lemma \ref{linlem} that $\mathfrak{M}^{\ev}_{\pmb\gamma'}\times_{\widetilde{\Ev}} \widetilde{\Ev}_{l}$ is equidimensional with fundamental class computed as:

$$[\mathfrak{M}^{\ev}_{\pmb\gamma}\times_{\widetilde{\Ev}} \widetilde{\Ev}_{l}] = \tilde{i}_l^!([\mathfrak{M}^{\ev}_{\pmb\gamma'}]) = \sum_{\pmb\gamma' \subset \pmb\gamma''} a_{\pmb\gamma''}[\mathfrak{M}^{\ev}_{\pmb\gamma''}],$$
with $a_{\pmb\gamma'} = |\coker(\Sigma(\ev)^*P_\rho: \gamma^{''\gp}_{\NN}/\gamma^{'\gp}_{\NN} \rightarrow \ZZ)|$. By commutativity of bivariant products, the previous equation yields:

$$\tilde{i}^!_l[\mathscr{M}_{\pmb\gamma'}]^{\vir} = \sum_{\pmb\gamma' \subset \pmb\gamma''} a_{\pmb\gamma''}[\mathscr{M}_{\pmb\gamma''}]^{\vir}.$$
Note that $\tilde{i}:\prod_{l \in L(G_{\tau})} \widetilde{V}_{\pmb\sigma(l)} \rightarrow \widetilde{\Ev}$ factors through $\tilde{i}_l$, and we let $\tilde{i} = \tilde{i}_l\tilde{i}'$ denote the factorization. By commutativity of proper pushforward with bivariant classes and the definition of the pushforward of bivariant classes under regular emeddings, the class $\ev^*\tilde{i}_*\alpha\cap[\mathfrak{M}^{\ev}_{\pmb\gamma}]$ is expressed as:
\begin{equation}\label{pusheq}
\begin{split}
(\ev^*\tilde{i}_*\alpha)\cap[\mathscr{M}_{\pmb\gamma'}]^{\vir} &= (\ev^*\tilde{i}_{l*}\tilde{i}'_*\alpha)\cap[\mathscr{M}_{\pmb\gamma'}]^{\vir}\\
&= (\tilde{i}_{l*}\ev^*\tilde{i}'_*\alpha)\cap[\mathscr{M}_{\pmb\gamma'}]^{\vir} \\
&= \tilde{i}_{l*}(\ev^*\tilde{i}'_*\alpha \tilde{i}_{l}^!([\mathscr{M}_{\pmb\gamma'}]^{\vir}))\\
&= \tilde{i}_{l*}(\sum_{\pmb\gamma' \subset \pmb\gamma''} a_{\pmb\gamma''}\ev^*\tilde{i}'_*\alpha\cap[\mathscr{M}_{\pmb\gamma''}]^{\vir})\\
\end{split}
\end{equation}

The pushforward of the previous class along $\text{st}: \mathscr{M}(\widetilde{X},\pmb\gamma) \rightarrow \mathscr{M}(X,\pmb\tau)$ can be expressed in terms of punctured log Gromov-Witten classes of $X$ by induction.

\end{proof}

\section{Log-Orbifold correspondence for canonical wall structures and generalized broken line expansions}

In this section, we give our first application of Theorem \ref{mthm1}, demonstrating the invariants used to define the canonical wall structure of Gross and Seibert may also be computed via log Gromov-Witten invariants with insertions coming from an alternate log \'etale model, see Equation \ref{blinet} and \ref{wallt}. This presentation in turn will give Corollary \ref{blorb}, providing an expression of the canonical wall structure in terms of orbifold Gromov-Witten invariants, aspects of which were first considered by You in \cite{orbBL}.

Let $(X,D)$ be a log Calabi Yau in the sense of \cite{int_mirror}, i.e. we must have either $c_1(T^{log}_X)$ is nef or antinef, or we may write $c_1(\Omega_X(log\text{ }D)) = K_X  + D =_\mathbb{Q} \sum_i a_iD_i$ with $a_i\ge 0$. In the latter case, the choice of representative of $K_X + D$ determines a subcomplex $\mathscr{P} \subset \Sigma(X)$ called the essential skeleton, with $1$ dimensional cones associated to divisors $D_i$ with $a_i = 0$. In the more restricted setting of \cite{scatt}, Gross and Siebert consider the following collection of tropical types which they call wall types and broken line types respectively. We include a generalization of broken line types for general log Calabi-Yau pair which do not necessarily satisfy the assumptions in \cite{scatt}:

\begin{definition}
For a log Calabi-Yau pair $(X,D)$ satisfying the assumptions of \cite{scatt}, a wall type $\tau = (G,\sigma,u)$ is a type of punctured log map to a target $X$ satisfying:
\begin{enumerate}
\item $G$ is a genus $0$ graph with $L(G) = \{L_{out}\}$ with $\pmb\sigma(L_{out}) \in \mathscr{P}$, and $u_\tau := u(L_{out}) \not= 0$. 
\item $\tau$ is realizable.
\item Let $h: \Gamma(G,l) \rightarrow \Sigma(X)$ be the corresponding universal family of tropical maps, and $\tau_{out}$ the cone corresponding to $L_{out}$. Then dim $\tau = \dim B-2$, and dim $h(\tau_{out}) = \dim B-1$. 
\end{enumerate}
\end{definition}

\begin{definition}\label{bltype}

For a log Calabi-Yau pair in the sense of \cite{int_mirror}, a broken-line type $\tau = (G,\sigma,u)$ is a type of punctured log map to a target $X$ satisfying:
\begin{enumerate}
\item $G$ is a genus $0$ graph with $L(G) = \{L_{\out},L_{\text{in}}\}$ with $\pmb\sigma(L_{\out}),\pmb\sigma(L_{\text{in}}) \in \mathscr{P}$, $u(L_{\out}) \not= 0$ and $u(L_{\text{in}}) \in B(\NN)$. 
\item $\tau$ is realizable and when $D$ contains a zero dimensional stratum is balanced in the sense of \cite{scatt}.
\item Let $h: \Gamma(G,l) \rightarrow \Sigma(X)$ be the corresponding universal family of tropical maps, and $\tau_{out}$ the cone corresponding to $L_{out}$. Then dim $\tau = \dim \pmb\sigma(L_{\out})-1$, and dim $h(\tau_{\out}) = \dim \pmb\sigma(L_{\out})$. 
\end{enumerate}
\end{definition}

The virtual dimension of the moduli stack $\mathscr{M}(X,\pmb\tau)$ for any decorated wall type or broken-line type $\pmb\tau$ is $\dim X - \dim B$. When $\pmb\tau$ is a broken line type, we define
 $$N_{\pmb\tau} = \frac{\deg(\ev_{x_{\out}}^*([\pt]_{X_{\pmb\sigma(L_{\out})}})[\mathscr{M}(X,\pmb\tau)]^{vir})}{|Aut(\tau'/\beta)|},$$
where $\beta$ is the potentially unrealizable combinatorial type with total curve class $\textbf{A}$ with a single vertex mapping to $\{0\} \in \Sigma(X)$ with two legs of contact orders $u(L_{\operatorname{in}})$ and $u(L_{\out})$.

These invariants are used in \cite{scatt} to construct broken line expansions of theta functions $\vartheta_{p,x}$ and wall functions $\mathfrak{p}_{\pmb\tau}$, with $\pmb\tau$ a wall type in the latter case. Both will be given by elements $\kk[Q][\Lambda_{x}]$, with $\kk[Q] = \kk[NE(X)]/I$ for some monomial ideal I such that all but finitely many curve classes are contained in $I$, and $x \in h(\text{int}(\tau_{\out}))$ for the wall function. 

For the broken line type, given $p \in B(\NN)$, $x \in \sigma' \in \Sigma(X)$ a general point with $\dim \sigma = \dim X$ and $\textbf{A} \in NE(X)$, we let $B(p,x,\textbf{A})$ be the collection of all decorated broken-line types $\pmb\tau$ of total degree $\textbf{A}$ with $u(L_{\operatorname{in}}) = p$ and $x \in \im(\tau_{\out})$. Letting $k_{\tau} = |\coker(\tau_{\out,\NN}^{\gp}\rightarrow \pmb\sigma(L_{\out})^{\gp}_{\NN})|$, these invariants are used to define the broken-line expansion:

\begin{equation}\label{blexp}
\vartheta_{p,x}= \sum_{\substack{\pmb\tau \in B(p,x)}} k_{\tau}N_{\pmb\tau}t^{\textbf{A}(\pmb\tau)}z^{-u(L_{\out})} \in \kk[Q][\sigma_{\NN}^{\gp}].
\end{equation}

For wall types, we let $x \in \mathscr{P}\setminus \mathscr{P}^{[n-2]}$ be a point of the essential skeleton not contained in the codimension $2$ skeleton of $\mathscr{P}$ and $p \in \sigma_{x,\NN}^{\gp}$ be an integral tangent vector of $x$ in the smallest cone $\sigma_x$ containing $x$. Let $W(p,x,\textbf{A})$ be the collection of all decorated wall types $\pmb\tau$ of total degree $\textbf{A}$ with $u(L_{\out}) = p$ with $x \in h(int(\tau_{\out})$. Letting $k_{\tau} = |\coker(\tau_{\out,\NN}^{\gp} \rightarrow h(\tau_{\out})^{\gp}_{\NN})|$, we will consider the following product of the associated wall functions:

\begin{equation}\label{wfunc}
\mathfrak{p}_{p,x,\textbf{A}} = \prod_{\pmb\tau \in W(p,x,\textbf{A})} \mathfrak{p}_{\pmb\tau} = \exp(\sum_{\pmb\tau \in W(p,x,\textbf{A})} k_{\tau}N_{\pmb\tau}t^{\textbf{A}}z^{-p})
\end{equation}

In the following two section, we will express both $\vartheta_{p,x}$ as well as $\mathfrak{p}_{p,x,\textbf{A}}$ in terms of orbifold Gromov-Witten theory of a root stack. We will then use this expression in the broken line case in Theorem \ref{blhom} to show that the broken line expansions yield ring homomorphisms outside the setting when $D$ is maximally degenerate. 

\subsection{Broken line expansion from orbifold Gromov-Witten theory of a root stack}

In this subsection, we establish the following using Theorem \ref{mthm1} as well as the log-orbifold correspondence of \cite{BNR2}:

\begin{proposition}\label{blorb}
In the setting of \cite{int_mirror}, the broken-line expansions $\vartheta_{p,x}$ are equal to the orbifold theta functions $\vartheta_{p}(x)$ on a suitable log \'etale model of $X$. 
\end{proposition}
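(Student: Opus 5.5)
I will prove Proposition \ref{blorb} in two stages. First, I rewrite each coefficient $k_\tau N_{\pmb\tau}$ in Equation \ref{blexp} as an ordinary log Gromov--Witten invariant of a log \'etale model $\widetilde{X}$ with a point insertion. Second, I identify that invariant with an orbifold invariant of a root stack via \cite{BNR2}.

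\textbf{Choice of model.} Fix $p\in B(\NN)$ and a general $x$ in a maximal cone $\sigma_x$. Choose a subdivision $\widetilde{\Sigma(X)}\to\Sigma(X)$ that contains the ray $\rho_x=\mathbb{R}_{\ge0}x$, and refine further so that the ray $\rho_p$ through $p$ is a cone. Let $\widetilde{X}$ be the associated log modification. The divisor $\widetilde{D}_{x}$ corresponding to $\rho_x$ is a toric-bundle-type stratum over $X_{\sigma_x}$.

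\textbf{Step 1: a lifted type with transverse contacts.} Consider the decorated type $\pmb\gamma$ of genus $0$ maps to $\widetilde{X}$ of class $\textbf{A}$ with two legs. The input leg has contact order $p$ along $\rho_p$. The output leg is a puncture with contact order $-u$ lying on $\rho_x$. I will apply Theorem \ref{mthm1} to
\[
\st_*\big(\ev_{\out}^*[\pt]_{\widetilde{D}_x}\cap[\mathscr{M}(\widetilde{X},\pmb\gamma)]^{\vir}\big).
\]
The point class on $\widetilde{D}_x$ comes from the point class of $X_{\sigma_x}$ times a top power of a piecewise linear function on $\Sigma(\widetilde{X})_{\rho_x}$.

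By Theorem \ref{mthm1}, this pushforward equals a sum over types $\tau'$ marked by the basic type $\beta$. Each term is weighted by $\deg_{\tau'}(Q_\gamma\Sigma(\ev_{\tau'})^*P)$. The tropical factor $P$ is nonzero on $\tau'$ only if the output vertex can be placed at the general point $x$. A dimension count, exactly as in the worked example after Theorem \ref{mthm1}, then forces $\tau'$ to be a broken-line type with $x\in\im(\tau_{\out})$. On such a $\tau'$ the product vanishes on all proper faces, so Example \ref{simpdeg} evaluates $\deg_{\tau'}$ as a lattice index. I expect this index to be precisely $k_\tau$. Combining this with the $1/|Aut|$ factors gives
\[
\sum_{\pmb\tau\in B(p,x,\textbf{A})}k_\tau N_{\pmb\tau}
\]
as this log invariant of $\widetilde{X}$.

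\textbf{Step 2: passing to orbifold invariants.} Make $\widetilde{X}$ smooth with snc boundary by further blowups; Corollary \ref{puncmcr1} guarantees the invariant is unchanged. The negative contact at $x$ is then a single transverse puncture along one divisor. The log-orbifold correspondence of \cite{BNR2} identifies genus-zero invariants with such contacts with orbifold invariants of the multi-root stack, for large roots. The input $p$ becomes a twisted sector of positive age and the output a large-age sector; this is exactly You's setup for $\vartheta_p(x)$ in \cite{orbBL}. Summing over $\textbf{A}$ with $t^{\textbf{A}}z^{-u}$ recovers both generating functions.

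\textbf{Main obstacle.} I expect two difficulties. The first is the vanishing argument in Step 1: showing that types $\tau'$ which are not broken-line types contribute zero. This needs genericity of $x$ together with the second part of Proposition \ref{degprop}, applied to faces where the insertion pulls back from a smaller cone. The second is that \cite{BNR2} is stated for snc pairs and non-punctured contacts. I will first try to convert the negative contact into a large positive orbifold age, using the standard negative-contact extension of the correspondence in genus zero, and check that the hypotheses on the root stack hold for this choice of model.
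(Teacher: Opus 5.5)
\textbf{The gap: the model and output stratum are tied to $x$ instead of to the output contact order.} You subdivide along $\rho_x=\mathbb{R}_{\ge0}x$ and attach the output puncture there, so that it evaluates into $\widetilde D_x$. This forces $\pmb\sigma(L_{\out})=\rho_x$, and hence $u(L_{\out})\in\rho_{x,\NN}^{\gp}=\ZZ\,x_{\mathrm{prim}}$. Your invariant therefore only sees broken lines whose final exponent is parallel to $x$.

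The exponents $-u(L_{\out})$ in \eqref{blexp} range over all of $\sigma_{x,\NN}^{\gp}$. Once $\dim\sigma_x\ge2$, for general $x$ none of the finitely many exponents surviving modulo $I$ is parallel to $x$. So your series would vanish rather than reproduce $\vartheta_{p,x}$. Put differently, the point class on $\widetilde D_x$ records the curve modulo $\rho_x^{\gp}$. That encodes ``the output leg passes through $x$'' only when the leg direction lies in $\rho_x^{\gp}$.

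\textbf{How the paper avoids this.} It fixes $(\textbf{A},u(L_{\out}))$ and uses \cite[Proposition $6.1$]{BNR2} to choose $\widetilde X$ so that $h(\tau_{\out})$ is a union of cones and every lift of $u(L_{\out})$ has at most one negative and at most one positive component. The output leg then sits on the cone $\kappa=\pmb\sigma(L_{\out})$ spanned by those divisors, with $u(L_{\out})\in\kappa^{\gp}$. The point class on $\widetilde X_\kappa$ splits as $\pi^*[\pt]\cup P_x$. Here $P_x$ is the product of the ray functions of the maximal cone of the quotient fan $\Sigma(\widetilde X)_\kappa$ that contains the image of $x$. Because one quotients by $\kappa^{\gp}\ni u(L_{\out})$, this is exactly the condition that the whole output leg passes through $x$. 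The single-negative-component condition is also precisely the hypothesis of \cite[Theorem $7.1$]{orbBL} that your Step 2 relies on.

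\textbf{Two further steps your sketch skips.}
\begin{enumerate}
\item Theorem \ref{mthm1} is stated for a single realizable lift $\pmb\gamma$ of a single $\pmb\tau$. The space of maps to $\widetilde X$ with prescribed contact orders is instead a union over many realizable types, and with a negative contact it requires the refined virtual class of \cite{BNR2}. The paper decomposes that class as in \eqref{vdcmp}, with multiplicities $a_\omega$, and applies Theorem \ref{mthm1} type by type. It also checks that each broken-line type receives a unique $\omega$, so nothing is double counted.
\item The index produced by Example \ref{simpdeg} is $\deg_\tau(a_\omega P_xQ_\omega)=a_\tau$, not $k_\tau$. Since $k_\tau=|u(L_{\out})|\,a_\tau$, the log generating series must carry the weight $|u(L_{\out})|$, as in \eqref{blinet}, before it matches \eqref{blexp}.
\end{enumerate}
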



%


\begin{proof}
For $\textbf{A} \in NE(X)$, we wish to compute $\vartheta_{p,x}[t^{\textbf{A}}]$. In order to compute the contributions to $\vartheta_{p,x}[t^{\textbf{A}}]$ coming from decorated broken line types with $u(L_{\out})$ fixed, we consider the moduli stack $\mathscr{M}(X,\beta')$, with $\beta'$ the non-realizable tropical type with two marked points with contact orders $p$ and $u(L_{out})$ and total curve class $\textbf{A}$. With $\beta'$ fixed, we choose an snc log \'etale model $\widetilde{X} \rightarrow X$ such that for all realizable tropical types $\pmb\tau$ marked by $\beta$, $h(\tau_{\out})$ is a union of cones of the subdivision and for which any lift $\beta$ of $\beta'$ with $u(L_{\out}) \in \pmb\sigma_x$ for a maximal cone $x \in \sigma_x \subset \Sigma(X)$ has at most one divisor $D_i$ such that $u(L_{\out})(D_i) < 0$ and one divisor $D_j$ for which $u(L_{\out})(D_j) > 0$. Such a subdivision can be found and is cofinal in the collection of all log blowups by \cite[Proposition $6.1$]{BNR2}. Moreover, by considering the refined virtual class of \cite{BNR2}, $\mathscr{M}(\widetilde{X},\beta)$ has refined virtual class of dimension $n-1$ and virtually equidimensional if $u(L_{out})$ is a non-negative contact order, and refined virtual dimension $n-2$ otherwise. In the latter case, we also have $\mathscr{M}(\widetilde{X},\beta)$ is virtually equidimensional. Indeed, the tropical moduli space $\omega$ of any virtual component of $\mathscr{M}(\widetilde{X},\beta)$ must be non-zero, hence $\vdim \mathscr{M}(\widetilde{X},\omega) < \dim n-1$. Since each component has dimension at least the refined dimension, we have $\vdim \mathscr{M}(\widetilde{X},\omega) = \dim n-2$ for all virtual components of $\mathscr{M}(\widetilde{X},\beta)$. We thus have the virtual decomposition:
 
 \begin{equation}\label{vdcmp}
 [\mathscr{M}(\widetilde{X},\beta)]^{\vir} = \sum_{\omega} \frac{a_{\omega}}{|Aut(\omega/\beta)|}[\mathscr{M}(\widetilde{X},\omega)]^{\vir}.
 \end{equation}
 To compute the multiplicity $a_{\omega}$, letting $k$ be the number of components of the boundary of $\widetilde{X}$, we consider the cone stack of tropical punctured maps $T$ to $\mathbb{R}_{\ge 0}^k$, with associated Artin fan $\mathcal{A}_T$, with the cone stack as defined following Definition $1.3$ in \cite{BNR2}. Then $\mathfrak{M}(\widetilde{X},\beta)$ is smooth over the algebraic stack $\mathcal{V}(T)$, defined by the fiber diagram:
 
\begin{equation}
\begin{tikzcd}
\mathcal{V}(T) \arrow{r} \arrow[d] & \mathcal{A}(T)\arrow[d,"s"]\\
B\mathbb{G}_m \arrow[r] & \mathbb{A}^1/\mathbb{G}_m
\end{tikzcd}.
\end{equation}

In particular, $\mathcal{V}(T)$ is equidimensional with $[\mathcal{V}(T)] = \sum_{\omega} a_\omega[\mathcal{V}(\omega)]$, where for $\eta_{\omega}$ the geometric generic point of a component $\mathcal{V}_{\omega}$. Taking local toric charts yields:

\begin{equation}\label{multi}
a_\omega = \val_{\eta_{\omega}}(s) = |\coker(\ev_v:\omega_{\NN}^{\gp} \rightarrow \ZZ)|.
\end{equation}
Virtual pullback of the given decomposition of $[\mathcal{V}(T)]$ now gives the desired description of $[\mathscr{M}(\widetilde{X},\beta)]^{\vir}$.

Since $x \in \Sigma(X)$ was general, we have it's image $x\in \Sigma(X)_{\pmb\sigma(L_{\out})}$ is contained in a maximal cone $x \in \sigma$. As $\Sigma(\widetilde{X})$ is simplicial, so is the quotient $ \Sigma(\widetilde{X})_{\pmb\sigma(L_{\out})}$. Thus, for every one dimensional face $\rho \subset \sigma$, there is a PL function $P_{\rho}$ uniquely determined by having slope $1$ along $\rho$ and vanishing on all other $1$-dimensional faces of $\Sigma(\widetilde{X})$. We set $P_x = \prod_{\rho \in \sigma^{[1]}} P_{\rho}$. Since by assumption the support of the vector $u(L_{\out})$ has size $1$ if $u(L_{\out})$ is a non-negative contact order and size $2$ otherwise, we have $\deg(P) = \dim \sigma-1$ if $u(L_{\out})$ is a non-negative contact order, and $\deg(P) = \dim \sigma - 2$ otherwise.


Letting $\overline{\pmb\omega}$ be the decorated tropical type of map to $X$ for which $\pmb\omega$ is a tropical lift, an application of Theorem \ref{mthm1} now yields:
 \begin{equation}\label{pushbl}
st_*(\ev^*(P_x)\cap[\mathscr{M}(\widetilde{X},\beta)]^{\vir}) = \sum_{\pmb\omega}  \sum_{\overline{\pmb\omega}\subset\pmb\tau} \frac{\deg_{\tau}(a_{\omega}P_xQ_{\omega})}{|Aut(\tau/\beta)|}[\mathscr{M}(X,\pmb\tau)]^{\vir}.
\end{equation}
For future reference, we define 
$$N_{p,u(L_{\out})}^{x,\textbf{A}}:= \deg(\ev^*[\pt]\cap[\mathscr{M}(\widetilde{X},\beta)]^{\vir}) = \deg(\ev^*(\pi^*[\pt]\cup P_x)\cap[\mathscr{M}(\widetilde{X},\beta)]^{\vir}).$$
 We note a tropical type $\pmb\tau$ contributes to the above equation only if the outgoing leg $L_{\out}$ intersects the interior of the maximal cone $\sigma$ containing $x$. In particular, any broken line type contributes to the righthand side of Equation \ref{maineq1}. On the other hand if $\dim h_{\tau}(\tau_{\out}) < \dim \sigma$, then as $\pmb\tau$ is marked by $\beta$, $\tau_{\out}$ cannot intersect the interior of maximal cone by construction of the modification $\widetilde{X} \rightarrow X$. Hence we must have $\dim h_{\tau}(\tau_{\out}) = \dim \sigma$. Since $\deg(P_x) = \vdim \mathscr{M}(\widetilde{X},\beta) +\dim \sigma - \dim X$, we must also have $\dim\tau \le \dim \sigma-1$ for all contributing $\tau$. In order to have $h_{\tau}(\tau_{\out}) = \dim \sigma$, we must in fact have $\dim \tau = \dim \sigma-1$. As the first two conditions for a broken line type are automatically satisfied by $\tau$ and we just verified the third condition, we deduce the only contributions to the righthand side of Equation \ref{maineq1} are broken line types.

To evaluate $\deg_{\tau}(a_\omega P_xQ_{\omega})$, for a broken line type $\pmb\tau$, let $v \in V(G_{\tau})$ be the unique vertex contained in $L_{\out}$ and note that $\ev_{L_{\out}}: \tau \rightarrow \Sigma(X)_{\pmb\sigma(L_{\out})}$ is an inclusion of cones by condition $3$ of Definition \ref{bltype}. After potentially further subdividing, we may assume $\sigma\subset \ev_{L_{\out}}(\tau)$, so $\ev_{L_{\out}}^{-1}(\sigma) \subset \tau$ is a simplicial subcone of dimension $\dim\tau$. Moreover, we have $a_\omega Q_{\omega}$ is a linear function on the cone $\ev_{L_{\out}}^{-1}(\sigma)$ given by pulling back the PL function on $\sigma$ which has slope $1$ on the one dimensional face $\pmb\sigma(v) \subset \sigma$ for $v \in V(G_{\omega})$ the vertex containing $L_{\out}$, and vanishing on all other one dimensional faces. As a result, since $ev_v^*(P_x)$ is a product of integral PL functions on $\tau$, by Example \ref{simpdeg}, we have:

\begin{equation}\label{blmult}
\begin{split}
\deg_{\tau}(a_\omega P_xQ_{\omega}) &= \deg_{\tau}(\ev_v^*(P_\sigma)) \\
&= |\coker(\ev_v: \tau^{\gp}_{\NN} \rightarrow \ZZ^{\dim \sigma} = \sigma^{\gp}_{\NN})| \\
&= a_{\tau}.
\end{split}
\end{equation}
Since $k_{\tau} = a_{\tau}|u(L_{out})|$, multiplying both sides of Equation \ref{pushbl} by $|u(L_{\out})|$ yields:
 
 \begin{equation}
|u(L_{\out})| st_*(\ev^*(P_x)\cap[\mathscr{M}(\widetilde{X},\beta)]^{\vir}) = \sum_{\pmb\omega}\sum_{\overline{\pmb\omega} \subset \pmb\tau \in B(p,x,\textbf{A})} k_{\tau}N_{\pmb\tau} .
 \end{equation}
Finally, in the case that $u(L_{\out})$ is not a positive contact order, we observe that for a broken line type $\pmb\tau \in B(p,x,\textbf{A})$, there exists a unique $\omega$ such that stablization induces a map $\omega \rightarrow \tau$. To see this, note that the vertex $v \in L_{\out} \subset G_{\omega}$ must have image a $1$-dimensional cone of $\Sigma(\widetilde{X})$ corresponding to the non-positive contact order of $u(L_{\out})$. Since the tropical moduli of $\omega$ is completely determined by its image in $\tau$, and the tropical moduli of $\tau$ is completely determined by its image in $\Sigma(X)$, it follows that any two types $\omega_1,\omega_2$ which map to $\tau$ via the tropicalization of the stabilization morphism agree. 

 We therefore conclude the following equality of elements of $\kk[Q][\sigma_{\NN}^{\gp}]$:
 
 \begin{equation}\label{blinet}
 \sum_{\beta} |u(L_{out})|\deg(\ev^*(\pt)\cap[\mathscr{M}(\widetilde{X},\beta)]^{\vir})t^{\textbf{A}}z^{-u(L_{out})} = \vartheta_{p,x}.
 \end{equation}
 By the main result of \cite{BNR2}, the invariants used to define the lefthand side of Equation \ref{blinet} are also given by $\deg(\ev^*(P)\cap[\mathscr{M}^{\orb}(\widetilde{X}_{\vec{r}},\beta)]^{\vir})$ for some choice of piecewise polynomial $P$, with $\mathscr{M}^{\orb}(\widetilde{X}_{\vec{r}},\beta)$ the moduli stack of twisted stable maps to a root stack $\widetilde{X}_{\vec{r}} \rightarrow \widetilde{X}$, with $\vec{r} \in \NN^k$ a vector of sufficiently large integers with $k$ the number of boundary divisors of $\widetilde{X}$. Moreover, in the broken line case, You in \cite{orbBL} constructs an analogous broken-line expansion for orbifold theta functions on snc log Calabi-Yau pairs using orbifold invariants. By \cite[Theorem $7.1$]{orbBL}, under the assumption that the output contact order $u(L_{out})$ contains only one negative contact order, You shows that the orbifold broken line expansion is equal to the left hand side of Equation \ref{blinet}, with log invariants replaced with orbifold invariants. We therefore conclude that You's broken line theta function $\vartheta_p(x)$ on an appropriate cofinal collection of log \'etale blowup coincides with intrinsic broken line expansion.
 \end{proof}
 
 \subsection{Wall functions from orbifold Gromov-Witten theory of a root stack}
 
 \begin{proposition}\label{worb}
 The product of wall function $\mathfrak{p}_{p,x,\textbf{A}}$ is the exponential of a generating series of orbifold Gromov-Witten invariants of root stacks of a cofinal collection of log blowups. 
 \end{proposition}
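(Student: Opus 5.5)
The plan is to mirror the proof of Proposition \ref{blorb}, with the one-leg wall types in place of broken line types. Fix $x \in \mathscr{P}\setminus\mathscr{P}^{[n-2]}$ general, $p \in \sigma_{x,\NN}^{\gp}$, and $\textbf{A}$. Let $\beta'$ be the (non-realizable) combinatorial type of genus $0$ with a single leg $L_{\out}$ of contact order $-p$ and total class $\textbf{A}$. Using \cite[Proposition $6.1$]{BNR2}, choose an snc log \'etale model $\widetilde{X}\rightarrow X$, cofinal among log blowups, with two properties. First, for every realizable type marked by $\beta'$, the image $h(\tau_{\out})$ is a union of cones. Second, every lift $\beta$ of $\beta'$ has outgoing contact order with at most one negative and one positive entry. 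As in the proof of Proposition \ref{blorb}, the refined virtual class of \cite{BNR2} makes $\mathscr{M}(\widetilde{X},\beta)$ virtually equidimensional. We then get a virtual decomposition $[\mathscr{M}(\widetilde{X},\beta)]^{\vir}=\sum_\omega \frac{a_\omega}{|Aut(\omega/\beta)|}[\mathscr{M}(\widetilde{X},\omega)]^{\vir}$, with $a_\omega$ computed by local toric charts on $\mathcal{V}(T)$ as in Equation \ref{multi}.

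The insertion is as follows. Since $x$ lies in the interior of a codimension-one cone of $\mathscr{P}$, the wall type dimension count has $\dim\tau=\dim B-2$ and $\dim h(\tau_{\out})=\dim B-1$. So I take $P_x=\prod_{\rho}P_\rho$ over the rays of a simplicial cone $\sigma\subset \Sigma(\widetilde{X})_{\pmb\sigma(L_{\out})}$ containing the image of $x$, paired with a point class on the stratum. The degree is arranged so that $\deg(\ev^*(\pi^*[\pt]\cup P_x)\cap[\mathscr{M}(\widetilde{X},\beta)]^{\vir})$ is a number. Applying Theorem \ref{mthm1} then gives an analogue of Equation \ref{pushbl}. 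The right-hand side is a sum over $\pmb\tau$ marked by $\overline{\pmb\omega}$, with coefficients $\deg_\tau(a_\omega P_xQ_\omega)$. By the same dimension count as for broken lines, the contributing $\tau$ satisfy $\dim h(\tau_{\out})=\dim B-1$ and $\dim\tau=\dim B-2$, and genus $0$ with one leg is forced. So exactly the wall types in $W(p,x,\textbf{A})$ contribute. For such $\tau$, Example \ref{simpdeg} computes $\deg_\tau(a_\omega P_xQ_\omega)$ as a lattice index. Multiplying by $|p|$ turns this into $k_\tau$, using $k_\tau=a_\tau|u(L_{\out})|$ as in Equation \ref{blmult}. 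Uniqueness of the lift $\omega$ of each $\tau$ follows as before, since the vertex on $L_{\out}$ must map to the ray of the negative contact order.

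Summing over lifts $\beta$ yields $\sum_{\pmb\tau\in W(p,x,\textbf{A})}k_\tau N_{\pmb\tau}=\sum_\beta |p|\,\deg(\ev^*(\pt\cdot P_x)\cap[\mathscr{M}(\widetilde{X},\beta)]^{\vir})$. Exponentiating as in Equation \ref{wfunc} expresses $\mathfrak{p}_{p,x,\textbf{A}}$ as the exponential of a generating series of log invariants of $\widetilde{X}$ with piecewise polynomial insertions. The log-orbifold correspondence of \cite{BNR2} then replaces each such invariant with an orbifold invariant of the root stack $\widetilde{X}_{\vec r}$ for $\vec r$ sufficiently large, which proves the proposition.

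I expect the main obstacle to be the dimension bookkeeping in the wall case. Here $x$ lies on a codimension-one cone rather than in a maximal cone, so I need to check that the chosen $P_x$ forces $\dim h(\tau_{\out})=\dim B-1$ rather than allowing contributions from types whose $\tau_{\out}$ sweeps a full-dimensional cone. I also need $Q_\omega P_x$ to vanish on all proper faces, so that Example \ref{simpdeg} applies. I would first try choosing $\sigma$ inside the quotient fan $\Sigma(\widetilde{X})_{\pmb\sigma(L_{\out})}$ restricted to the image of $\mathscr{P}$, so that this works. If that fails, I would instead impose the constraint through the stratum containing $x$, using the refinement's hypothesis that walls are unions of cones.
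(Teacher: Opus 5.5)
Your architecture matches the paper's: the refinement via \cite[Proposition $6.1$]{BNR2}, the decomposition (\ref{vdcmp}), Theorem \ref{mthm1}, identifying the contributing types with $W(p,x,\textbf{A})$, the lattice index $a_\tau$, multiplication by $|u(L_{\out})|$, exponentiation, and finally \cite{BNR2}. The gap is the insertion. You transplanted it from the broken-line case, and it has the wrong dimension.

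First, there should be no point class. For a wall type $\vdim\mathscr{M}(X,\pmb\tau)=\dim X-\dim B=0$, and $N_{\pmb\tau}$ is the degree of the virtual class itself. If the minimal cone $\sigma_x\in\Sigma(X)$ containing $x$ is maximal, your point class is vacuous. If instead the wall lies in a codimension-one cone $\sigma_x$ (allowed, since $x$ need only avoid $\mathscr{P}^{[n-2]}$), then $X_{\sigma_x}$ is a curve and $\ev^*[\pt]$ is a genuine codimension-one condition that kills every wall type. Either the total degree exceeds $\vdim\mathscr{M}(\widetilde{X},\beta)$, or you lower $\deg P_x$ by one to ``arrange a number''. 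In the latter case $\deg_\tau(a_\omega P_xQ_\omega)$ is homogeneous of degree $-1$ on every $\tau$ with $\dim\tau=\dim X-2$ (Proposition \ref{degprop}), so no wall type survives on the right of (\ref{maineq1}).

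Second, suppose your cone in $\Sigma(\widetilde{X})_{\pmb\sigma(L_{\out})}$ is a maximal one through the image of $x$, as in the broken-line case. When $u(L_{\out})$ is non-negative it has $\dim X-1$ rays, one more than $\vdim\mathscr{M}(\widetilde{X},\beta)=\dim X-2$, so the left side vanishes identically. Restricting to the image of $\mathscr{P}$ does not help: the problem is the dimension of the cone, not where it lies.

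What is needed is your fallback, made precise as in the paper.
\begin{itemize}
\item Choose $x$ as a general point of $h(\omega'_{\out})$ for some type $\pmb\omega'$ marked by $\beta'$ with nonzero virtual class and $\dim h(\omega'_{\out})=\dim X-1$. A general point of $\mathscr{P}$ lies on no wall, so $W(p,x,\textbf{A})$ would be empty otherwise. The virtual-dimension bound $\dim\omega'\le\dim X-2$ already rules out legs sweeping full-dimensional cones.
\item Refine so that all such images are unions of cones. Then the minimal cone $\sigma\in\widetilde{\Sigma(X)}$ containing $x$ has dimension $\dim X-1$. This cone of the refinement, not a codimension-one cone of $\mathscr{P}$, is the relevant codimension-one cone.
\item Take the lift with $u(L_{\out})\in\text{Star}(\sigma)$. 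Let $P_x$ be the product of the ray functions of the image of $\sigma$ in $\Sigma(\widetilde{X})_{\pmb\sigma(L_{\out})}$. This is a non-maximal cone with $\dim X-2$ rays, or $\dim X-3$ when $u(L_{\out})$ has a negative entry.
\end{itemize}
Then $\deg P_x=\vdim\mathscr{M}(\widetilde{X},\beta)$, and $\ev^*(P_x)$ alone is the insertion: it cuts the evaluation down to the one-dimensional stratum $\widetilde{X}_\sigma$, not to a point. A contributing $\pmb\tau$ must have $L_{\out}$ meeting the relative interior of $\sigma$, so $\dim h(\tau_{\out})=\dim X-1$ because images are unions of cones. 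With this, the rest of your argument goes through and gives (\ref{wallt}).
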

 
\begin{proof}
We now turn to the wall type case, in particular we assume $(X,D)$ satisfies the additional assumptions taken in \cite{scatt}. We wish to express wall type invariants by integrals on an appropriate moduli stack $\mathscr{M}(\widetilde{X},\beta)$ for an appropriate choice of subdivision $\widetilde{X}$ and lift $\beta$ of a fixed combinatorial type $\beta'$ with one leg $L_{\out}$ and total curve class $\textbf{A}$. Note that for any decorated tropical type $\pmb\omega'$ marked by $\beta'$, we have $\vdim \mathscr{M}(X,\pmb\omega') \le \dim X - 2$. Consider the images $\Sigma(h_{\pmb\omega'}(\omega'_{\out})) \subset \Sigma(X)$ for decorated tropical types $\pmb\omega'$ types marked by $\beta'$ such that $[\mathscr{M}(X,\pmb\omega')]^{\vir} \not =0$. Virtual dimension constraints ensure that $\dim \omega' \le \dim X - 2$, hence in particular $\dim\Sigma(h_{\omega'}(L_{\out}))\le \dim X-1$. 

After picking a general point $x \in\Sigma(h_{\omega'}(\omega'_{\out}))$ for some $\omega'$ with $\dim\Sigma(h_{\omega'}(L_{\out}))= \dim X-1$, consider a log blowup $\widetilde{X} \rightarrow X$ and lift $\beta$ of $\beta'$ such that for any realizable tropical type $\omega$ marked by $\beta'$, the image $h_{\omega}: \omega_{\out} \rightarrow \Sigma(X)$ is a union of cones of $\widetilde{\Sigma(X)}$, the lift $u(L_{\out})$ contained in $\text{Star}(\sigma)$ for $\sigma$ the minimal dimension cone containing $x$ has at most one negative contact order and there is at most one component $D_i \subset D$ such that $u(L_{\out})(D_i) < 0$. As in the broken line case, this blowup exists and is cofinal in the collection of log blowups by \cite[Proposition $6.1$]{BNR2}, and $\mathscr{M}(\widetilde{X},\beta)$ is virtually equidimensional of dimension $\dim X - 2$ if $u(L_{\out})$ is a non-negative contact order, and $\dim X -3$ otherwise, with virtual class as in Equation \ref{vdcmp}. Since $x$ is a general point of $h_{\omega'}(\omega'_{\out})$, we have $\dim \sigma = \dim X - 1$.

Let $P_{x}$ be the piecewise polynomial function given by the product of all piecewise linear function associated with one dimensional faces of the image of $\sigma$ under $\text{Star}(\pmb\sigma(L_{\out})) \rightarrow \Sigma(\widetilde{X})_{\pmb\sigma(L_{\out})}$. In particular, if $p$ is a non-negative contact order, we have $\deg(P_{x}) = \dim X -2$ and otherwise $\deg(P_x) = \dim X - 3$. Applying Theorem \ref{mthm1} to the invariant $st_*(P_{x}\cap[\mathscr{M}(\widetilde{X},\beta)]^{\vir})$ gives:
 
 \begin{equation}
 st_*(\ev^*(P_x)\cap[\mathscr{M}(\widetilde{X},\beta)]^{\vir}) = \sum_{\pmb\omega}  a_{\omega}\sum_{\pmb\omega\subset \pmb\tau} \frac{\deg_{\tau}(P_xQ_{\omega})}{|Aut(\tau/\beta)|}[\mathscr{M}(X,\pmb\tau)]^{\vir}.
 \end{equation}
 
Now let $W(p,x,\textbf{A})$ be the set of wall types $\pmb\tau$ on $X$ with $u(L_{\out}) = p$ and $h(\tau_{\out})\cap\sigma^{\circ} \not= \emptyset$. As in the broken line case, a type $\pmb\tau$ only contributes to the righthand side of the above equation if $L_{\out}$ intersects the interior of the cone $\sigma$. By the same argument as in the broken line case, we have only wall types contained in $W(p,x)$ contribute to the righthand side. An analogous argument to the broken line case yields $\deg_{\tau}(a_{\omega}P_xQ_{\omega}) = |\ev_v: \tau^{\gp}_{\NN}\rightarrow \sigma(v)| = a_{\tau}$, and since $k_{\tau} = |u(L_{\out})|a_{\tau}$, we conclude that:
 
  \begin{equation}\label{wallt}
|u(L_{\out})| st_*(\ev^*(P_x)\cap[\mathscr{M}(\widetilde{X},\beta)]^{\vir})t^{\textbf{A}}z^{-u(L_{\out})} = \sum_{\pmb\tau \in W(p,x,\textbf{A})} k_{\tau}N_{\pmb\tau} t^{\textbf{A}}z^{-u(L_{\out})}.
 \end{equation}
After exponentiating both sides of the above equation, we recover product of wall function $\mathfrak{p}_{p,x,\textbf{A}}$. As in the broken line case, the main result of \cite{BNR2} ensures that the invariant on the lefthand side of Equation \ref{wallt} is an orbifold Gromov-Witten invariant on a blowup contained in a cofinal collection, as desired. 
\end{proof}

 \subsection{Broken line expansions respect the product rule}
 
In \cite[Theorem $6.1$]{scatt}, Gross and Siebert show that the assignment of broken line expansions to theta functions extends to an algebra homomorphism $R_{(X,D)} \rightarrow \kk[\sigma_{p,\NN}^{\gp}][Q]$. Using the relationship with generalized broken line expansions with orbifold Gromov-Witten invariants, we generalize this result to all log Calabi-Yau pairs considered in the more general setting of \cite{int_mirror} in the following theorem:

 \begin{theorem}\label{blhom}
For $p,q \in B(\NN)$, $x \in \sigma \in B$ a general point of a maximal dimensional cone, and $s \in \sigma_{\NN}^{\gp}$, we have:
 \begin{equation}\label{blWDVV}
 \sum_{(\textbf{A}_1, \textbf{A}_2,s')} N_{p_1,p_2,s'}^{\textbf{A}_1}|s|N_{s',s}^{x,\textbf{A}_2} = \sum_{(\textbf{A}_1, \textbf{A}_2 ,s')} |s'|N_{p_2,s'}^{x,\textbf{A}_1}|s-s'|N_{p_1,s-s'}^{x,\textbf{A}_2}.
 \end{equation}
 In particular, the assignment of broken line expansions extends to a homomorphism of $\kk[Q]$ algebras $R_{(X,D)} \rightarrow \kk[\sigma_{p,\NN}^{\gp}][Q]$.
 \end{theorem}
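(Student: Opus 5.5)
We prove Equation \ref{blWDVV} as a WDVV-type relation on a suitable log \'etale model $\widetilde{X}$. Both sides are the pushforward of a single genus $0$ punctured class with four markings, evaluated over two different boundary points of $\overline{M}_{0,4}$. Concretely, fix $p_1,p_2,s$ and a total class $\textbf{A}$. Consider the combinatorial type $\beta$ of genus $0$ punctured maps to $X$ with four legs: two inputs of contact orders $p_1,p_2$; an output $L_{\out}$ of contact order $-s$ (lifted to $\widetilde{X}$ as in the proof of Proposition \ref{blorb}); and an auxiliary marked point $y$ of contact order $0$. Choose $\widetilde{X}\rightarrow X$ by \cite[Proposition $6.1$]{BNR2}, cofinal and compatible with all finitely many types marked by the relevant contact orders with class $\le \textbf{A}$, so that the refined virtual classes are virtually equidimensional as in Equation \ref{vdcmp}. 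The cycle to study is
\[
\ev_{L_{\out}}^*(\pi^*[\pt]\cup P_x)\cup \psi_y^{k}\cap[\mathscr{M}(\widetilde{X},\beta)]^{\vir},
\]
with $k$ chosen to cut the dimension to $1$, or without descendants if the dimension count already works. We then push this cycle forward along the forgetful map to $\overline{M}_{0,4}$ (markings $p_1,p_2,L_{\out},y$) and intersect with the two linearly equivalent boundary points $\{p_1p_2\,|\,L_{\out}y\}$ and $\{p_2 L_{\out}\,|\,p_1 y\}$.

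Next, apply the gluing/splitting formalism for punctured invariants of \cite{punc} (or, equivalently, transfer to orbifold invariants of a root stack $\widetilde{X}_{\vec r}$ via \cite{BNR2} and use the ordinary orbifold splitting axiom, as in \cite{logWDVV}). On $\{p_1p_2|L_{\out}y\}$ the curve splits into a component carrying $p_1,p_2$ and a node of contact order $s'$. That piece gives the structure constant $N^{\textbf{A}_1}_{p_1,p_2,s'}$ of $R_{(X,D)}$ from \cite{int_mirror}: a punctured invariant with point constraint at the origin-type stratum. The other piece, carrying the node as an input of contact order $s'$ and the output $L_{\out}$ constrained by $P_x$ and a point, is exactly the two-pointed invariant $N^{x,\textbf{A}_2}_{s',s}$ appearing in Equation \ref{blinet}. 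The factor $|s|$ comes from the normalization in \eqref{blinet}, and the gluing multiplicity of the node is absorbed as in \cite{logWDVV}. On the other boundary point, the curve splits into two pieces. One contains $p_2$ and the output, with node of contact order $s'$. The other contains $p_1$ and the $x$-constrained side carrying contact order $s-s'$. Using Theorem \ref{mthm1} and the argument of Proposition \ref{blorb}, each piece is identified with a two-pointed broken-line invariant on $\widetilde{X}$. This yields the right hand side, with the factors $|s'|$ and $|s-s'|$ arising from the splitting of $P_x$ and the point class via the K\"unneth decomposition of the diagonal in $\Ev$.

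Finally, the algebra statement follows formally. By Equation \ref{blinet}, $\vartheta_{p,x}=\sum |u|N^{x,\textbf{A}}_{p,u}t^{\textbf{A}}z^{-u}$, so the coefficient of $t^{\textbf{A}}z^{-s}$ in $\vartheta_{p_1,x}\vartheta_{p_2,x}$ is the right hand side of \eqref{blWDVV}. The coefficient in $\sum_{s'}N_{p_1,p_2,s'}\vartheta_{s',x}$ is the left hand side. Hence $\vartheta_p\mapsto\vartheta_{p,x}$ respects the product rule of $R_{(X,D)}$, and $\kk[Q]$-linearity is clear. The constant term is the $s=0$ coefficient, a descendent invariant, as claimed in the introduction.

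The main obstacle is the middle step: identifying the boundary restrictions with products of the named invariants. Here the diagonal of $\Ev$ is not log-compatible, and the gluing of punctured maps along nodes with negative contact orders is only known virtually after passing to refined/orbifold models. We would handle this by working entirely on the orbifold side using \cite{BNR2} and \cite[Theorem 7.1]{orbBL}, where the splitting axiom is standard. Theorem \ref{mthm1} is then used only to pull the answer back to the punctured invariants $N^{x,\textbf{A}}$ on $X$. We also still need to check that the lattice-index factors match $|s|,|s'|,|s-s'|$ through Example \ref{simpdeg}.
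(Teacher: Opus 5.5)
There is a genuine gap, and it already shows up in the dimension count. After imposing $\ev_{L_{\out}}^*(\pi^*[\pt]\cup P_x)$, your four-pointed cycle with markings $p_1,p_2,L_{\out},y$ has dimension $(\dim X+1-\#\text{neg})-(\dim X-1-\#\text{neg})=2$. So you are forced to take $k=1$. But $y$ has contact order $0$ and no other insertion. On each of your two boundary points, the component containing $y$ has exactly two other special points: the node and $L_{\out}$, respectively the node and $p_1$.

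Since $\psi_y$ restricts to the $\psi_y$ of that factor, the dilaton equation gives every term the factor $2g-2+n=0$. This equation holds in this punctured setting, as in the proof of Theorem \ref{punctorub}, and is standard on the orbifold side. Terms where that component is contracted vanish because $\psi$ is zero on $\overline{M}_{0,3}$. So your relation reads $0=0$.

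Independently of this, the right-hand side of \eqref{blWDVV} is a product of two broken-line invariants, each with its own point constraint at $x$. Your cycle has only one point constraint, and it lands on the $p_2$-side piece. The three-legged pieces (node, $p_2$, $L_{\out}$) and (node, $p_1$, $y$) are not two-pointed invariants $N^{x}$. A K\"unneth decomposition of the diagonal at the node neither converts them into such invariants nor produces the factors $|s'|$ and $|s-s'|$.

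The missing idea is the paper's auxiliary vector. Choose $u\in\sigma^{\gp}_{\NN}$ with $u$ and $-s+u$ non-negative and $D_i(u),D_i(-s+u)>D_i\cdot\textbf{A}'$ for every effective $\textbf{A}'$ decomposing $\textbf{A}$. Then replace the single output by two legs: one of contact order $-u$ with insertion $[1]$, and one of contact order $-s+u$ with a point insertion, made divisorial via \cite[Proposition 6.1]{BNR2}. The four-point relation of \cite[Theorem 8.3]{logWDVV} for $p_1,p_2,-u,-s+u$ then has the correct dimension with no descendants.

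On the $\{p_1,p_2\}\,|\,\{-u,-s+u\}$ side:
\begin{itemize}
\item A virtual dimension count forces the node class to be $[1]_{s'}$ with $s'$ non-negative.
\item Log balancing with $u$ large, Lemma \ref{contract}, and the vanishing of \cite[Lemma A.13]{int_mirror} force the legs $-u$ and $-s+u$ onto one trivalent vertex with contracted class.
\item Theorem \ref{mthm1} and the transverse gluing of \cite{trglue} then give $\langle[1]_{s'},[1]_{-u},[\pt]_{-s+u}\rangle=|s|N^{x,\textbf{A}_2}_{s',s}$.
\end{itemize}

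On the $\{p_2,-u\}\,|\,\{p_1,-s+u\}$ side:
\begin{itemize}
\item Balancing forces the node contact order to be purely negative, and virtual dimension forces it to have a single negative entry.
\item The diagonal therefore places $[\pt]_{-s'}$ on the $(p_2,-u)$ piece; this is where the second point constraint comes from.
\item The same argument identifies the two factors with $|s'+u|N^{x}_{p_2,s'+u}$ and $|s-s'-u|N^{x}_{p_1,s-s'-u}$.
\end{itemize}
Reindexing by $s''=s'+u$ then gives \eqref{blWDVV}. Your final coefficient comparison deducing the algebra homomorphism is fine once \eqref{blWDVV} is established.
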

 
 \begin{proof}
Let $u \in \sigma^{\gp}_{\NN}$ be an integral tangent vector to $x$ such that $u$ and $-s+u$ non-negative contact orders with $D_i(u),D_i(-s+u) >> D_i\cdot \textbf{A}$ for all irreducible boundary divisors $D_i$. More precisely, since $X$ is projective, there are only finitely many decompositions $\textbf{A} = \textbf{A}' + \textbf{A}''$ for effective curve classes $\textbf{A}',\textbf{A}''$, and we suppose that $D_i(u),D_i(-s+u) > D_i\cdot\textbf{A}'$ for all $i$ and effective curve classes $\textbf{A}'$ decomposing $\textbf{A}$. By \cite[Theorem $8.3$]{logWDVV}, after passing to an appropriate log \'etale modification $\widetilde{X}\rightarrow X$, we have the following relation of refined punctured invariants in the sense of \cite{BNR2}:
 
 \begin{equation}\label{WDVV2}
\begin{split}
\sum_{(\textbf{A}_1, \textbf{A}_2,s',l)} \langle[1]_{p_1},[1]_{p_2},T_{-s',l}\rangle_{0,\textbf{A}_1,3}\langle T^l_{s'},[1]_{-u},[\pt]_{-s+u}\rangle_{0,\textbf{A}_2,3} \\
= \sum_{(\textbf{A}_1, \textbf{A}_2 ,s',l)} \langle[1]_{p_2},[1]_{-u},T_{-s',l}\rangle_{0,\textbf{A}_1,3}\langle T^l_{s'},[1]_{p_1},[\pt]_{-s+u}\rangle_{0,\textbf{A}_2,3}.
\end{split}
\end{equation}

Moreover, we may assume that $|\{D_i\text{ }|\text{ }D_i(-u)< 0\}| \le 1$ by \cite[Proposition $6.1$]{BNR2}, and $-s+u$ corresponds to a divisorial valuation. Note that since $x \in \sigma$ was general, we have $x$ is contained in a maximal dimension cone of $\widetilde{X}$. Since the tangent spaces are unaffected by the subdivision, we will also refer to this maximal cone of $\Sigma(\widetilde{X})$ as $\sigma$. 


 After replacing $X$ with $\widetilde{X}$, consider a contribution coming from $s' \in \sigma^{\gp}_{\NN}$. We note that the invariants $\langle T^l_{s'},[1]_{-u},[\pt]_{-s+u}\rangle_{0,\textbf{A}_2,3}$ and $\langle T^l_{s'},[1]_{p_1},[\pt]_{-s}\rangle_{0,\textbf{A}_2,3}$ are non-zero only if $s' \in B(\ZZ)$ by \cite[Lemma $8.6$]{logWDVV} and the expression for the refined virtual class in terms of realizable tropical types. In particular, the only non-zero contributions to either side of Equation \ref{WDVV2} satisfy $s' \in B(\ZZ)$. 

Letting $\beta$ be the combinatorial type associated with the invariant $\langle T^l_{s'},[1]_{-u},[\pt]_{-s+u}\rangle_{0,\textbf{A}_2,3}$, the refined virtual dimension of $\mathscr{M}(X,\beta)$ is:
\begin{equation}\label{vdimb}
\begin{split}
\vdim \mathscr{M}(X,\beta) &= \dim X  -3 + |L(\beta)| - |\{D_i\text{ }|\text{ }D_i(-u)< 0\}| - |\{D_i\text{ }|\text{ }D_i(s')< 0\}| \\
&= \dim X - 1 - |\{D_i\text{ }|\text{ }D_i(s')< 0\}.
\end{split}
\end{equation}
Since $[\pt]_{-s+u}$ is a codimension $\dim X - 1$ condition, we must have $s'$ is a non-negative contact order in order for the invariant in question to be non-zero, and $T^l_{s'} = [1]_{s'}$. 
Now consider any component $\mathfrak{M}_{\eta}$ of $\mathfrak{M}(\mathcal{X},\beta)$ with non-zero virtual pullback to $\mathscr{M}(X,\beta)$ with associated decorated tropical type $\pmb\omega = \pmb\omega_{\eta}$ and let $v \in V(G_{\omega_{\eta}})$ be the vertex contained in the leg $L_{-u}$ with contact order $-u$. The multiplicity of the component can be computed as in \ref{multi}, giving $a_{\omega} = |\coker(\ev_v: \omega_{\NN}^{\gp} \rightarrow \ZZ)|$. For a fixed divisor $D_i$, consider the induced tropical type $\omega_i$ of log stable map relative to $D_i$, and cut $\omega_i$ along all edges $e$ with $u(e)\not= 0$ containing a vertex $v'$ with $\pmb\sigma(v') = 0$, and consider the tropical type $\omega_{i,v}$ corresponding to the connected component containing the vertex $v$. Letting $\textbf{A}'$ be the total curve class of the $\pmb\omega_{i,v}$, the log balancing condition imposes the constraint:

$$\textbf{A}'\cdot D_i = \sum_{l \in L(G_{\omega_{i,v}})} D_i(u(l)).$$

Since any new legs produced from cutting must satisfy $D_i(u(e)) < 0$ with magnitude bounded above by $\textbf{A}''\cdot D_i$ with $\textbf{A}''$ an effective curve class decomposing $\textbf{A}$, by taking $u$ sufficiently large, we can ensure that $\omega_{i,v}$ inherits at least one leg from $\omega$. Since there are only finitely many possibilities for the positive contact $s'$, taking $u$ sufficiently large once again ensures that one of these legs must be the leg $L_{-s+u}$ with contact order $-s+u$. Letting $v'$ be the vertex contained in the leg $L_{-s+u}$, since $\pmb\sigma(v') \not= 0$ for all tropical types $\omega_{v,i}$ associated with the divisors $D_i$ which contain $X_{\pmb\sigma(v)}$, we must have $\pmb\sigma(v')$ and $\pmb\sigma(v)$ map into the same $\dim B$ dimensional cone of the original target $X$. With this condition, we wish to show that $\pi_*\textbf{A}(v') = 0$. To do so, we consider the following generalization of Lemma $8.5$ of \cite{qpint}:

\begin{lemma}\label{contract}
Let $X \rightarrow \overline{X}$ a log \'etale modification, and $\pmb\omega$ be a genus $0$ decorated realizable tropical type of punctured log map to $X$ with a single vertex $v$ and a leg $L_{\out}$ with $\pmb\sigma(v) \in \Sigma(X)$ a cone with interior mapping to a maximal cone of $\Sigma(\overline{X})$ under the subdivision $\Sigma(X) \rightarrow \Sigma(\overline{X})$. If $\pi_*\textbf{A}(v) \not= 0$, then $[\mathscr{M}(X,\pmb\omega)]^{\vir}\cap \ev^*(\pt) = 0$. 
\end{lemma}

\begin{proof}
Recall that for $z \in X_{-s+u}$, there are the point constrained moduli stacks $\mathscr{M}(X,\pmb\omega)_z = \mathscr{M}(X,\pmb\omega)\times_{\underline{X_{-s+u}}} z$ and $\mathfrak{M}^{\ev}(\mathcal{X},\pmb\omega)_z$ and a perfect obstruction theory for the natural morphism $\mathscr{M}(X,\pmb\omega)_z \rightarrow \mathfrak{M}^{\ev}(\mathcal{X},\pmb\omega)_z$ pulled back from the morphism of unconstrained moduli stacks. Moreover, Gysin pullback along the regular inclusion $z \rightarrow X_{-s+u}$ yields the virtual fundamental class $[\mathscr{M}(X,\pmb\omega)_z]^{\vir}$ such that $$z_*[\mathscr{M}(X,\pmb\omega)_z]^{\vir} = \ev^*_x([\pt])\cap[\mathscr{M}(X,\pmb\omega)]^{\vir}.$$ To prove the lemma, it therefore suffices to show that $[\mathscr{M}(X,\pmb\omega)_z]^{\vir} \not= 0$ only if $\pi_*\textbf{A}(v) = 0$. 

By the assumption that the interior of $\pmb\sigma(v)$ maps to a maximal cone of $\Sigma(X)$, the stratum $X_{\pmb\sigma(v)}$ is a split toric bundle over the stratum associated with the maximal dimensional stratum of the original log Calabi-Yau $\overline{X}$. Moreover, $\pmb\omega$ is a tropical lift of a unique decorated tropical type $\pmb\tau$ of punctured log map to $\overline{X}$, and in particular there is a vertex $v \in V(G_{\tau})$ which maps into a maximal dimension cone of $\Sigma(\overline{X})$. By cutting at all compact edges of $G_{\omega}$ containing $v$, we produce another decorated tropical type $\pmb\omega'$ containing a single vertex, which is the tropical lift of a unique tropical type $\pmb\tau'$ with a single vertex decorated by $\pi_*\textbf{A}(v) $. If $[\mathscr{M}(X,\pmb\omega')_z]^{\vir} \not=0$, then $[\mathscr{M}(X,\pmb\omega)_z]^{\vir}\not=0$ by \cite[Theorem $A.14$]{int_mirror}. Thus, it suffices to show $[\mathscr{M}(X,\pmb\omega')_z]^{\vir} \not=0$ only if $\pi_*\textbf{A}(v) = 0$. 

%
%
%


Consider a tropical type $\pmb\tau''$ with a single vertex $v$ and two legs, one with contact order $u(D_i) = \textbf{A}\cdot D_i - l(D_i)$, and the other with contact order $l$ as before. Stability of the map ensures that the moduli stack $\mathscr{M}(\overline{X},\pmb\tau'')$ is non-empty only if $\pi_*\textbf{A}(v) \not= 0$. If we now assume $\pi_*\textbf{A}(v) \not= 0$, the argument \cite[Lemma $8.5$]{qpint} extends beyond the setting of smooth divisors to give a partial stabilization maps $\mathscr{M}(\overline{X},\pmb\tau') \rightarrow \mathscr{M}(\overline{X},\pmb\tau'')$ and $\mathfrak{M}^{\ev}(\mathcal{X},\pmb\tau') \rightarrow \mathfrak{M}^{\ev}(\overline{\mathcal{X}},\pmb\tau'')$, and for $z' \in \overline{X}_{\pmb\sigma(v)}$ general, a cartesian diagram:

\begin{equation}\label{vancart}
\begin{tikzcd}
\mathscr{M}(X,\pmb\omega')_z \arrow{r} \arrow{d} &\mathscr{M}(\overline{X},\pmb\tau')_{z'} \arrow{d}\arrow{r} & \mathscr{M}(\overline{X},\pmb\tau'')_{z'} \arrow{d} \\
\mathfrak{M}^{\ev}(\mathcal{X},\pmb\omega')_z\arrow{r} & \mathfrak{M}^{\ev}(\overline{\mathcal{X}},\pmb\tau')_{z'}\arrow{r} & \mathfrak{M}^{\ev}(\overline{\mathcal{X}},\pmb\tau'')_{z'}.
\end{tikzcd}
\end{equation}
Moreover, the standard obstruction theories for the vertical morphisms are compatible. Since the tropical modulus of $\tau''$ is completely determined by the image of $v$, we have $\dim \overline{X}_{z'} = \dim \overline{X} - \dim \pmb\sigma(v) = \dim \overline{X} - \dim \tau''$, and the virtual dimension of $\mathscr{M}(\overline{X},\pmb\tau'')_z$ is:
$$\vdim \mathscr{M}(\overline{X},\pmb\tau'')_z = (\dim \overline{X} - 3) + 2 - \dim B - (\dim \overline{X} - \dim B)  = -1.$$ 
It now follows from \cite[Lemma $A.13$]{int_mirror} that $[\mathscr{M}(X,\pmb\omega')_z]^{\vir} = 0$ if $\pi_*\textbf{A}(v) \not= 0$, as required. 
\end{proof}


Consider the tropical type $\pmb\omega_v$ produced by cutting across all edges of $G_{\omega}$ which contain $v$. Such a tropical type is considered in  Lemma \ref{contract}, hence $[\mathscr{M}(X,\pmb\omega_v)]^{\vir}\cap \ev^*(pt)\not=0$ only if $\pi_*\textbf{A}(\pmb\omega_v) = 0$. In particular, the curve must satisfy the usual tropical balancing condition and must contain at least $3$ legs. 

We now show that we must have $v = v'$ and $\val(v) = 3$ if $[\mathscr{M}(X,\pmb\omega)_z]^{\vir} \not= 0$. First, suppose either $v \not= v'$ or $\val(v) > 3$. Then $v$ would be contained in an edge which is not contained in the convex hull of the legs of $G_\omega$. For $v \in e$ one such edge, let $\omega'$ be the tropical type produced by cutting at all edges containing $v$ other than $e$, and $\pmb\omega^c$ the disconnected tropical type of the remainder. Consider also the tropical type $\pmb\omega''$ given by cutting $\pmb\omega'$ at $e$ and taking the connected component not containing $v$. Splitting induces a morphism $\mathscr{M}(X,\pmb\omega') \rightarrow \mathscr{M}(X,\pmb\omega'')$. This morphism gives the top horizontal morphism in the following commutative diagram:
\begin{equation}\label{fgtdi}
\begin{tikzcd}
\mathscr{M}(X,\pmb\omega')_z\arrow{r}\arrow{d} & \mathscr{M}(X,\pmb\omega'')_z\arrow{d}\\
\mathfrak{M}^{\ev}(\mathcal{X},\pmb\omega')_z\arrow{r} & \mathfrak{M}^{\ev}(\mathcal{X},\pmb\omega'')_z.
\end{tikzcd}
\end{equation}
We claim that this diagram is cartesian. To see this, suppose we have a family of maps $C'' \rightarrow X$ marked by the type $\pmb\omega''$, and a family of maps $C' \rightarrow \mathcal{X}$ of type $\pmb\omega'$ such that $C'' \rightarrow X \rightarrow \mathcal{X}$ is isomorphic to a restriction of $C' \rightarrow \mathcal{X}$ to components marked by vertices contained in $V(G_{\omega''})$. The composition $C'' \rightarrow X \rightarrow \overline{X}$ can be extended as a stable map to $C' \rightarrow \overline{X}$ by contracting all components marked by $v$, and composing further gives a prestable map $C' \rightarrow \overline{\mathcal{X}}$. The previous prestable map in turn agrees with the composition $C' \rightarrow \mathcal{X}\rightarrow \overline{\mathcal{X}}$. Indeed, agreement upon restriction to $C'' \subset C'$ holds by assumption, and agreement on $C_v$ follows from the fact that $C_v \rightarrow \overline{X}$ is genus $0$ and contracted, and the tropicalization of the provided map $C_v \rightarrow \mathcal{X}$ satisfies the ordinary tropical balancing condition. Thus, $C' \rightarrow \overline{X}$ admits a log enhancement. Since we have a factorization $C' \rightarrow \mathcal{X} \rightarrow \overline{\mathcal{X}}$, we have a unique lift $C' \rightarrow X$ of type $\pmb\omega'$ compatible with the given data. It is straightforward from this construction also respects the point constraint, again using the fact that $C_v \rightarrow X$ is contracted. 

To see that the obstruction theories are related via pullback, we denote by $g: C'' \rightarrow X$ and $f: C' \rightarrow X$ the log stable maps of type $\pmb\omega''$ and $\pmb\omega'$ induced by $S \rightarrow \mathscr{M}(X,\pmb\omega')_z \rightarrow \mathscr{M}(X,\pmb\omega'')_z$ and $\overline{g},\overline{f}$ their respective stabilizations. Since we have the equality $\pi^*T_{\overline{X}}^{\log} = T_X^{\log}$, $c: C' \rightarrow C''$ is a contraction of rational components, and $c_*\mathcal{O}_{C'}(-x') = \mathcal{O}_{C''}(-x'')$ for $x',x''$ the marked sections of $C'$ and $C''$ possessing the point constraint, it follows as in \cite[Section $6$]{bir_GW} that:
 $$Rc_*f^*T_X^{\log}(-x')=Rc_*\overline{f}^*T_{\overline{X}}^{\log}(-x') = Rc_*c^*\overline{g}^*T_{\overline{X}}^{\log}(-x'')=g^*T_X^{\log}(-x'').$$
For $p'': C'' \rightarrow S$ and $p': C' \rightarrow S$ the family of log curves, applying $Rp''_*$ to the above equations shows $Rp'_*f^*T_X^{\log}(-x') = Rp''_*g^*T_X^{\log}(-x'')$. In particular, the obstruction theories are related via pullback.

Combining \cite[Theorem $C$]{punc} with the Diagram \ref{fgtdi} gives the following Cartesian diagram:


\begin{equation}
\begin{tikzcd}
\mathscr{M}(X,\pmb\omega)_z \arrow{r}\arrow{d} & \mathscr{M}(X,\pmb\omega^c)\times\mathscr{M}(X,\pmb\omega'')_z\arrow{d}\\
\mathfrak{M}^{\ev}(\mathcal{X},\pmb\omega)_z \arrow{r} & \mathfrak{M}^{\ev}(\mathcal{X},\pmb\omega^c)\times \mathfrak{M}^{\ev}(\mathcal{X},\pmb\omega'')_z.
\end{tikzcd}
\end{equation}
Moreover, the natural obstruction theories are related via pullback. Finally, $$\vdim \mathscr{M}(X,\pmb\omega'')_z = \dim X - 2 - (\dim X - 1) = -1.$$ It follows by \cite[Lemma $A.13$]{int_mirror} that $[\mathscr{M}(X,\pmb\omega)_z]^{\vir} = 0$ assuming either $v\not= v'$ or $\val(v) > 3$.

By the constraints established above, $\pmb\omega$ is a gluing along an edge $e$ of a two pointed tropical type $\pmb\omega'$ contributing to the righthand side of Equation \ref{vdcmp} with a three pointed tropical type $\pmb\omega_v$ with a single vertex mapping to a ray decorated by a curve class which is contracted by $\pi$. The contribution from the type $\pmb\omega$ may be calculated as above using Theorem \ref{mthm1}. By analogous arguments used in the proof of Corollary \ref{blorb}, the only contributions to the invariant $\deg\ev^*(\pt)\cap [\mathscr{M}(X,\pmb\omega)]^{\vir} $ are from types $\pmb\tau$ which are gluings of generalized broken line types $\pmb\tau'$ with a three pointed tropical types $\pmb\tau_v$ with a single vertex decorated by the contracted curve class. After recalling that the multiplicity of the component $\mathfrak{M}_{\eta}$ in $\mathfrak{M}(\mathcal{X},\beta)$ is $a_{\omega}$, by the same argument given for evaluating the analogous degree for broken line types, the contribution of $\pmb\tau$ to the invariant of interest is:
\begin{equation}
\begin{split}
\deg_{\tau}(a_{\omega}Q_{\omega}P_x)\deg[\mathscr{M}(\overline{X},\pmb\tau)]^{\vir} &= |\coker(\ev_v: \tau^{\gp}_{\NN} \rightarrow \sigma^{\gp}_{\NN})|\deg[\mathscr{M}(\overline{X},\pmb\tau)]^{\vir} \\
&= |s|a_{\tau}\deg[\mathscr{M}(\overline{X},\pmb\tau)]^{\vir},
\end{split}
\end{equation}
with $a_{\tau}$ as defined in Equation \ref{blmult}. 



Note that $\ev_v: \tau_v \rightarrow \pmb\sigma(v)$ is surjective, hence $\pmb\tau$ is a tropically transverse gluing\footnote{Gross defines a tropical type $\tau$ to be tropically transverse if a condition is satisfied by the splitting of $G_{\tau}$ across all edges. We note that the required gluing results also apply after only partially splitting the tropical type, with the obvious modification of the tropical transverse condition.}. Since $\pmb\sigma(v)$ is a top dimensional cone of $\Sigma(X)$, for every edge $v' \in e$, flatness of the evaluation map $\ev_{p_e}: \mathfrak{M}^{\ev}(\overline{\mathcal{X}},\pmb\tau)\rightarrow \overline{X}_{\pmb\sigma(v)}$ follows from \cite[Theorem $5.3$]{trglue}. We may therefore apply Theorems $5.1$ and $5.5$ of \cite{trglue} to compute the contribution of $\deg[\mathscr{M}(\overline{X},\pmb\tau)]^{\vir}$. Letting $v'' \in e$ be the vertex not equal to $v$, the gluing factor from loc. cit. is given by $|\coker(\tau^{'\gp}_{\NN}\times\tau^{\gp}_{v,\NN}\times \ZZ \rightarrow \pmb\sigma(e))|$, with the map given by the derivative of the map sending a triple $(h_{\tau'},h_{\tau},n)$ of two tropical maps and an integer to $h_{\tau'}(v'') - h_{\tau_v}(v) - nu(e)$. Since $\ev_v: \tau_v \rightarrow \pmb\sigma(v)$ is surjective on lattice points, the tropical multiplicity is one, and we conclude the contribution of $\pmb\tau$ is $|s|a_{\tau}N_{\pmb\tau'}$. Summing over all contributing types $\pmb\tau$ to the virtual class $[\mathscr{M}(\widetilde{X},\beta)]^{\vir}$ now yields:



\[\langle [1]_{s'},[1]_{-u},[\pt]_{-s+u}\rangle_{0,\textbf{A}_2,3} =  |s|N_{s',s}^{x,\textbf{A}_2}\]

Turning to the three pointed invariant $\langle T_{s'}^l,[1]_{p_1},[\pt]_{-s+u}\rangle_{0,\textbf{A}_2,3}$, by an analogous argument to the one following Equation \ref{vdimb}, the logarithmic balancing condition implies that $s'$ must be a purely negative contact order with $s' \in \sigma_{-s+u,\NN}^{\gp}$, with magnitude positively correlated with the magnitude of $-s+u$. Virtual dimension constraints further impose $|\{D_i\text{ }|\text{ }D_i(s') < 0\} = 1$ and $T_{s'}^l = [1]_{s'}$. With these constraints, analogous arguments as above give:
 \begin{equation}
 \langle [1]_{s'},[1]_{p_1},[\pt]_{-s+u}\rangle_{0,\textbf{A}_2,3} = |s-s'-u|N_{p_1,s-s'-u}^{x,\textbf{A}_2}.
 \end{equation}
 
 Since $s'$ is purely negative, $-s'$ is a purely positive contact order with $D_i(s') >> D_i \cdot \textbf{A}'$ for all effective curve classes decomposing $\textbf{A}$, and similar arguments once again yield:
 \begin{equation}
 \langle[1]_{p_2},[1]_{-u},[\pt]_{-s'}\rangle_{0,\textbf{A}_1,3} = |s'+u|N_{p_2,s'+u}^{x,\textbf{A}_1}.
 \end{equation}
 
Thus, by substituting the above relations into Equation \ref{WDVV2}, we find:
 
 \begin{equation}\label{wdvv2}
 \sum_{(\textbf{A}_1, \textbf{A}_2,s')} N_{p_1,p_2,s}^{\textbf{A}_1}|s|N_{s',s}^{x,\textbf{A}_2} = \sum_{(\textbf{A}_1, \textbf{A}_2 ,s'' = s'+u)} |s''|N_{p_2,s''}^{x,\textbf{A}_1}|s-s''|N_{p_1,s-s''}^{x,\textbf{A}_2}.
 \end{equation}
 
 The terms of the righthand side of Equation \ref{wdvv2} clearly match the terms appearing in the sum on the righthand side of Equation \ref{blWDVV}, hence the desired equation follows.

 \end{proof}
 
 \begin{remark}
 \begin{enumerate}
 \item Note the proof of the theorem above also gives another way of expressing the two pointed invariants $N_{p,q}^{x,\textbf{A}}$, namely the three pointed invariants on a log \'etale modification with contact orders $p,-u,-q+u$ for an appropriately chosen integral tangent vector $u \in \pmb\sigma^{\gp}_{\NN}$. In \cite{orbBL}, a similar expression is given in Proposition $7.1$ in terms of mid-ages. Since the vanishing results of \cite{logWDVV} are expressed in terms of orbifold Gromov-Witten invariants without mid ages, we used a relation on the modification which did not use mid ages. 
 \item Peter Zaika in upcoming work both gives another proof of the relation above as well as a study of the wall crossing behavior of generalized broken line expansions. As You outlines in \cite{orbBL}, such an extension of the canonical wall structure description of theta functions is desirable for the investigation of the Doran-Harder-Thompson conjecture via studying intrinsic mirror symmetry for intermediate Tyurin degenerations. 
 \end{enumerate}
 \end{remark}
 
Since both sides of Equation \ref{blWDVV} are finite, the product of broken line expansions is well defined in general. Additionally, virtual dimension considerations ensure that $N_{p,0} \not= 0$ when $p = 0$. Thus, for any $x \in B$, we have $\prod_{i=1}^m \vartheta_{p_i}^x[z^0t^\textbf{A}] = \prod_{i=1}^m \vartheta_{p_i}[\vartheta_0t^{\textbf{A}}]$. By \cite[Theorem $1.3$]{logWDVV}, letting $\beta$ be the tropical type with total curve class $\textbf{A}$ and contact orders $0,p_1,\ldots,p_m$, and $x_{\out}$ the marked point with contact order $0$, we have
 
 \[N_{p_1,\ldots,p_m}^{\textbf{A}} = \int_{[\mathscr{M}(X,\beta)]^{\vir}}\psi_{x_{\out}}^{m-2}\ev^*_{x_{\out}}(\pt) = \prod_{i=1}^m \vartheta_{p_i}[\vartheta_0t^{\textbf{A}}]\]
 Thus, we derive an expression for the descendent integral above in terms of the two pointed invariants $N_{p,i}^x$, generalizing \cite[Theorem $1.5$]{mirrcomp} to the general log Calabi-Yau setting:
 
 \begin{corollary}
 With notation as above, we have:
 \begin{equation}
 \begin{split}
 N_{p_1,\ldots,p_m}^{\textbf{A}} &= \sum_{\substack{q_1+\cdots+q_m = 0,\\\textbf{A}_1+\cdots+\textbf{A}_m = \textbf{A}}} \prod_{i=1}^m |q_i|N^{x,\textbf{A}_i}_{p_i,q_i}\\
 &=  \sum_{\substack{q_1+\cdots+q_m = 0,\\
 \textbf{A}_1+\cdots+\textbf{A}_m = \textbf{A}}}\sum_{\pmb\tau_1,\dots,\pmb\tau_{m}}\prod_i k_{\tau_i}N_{\pmb\tau_i}.
 \end{split}
 \end{equation}
 The sum after the second equality is over decorated tropical types $\pmb\tau_i$ with legs with contact order $p_i$ and $q_i$ and total curve class $\textbf{A}_i$. 
 \end{corollary}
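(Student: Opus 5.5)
The plan is to combine the product rule of Theorem \ref{blhom} with the identification, coming from \cite[Theorem $1.3$]{logWDVV}, of the descendent invariant $N_{p_1,\ldots,p_m}^{\textbf{A}}$ as the $\vartheta_0 t^{\textbf{A}}$-coefficient of $\vartheta_{p_1}\cdots\vartheta_{p_m}$ in $R_{(X,D)}$. Theorem \ref{blhom} says that $\vartheta_p \mapsto \vartheta_{p,x}$ extends to a $\kk[Q]$-algebra homomorphism $R_{(X,D)} \rightarrow \kk[\sigma_{\NN}^{\gp}][Q]$. Hence the image of $\prod_i \vartheta_{p_i}$ is the product $\prod_i \vartheta_{p_i,x}$ of broken line expansions. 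First I will check that taking the $\vartheta_0$-coefficient in $R_{(X,D)}$ corresponds to taking the $z^0$-coefficient of the image. For this I will show that $\vartheta_{q,x}$ has no $z^0$-term unless $q=0$, and that $\vartheta_{0,x}=1$. This is the virtual dimension remark preceding the corollary: a two-pointed invariant $N^{x}_{q,0}$ with outgoing contact order $0$ vanishes for $q\neq 0$, while for $q=0$ only the constant curve contributes. Writing $\prod_i\vartheta_{p_i} = \sum_{q,\textbf{A}} c_{q,\textbf{A}}\vartheta_q t^{\textbf{A}}$ and applying the homomorphism then gives $\prod_i \vartheta_{p_i,x}[z^0t^{\textbf{A}}] = c_{0,\textbf{A}} = N_{p_1,\ldots,p_m}^{\textbf{A}}$.

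Next I will expand the product directly. By Equation \ref{blinet}, $\vartheta_{p,x} = \sum_{q,\textbf{A}'} |q| N^{x,\textbf{A}'}_{p,q} t^{\textbf{A}'} z^{-q}$. The coefficient of $z^0 t^{\textbf{A}}$ in $\prod_i \vartheta_{p_i,x}$ is therefore the sum over tuples $(q_i,\textbf{A}_i)$ with $\sum_i q_i = 0$ and $\sum_i\textbf{A}_i=\textbf{A}$ of $\prod_i |q_i| N^{x,\textbf{A}_i}_{p_i,q_i}$, which is the first equality. The sum is finite because only finitely many decompositions of $\textbf{A}$ occur in $\kk[Q]$, and for each $\textbf{A}_i$ only finitely many $q_i$ have a nonzero invariant, by the finiteness already used in Theorem \ref{blhom}.

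For the second equality I will substitute the original definition of the broken line expansion, Equation \ref{blexp}, for each factor. The proof of Proposition \ref{blorb} shows that $|q|N^{x,\textbf{A}'}_{p,q} = \sum_{\pmb\tau \in B(p,x,\textbf{A}')} k_\tau N_{\pmb\tau}$, where $u(L_{\out})=q$. Distributing this over the product gives the sum over tuples of broken line types $\pmb\tau_1,\ldots,\pmb\tau_m$.

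The main obstacle is the first step, namely justifying that extracting the $\vartheta_0$-coefficient commutes with the homomorphism. This requires confirming that no nonzero $q$ produces a $z^0$ monomial. My plan is to redo the virtual dimension count from the proof of Proposition \ref{blorb} with $u(L_{\out}) = 0$. In that case the outgoing leg has contact order zero and cannot meet the interior of the maximal cone $\sigma$, so the point insertion $P_x$ forces vanishing unless the type is the trivial one with $\textbf{A}=0$ and $q=0$.
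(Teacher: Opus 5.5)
Your proposal is correct and follows essentially the same route as the paper: push $\prod_i\vartheta_{p_i}$ through the homomorphism of Theorem \ref{blhom}, use the absence of $z^0$-terms in $\vartheta_{q,x}$ for $q\neq 0$ (with $\vartheta_{0,x}=1$) to identify the $z^0t^{\textbf{A}}$-coefficient with the $\vartheta_0t^{\textbf{A}}$-structure constant, which \cite[Theorem $1.3$]{logWDVV} identifies with the descendent invariant, and then expand using Equations \ref{blinet} and \ref{blexp}. The one point to tighten is your justification of that vanishing: a contracted outgoing leg can perfectly well sit over the interior of $\sigma$, so the real reason is the virtual dimension count the paper alludes to, namely that with $u(L_{\out})=0$ the point insertion has codimension $\dim X$ on a moduli space of virtual dimension $\dim X-1$, so $N^{x}_{q,0}=0$ whenever the moduli space is stable, and only the convention $\vartheta_{0,x}=1$ remains.
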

  
We conclude this section with a final application to Fano mirror symmetry. In \cite{qpint}, it is shown that for any pair of a Fano variety $X$ together with an anticanonical divisor $D \in |-K_X|$ for which there exists a log resolution $p: (X',D') \rightarrow (X,D)$ such that $(X',D')$ is log Calabi-Yau, the mirror algebra $R_{(X',D')}$ contains an element $W_D \in R_{(X',D')}$ such that the series $\pi_{W_D} = \sum_{k \ge 0\text{, }p_*\textbf{A}\cdot (-K_X) = k} W_D^k[\vartheta_0t^{p_*\textbf{A}}]t^k$ is the regularized quantum period of $X$ in the sense of \cite{fanmir}. The generalized broken line expansion for $W_D$ with respect to any point the essential skeleton of $X\setminus D$ thus gives Corollary \ref{fanocor}:
  
  \begin{proof}[Proof of Corollary \ref{fanocor}]
Let $X' \rightarrow X$ be a log resolution of the pair $(X,D)$. By assumption, we have the resulting snc pair $(X',D')$ is a log Calabi-Yau pair. Moreover, for $\nu_{D_i}$ the divisorial valuation on $\kk(X)$ associated with the divisor $D_i$, we have $\nu_{D_i} \in B(\NN)$. Consider the element $W_{D}  =\sum_{i} \vartheta_{\nu_{D_i}} \in R_{(X',D')}$, and the broken line expansion $W_{D,x} = \sum_{i} \vartheta_{\nu_{D_i},x}$. We claim that after replacing each coefficient $c_{i,j}x^jt^{\textbf{A}}$ of $\vartheta_{D_i,x}$ with $c_{i,j}x^jt^{-K_X\cdot p_*\textbf{A}}$, we produce a power series in $t$ with coefficients in the integral tangent lattice $\Lambda_x$. To see this, note that $X' \rightarrow X$ is projective, with relative polarization $E$ represented by a Cartier divisor supported on the complement of $U$. In particular, there is a PL function on $\Sigma(X')$ such that for any log stable map $f: C \rightarrow X'$ with tropical type $\tau$, the log balancing condition ensures that $E\cdot f_*([C]) = \sum_{l \in L(G_{\tau})} E(u(l))$. Equipping $X'$ with the polarization given by $L = p^*(-K_X) + E$, we have $L\cdot f_*([C])$ is completely determined by $p_*f_*([C])$ and the contact orders of the marked points of $C$. Since only finitely many curve classes $\textbf{A} \in NE(X')$ satisfy $\textbf{A}\cdot L = k$ for any fixed $k$, for fixed monomial $x^{j}t^{\textbf{B}}$ for $\textbf{B} \in NE(X)$, there are only finitely many non-zero terms of $\vartheta_{D_i,x}$ which are sent to a term of the form $c_{i,j}x^jt^{\textbf{B}}$ with $c_{ij}$ non-zero under the replacement described above. Thus, the series 
$$\vartheta_{D_i,x}^X := \sum_{p^*(-K_X)\cdot \textbf{A} = k,\text{ }j \in \Lambda_x} |j|N_{i,j}^{x,\textbf{A}}x^jt^k$$
yields a power series in $\kk[\Lambda_x][[t]]$. Moreover, \cite[Theorem $1.1$]{qpint} ensures that the classical period of $W_{D,x}^X = \sum_i \vartheta_{D_i,x}^X$ recovers the regularized quantum period of $X$. 
  \end{proof}



\section{Double Ramification Cycle with target log variety}
In order to use Theorem \ref{mthm1} to study the punctured log Gromov-Witten theory of a toric bundle over a log smooth base scheme $S$, we will proceed by first studying a natural class of punctured log Gromov-Witten invariants generalizing the double ramification cycle with target introduced and studied in \cite{DRtarget}. 

Before introducing this class of invariant, we first consider the case $S$ is a smooth projective variety with a trivial log structure. Let $X =Tot(L)$ be the total space of a line bundle $L \in \Pic(S)$, equipped with the divisorial log structure associated with the zero section. Note $\Sigma(X) = \mathbb{R}_{\ge 0}$, and after picking a curve class $\textbf{A} \in H_2(S)$, define the decorated realizable tropical type $\pmb\tau$ of tropical map to $\Sigma(X)$ to have a single vertex $v$ with genus $g$ with decorating curve class $\textbf{A}$, and $n$ legs $l_1,\ldots,l_n$ with $\pmb\sigma(v) = \mathbb{R}_{> 0}$ and $u(l_i) = a_i$ satisfying $\sum_i a_i = c_1(L)\cdot \textbf{A}$. Denote the resulting moduli space of log stable maps by $\mathscr{M}(X,\pmb\tau)$. Note that all families of curves appearing in this space must map into the zero section $0 \in X$. In particular, it follows that $\mathscr{M}(X,\pmb\tau)$ is proper. Note there exists an open substack $\mathscr{M}(X,\pmb\tau)^\circ$ corresponding to maps from smooth $k$-marked genus $g$ curves to $X$ of type $\tau$. In fact, it can be shown that we have an identification:

\[\mathscr{M}(X,\pmb\tau)^\circ \cong DR^\circ_{g,\textbf{A},(a_i)}(S,L) = \{(f:C \rightarrow S)| f^*L(\sum_i -a_ip_i) \cong \mathcal{O}\}.\]
We strengthen the observation above to hold over compactifications in the following sense:

\begin{theorem}\label{punctorub}

Letting $\rho: \mathscr{M}(X,\pmb\tau) \rightarrow \mathscr{M}_{g,n}(S,\textbf{A})$ be the map forgetting log structures, we have the following identity in $A_*(\mathscr{M}_{g,n}(S,\textbf{A}))$:

\[\rho_*([\mathscr{M}(X,\pmb\tau)]^{\vir}) = [DR_{g,\textbf{A},(a_i)}(S,L)]\]

\end{theorem}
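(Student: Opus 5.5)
The plan is to compare both sides with the rubber description of the double ramification cycle with target. By the definition in \cite{DRtarget}, $[DR_{g,\textbf{A},(a_i)}(S,L)]$ is the pushforward to $\mathscr{M}_{g,n}(S,\textbf{A})$ of the virtual class of the space $\mathscr{M}^{\sim}$ of rubber maps to the $\mathbb{P}^1$-bundle $P=\mathbb{P}(L\oplus\mathcal{O})\rightarrow S$, relative to the $0$ and $\infty$ sections, with contact orders $a_i$. I will show that $\mathscr{M}(X,\pmb\tau)$ and $\mathscr{M}^{\sim}$ are two log modifications of a common moduli problem over $\mathscr{M}_{g,n}(S,\textbf{A})$. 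I will then show that their obstruction theories are pulled back from one obstruction theory on that common problem. The Costello--Herr--Wise pushforward formula \cite{pusherr} then identifies the two pushforwards, in the same way Theorem \ref{decthm} and Corollary \ref{puncmcr1} were used in Section 3.

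\emph{Step 1: modular description of $\mathscr{M}(X,\pmb\tau)$.} A punctured map of type $\pmb\tau$ sends $C$ into the zero section, so it amounts to three pieces of data: the underlying map $f:C\rightarrow S$; a log structure on the base together with a piecewise linear function $\alpha$ on the tropical curve, of slope $a_i$ along $l_i$ and with vertex value in $\mathbb{R}_{>0}$; and an isomorphism $f^*L\cong\mathcal{O}_C(\alpha)$. This last isomorphism is exactly the log enhancement of the map to $X=Tot(L)$ along the zero section. The same argument as in Lemma \ref{logenh}, using that the generalized Cartier divisor of a PL function is pulled back from the base, makes this precise.

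Let $\mathfrak{D}$ be the stack of prestable curves with such an $\alpha$, as in Marcus--Wise. Its Abel--Jacobi map $\mathfrak{D}\rightarrow \mathfrak{Pic}$ sends $(C,\alpha)$ to $\mathcal{O}(\alpha)$. The description above gives a cartesian square
\[\mathscr{M}(X,\pmb\tau)\cong \mathscr{M}_{g,n}(S,\textbf{A})\times_{\mathfrak{Pic}}\mathfrak{D},\]
where the map $\mathscr{M}_{g,n}(S,\textbf{A})\rightarrow\mathfrak{Pic}$ is $f\mapsto f^*L$. The rubber space $\mathscr{M}^{\sim}$ admits the same description with $\mathfrak{D}$ replaced by its rubber (expanded) version $\mathfrak{D}^{\sim}$. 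That version is a proper log \'etale modification of $\mathfrak{D}$, obtained by subdividing along the values of $\alpha$ at vertices.

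\emph{Step 2: obstruction theories.} The punctured obstruction theory is relative to $\mathfrak{M}(\mathcal{X},\pmb\tau)$ and is given by $R\pi_*f^*T^{\log}_X$. Since $T^{\log}_X|_{0}$ is an extension of $T_S$ by $\mathcal{O}$, I will show it is compatible with the obstruction theory $R\pi_*f^*T_S$ for $\mathscr{M}_{g,n}(S,\textbf{A})$, together with the relative obstruction theory of $\mathfrak{D}\rightarrow\mathfrak{Pic}$. The $\mathcal{O}$-summand $R\pi_*\mathcal{O}$ contributes the tangent--obstruction complex of $\mathfrak{Pic}$. The rubber obstruction theory, relative to $T_{P/S}^{\log}=\mathcal{O}$, decomposes in the same way. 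Both virtual classes therefore become refined Gysin pullbacks $\Delta^!$ along the same lci map on $\mathfrak{Pic}$, applied to $[\mathfrak{D}]$ and $[\mathfrak{D}^{\sim}]$ respectively.

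\emph{Step 3: conclusion.} Since $\mathfrak{D}^{\sim}\rightarrow\mathfrak{D}$ is proper and birational, $[\mathfrak{D}^{\sim}]$ pushes forward to $[\mathfrak{D}]$. Commuting the Gysin pullback with proper pushforward gives equality of the two pushforwards to $\mathscr{M}_{g,n}(S,\textbf{A})$, and hence the theorem.

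I expect Step 2 to be the main obstacle. It requires matching the punctured obstruction theory, which has a nontrivial idealized log structure because the vertex lies in $\mathbb{R}_{>0}$, with the rubber one. In particular, the extension class of $\mathcal{O}\rightarrow T^{\log}_X|_0\rightarrow T_S$ must be seen to correspond to the deformation theory of the isomorphism $f^*L\cong\mathcal{O}(\alpha)$. I would first try to verify this on the open locus $\mathscr{M}(X,\pmb\tau)^\circ$ of smooth curves. I would then propagate it using the fact that both theories are functorial in the Artin fan, which reduces the comparison to the universal case over $\mathfrak{D}$.
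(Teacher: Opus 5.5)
\emph{Gap: Step 2 is not established on either side.} Your Steps 1 and 3 are sound in outline. The cartesian description follows most cleanly from the strict isomorphism $X\cong S\times_{B\mathbb{G}_m}\mathcal{A}_{\NN}$ rather than from Lemma \ref{logenh}; it is the trivial-log case of Lemma \ref{cartprod}. Proper pushforward $[\mathfrak{D}^\sim]\mapsto[\mathfrak{D}]$ also commutes with refined Gysin and virtual pullbacks. But Step 2 carries the whole theorem, and your plan for it does not work.

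\emph{Punctured side.} You propose to match the extension $0\to\mathcal{O}\to T^{\log}_X|_{D_0}\to T_S\to 0$ with the deformation theory of $f^*L\cong\mathcal{O}(\alpha)$ over the smooth locus $\mathscr{M}(X,\pmb\tau)^\circ$, and then ``propagate''. Agreement of two obstruction theories on an open substack determines the virtual class only modulo classes supported on the complement. Here that complement matters: $\mathscr{M}(X,\pmb\tau)$ has boundary components of excess dimension, and their contributions are what separate the DR cycle from the naive locus. Nor is there a universal case over $\mathfrak{D}$ to reduce to, since the complexes depend on $f$ and on $L$, not only on the map to the Artin fan.

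What works is a global argument that never matches extension classes; this is how the paper proves Proposition \ref{comp}.
\begin{itemize}
\item Build the obstruction theory for $\mathscr{M}(X,\pmb\tau)\to\mathscr{M}_{g,n}(S,\textbf{A})\times_{\mathfrak{M}}\mathfrak{D}$ as the cone of $R\pi_*f^*T_S\to R\pi_*f^*T^{\log}_X$, via \cite[Construction 3.13]{vpull}.
\item Cancel its $h^0=\mathcal{O}$ against the smooth $B\mathbb{G}_m$ factor of $\mathfrak{D}$ (Proposition \ref{smooth}).
\item Use that virtual pullback along an embedding depends only on the $K$-theory class of the obstruction theory \cite{globnc}. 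Hence it agrees with Gysin pullback along the zero section $e$ of the rigidified, compactified Picard space.
\end{itemize}

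\emph{Rubber side.} The class of \cite{DRtarget} is defined by the Li/Graber--Vakil obstruction theory on expanded rubber targets. It is not a log theory $R\pi_*f^*T^{\log}$ relative to a stack log \'etale over $\mathfrak{D}$, so ``decomposes in the same way'' is not available. You need two things: an identification of the expanded rubber space with $\mathscr{M}_{g,n}(S,\textbf{A})\times_{\mathfrak{Pic}}\mathfrak{D}^\sim$, and equality of virtual classes under it. \cite{compGW} only relates log and expanded \emph{non-rubber} relative spaces, and only through Kim's roof. The rubber statement is a genuine comparison theorem: of Marcus--Wise type for $S$ a point, and of the Picard-stack Abel--Jacobi type of Bae--Holmes--Pandharipande--Schmitt--Schwarz in general. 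It must be cited or proved. With those inputs your route goes through, and it avoids needing any contact-order-zero marking.

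\emph{How the paper proceeds.} It never compares rubber theories directly.
\begin{itemize}
\item It rigidifies via \cite{topview}, writing the rubber class as $\epsilon_*(\ev_{p_i}^*[D_0]\cap[\mathscr{M}^{\rel}_{g,\textbf{A},(a_i)}(\mathbb{P}(L))]^{\vir})$.
\item It passes from relative to log maps to $\mathbb{P}(L)$ using Kim's roof from \cite{compGW}.
\item It identifies $[\mathscr{M}(X,\pmb\tau)]^{\vir}$ with the pushforward of the same insertion, by applying Theorem \ref{mthm1} to the blowup of $X\times\mathbb{A}^1$ along the zero section of the central fibre.
\item It removes the need for a marking with $a_i=0$ by a dilaton argument.
\end{itemize}
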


\begin{proof}


Letting $\proj_S(\mathcal{O}_S\oplus L) = \mathbb{P}(L)$, we first recall that the data of $\textbf{A}$ and $(a_i)$ yield a moduli space $\mathscr{M}^{\rel}_{g,\textbf{A},(a_i)}(\mathbb{P}(L))$ of stable maps relative to the zero and infinity section and its rubber variant $ \mathscr{M}^{\sim}_{g,\textbf{A},(a_i)}(\mathbb{P}(L)).$
We also recall the moduli space $\mathscr{M}^{\rel}_{g,\textbf{A},(a_i)}(\mathbb{P}(L))$ of log stable maps to $\mathbb{P}(L)$ equipped with the divisorial log structure at the $0$ and $\infty$ sections. These moduli stacks are equipped with virtual fundamental classes $[\mathscr{M}^{\rel}_{g,\textbf{A},(a_i)}(\mathbb{P}(L))]^{\vir}$ and $[\mathscr{M}^{\sim}_{g,\textbf{A},(a_i)}(\mathbb{P}(L))]^{\vir}$ respectively and come with a morphism 
$$\epsilon: \mathscr{M}_{g,\textbf{A},(a_i)}^{\rel}(\mathbb{P}(L)) \rightarrow \mathscr{M}^{\sim}_{g,\textbf{A},(a_i)}(\mathbb{P}(L)).$$
 There is a forgetful morphism $\forget: \mathscr{M}^{\sim}_{g,\textbf{A},(a_i)}(\mathbb{P}(L)) \rightarrow \mathscr{M}_{g,\textbf{A},n}(S)$, and
\[\forget_*([\mathscr{M}^{\sim}_{g,\textbf{A},(a_i)}(\mathbb{P}(L))]^{\vir}) = [DR_{g,\textbf{A},(a_i)}(S,L)].\]

By Lemma $2$ of \cite{topview}, when there exists a marked point $p_i$ such that $a_i = 0$, we have the following expression for $[\mathscr{M}^{\sim}_{g,\textbf{A},(a_i)}(\mathbb{P}(L))]^{\vir}$:

\begin{equation}\label{rigid}
[ \mathscr{M}^{\sim}_{g,\textbf{A},(a_i)}(\mathbb{P}(L))]^{\vir}  = \epsilon_*(\ev_{p_i}^*([D_0])\cap [\mathscr{M}^{\rel}_{g,\textbf{A},(a_i)}(\mathbb{P}(L))]^{\vir})
\end{equation}
The proof of the equality above in loc cit. is via localization.

We wish to express $[\mathscr{M}(X,\pmb\tau)]^{\vir}$ in a manner similar to Equation \ref{rigid}. To do so, we first replace $X$ with $X\times \mathbb{A}^1$, where $\mathbb{A}^1$ is equipped with its toric log structure, and the realizable tropical type $\tau$ with the tropical type $\tau'$ with underlying graph $G_\tau$, $\pmb\sigma(v)$ the deepest stratum of $\Sigma(X\times \mathbb{A}^1) \cong \mathbb{R}_{\ge 0}^2$ and $u(L_i) = (a_i,0)$. By Theorem $6.2$ of \cite{trglue}, we have $\mathscr{M}(X,\pmb\tau) \cong \mathscr{M}(X\times \mathbb{A}^1/\mathbb{A}^1,\pmb\tau')$, with the isomorphism respecting obstruction theories. We consider the log \'etale modification $\mathcal{P}:= \widetilde{X\times\mathbb{A}^1} \rightarrow X\times \mathbb{A}^1$ given by blowing up the zero section on the central fiber, and the tropical lift $\gamma'$ of $\tau'$ with $\pmb\sigma(v)$ given by the unique $1$-dimensional cone mapping into the interior of $\Sigma(X\times \mathbb{A}^1)$. As the component of the special fiber associated with this $1$-dimensional cone is $\mathbb{P}(L)$ with $\overline{\mathcal{M}}_{\mathbb{P}(L)}^{\gp}/\pi^*(\overline{\mathcal{M}}_{\mathbb{A}^1}^{\gp})$ supported only on the zero and infinity section of $\mathbb{P}(L)$, it follows again by Theorem $6.2$ of \cite{trglue} that $\mathscr{M}(\mathcal{P}/\mathbb{A}^1,\pmb\gamma') \cong \mathscr{M}_{g,\textbf{A},(a_i)}(\mathbb{P}(L))$ with the isomorphism respecting obstruction theories. Letting $\rho: \mathscr{M}_{g,\textbf{A},(a_i)}(\mathbb{P}(L)) \rightarrow \mathscr{M}(X,\pmb\tau)$ be the morphism induced by the blowdown and the identifications above, we now claim the following equality analogous to Equation \ref{rigid}:
\begin{equation}\label{localiz}
[\mathscr{M}(X,\pmb\tau)]^{\vir} = \rho_*(\ev_{p_i}^*([D_0])\cap [\mathscr{M}_{g,\textbf{A},(a_i)}(\mathbb{P}(L))]^{\vir})
\end{equation}

To see this equality, note that the class $[D_0]$ corresponds to a piecewise linear function on $\Sigma(X)$ which has slope $1$ upon restriction to the $1$ dimensional cone associated with the divisor $D_0$ and $0$ on all other $1$ dimensional cones. For dimension reasons, the only non-vanishing contribution to Equation \ref{maineq1} is given by $\tau$. Since $\ev_{v,\NN}: \tau_{\NN} \rightarrow \NN_{\ge 0}^2$ is an isomorphism, and the piecewise polynomial $[D_0]Q_{\gamma}$ is non-vanishing only in the interior of the cone bounded above by the diagonal, we have $\deg_{\tau}([D_0]Q_{\gamma}) = |\coker(\ev_{v,\NN})| = 1$, and the desired equality follows from Theorem \ref{mthm1}.  Furthermore, by \cite[Theorem $1.1$]{compGW}, there is a moduli stack $\operatorname{Kim}(X,\beta)$ equipped with a virtual class $[\operatorname{Kim}(X,\beta)]^{\vir}$, and a roof:

\[
\begin{tikzcd}
&&\arrow[dll,"\Theta"] \operatorname{Kim}_{g,\textbf{A},(a_i)}(\mathbb{P}(L)) \arrow[drr,"\Upsilon"]\\
\mathscr{M}^{\rel}_{g,\textbf{A},(a_i)}(\mathbb{P}(L))&&&&\mathscr{M}_{g,\textbf{A},(a_i)}(\mathbb{P}(L))
\end{tikzcd}
,\]
satisfying the equations:
 $$\Theta_*([\operatorname{Kim}_{g,\textbf{A},(a_i)}(\mathbb{P}(L))]^{\vir}) = [\mathscr{M}_{g,\textbf{A},(a_i)}(\mathbb{P}(L))^{\rel}]^{\vir}$$
$$\Upsilon_*([\operatorname{Kim}_{g,\textbf{A},(a_i)}(\mathbb{P}(L))]^{\vir} ) = [\mathscr{M}_{g,\textbf{A},(a_i)}(\mathbb{P}(L))]^{\vir}.$$ Thus, by the projection formula and Equations \ref{rigid} and \ref{localiz}, we have the equations: 

\begin{equation}\label{prigid}
\begin{split}
[\mathscr{M}^{\sim}_{g,\textbf{A},(a_i)}(\mathbb{P}(L))]^{\vir} &= \epsilon\Theta_*(\ev_{p_i}^*([D_0])\cap [\operatorname{Kim}_{g,\textbf{A},(a_i)}(\mathbb{P}(L))]^{\vir})\\
[\mathscr{M}(X,\pmb\tau)]^{\vir} &= \rho\Upsilon_*(\ev_{p_i}^*([D_0])\cap [\operatorname{Kim}_{g,\textbf{A},(a_i)}(\mathbb{P}(L))]^{\vir}).
\end{split}
\end{equation}

Hence, by pushing both equation in \ref{prigid} to $A_*(\mathscr{M}_{g,n}(S,\textbf{A}))$, we see the righthand sides of the two resulting equations are equal, and we derive the claimed equality assuming a non-relative marking. 

To remove the assumption of a point of contact order $0$, we note that we have a forgetful morphism $\st: \mathscr{M}_{g,\textbf{A},(0,(a_i))}((\mathbb{P}(L)) \rightarrow \mathscr{M}_{g,\textbf{A},(a_i)}((\mathbb{P}(L))$, and we let $x_0$ denote the marked section of the universal curve $\mathcal{C} \rightarrow \mathscr{M}_{g,\textbf{A},(0,(a_i))}((\mathbb{P}(L))$. By the dilaton equation, we have:

\[\st_*(\psi_{x_0}[\mathscr{M}_{g,\textbf{A},(0,(a_i))}((\mathbb{P}(L))]^{\vir}) = (2g-2+n)[\mathscr{M}_{g,\textbf{A},(a_i)}((\mathbb{P}(L))]^{\vir}.\]
Moreover, letting $\pmb\tau^{+}$ be the decorated realizable tropical type by adding one contracted leg to $\pmb\tau$, by an application of the known case of the proposition, we have:
\[\rho_*(\psi_{x_0}[\mathscr{M}(X,\pmb\tau^+)]^{\vir}) = st_*(\psi_{x_0}[\mathscr{M}^{\sim}_{g,\textbf{A},(a_i,0)}]^{\vir}) = (2g-2+n)[\mathscr{M}^{\sim}_{g,\textbf{A},(a_i))}]^{\vir}\]
Letting $\forget: \mathscr{M}(X,\pmb\tau^+) \rightarrow \mathscr{M}(X,\pmb\tau)$ and $\forget: \mathfrak{M}^{stab}(\mathcal{X},\pmb\tau^+)$ be the maps which forget the additional contact order zero point and stabilizes the resulting prestable map, the theorem now reduces to establishing the following analogue of the dilaton equation:
\[\forget_*(\psi_{x_0}[\mathscr{M}(X,\pmb\tau^+)]^{\vir}) = (2g-2+n)[\mathscr{M}(X,\pmb\tau)]^{\vir}\]
To prove this, observe that the forgetful map $\mathscr{M}(X,\pmb\tau) \rightarrow \mathfrak{M}(\mathcal{X},\pmb\tau)$ factors through the open inclusion $\mathfrak{M}^{stab}(\mathcal{X},\pmb\tau) \subset \mathfrak{M}(\mathcal{X},\pmb\tau)$ of prestable curves to $\mathcal{X}$ of decorated type $\tau$ with source curves in which all degree $0$ components together with their special points are stable. Similarly, we have $\mathscr{M}(X,\pmb\tau^+) \rightarrow \mathfrak{M}(\mathcal{X},\pmb\tau^+)$ factors through the analogous open substack $\mathfrak{M}^{stab}(\mathcal{X},\pmb\tau^+) \subset \mathfrak{M}(\mathcal{X},\pmb\tau^+)$. Moreover, forgetting $x_0$ and partial stabilization of the domain curve induces a morphism $\mathfrak{M}^{stab}(\mathcal{X},\pmb\tau^+) \rightarrow \mathfrak{M}_{A}^{stab}(\mathcal{X},\pmb\tau)$. Using this morphism, we have a cartesian diagram:

\[
\begin{tikzcd}
\mathscr{M}(X,\pmb\tau^+) \arrow{r}\arrow{d} & \mathfrak{M}^{stab}_{A}(\mathcal{X},\pmb\tau^+)\arrow{d}\\
\mathscr{M}(X,\pmb\tau) \arrow[r,"\epsilon_{\tau}"] & \mathfrak{M}^{stab}_{A}(\mathcal{X},\pmb\tau)
\end{tikzcd}.
\]

It is straightforward to verify the obstruction theories associated with the horizontal morphisms above are compatible, hence by \cite{vpull}, we have:

\[\forget_*(\psi_{x_0}[\mathscr{M}(X,\pmb\tau^+)]^{\vir}) = \epsilon_{\tau}^!(\forget_*(\psi_{x_0}[\mathfrak{M}^{stab}(\mathcal{X},\pmb\tau^+)])).\]
Thus, the dilaton equation reduces to showing:
\[\forget_*(\psi_{x_0}[\mathfrak{M}^{stab}(\mathcal{X},\pmb\tau^+)]) = (2g-2+n)[\mathfrak{M}^{stab}(\mathcal{X},\pmb\tau)].\]
To show this, we wish to show the claim $\forget: \mathfrak{M}^{stab}(\mathcal{X},\pmb\tau^+)\rightarrow \mathfrak{M}^{stab}(\mathcal{X},\pmb\tau)$ is the pullback along forgetful map $\mathfrak{M}^{stab}(\mathcal{X},\pmb\tau) \rightarrow \mathfrak{M}_{g,n,\textbf{A}}$ remembering only the domain marked curve of the partial stabilization map $\mathfrak{M}_{g,n+1,\textbf{A}} \rightarrow \mathfrak{M}_{g,n,\textbf{A}}$. Assuming this claim, then by \cite[Proposition $2.1.1$]{Cos}, the morphism $\mathfrak{M}^{stab}(\mathcal{X},\pmb\tau^+) \rightarrow \mathfrak{M}^{stab}(\mathcal{X},\pmb\tau)$ is the pullback of the universal family over $\mathfrak{M}_{g,n,\textbf{A}}$, hence the underlying stack morphism $\forget$ is the universal family of prestable domains. The desired equality would then follow by the standard proof of the dilaton equation for the moduli space of stable curves. 

To prove the claim, we first observe the analogous tropical statement, i.e. that the map of cones $\tau^{'+} \rightarrow \tau'$ for any tropical type $\tau'$ marked by $\tau$ is the universal family of tropical curves over $\tau'$. Indeed, to give a tropical map of type $\tau^{'+}$, it is necessary and sufficient to give a tropical map $\Gamma \rightarrow \Sigma(X)$ of type $\tau$, and a potentially new vertex $v'$ anywhere on $\Gamma$ which will contain the contracted leg associated with $x_0$. Since there is no constraint on the location of this new vertex, the tropical statement follows. 

Now note that since the structure map of a log curve is saturated, the fine and saturated and ordinary fiber products of $\mathfrak{M}(\mathcal{X},\pmb\tau)$ and $\mathfrak{M}_{g,n+1,\textbf{A}}$ over $\mathfrak{M}_{g,n,\textbf{A}}$ agree. Thus, it suffices to show that $\mathscr{M}(\mathcal{X},\pmb\tau^+)$ satisfies the required universal property for maps coming from fine and saturated log schemes. Thus, suppose we have a fine and saturated log scheme $S$, log prestable curves $C',C/S$ over $S$, a partial stabilization morphism $C' \rightarrow C$ after forgetting the marked section $x_0$, and a log map $C \rightarrow \mathcal{X}$ marked by the type $\pmb\tau$. By the tropical fiber product of the previous paragraph, the morphism $\Sigma(S) \rightarrow \Sigma(\mathfrak{M}_{g,n+1})$ corresponding to the family of tropical curves $\Sigma(C')\rightarrow \Sigma(S)$ factors through $\Sigma(S) \rightarrow \Sigma(\mathfrak{M}(\mathcal{X},\pmb\tau^+))$. In particular, the universal family admits a map $\Sigma(C') \rightarrow \Sigma(X)$ marked by the tropical type $\tau^+$. By the characterization of morphisms from fine and saturated log schemes to a Zariski Artin fans given in \cite[Proposition $2.10$]{decomp}, this uniquely determines a log map $C' \rightarrow \mathcal{X}$ marked by the tropical type $\tau^+$, as required.

\end{proof}

\begin{remark}
Xuanchun Lu has an independent proof of Theorem \ref{punctorub} via an explicit comparison of obstruction theories used to define the virtual classes.
\end{remark}

By Theorem \ref{punctorub} and the main result of \cite{DRtarget}, we observe that the image of a certain punctured log Gromov-Witten class in the Chow theory of the moduli space of stable maps is tautological:

\begin{corollary}
$\forget_*([\mathscr{M}(X,\pmb\tau)]^{\vir}) \in A_*(\mathscr{M}_{g,n}(S,\textbf{A}))$ has an expression in terms of $[\mathscr{M}_{g,n}(S,\textbf{A})]^{\vir}$ and tautological classes on $\mathscr{M}_{g,\textbf{A},n}(S)$. 
\end{corollary}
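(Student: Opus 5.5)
The corollary follows by combining Theorem \ref{punctorub} with the formula of \cite{DRtarget}. The map $\forget$ in the statement is the map $\rho: \mathscr{M}(X,\pmb\tau) \rightarrow \mathscr{M}_{g,n}(S,\textbf{A})$ forgetting log structures. Theorem \ref{punctorub} gives
\[\forget_*([\mathscr{M}(X,\pmb\tau)]^{\vir}) = [DR_{g,\textbf{A},(a_i)}(S,L)],\]
so everything reduces to the double ramification cycle with target variety $S$ and line bundle $L$.

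The main result of \cite{DRtarget} (the Janda--Pandharipande--Pixton--Zvonkine formula with target) writes $[DR_{g,\textbf{A},(a_i)}(S,L)]$ as follows. Take the constant term in $r$ of a sum over stable graphs $\Gamma$ with weightings mod $r$. For each graph, glue virtual classes $[\mathscr{M}_{g(v),n(v)}(S,\textbf{A}(v))]^{\vir}$ over the vertices along the diagonal of $S$. Decorate by the exponentials $\exp(a_i^2\psi_i + a_i\xi_i)$, where $\xi = \ev^* c_1(L)$, together with the edge factors and the vertex factors $\exp(-\eta)$, with $\eta$ the pushforward of $\xi^2$ from the universal curve. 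Then push forward along the gluing maps into $\mathscr{M}_{g,n}(S,\textbf{A})$.

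The only step needing care is to recognize each such term as a polynomial in tautological classes capped with $[\mathscr{M}_{g,n}(S,\textbf{A})]^{\vir}$. This is the definition of tautological classes on the moduli of stable maps: boundary pushforwards, $\psi$-classes, pullbacks of classes on $S$ under evaluation maps, and $\kappa$-type pushforwards from the universal curve.

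The gluing along diagonals is handled by the splitting axiom. Write the diagonal class $[\Delta_S] = \sum_l T_l \otimes T^l$ in a Künneth basis of $H^*(S\times S)$; this needs at least rational cohomological decomposition, so I would work in homology if the Chow diagonal does not split. Then each glued product of vertex virtual classes equals a boundary-stratum pushforward of $[\mathscr{M}_{g,n}(S,\textbf{A})]^{\vir}$ with $\ev^*T_l$, $\ev^*T^l$ insertions at the nodes. Combined with the DR formula, this gives the desired expression.
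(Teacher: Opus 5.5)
This is the paper's argument: Theorem \ref{punctorub} identifies $\forget_*[\mathscr{M}(X,\pmb\tau)]^{\vir}$ with $[DR_{g,\textbf{A},(a_i)}(S,L)]$, and the main theorem of \cite{DRtarget} then supplies the tautological expression, and the paper's proof consists of exactly these two citations. You should drop the K\"unneth step in your last paragraph rather than rely on it, for three reasons: the Chow diagonal of $S$ need not decompose; retreating to homology would prove less than the stated identity in $A_*$; and K\"unneth insertions at the nodes produce classes on the product of the unglued vertex spaces, which does not map to $\mathscr{M}_{g,n}(S,\textbf{A})$. The correct input holds in Chow and needs no decomposition of the diagonal. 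The class $\Delta^!\prod_v[\mathscr{M}_{g(v),n(v)}(S,\textbf{A}(v))]^{\vir}$, summed over curve-class splittings, is the refined Gysin pullback of $[\mathscr{M}_{g,n}(S,\textbf{A})]^{\vir}$ along the boundary map of the stack of prestable curves, by the splitting compatibility of virtual classes.
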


The double ramification cycle with target variety is built out of maps to a $\mathbb{P}^1$-bundle over a smooth base scheme $S$ relative to the zero and infinity sections. In order to prove the main theorem in the desired generality, we wish to generalize the definition of the DR cycle with target to allow for a \emph{log} smooth projective pair $(S,D)$ as a base scheme. Moreover, we also will need to show that this new class in $A_*(\mathscr{M}_{g,n}(S,\textbf{A}))$ is tautological. 

With these ends in mind, let $S$ be a log smooth scheme with Artin fan $\mathcal{S}$, $L \in Pic(S)$ a line bundle on $S$, and $\pi: X = Tot(L) \rightarrow S$ the total space of the line bundle over $S$. We give $X$ the log structure coming from the snc divisor $\pi^{-1}(D) + D_0$, with $D_0 \subset X$ the zero section. Note that the Artin fan for $X$ is simply $\mathcal{X} = \mathcal{S}\times \mathcal{A}_{\NN}$ and $\Sigma(X) = \Sigma(S) \times \mathbb{R}_{\ge 0}$. Let $\pmb\tau$ a decorated realizable tropical type of log stable map to $S$ with a single vertex $v'$, $L(G_{})= \{l_i'\}$ and total curve class $\textbf{A}$. Furthermore, for $n = |L(G_{\tau})|$, let $a_1,\ldots,a_n \in \ZZ$ satisfy:
\[L\cdot \textbf{A} = \sum_i a_i\]
Using this additional data, we define a realizable type $\gamma$ of tropical map to $\Sigma(X)$, which we will call a double ramification tropical type. The type has graph with a single vertex $v$, $\pmb\sigma(v) = \pmb\sigma(v') \times \mathbb{R}_{> 0} \subset \Sigma(S) \times \mathbb{R}_{\ge 0}$ and $n$ legs with contact order $u({l_i}) = (u({l_i'}),a_i) \in \pmb\sigma(l_i')^{\gp}_{\NN} \times \ZZ$. We have a corresponding moduli space of punctured log maps $\mathscr{M}_{\pmb\gamma}:=\mathscr{M}(X,\pmb\gamma)$, and note that all curves associated with points of this moduli space must scheme theoretically factor through $D_0 \cong X$. Additionally, we have a morphism $\epsilon: \mathscr{M}(X,\pmb\gamma) \rightarrow \mathscr{M}(S,\pmb\tau)$ as well as $\epsilon':\mathfrak{M}(\mathcal{X},\pmb\gamma) \rightarrow \mathfrak{M}(\mathcal{S},\pmb\tau)$ given in the latter case by composing with $\mathcal{X} = \mathcal{S}\times \mathcal{A}_{\NN} \rightarrow \mathcal{S}$ and taking the associated basic family of prestable maps to $\mathcal{S}$ marked by $\pmb\tau$. Note that in either case, no stabilization of the source curve is required. We define the DR cycle with target log variety as:

\begin{equation}
[DR_{g,(a_i)}(S,\pmb\tau,L)] = \epsilon_*([\mathscr{M}(X,\pmb\gamma)]^{\vir}) \in A_*(\mathscr{M}(S,\pmb\tau)).
\end{equation}

When $S$ has trivial log structure, Theorem \ref{punctorub} demonstrates that this class agrees with the ordinary double ramification cycle with target. To show that this class is tautological in general, we will modify arguments appearing in \cite{BNtaut} to this setting. First, we consider the following fiber product:
\[\begin{tikzcd}
\mathfrak{M}_{S,\pmb\gamma} \arrow[r,"j"]\arrow[d,"i"] & \mathfrak{M}_{\pmb\gamma}\arrow{d}\\
\mathscr{M}(S,\pmb\tau) \arrow{r} & \mathfrak{M}_{\pmb\tau}
\end{tikzcd}\]

We pullback the obstruction theory for the bottom horizontal morphism to equip $\mathfrak{M}_{S,\pmb\gamma} \rightarrow \mathfrak{M}_{\pmb\gamma}$ with a perfect obstruction theory. In particular, $\mathfrak{M}_{S,\pmb\gamma}$ is equipped with a virtual fundamental class $[\mathfrak{M}_{S,\pmb\gamma}]^{\vir} = j^![\mathfrak{M}_{\pmb\gamma}]$. Note that we have a canonical map $h: \mathscr{M}_{\pmb\gamma} \rightarrow \mathfrak{M}_{S,\pmb\gamma}$ which factors $\mathscr{M}_{\pmb\gamma} \rightarrow \mathfrak{M}_{\pmb\gamma}$ and $\mathscr{M}_{\pmb\gamma} \rightarrow \mathscr{M}(S,\pmb\tau)$. In order to use this square to describe the virtual class $[\mathscr{M}(S,\pmb\gamma)]^{\vir}$, a defect which will need to be addressed is the vertical morphisms are not proper. 

In order fix the defect above as well as to gain a more explicit understanding for the deformation-obstruction theory of the morphism $\mathscr{M}_{\pmb\gamma} \rightarrow \mathfrak{M}_{S,\pmb\gamma}$, we recall the notion of stability from \cite{logDR}:


\begin{definition}{(\cite{logDR} Definition $28$)}
Given a family $\pi: C\rightarrow B$ of log smooth curves, a stability condition of degree $d$ for $\pi$ is the data of a function $\theta: V(\Gamma_b) \rightarrow \mathbb{Q}$ for every geometric point $b \in B$, satisfying the conditions:

\begin{enumerate}
\item $\sum_{v \in V(\Gamma_b)} \theta(v) = d.$
\item For $b \rightarrow b'$ an \'etale specialization, hence a marking of $\Gamma_{b'}$ by the graph $\Gamma_b$, and every $v \in V(\Gamma_b)$, we have the equality:
\[\theta(v) = \sum_{v_i\text{ marked by }v} \theta(v_i).\]

\end{enumerate}

\end{definition}

A choice of stability condition $\theta$ for $\pi$ determines a modular compactification $Pic^0 \subset Pic^0_{\theta}$ of $\theta$-semistable line bundles on (quasi-stable models of) fibers of $\pi$, see \cite{logDR} Section $4$ for details. In particular, under the condition that the family admits a section $x_1: B \rightarrow C$ and the stability condition is \emph{non-degenerate} and \emph{small}, $\theta$-semistability agrees with $\theta$-stability and every multidegree $0$ line bundle $L$ on a fiber $C_s$ of $\pi$ is $\theta$-stable. 

Given a family of log curves $\pi$, a stability condition $\theta$ of degree $d$, and a line bundle $L \in Pic(C)$ of degree $d$, Theorem $35$ of \cite{logDR} constructs a log \'etale modification $\rho: B^\theta_L \rightarrow B$ such that $B^{\theta}_{L}$ represents the following fibered category over $LogSch/B$:

\begin{definition}
$B^{\theta}_{L}$ represents the fibered category over $LogSch/B$ with objects tuples:

\[(T/B,\widehat{C}\rightarrow C_T,\alpha),\]
where $T$ is a log scheme over $B$, $\widehat{C}\rightarrow C$ is a quasi stable model over the log curve $C_T = T\times_S C$, and $\alpha$ is a PL function on $\Sigma(\widehat{C})$ which vanishes at the vertex contained in the leg associated with the section $x_1$, and for which $L_T(\alpha)$ is $\theta$-stable. 

\end{definition}
By Remark $36$ of \cite{logDR}, $B^\theta_L$ depends only on the multidegree of the line bundle $L$, hence a tropical divisor on $\Sigma(C)$. In particular, we only need the initial input of the multidegree of the line bundle $L$ to define the stack $B^{\theta}_{L}$

The family of curves which we will consider stability conditions for has base $\mathfrak{M}_{\pmb\tau}$, where as usual we take the smallest open union of strata which contains the image of the forgetful map $\mathscr{M}(S,\pmb\tau) \rightarrow \mathfrak{M}_{\pmb\tau}$, and $C$ the universal curve over $\mathfrak{M}_{\pmb\tau}$. Note in particular that $\mathfrak{M}_{\pmb\tau}$ is finite type, hence there are finitely many combinatorial types of dual graphs of prestable curves over geometric points of $\mathfrak{M}_{\pmb\tau}$. Observe that we have a tropical divisor $deg(L)$ induced by the intersection pairing of $L \in Pic(S)$ with a curve class $\textbf{A} \in NE(X)$. Letting $\sum_i a_iv_i$ be the tropical divisor $\deg(\mathcal{O}(\sum_i a_ip_i))$ with $p_1,\ldots,p_n$ the marked points of the type $\tau$, after a making a choice of stability condition $\theta$, we may form the log \'etale modification $\rho: \mathfrak{M}^\theta_{deg(L) - \sum_i a_iv_i} \rightarrow \mathfrak{M}_{\pmb\tau}$. 

Note that for any prestable log map $C \rightarrow \mathcal{A}_X$ in the image of $\mathscr{M}(X,\pmb\gamma) \rightarrow \mathfrak{M}_{\pmb\gamma}$, letting $\alpha$ be pullback along $f_{\mathcal{A}_X}: C \rightarrow \mathcal{A}_X$ of the primitive integral PL function which is only non-vanishing along the ray associated zero section of $X$, we have by the log balancing condition that $\deg(L) - \deg(\alpha) = (0)$. Assuming the stability condition is \emph{non-degenerate} and \emph{small}, every multidegree $0$ line bundle $L$ on a fiber $C_s$ of $\pi$ is $\theta$-stable, hence in particular so is $\mathcal{O}_C \cong L\otimes\mathcal{O}(-\alpha)$. Finally, letting $h_{v_1}: \Sigma(\mathfrak{M}_{\pmb\gamma}) \rightarrow \mathbb{R}_{\ge 0}$ be the evaluation map at the vertex $v_1$ contained in the leg $l_1$, we have $deg(\alpha - h_{v_1}) = deg(\alpha)$ and $\alpha - h_{v_1}$ vanishes at $v_1$. Hence, by the universal property characterizing $\mathfrak{M}^\theta_{deg(L) - \sum_i a_iv_i}$, we have a factorization $\mathscr{M}_{\pmb\gamma} \rightarrow \mathfrak{M}^\theta_{deg(L) -\sum_i a_iv_i} \rightarrow \mathfrak{M}_{\pmb\tau}$. 

We now let $\mathfrak{M}_{\pmb\gamma}$ be the stack of punctured log maps marked by the decorated type $\pmb\gamma$ such that the partial stabilization $C \rightarrow \overline{C}$ induced by the marking is a quasi-stable model of $\overline{C}$, the map $C \rightarrow \mathcal{A}_S$ is induced by composing the partial stabilization with a morphism $\overline{C} \rightarrow \mathcal{A}_S$ corresponding to a point of $\mathfrak{M}_{\pmb\tau}$, and the associated tropical divisor $deg(\mathcal{O}(-\alpha)) + deg(L)$ is $\theta$-stable. In particular, $\mathfrak{M}_{\pmb\gamma}$ is a finite type open substack of the full stack of prestable log maps to $\mathcal{A}_X$ of decorated type $\pmb\gamma$, the forgetful map induces a map $\mathscr{M}_{\pmb\gamma}\rightarrow \mathfrak{M}_{\pmb\gamma}$, and we have a map $p: \mathfrak{M}_{\pmb\gamma} \rightarrow \mathfrak{M}^\theta_{deg(L) - \sum_i a_iv_i}$ given by mapping $C/T \rightarrow \mathcal{A}_X$ to the object associated to the triple of data $(T \rightarrow \mathfrak{M}_{\pmb\tau}, C \rightarrow \overline{C},h_{v_1}-\alpha)$. We claim that $\mathfrak{M}_{\pmb\gamma} \cong \mathfrak{M}^\theta_{deg(L) - \sum_i a_iv_i} \times B\mathbb{G}_m$, and $p$ is simply projection under this isomorphism. In particular, there is a section  $s: \mathfrak{M}^\theta_{deg(L) - \sum_i a_iv_i} \rightarrow \mathfrak{M}_{\pmb\gamma}$ which is smooth. We prove this in the following lemma and proposition:


\begin{lemma}\label{strict}
Let $\rho$ be the $1$-dimensional face of $\gamma$ spanned by $(0,1)$ in $\gamma \cong \tau \times \mathbb{R}_{\ge 0}$. Then after identifying $\rho$ with a cone of $\Sigma(\mathfrak{M}_{\pmb\gamma})$:

\[\Sigma(\mathfrak{M}^\theta_{deg(L) - \sum_i a_iv_i}) = \Sigma(\mathfrak{M}_{\gamma})_{\rho}\]

\end{lemma}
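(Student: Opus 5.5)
We would prove the lemma by comparing both cone stacks through their functors of points on cones, using that each side is a moduli stack of tropical objects. By \cite[Theorem 35]{logDR} and the universal property of $\mathfrak{M}^\theta_{deg(L)-\sum_i a_iv_i}$, a map from a cone $\omega$ to $\Sigma(\mathfrak{M}^\theta_{deg(L)-\sum_i a_iv_i})$ amounts to three pieces of data: a map $\omega \rightarrow \Sigma(\mathfrak{M}_{\pmb\tau})$, i.e.\ a family of tropical maps to $\Sigma(S)$ of type marked by $\tau$; a quasi-stable tropical model $\widehat{\Gamma}$ of the family; and a PL function $\alpha$ on $\widehat{\Gamma}$. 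The function $\alpha$ must vanish at the vertex $v_1$ and satisfy $\deg(L)-\sum_i a_i v_i+\deg(\alpha)$ $\theta$-stable. Smallness and non-degeneracy of $\theta$ force this divisor to have multidegree $0$, which says exactly that $\alpha$ has slopes $a_i$ along the legs and satisfies the balancing condition imposed by $\deg(L)$ at each vertex.

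On the other side, a point of $\Sigma(\mathfrak{M}_{\pmb\gamma})$ over $\omega$ is a family of tropical maps $\widehat{\Gamma} \rightarrow \Sigma(S)\times\mathbb{R}_{\ge 0}$ marked by $\gamma$, subject to the quasi-stability and $\theta$-stability conditions built into the definition of $\mathfrak{M}_{\pmb\gamma}$. Its first component is pulled back from a point of $\Sigma(\mathfrak{M}_{\pmb\tau})$. Its second component is a PL function $\alpha \ge 0$ with the prescribed slopes and balancing. The ray $\rho$ spanned by $(0,1)$ acts by translating the second coordinate, i.e.\ replacing $\alpha$ by $\alpha + c$ for a constant $c$. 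Passing to $\Sigma(\mathfrak{M}_\gamma)_\rho$ therefore quotients out this translation. Normalizing by $\alpha \mapsto \alpha - h_{v_1}$, as done just before the lemma, then gives a canonical representative vanishing at $v_1$.

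The key steps in order are as follows.
\begin{enumerate}
\item Write the cones of both stacks explicitly as cones of tuples (edge lengths, vertex positions in $\Sigma(S)$, values of $\alpha$ at vertices of the quasi-stable model).
\item Check that the map $(\text{family},\alpha)\mapsto(\text{family},\widehat{\Gamma},h_{v_1}-\alpha)$ induces an isomorphism of monoids on each cone after quotienting by $\rho^{\gp}$. Integrality holds because $\alpha$ is determined by its value at $v_1$ together with the integral slopes.
\item Verify that these cone isomorphisms are compatible with face maps and automorphisms, so they glue to an isomorphism of cone stacks.
\end{enumerate}
Positivity is not lost in step 2: the condition $\alpha \ge 0$ on the $\gamma$ side corresponds in the quotient $\sigma'/\rho$ to no condition at all. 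This is consistent, since on the other side $\alpha - h_{v_1}$ is allowed to take either sign.

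We expect the main obstacle to be matching the stability conditions. Specifically, one must show that the cones of $\Sigma(\mathfrak{M}^\theta)$, which are the chambers cut out by $\theta$-stability of $\deg(L)-\sum a_iv_i + \deg(\alpha)$ on quasi-stable models, agree exactly with the images of the cones of $\Sigma(\mathfrak{M}_{\pmb\gamma})$ modulo $\rho$. Here we would use the definition of $\mathfrak{M}_{\pmb\gamma}$, which imposes precisely this $\theta$-stability. We would also use \cite[Remark 36]{logDR}, which says the subdivision depends only on the tropical divisor. Together these reduce the comparison to a cone-by-cone tropical statement in which both subdivisions are cut out by the same linear inequalities on $\alpha$.
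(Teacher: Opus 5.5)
Your mechanism is the paper's. The ray $\rho$ acts by translating the height function $\alpha$, and quotienting by $\rho^{\gp}$ removes this translation. Normalizing at $v_1$ then matches points; the paper runs the same bijection backwards, shifting a normalized $\alpha$ by a large integer $k$ to land in a unique cone $\gamma'$. Integrality, $(\gamma')^{\gp}_{\NN}=\omega^{\gp}_{\NN}\times\ZZ$, holds because $\alpha$ is determined by $\alpha(v_1)$, the edge lengths and its integral slopes.

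However, your description of the two cone stacks rests on a false step. Smallness and non-degeneracy of $\theta$ do \emph{not} force a $\theta$-stable divisor to have multidegree $0$. They give the converse, which is what the paper uses just before the lemma: every multidegree-$0$ divisor is $\theta$-stable. Since $\mathfrak{M}^\theta_{deg(L)-\sum_i a_iv_i}\to\mathfrak{M}_{\pmb\tau}$ is a log modification, its tropicalization is a subdivision of all of $\Sigma(\mathfrak{M}_{\pmb\tau})$. Its cones are indexed by $(\pmb\tau',\widehat{G},D,I)$, with $D$ a $\theta$-stable divisor and $I$ an acyclic flow satisfying $\operatorname{div}(I)=\deg(L)-\sum_i a_iv_i-D$. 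Here $D\neq 0$ wherever $\deg(L)-\sum_i a_iv_i$ fails to be principal. For example, on two vertices joined by two edges, multidegree $(d,-d)$ with $d\neq 0$ is principal only for finitely many ratios of edge lengths. Likewise, $\mathfrak{M}_{\pmb\gamma}$ parametrizes prestable maps to the Artin fan $\mathcal{A}_X$, where no balancing is imposed. Balancing holds on the image of $\mathscr{M}_{\pmb\gamma}$, not on $\mathfrak{M}_{\pmb\gamma}$.

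Taken literally, your Step 1 would write down only the cones with $D=0$ on both sides, i.e.\ the tropical double ramification locus. The translation bijection would then identify these subcomplexes rather than the full cone stacks, which is what the lemma asserts and what Proposition \ref{smooth} needs. Your last paragraph, where the cones are chambers cut out by $\theta$-stability imposed identically on both sides, is the correct picture and contradicts your earlier claim.

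The repair is to drop balancing from both descriptions. A point on either side is then a tropical map to $\Sigma(S)$, a quasi-stable model, and an integral PL function $\alpha$ whose twisted divisor is $\theta$-stable. On the $\gamma$ side there is the extra condition $\alpha\ge 0$ at vertices, which disappears modulo $\rho$. With that correction your argument is the paper's proof, and it checks compatibility with face maps and automorphisms more explicitly than the paper does.
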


\begin{proof}
Recall that the cones of $\Sigma(\mathfrak{M}^\theta_{deg(L) - \sum_i a_iv_i})$ correspond to tuples $(\pmb\tau',\widehat{G} \rightarrow G_{\omega},D, I)$, where $G_{\tau'}$ is a prestable graph associated with a stratum of $\mathfrak{M}_{\pmb\tau}$, $\widehat{G}$ is the prestable graph of a quasistable model of $G$, $D$ is a $\theta$-stable tropical divisor on $\widehat{G}$ and $I$ is an acyclic flow on $\widehat{G}$ such that $\text{div}(I) = \deg(L) - \sum_i a_iv_i - D$. Moreover, the points of these cones correspond to tropical maps $\Gamma \rightarrow \Sigma(S)$ of type $\omega$ and a PL map $\alpha: \hat{\Gamma} \rightarrow \mathbb{R}$ inducing the flow $I$ with $\alpha(v_1) = 0$ for $v_1$ the vertex contained in the leg associated with the section $x_1$. Letting $\omega$ be one such cone and $q \in \omega_{\NN}$ an integral point, giving the data $(\hat{\Gamma}_q\rightarrow \Gamma_q, \alpha:\hat{\Gamma}_q \rightarrow \mathbb{R})$, we are given a map $h_q: \hat{\Gamma}_q \rightarrow \Sigma(S)$ marked by the tropical type $\pmb\tau$. By considering the PL function $\alpha + k$ for a sufficiently large integer $k$ such that $(\alpha+k)(v) > 0$ for all vertices $v \in V(G)$, we can construct a map $\hat{\Gamma}_q \rightarrow \Sigma(X) = \Sigma(S) \times \mathbb{R}_{\ge 0}$ of type $\pmb\gamma'$ marked by type $\pmb\gamma$, given by $h_q \times (\alpha + k)$. We thus have a point $p \in \pmb\gamma' \in \Sigma(\mathfrak{M}_{\pmb\gamma})$ such that $\Sigma(f)(p) = q$. As a result, for every cone $\omega \in \Sigma(\mathfrak{M}^\theta_{deg(L) - \sum_i a_iv_i})$, there exists a cone $\pmb\gamma' \in \Sigma(\mathfrak{M}_{\pmb\gamma})$ such that the induced map $\pmb\gamma' \rightarrow \pmb\omega$ is a surjection on integral points. Uniqueness of the type $\pmb\gamma'$ is also clear by construction. Moreover, we have $\gamma^{'gp}_{\NN} = \omega^{\gp}_{\NN}\times \ZZ$, with the map $\gamma^{'gp}_{\NN} \rightarrow \omega^{\gp}_{\NN}$ projection onto the first factor under this identification. 

Recall that $\Sigma(\mathfrak{M}_{\gamma})_{\rho}$ is the cone complex whose cones are the images of $\gamma' \subset \gamma^{'gp}$ under the quotient maps $\gamma^{'gp} \rightarrow \gamma^{'gp}/\rho^{\gp}$, with natural gluing morphisms. Note that $\rho$ is mapped to the zero cone by $\Sigma(p)$, hence we must have $\gamma^{'gp}/\rho^{\gp} \cong (\omega^{\gp}\times\ZZ)/\ZZ \cong \omega^{\gp}$. Since the image of $\gamma'$ in $\omega^{\gp}$ is $\omega$ as established in the previous paragraph, the conclusion follows.
\end{proof}

\begin{proposition}\label{smooth}
$\mathfrak{M}_{\pmb\gamma}\cong\mathfrak{M}^\theta_{deg(L) -\sum_i a_iv_i} \times B\mathbb{G}_m$, and in particular there is a smooth section $\mathfrak{M}^\theta_{deg(L) -\sum_i a_iv_i} \rightarrow \mathfrak{M}_{\pmb\gamma} $ of $p$ with relative cotangent bundle $\mathcal{O}$. 
\end{proposition}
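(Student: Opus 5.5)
I would prove the isomorphism by constructing mutually inverse functors on log-scheme-valued points, using the tropical identification of Lemma \ref{strict} as the guide. The data of a $T$-point of $\mathfrak{M}_{\pmb\gamma}$ is a punctured log map $C\rightarrow \mathcal{X}=\mathcal{S}\times\mathcal{A}_{\NN}$, i.e. a map $C\rightarrow\mathcal{S}$ factoring through a quasi-stable model of a point of $\mathfrak{M}_{\pmb\tau}$, together with a map $C\rightarrow \mathcal{A}_{\NN}$. The latter is equivalent to a global section $\alpha$ of $\overline{\mathcal{M}}_C$ together with a lift to an actual section of $\mathcal{M}_C$ up to the $\mathbb{G}_m$-action, i.e. a line bundle $\mathcal{O}_C(-\alpha)$ with a section compatible with the log structure. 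The map $p$ remembers only $(T\rightarrow\mathfrak{M}_{\pmb\tau}, C\rightarrow\overline{C}, h_{v_1}-\alpha)$. So the plan is to show that the forgotten information, namely the ghost value $h_{v_1}$ and the torsor lifting it, is exactly a $B\mathbb{G}_m$ worth of data and is unobstructed.

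First step: decompose $\alpha = (\alpha-h_{v_1}) + h_{v_1}$, where $h_{v_1}$ is pulled back from the base $T$ via the evaluation at $v_1$. Given a $T$-point $(T\rightarrow\mathfrak{M}_{\pmb\tau},\widehat{C}\rightarrow C_T,\beta)$ of $\mathfrak{M}^\theta$ with $\beta(v_1)=0$, a lift to $\mathfrak{M}_{\pmb\gamma}$ requires choosing a section $h$ of $\overline{\mathcal{M}}_T$ with $\beta+h\ge 0$ appropriately (in the interior of $\mathbb{R}_{>0}$ at every vertex, as forced by $\pmb\sigma(v)$) and a log lift. By Lemma \ref{strict}, on the level of cone stacks $\Sigma(\mathfrak{M}_{\gamma})\rightarrow\Sigma(\mathfrak{M}^\theta)$ is the quotient by the ray $\rho$, with $\gamma'^{\gp}_{\NN}=\omega^{\gp}_{\NN}\times\ZZ$ split. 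Translating through the Artin-fan / cone-stack equivalence (\cite{stacktrop}), the corresponding Artin fans satisfy $\mathcal{A}_{\Sigma(\mathfrak{M}_\gamma)}$ restricted to the relevant strata $\cong \mathcal{A}_{\Sigma(\mathfrak{M}^\theta)}\times \mathcal{A}_{\rho,\rho}$, exactly as in the product decomposition of Lemma \ref{stratadecomp} with $\mathcal{A}_{\rho,\rho}=B\mathbb{G}_m$ (the idealized Artin cone of a ray at its closed point). Since all curves map into $D_0$, the $\mathcal{A}_\NN$ factor lands in the closed point $B\mathbb{G}_m$, which is why an idealized cone appears.

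Second step: show $\mathfrak{M}_{\pmb\gamma}\rightarrow\mathfrak{M}^\theta$ is the base change of $\mathcal{A}_{\Sigma(\mathfrak{M}_\gamma)}\rightarrow\mathcal{A}_{\Sigma(\mathfrak{M}^\theta)}$, i.e. that a log map $C\rightarrow\mathcal{X}$ of type $\pmb\gamma$ is uniquely determined by the pair consisting of its image in $\mathfrak{M}^\theta$ and the data $(h_{v_1},\text{torsor})$. Here I would use \cite[Proposition $2.10$]{decomp} as in the proof of Theorem \ref{punctorub}: maps from fs log schemes to Zariski Artin fans are determined by tropical data, so the PL function $\beta+h$ on $\Sigma(\widehat{C})$ determines $C\rightarrow\mathcal{A}_\NN$ up to the $\mathbb{G}_m$-torsor on the base, and $\theta$-stability is unchanged since $\deg(h_{v_1})=0$. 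Then $\mathfrak{M}_{\pmb\gamma}\cong \mathfrak{M}^\theta\times B\mathbb{G}_m$, $p$ is projection, the trivial torsor gives the section $s$, and the relative cotangent complex of $B\mathbb{G}_m\rightarrow\Spec\kk$ being $\mathcal{O}[-1]$ gives the stated trivial relative cotangent bundle for the section.

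The main obstacle is the second step: checking that the splitting chosen (normalizing $\alpha$ at $v_1$) is compatible across all strata and specializations so that the descent produces a global product rather than a nontrivial $B\mathbb{G}_m$-gerbe; I would handle this by noting the normalization uses the canonical section $x_1$, hence the character $h_{v_1}$ is a globally defined extension in the sense of the systems of extensions above, so the torsor is globally trivialized by the pullback of $\mathcal{O}(-h_{v_1})$.
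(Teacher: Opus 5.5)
Your proposal is correct and follows essentially the paper's route: the paper takes $p\times\mathcal{O}(h_{v_1})$ as the forward map, and builds the inverse \'etale locally by combining Lemma \ref{strict} with the product decomposition $\mathcal{A}_{\omega}\times B\mathbb{G}_m$ of the $\rho$-stratum from Lemma \ref{stratadecomp}, then reading off the log map $C\rightarrow\mathcal{A}_X$ from the resulting tropical map. Your handling of the gluing issue is the same as the paper's, since the globally defined line bundle $\mathcal{O}(h_{v_1})$ is what makes the local product decompositions glue, and your cotangent-complex remark supplies the smoothness of the section with trivial relative cotangent bundle, which the paper leaves implicit.
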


\begin{proof}
%
%
%
%
%
%
%

We construct maps going both direction of the desired isomorphism. First, we have a map $p \times \mathcal{O}(h_{v_1}): \mathfrak{M}_{\pmb\gamma}\rightarrow \mathfrak{M}^\theta_{deg(L) -\sum_i a_iv_i} \times B\mathbb{G}_m$, where $h_{v_1}$ is the universal PL function giving the position of the vertex $v_1$ specified in the construction of $f$.

To construct the morphism in the opposite direction, suppose we have a morphism $S \rightarrow \mathfrak{M}^\theta_{deg(L) -\sum_i a_iv_i} \times B\mathbb{G}_m$. This gives the data of a log map $C \rightarrow \mathcal{A}_S$ marked by the type $\pmb\tau$ with associated tropical curve $\Gamma_C$, a PL morphism $\alpha$ on a quasi stable model $\widehat{\Gamma}_C \rightarrow \Gamma_C$ with $deg(\alpha) - deg(L) -\sum_i a_iv_i$ a $\theta$-stable tropical divisor, and a line bundle $\mathcal{O}(h_{v_1})$. Since we can define the desired section via compatible maps defined on \'etale charts, we may assume that the log structure on $S$ induced by $S \rightarrow \mathfrak{M}^\theta_{deg(L) -\sum_i a_iv_i}$ is induced by a map $S \rightarrow \mathcal{A}_{\omega}$ for some cone $\omega \in \Sigma( \mathfrak{M}^\theta_{deg(L) -\sum_i a_iv_i})$. We also produce a morphism $S \rightarrow \mathcal{A}_\omega \times B\mathbb{G}_m$ by taking the projection onto the second factor to $\mathcal{O}(h_{v_1})$. Since $\omega = \gamma'/\rho$ for a unique cone $\gamma' \in \Sigma(\mathfrak{M}_{\pmb\gamma})$ by Lemma \ref{strict}, and the closure of the stratum of $\mathcal{A}_{\mathfrak{M}_{\pmb\gamma}}$ associated with $\gamma'$ is isomorphic to $\mathcal{A}_{\Sigma(\mathfrak{M}_{\gamma})_{\rho}} \times B\mathbb{G}_m$ by Lemma \ref{stratadecomp}, this stratum of $\mathcal{A}_{\mathfrak{M}_{\pmb\gamma}}$ is isomorphic to $\mathcal{A}_{\omega}\times B\mathbb{G}_m$. Thus we have a map $S \rightarrow \mathcal{A}_{\gamma'}$ for some type $\gamma'$ marked by $\gamma$. This morphism yields a map $\Sigma(C) \rightarrow \Sigma(X)$ of type $\pmb\gamma'$, hence a map $C \rightarrow \mathcal{A}_X$ lifting the given map $C \rightarrow \mathcal{A}_S$, finally yielding the data of a map $S \rightarrow \mathfrak{M}_{\pmb\gamma}$. Since the constructed morphism depended only on the data of the initial morphism $S \rightarrow \mathfrak{M}^{\theta}_{deg(L) -\sum_i a_iv_i} \times B\mathbb{G}_m$, the local description of the morphisms glue to give a morphism of stacks $g: \mathfrak{M}^\theta_{deg(L) -\sum_i a_iv_i} \times B\mathbb{G}_m \rightarrow \mathfrak{M}_{\pmb\gamma}$ which is inverse to the morphism in the opposite direction. 

\end{proof}


Now consider the stack $\mathfrak{M}^\theta_{S,\pmb\gamma}$ constructed via the following fiber product:

\begin{equation}\label{stau}
\begin{tikzcd}
\mathfrak{M}_{S,\pmb\gamma} \arrow{r} \arrow{d} & \mathfrak{M}_{\pmb\gamma}\arrow{d}\\
\mathfrak{M}^\theta_{S,\pmb\gamma} \arrow[r,"j"]\arrow[d,"i"] & \mathfrak{M}^\theta_{deg(L) - \sum_i a_iv_i}\arrow{d}\\
\mathscr{M}(S,\pmb\tau) \arrow[r,"\fgt"] & \mathfrak{M}_{\pmb\tau}
\end{tikzcd}
\end{equation}
Using the obstruction theory for the morphism $\fgt$, we let 
$$[\mathfrak{M}_{S,\pmb\gamma}^\theta]^{\vir} = \fgt^![\mathfrak{M}^\theta_{deg(L) - \sum_i a_iv_i}].$$
The choice of stability condition $\theta$ also determines a modular compactification $Pic^0 \subset Pic^0_{\theta}$ of $\theta$-stable line bundles on (quasi-stable models of) fibers of the universal curve $\mathfrak{C} \rightarrow \mathfrak{M}_{\pmb\tau}$, and a section $aj: \mathfrak{M}^\theta_{S,\pmb\gamma} \rightarrow Pic^0_{\theta}$ sending a quadruple $(T/\mathfrak{M}_{\pmb\tau},\widehat{C}\rightarrow C_T,\alpha,f:C_T \rightarrow S)$ to $f^*L \otimes \mathcal{O}(-\alpha - \sum_i a_ip_i)$, see \cite{logDR} Section $4$ for details.

We now observe the following cartesian diagram:
\begin{lemma}\label{cartprod}
The following commuting square is cartesian
\begin{equation}\label{cartdiag}
\begin{tikzcd}
\mathscr{M}_{\pmb\gamma} \arrow{r} \arrow{d} & \mathfrak{M}_{S,\pmb\gamma}^{\theta}\arrow[d,"aj"]\\
\mathfrak{M}_{S,\pmb\gamma}^{\theta} \arrow[r,"e"]&Pic^0_{\theta}.
\end{tikzcd}.
\end{equation}
\end{lemma}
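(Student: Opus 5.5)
The proof will identify both $\mathscr{M}_{\pmb\gamma}$ and the fiber product $\mathfrak{M}^\theta_{S,\pmb\gamma}\times_{e,Pic^0_\theta,aj}\mathfrak{M}^\theta_{S,\pmb\gamma}$ as representing the same functor on fs log schemes $T$. Here $e$ denotes the section of $Pic^0_\theta$ picking out the trivial bundle $\mathcal{O}$ (pulled back along the structure map). The fiber product therefore parametrizes objects $(T\to\mathfrak{M}_{\pmb\tau},\widehat{C}\to C_T,\alpha,f:C_T\to S)$ of $\mathfrak{M}^\theta_{S,\pmb\gamma}$ together with an isomorphism
$$\phi: f^*L\otimes\mathcal{O}(-\alpha-\textstyle\sum_i a_ip_i)\cong\mathcal{O}.$$
So I would compute the fiber product as the locus where the Abel--Jacobi section meets the trivial section, keeping track of the trivialization $\phi$ as extra data.

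First I construct the morphism $\mathscr{M}_{\pmb\gamma}\to$ fiber product. Given a punctured map $F:C\to X=Tot(L)$ of type $\pmb\gamma$, compose with $\pi$ to get $f:C\to S$ of type $\pmb\tau$. Let $\alpha$ be the pullback of the zero-section PL function, normalized as in the discussion preceding Lemma \ref{strict}, i.e.\ replaced by $\alpha-h_{v_1}$. We already showed this gives a factorization through $\mathfrak{M}^\theta_{deg(L)-\sum_i a_iv_i}$, hence a point of $\mathfrak{M}^\theta_{S,\pmb\gamma}$. The log map to the total space of $L$ relative to $D_0$ is equivalent to a section of $f^*L$ in the log sense, namely a trivialization of $f^*L(-\alpha)$ twisted by the punctures. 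This uses the standard description of log maps to $\mathcal{A}_{\NN}\times_{B\mathbb{G}_m}$-type targets: a map to $Tot(L)$ with log structure along $D_0$ is a PL function $\alpha$ plus an isomorphism $f^*L\cong\mathcal{O}(\alpha+\sum a_ip_i)$ on $C$, the contact orders $a_i$ accounting for the marked points. Since the curve factors scheme-theoretically through $D_0$, there is no further section data beyond the torsor isomorphism. Twisting by $\mathcal{O}(h_{v_1})$ is absorbed, because $\mathcal{O}(h_{v_1})$ is pulled back from $T$ and $Pic^0_\theta$ is the relative Picard stack modulo the base. This yields the trivialization $\phi$.

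Conversely, given $(f,\widehat{C},\alpha,\phi)$ over $T$, I use Proposition \ref{smooth} (the identification $\mathfrak{M}_{\pmb\gamma}\cong\mathfrak{M}^\theta\times B\mathbb{G}_m$) to lift $\alpha$, after choosing the $B\mathbb{G}_m$ factor to be trivial, to a prestable log map $\widehat{C}\to\mathcal{A}_X$ of type $\pmb\gamma$. The isomorphism $\phi$ then upgrades this to an honest log map $\widehat{C}\to X$ lifting $f$ and $\widehat{C}\to\mathcal{A}_X=\mathcal{A}_S\times\mathcal{A}_{\NN}$, using that $X\to S\times\mathcal{A}_{\NN}$ is the $\mathbb{G}_m$-torsor/line bundle classified by $L$ and the line bundle associated with the PL function. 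I then check that the two constructions are mutually inverse and compatible with pullback along $T'\to T$; this is routine.

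The main obstacle is the quasi-stable model $\widehat{C}$. Points of $\mathscr{M}_{\pmb\gamma}$ have domains $C$ whose partial stabilization is $\overline{C}$, a quasi-stable model, so I must check that $F$ contracts exactly the exceptional components. This holds because on exceptional $\mathbb{P}^1$'s the map to $S$ is constant and the degree condition forces the right bidegree. I also need stability: the result must be genuinely a stable log map, and uniqueness of the lift must hold. I would handle stability by noting that exceptional components carry nonzero slope of $\alpha$, hence are not contracted in the $\mathcal{A}_X$ direction, so the basic-monoid and automorphism arguments agree. I expect this step, rather than the torsor bookkeeping, to require the most care.
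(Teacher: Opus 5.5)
Your overall route matches the paper's. You test both sides on the same objects, read a log map to $Tot(L)$ along $D_0$ as the PL function $\alpha$ together with an isomorphism $f^*L\cong\mathcal{O}(\alpha+\sum_i a_ip_i)$, and reduce the fiber product of the two sections $e,aj$ to one object of $\mathfrak{M}^\theta_{S,\pmb\gamma}$ with an identification $aj\cong e$. The gap is at the step you flag yourself, and the fix you propose is wrong.

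Points of $\mathscr{M}_{\pmb\gamma}$ have no exceptional components. The underlying map $C\to X$ factors through $D_0\cong S$, and stability of a log map is a condition on the underlying map. So $C\to X$ is stable exactly when $C\to S$ is, and $C$ is already the domain of the image point of $\mathscr{M}(S,\pmb\tau)$; this is the paper's remark that $\epsilon$ needs no stabilization. You seem to be conflating $\mathscr{M}_{\pmb\gamma}$ with the Artin-fan-level stack $\mathfrak{M}_{\pmb\gamma}$, whose domains may be quasi-stable models.

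Conversely, suppose an object $(f,\widehat{C}\to C,\alpha,\phi)$ of the fiber product had $\widehat{C}\neq C$. Then the map $\widehat{C}\to X$ you build would be constant on a rational component with two special points, hence unstable, so it is not a point of $\mathscr{M}_{\pmb\gamma}$. Nonzero slope of $\alpha$ there (``not contracted in the $\mathcal{A}_X$ direction'') does not help, since stability does not see the Artin fan. As written, your inverse map is not well defined.

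The missing observation, which is the heart of the paper's proof, is that the fiber-product condition itself forces $\widehat{C}\cong C$. The section $e$ takes values in pairs (trivial model $C$, $\mathcal{O}_C$), so an isomorphism $aj(c)\cong e(c)$ in $Pic^0_\theta$ includes an isomorphism $\widehat{C}\cong C$ over $C$. Numerically: on an exceptional component both $f^*L$ and $\mathcal{O}(\sum_i a_ip_i)$ have degree $0$. In the quasi-stable compactified-Jacobian conventions, $\theta$-stable bundles have degree one there, so no trivialization can exist. Once $\widehat{C}=C$, the isomorphism $f^*L\otimes\mathcal{O}(-\alpha-\sum_i a_ip_i)\cong\mathcal{O}$ gives the unique log enhancement of $C\to S\hookrightarrow X$ compatible with the given maps to $S$ and $\mathcal{A}_X$. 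Stability is inherited from $C\to S$, so no bidegree or basic-monoid argument on exceptional components is needed.

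A smaller bookkeeping point: your forward map uses $Pic^0_\theta$ modulo line bundles pulled back from the base. With that convention, the $B\mathbb{G}_m$-factor of Proposition \ref{smooth} in your converse cannot be chosen trivial. It is forced to be the line bundle $\pi_*(f^*L\otimes\mathcal{O}(-\alpha-\sum_i a_ip_i))$ on the base. On $\mathscr{M}_{\pmb\gamma}$ this is the torsor $\mathcal{O}(h_{v_1})$ of the extra log direction, which is nontrivial in general. Choosing it trivial means your two constructions are not mutually inverse.
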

\begin{proof}
Suppose we have a scheme $T$ and two morphisms $c_1,c_2: T \rightarrow \mathfrak{M}^{\theta}_{S,\pmb\gamma}$ making the square commute. By composing with the projection $Pic^0_{\theta} \rightarrow \mathfrak{M}^{\theta}_{S,\pmb\gamma}$, we produce a family of log stable maps $C/T \rightarrow S$ marked by the type $\pmb\tau$, together with a map from a quasi-stable model $\widehat{C} \rightarrow \mathcal{A}_X$ marked by the type $\pmb\gamma$. Since both $e$ and $aj$ are sections of the projection, we must have each morphism $c_i$ is isomorphic to the morphism induced by projection, and we have $c_1$ and $c_2$ are isomorphic. Moreover, letting $f: \widehat{C}/T \rightarrow \mathcal{A}_X$ be the corresponding map, the fact that these sections $e$ and $aj$ are isomorphic after precomposing with $T \rightarrow \mathfrak{M}^{\theta}_{S,\pmb\gamma}$ implies that:

\[f^*L\otimes \mathcal{O}_{\widehat{C}}(-\alpha - \sum_i a_ip_i) \cong\mathcal{O}_{\widehat{C}}.\]
In particular, the quasi-stable model associated with $T \rightarrow \mathfrak{M}^{\theta}_{\deg(L) - \sum_{i}a_iv_i}$ must be trivial i.e. $\widehat{C} \cong C$. Since $f: C \rightarrow S \rightarrow X$ has image entirely within the zero section of $X$, the isomorphism of line bundles given above is necessary and sufficient to guarantee $C \rightarrow X$ admits a unique log enhancement such that the two induced maps $C\rightarrow \mathcal{A}_X$ and $C \rightarrow S$ are as given, and the conclusion follows.
\end{proof}

Pulling back the obstruction theory for the bottom horizontal morphism of Diagram \ref{cartdiag} equips $\mathscr{M}_{\pmb\gamma} \rightarrow \mathfrak{M}_{S,\pmb\gamma}^\theta$ with an obstruction theory. We note that $(R^1\pi_*\mathcal{O}[-1])^\vee \in D^{b}(\mathfrak{M}_{\pmb\gamma})$ is the obstruction bundle for the obstruction theory associated with the regular inclusion of the zero section $e: \mathfrak{M}_{\pmb\gamma} \rightarrow Pic^{0}_{\theta}$. We define the refined virtual class on $\mathscr{M}_{\pmb\gamma}$ as:

%

\[e^!([\mathfrak{M}_{S,\pmb\gamma}^\theta]^{\vir}) = [\mathscr{M}_{\pmb\gamma}]^{\rf} \in A_{\vdim}(\mathscr{M}_{\pmb\gamma}).\] 
We claim that the refined class produced above equals the virtual class on $\mathscr{M}_{\pmb\gamma}$ thought of as a space of punctured log stable maps:

\begin{proposition}\label{comp}
$[\mathscr{M}_{\pmb\gamma}]^{\rf} = [\mathscr{M}_{\pmb\gamma}]^{\vir}$
\end{proposition}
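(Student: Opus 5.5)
The plan is to compare the two classes through a compatible triple of obstruction theories, in the style of the comparison arguments of \cite{BNtaut} and \cite{logDR}. The key input is the structure of $T^{\log}_X$ along the zero section. With $X = Tot(L)$ carrying the log structure from $\pi^{-1}(D)+D_0$, we have an exact sequence
\[0 \rightarrow \mathcal{O}_{D_0} \rightarrow T^{\log}_X|_{D_0} \rightarrow \pi^*T^{\log}_S|_{D_0} \rightarrow 0.\]
The kernel is the vertical logarithmic tangent bundle, which is trivialized by the Euler vector field. Since every map in $\mathscr{M}_{\pmb\gamma}$ factors scheme theoretically through $D_0$, pulling back along the universal map $f$ and applying $R\pi_*$ gives an exact triangle
\[R\pi_*\mathcal{O}_C \rightarrow R\pi_*f^*T^{\log}_X \rightarrow R\pi_*f^*T^{\log}_S \rightarrow R\pi_*\mathcal{O}_C[1].\]
The middle term is the obstruction theory defining $[\mathscr{M}_{\pmb\gamma}]^{\vir}$, taken relative to $\mathfrak{M}_{\pmb\gamma}$. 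The right term is the pullback of the obstruction theory of $\fgt: \mathscr{M}(S,\pmb\tau) \rightarrow \mathfrak{M}_{\pmb\tau}$.

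First, I would use the top square of Diagram \ref{stau} together with Proposition \ref{smooth}. The latter identifies $\mathfrak{M}_{\pmb\gamma} \cong \mathfrak{M}^\theta_{deg(L) - \sum_i a_iv_i}\times B\mathbb{G}_m$, so $\mathfrak{M}_{S,\pmb\gamma} \cong \mathfrak{M}^\theta_{S,\pmb\gamma}\times B\mathbb{G}_m$. The smooth morphism $\mathfrak{M}_{S,\pmb\gamma} \rightarrow \mathfrak{M}^\theta_{S,\pmb\gamma}$ has relative tangent complex $\mathcal{O}[1]$, coming from the $B\mathbb{G}_m$ factor, which records the line bundle $\mathcal{O}(h_{v_1})$. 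Flat pullback along it carries $[\mathfrak{M}^\theta_{S,\pmb\gamma}]^{\vir}$ to $[\mathfrak{M}_{S,\pmb\gamma}]^{\vir}$, because both are defined by virtual pullback of the same obstruction theory for $\fgt$ (\cite{vpull}).

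Second, I would identify the obstruction theory of $e$ pulled back to $\mathscr{M}_{\pmb\gamma}\rightarrow \mathfrak{M}^\theta_{S,\pmb\gamma}$ via Lemma \ref{cartprod}. It is governed by $R^1\pi_*\mathcal{O}$, the tangent space of $Pic^0_\theta$ relative to its base, placed in obstruction degree. Combining this with the $H^0(\mathcal{O})$-term coming from the $B\mathbb{G}_m$ factor recovers exactly the full complex $R\pi_*\mathcal{O}_C$. The key step is then to produce a morphism of exact triangles. The triangle above should map to the triangle of relative cotangent complexes
\[\mathscr{M}_{\pmb\gamma} \rightarrow \mathfrak{M}_{S,\pmb\gamma} \rightarrow \mathfrak{M}_{\pmb\gamma}.\]
Here the first map is given the obstruction theory $R\pi_*\mathcal{O}_C$, which should be the obstruction theory of $e$ pulled back and twisted by the $B\mathbb{G}_m$ direction. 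Once such a compatible triple exists, Manolache's functoriality of virtual pullback gives
\[[\mathscr{M}_{\pmb\gamma}]^{\vir} = h^!\,\fgt^![\mathfrak{M}_{\pmb\gamma}] = e^!([\mathfrak{M}^\theta_{S,\pmb\gamma}]^{\vir}) = [\mathscr{M}_{\pmb\gamma}]^{\rf}.\]
The middle equality uses the product decomposition from Proposition \ref{smooth}, so that the $B\mathbb{G}_m$ factor and the $H^0(\mathcal{O})$ term cancel.

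The main obstacle is the compatibility itself. One must check that the map $R\pi_*f^*T^{\log}_X \rightarrow R\pi_*f^*T^{\log}_S$ induced by $X\rightarrow S$ lifts to a morphism of triangles whose fiber is the obstruction theory of the zero section of $Pic^0_\theta$. Concretely, this means showing that deforming the log enhancement of $C\rightarrow D_0\subset X$ over a fixed map to $S$ is the same as deforming a trivialization of $f^*L\otimes\mathcal{O}(-\alpha-\sum_i a_ip_i)$. I would verify this on the level of torsors: log maps to $X$ over a fixed map to $S$ and a fixed $\alpha$ form a torsor under $\mathcal{O}^\times_C$, and deformations of this torsor are computed by $R\pi_*\mathcal{O}_C$. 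One must also handle the quasi-stable model. By Lemma \ref{cartprod}, $\widehat{C}\cong C$ on $\mathscr{M}_{\pmb\gamma}$, so no correction term should arise there; I would still check this carefully near the locus where the $\theta$-stable model jumps.
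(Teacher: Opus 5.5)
Your outline has the same skeleton as the paper's proof. Both use the identification $f^*T^{\log}_{X/S}\cong\mathcal{O}_C$ (the paper writes it as $f^*L^{\vee}\otimes\mathcal{O}(\alpha)\cong\mathcal{O}$). Both take the cone obstruction theory $\mathbb{E}_h$, dual to $R\pi_*\mathcal{O}_C$, for $h\colon\mathscr{M}_{\pmb\gamma}\to\mathfrak{M}_{S,\pmb\gamma}$ from the triple over $\mathfrak{M}_{\pmb\gamma}$. Both let the $B\mathbb{G}_m$ factor of Proposition \ref{smooth} absorb $\pi_*\mathcal{O}_C$, and both finish with functoriality of virtual pullback from \cite{vpull}.

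The gap is exactly the step you call the main obstacle. Functoriality only gives $[\mathscr{M}_{\pmb\gamma}]^{\vir}=i^![\mathfrak{M}^\theta_{S,\pmb\gamma}]^{\vir}$ for \emph{some} obstruction theory on $i\colon\mathscr{M}_{\pmb\gamma}\to\mathfrak{M}^\theta_{S,\pmb\gamma}$ compatible with $\mathbb{E}_h$. To reach $[\mathscr{M}_{\pmb\gamma}]^{\rf}$, that obstruction theory must be the one pulled back from the zero section $e$ of $Pic^0_\theta$ through Lemma \ref{cartprod}. The match must include its map to the cotangent complex and the morphism of triangles. Your torsor argument only matches cohomology sheaves: no tangent part, and obstruction sheaf $R^1\pi_*\mathcal{O}_C$. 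Identifying the deformation theory of the log enhancement with that of a trivialization of $f^*L\otimes\mathcal{O}(-\alpha-\sum_i a_ip_i)$, as maps to $\mathbb{L}$, is substantive work that you have not carried out.

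The paper never proves this compatibility; it bypasses it. It applies \cite[Construction 3.13]{vpull} twice to build an obstruction theory on $i$ that is compatible by construction. This gives $i^![\mathfrak{M}^\theta_{S,\pmb\gamma}]^{\vir}=h^![\mathfrak{M}_{S,\pmb\gamma}]^{\vir}=[\mathscr{M}_{\pmb\gamma}]^{\vir}$, and its obstruction bundle is again $R^1\pi_*\mathcal{O}_C$. Since $i$ is a closed embedding (the base change of $e$), \cite[Theorem 4.6]{globnc} applies, extended to virtual pullbacks as in the footnote and \cite{relvpull}. It says the virtual pullback depends only on the $K$-theory class of the obstruction theory. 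The two classes coincide, so $e^!=i^!$. This $K$-theoretic invariance is the missing idea, and it makes your torsor comparison unnecessary.

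A secondary point concerns your factorization $i=p\circ h$ through the projection $p$ with relative tangent complex $\mathcal{O}[1]$. This $p$ is not of DM type and its cotangent complex sits in degree $1$, so Manolache's functoriality does not apply verbatim. The paper instead uses $\mathscr{M}_{\pmb\gamma}\xrightarrow{i}\mathfrak{M}^\theta_{S,\pmb\gamma}\xrightarrow{s}\mathfrak{M}_{S,\pmb\gamma}$, where $s$ is a smooth section (a $\mathbb{G}_m$-torsor with $\Omega_s\cong\mathcal{O}$). For $h=s\circ i$ to hold, $s$ must be chosen appropriately: it should be classified by a line bundle restricting to $\mathcal{O}(h_{v_1})$ on $\mathscr{M}_{\pmb\gamma}$, which is in general nontrivial.
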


\begin{proof}
We first observe that we have a triple of morphisms:

\begin{equation}\label{triple1}
\begin{tikzcd}
\mathscr{M}_{\pmb\gamma} \arrow[r,"h"] \arrow[bend left,"\fgt"]{rr} &  \mathfrak{M}_{S,\pmb\gamma} \arrow[r,"g"] &  \mathfrak{M}_{\pmb\gamma},
\end{tikzcd}
\end{equation}
where $\fgt$ and $g$ are equipped with obstruction theories $\mathbb{E}_{\fgt} \rightarrow \mathbb{L}_{\mathscr{M}_{\pmb\gamma}/\mathfrak{M}_{\pmb\gamma}}$ and $\mathbb{E}_g \rightarrow \mathbb{L}_{\mathfrak{M}_{S,\pmb\gamma}/\mathfrak{M}_{\pmb\gamma}}$. Letting $\omega_{\pi}[1]$ be the relative dualizing complex for the family of curves $\pi: C\rightarrow \mathscr{M}_{\pmb\gamma}$, which is the relative dualizing sheaf with cohomological degree $-1$, and $f: C \rightarrow X$ is the universal map, observe that we have a map $f^*\Omega_S^{\log}\otimes \omega_{\pi}[1]\rightarrow f^*\Omega^{\log}_{X}\otimes \omega_{\pi}[1]$ associated to the projection. Taking derived pushforward gives a morphism $\phi: \mathbb{E}_g \rightarrow \mathbb{E}_{\fgt}$ which commutes with the morphisms to the cotangent complexes given above. By \cite[Construction $3.13$]{vpull}, the mapping cone of $\phi$ can be enhanced to a perfect obstruction theory for the morphism $h$ in Diagram \ref{triple1}. The mapping cone can be computed by taking the derived pushforward of the mapping cone of $f^*\Omega_S^{\log}\otimes \omega_{\pi}[1]\rightarrow f^*\Omega^{\log}_{X}\otimes (\omega_{\pi}[1])$, yielding $R\pi_*(\Omega_{X/S}^{\log}\otimes (\omega_{\pi}[1]))$. Since $X$ has is relative dimension $1$ over $S$, we have $f^*\Omega_{X/S}^{\log} = f^*(L^{\vee} \otimes \mathcal{O}(D_0)) = f^*L^{\vee} \otimes \mathcal{O}(\alpha).$ As a result, we have:
\[\mathbb{E}_h = R\pi_*(f^*\Omega_{X/S}^{\log}\otimes (\omega_{\pi}[1]))  = R\pi_*(f^*L^{\vee}\otimes \mathcal{O}( \alpha)\otimes (\omega_{\pi}[1])).\]
In addition, we consider the triple of morphisms:

\begin{equation}\label{triple2}
\begin{tikzcd}
\mathscr{M}_{\pmb\gamma} \arrow[r,"i"] \arrow[bend left,"h"]{rr} &  \mathfrak{M}_{S,\gamma}^\theta \arrow[r,"s"] &  \mathfrak{M}_{S,\pmb\gamma}
\end{tikzcd}
\end{equation}
By Proposition \ref{smooth}, $s$ is a smooth morphism, hence the relative cotangent bundle $\mathcal{O}$ defines a perfect obstruction theory for $s$. Moreover, since $L^{\vee} \otimes \mathcal{O}(\alpha) \cong \mathcal{O}$ on $\mathscr{M}_{\pmb\gamma}$, we have by relative duality that $R^0\pi_*L^{\vee}\otimes \mathcal{O}(\alpha) \otimes (\omega_{\pi}[1]) \cong R^0\pi_*\omega_{\pi}[1] = \mathcal{O}$. Additionally, we have the isomorphism $R^0\pi_*\omega_{\pi}[1] \cong \mathscr{H}om(\mathcal{O},R\pi_*\omega_{\pi}[1])$. A non-zero constant section of $\mathcal{O}$ thus induces a non-zero morphism $\mathcal{O} \rightarrow \mathbb{E}_h$. This morphism is uniquely determined by the composition to the truncation $\mathcal{O} \rightarrow \tau_{\ge 0} \mathbb{E}_h \cong \mathcal{O}$ being the scaling map, in particular an isomorphism. Since $h^0(\mathbb{E}_h) \rightarrow \mathcal{O}$ is an isomorphism, we have $\mathcal{O} \rightarrow h^0(\mathbb{E}_h)$ is an isomorphism. Construction $3.13$ of \cite{vpull} again yields an obstruction theory for $\mathscr{M}_{\pmb\gamma} \rightarrow \mathfrak{M}_{S,\gamma}^{\theta}$, with obstruction bundle $(R^1\pi_*L \otimes \mathcal{O}(-\alpha)[-1] )^{\vee}= (R^1\pi_*\mathcal{O}_{C}[-1])^{\vee}$. Moreover, $i^![\mathfrak{M}^{\theta}_{S,\pmb\gamma}]^{\vir} = h^![\mathfrak{M}_{S,\pmb\gamma}]^{\vir} = [\mathscr{M}_{\pmb\gamma}]^{\vir}$ by \cite[Corollary $4.9$]{vpull}.

%
%

Since the map $\mathscr{M}_{\pmb\gamma} \rightarrow \mathfrak{M}^\theta_{S,\pmb\gamma}$ is an embedding, by \cite[Theorem $4.6$]{globnc}, the virtual pullback depends only on the $K$-theory class of the complex $\mathbb{E}$.\footnote{Theorem $4.6$ is only stated as a formula for the virtual fundamental class, i.e. the virtual pullback of the fundamental class. However, Since $\mathfrak{M}_{S,\gamma}^{\theta}$ is a Deligne-Mumford stack, we can make a choice of cycle $V = \sum_i a_i V_i$ with $V_i \subset \mathfrak{M}_{S,\gamma}^{\theta}$ representing the class $[\mathfrak{M}_{S,\gamma}^{\theta}]^{\vir}$ and replacing the cone of the embedding $C_{\mathscr{M}_{\pmb\gamma}/\mathfrak{M}^{\theta}_{S,\gamma}}$ with the closed subcones $C_{\mathscr{M}_{\pmb\gamma}\times_{\mathfrak{M}^{\theta}_{S,\gamma}} V_i/\mathfrak{M}^{\theta}_{S,\gamma}}$, the same proof works to show the virtual pullback depends only on the $K$-theory class of the tangent-obstruction bundle. A complete argument in larger generality is given by Lu and Webb in \cite{relvpull}.} In particular, the operators $e^!,i^!: A_*(\mathfrak{M}_{S,\pmb\gamma}^{\theta}) \rightarrow A_*(\mathscr{M}_{\pmb\gamma})$ agree. Hence $e^![\mathfrak{M}^{\theta}_{s,\pmb\gamma}]^{\vir} = i^![\mathfrak{M}^{\theta}_{S,\pmb\gamma}]^{\vir} = [\mathscr{M}_{\pmb\gamma}]^{\vir}$, as desired.

\end{proof}

Note now that for any tropical type $\pmb\gamma'$ marked by $\pmb\gamma$, we have substacks $\mathscr{M}_{\pmb\gamma'}\subset \mathscr{M}_{\pmb\gamma}$ and $\mathfrak{M}_{\pmb\gamma'}\subset \mathfrak{M}_{\pmb\gamma}$ of log maps which admit markings by the type $\pmb\gamma'$, and we let $[\mathscr{M}_{\pmb\gamma'}]^{\vir} = \fgt^!([\mathfrak{M}_{\pmb\gamma'}])$. Letting $\mathscr{M}(S,\pmb\gamma')$ be the moduli stack of punctured log stable maps marked by $\gamma'$ and $i_{\pmb\gamma'}: \mathscr{M}(X,\pmb\gamma') \rightarrow \mathscr{M}_{\pmb\gamma}$ the natural map, note $i_{\pmb\gamma'*}([\mathscr{M}(S,\pmb\gamma')]^{\vir}) = |Aut(\gamma'/\gamma)|[\mathscr{M}_{\pmb\gamma'}]^{\vir}$. Moreover, by the same proof as Proposition \ref{comp}, we have $[\mathscr{M}_{\pmb\gamma'}]^{\vir} =e^!([\mathfrak{M}^{\theta}_{S,\gamma'}])$, with $\mathfrak{M}^{\theta}_{\gamma',S} = \mathfrak{M}_{\pmb\gamma'} \times_{\mathfrak{M}^{\theta}_{\deg(L)-\sum_i a_iv_i}} \mathfrak{M}^{\theta}_{S,\pmb\gamma}$. Additionally, for any type $\pmb\gamma'$ marked by $\pmb\gamma$, composing the universal tropical family $\Gamma_{\gamma} \rightarrow \Sigma(X)$ with the projection $\Sigma(X) \rightarrow \Sigma(S)$ yields a family of tropical maps of a fixed tropical type $\tau'$, which naturally admits a decoration $\pmb\tau'$ from the decoration of $\pmb\gamma$. The decorated tropical type $\pmb\tau'$ is marked by $\pmb\tau$, and we call this type the projection of $\pmb\gamma'$. Note that the maps $\mathscr{M}(X,\pmb\gamma') \rightarrow \mathscr{M}(S,\pmb\tau)$ factor through the map $\mathscr{M}(S,\pmb\tau') \rightarrow \mathscr{M}(S,\pmb\tau)$

By slight modifications to arguments presented in \cite{BNtaut}, we express the punctured log Gromov-Witten classes of $[\mathscr{M}_{\pmb\gamma}]^{\vir}$ in terms of punctured log Gromov-Witten classes of $[\mathscr{M}(S,\pmb\tau)]^{\vir}$. 

\begin{theorem}\label{DRtaut}
For $\pmb\gamma'$ any tropical type marked by the double ramification tropical type $\pmb\gamma$ which projects to a type $\pmb\tau'$ marked by $\pmb\tau$, and a piecewise polynomial $P \in PP(\mathfrak{M}_{\pmb\gamma})$, we have:
\[\fgt_*P\cap[\mathscr{M}_{\pmb\gamma'}]^{\vir} = \sum_{\pmb\tau' \subset \pmb\tau''} \beta_{\pmb\tau'}\cap [\mathscr{M}_{\pmb\tau''}]^{\vir}.\]
In the above equation, $\beta_{\tau}$ are polynomials in tautological classes on $\mathscr{M}(S,\pmb\tau)$. 
\end{theorem}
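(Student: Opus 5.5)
The plan is to follow Molcho's argument in \cite{BNtaut}, using the setup of Lemma \ref{cartprod} and Proposition \ref{comp}. By Proposition \ref{comp} and the discussion after it, $[\mathscr{M}_{\pmb\gamma'}]^{\vir} = e^![\mathfrak{M}^{\theta}_{S,\pmb\gamma'}]^{\vir}$. Here $e$ is the zero section of the compactified Jacobian $Pic^0_\theta$, which is smooth over $\mathfrak{M}^\theta_{S,\pmb\gamma}$ of relative dimension $g$. By Lemma \ref{cartprod}, $\mathscr{M}_{\pmb\gamma}$ is the intersection of the zero section with the Abel--Jacobi section $aj$. Hence
\[
\fgt_*\bigl(P\cap[\mathscr{M}_{\pmb\gamma'}]^{\vir}\bigr) = \fgt_*\bigl(P\cap aj^*[e]\cap[\mathfrak{M}^\theta_{S,\pmb\gamma'}]^{\vir}\bigr),
\]
where $[e]\in A^g(Pic^0_\theta)$ is the class of the zero section. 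We may use this because $aj$ is a section of a smooth morphism, so it is a regular embedding and the refined Gysin maps agree.

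First, express $aj^*[e]$ by a universal formula. Following \cite{BNtaut}, on the compactified Jacobian the class of the zero section is given by the exponential of the theta divisor pulled back along a section, as in Pixton's formula. Concretely, $aj^*[e]$ equals the degree $g$ part of
\[
\exp\Bigl(-\tfrac12 \pi_*\bigl(c_1(\mathcal{F})^2\bigr)\Bigr)\cdot(\text{piecewise polynomial corrections}),
\]
where $\mathcal{F}=f^*L\otimes\mathcal{O}(-\alpha-\sum_i a_ip_i)$ is the universal line bundle. The term $\pi_*(c_1(\mathcal{F})^2)$ expands into:
\begin{itemize}
\item $\pi_*(c_1(f^*L)^2)$, $\pi_*(c_1(f^*L)\cdot p_i)$ and $\psi$-classes, all tautological classes on $\mathscr{M}(S,\pmb\tau)$ in the sense of \cite{tauttarget};
\item terms involving $\alpha$, which are piecewise polynomials pulled back from the Artin fan $\mathcal{A}_{\mathfrak{M}^\theta}$.
\end{itemize}
The key input from \cite{BNtaut} is that $[e]$ on $Pic^0_\theta$ pulls back to a class that, after a log modification, is piecewise polynomial in the tropical data plus tautological classes. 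I would adapt this verbatim, noting that the only change from the absolute case is the extra line bundle $f^*L$, whose Chern class contributes tautological classes.

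Second, the whole integrand is now $P$ times a piecewise polynomial on $\Sigma(\mathfrak{M}^\theta_{\deg(L)-\sum a_iv_i})$ times tautological classes pulled back from $\mathscr{M}(S,\pmb\tau)$. Since $\mathfrak{M}^\theta_{\deg(L)-\sum a_iv_i}\to\mathfrak{M}_{\pmb\tau}$ is a log \'etale modification, apply Corollary \ref{pushvpull} and Theorem \ref{normalform} to the cartesian Diagram \ref{stau}, with the obstruction theory of $\fgt$. This pushes the piecewise polynomial part forward to a sum over $\pmb\tau''$ marked by $\pmb\tau'$ of $\deg_{\tau''}(Q_{\gamma'}\cdot -)$ times $[\mathscr{M}_{\pmb\tau''}]^{\vir}$. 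The projection formula carries along the tautological classes from $\mathscr{M}(S,\pmb\tau)$. The degrees $\deg_{\tau''}$ are tautological by the final step of the proof of Theorem \ref{mthm1}, since edge lengths give $-\psi$ sums and vertex positions give Chern classes of pulled back PL functions on $\Sigma(S)$. This yields the $\beta$'s.

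The main obstacle is the first step: establishing a formula for $aj^*[e]$, or equivalently for $e^!$, in this relative log setting where the base $\mathfrak{M}_{\pmb\tau}$ carries nontrivial tropical moduli from $S$. I would first try to reduce to the universal case. The class $[e]$ on $Pic^0_\theta$ over $\mathfrak{M}_{\pmb\tau}$ is pulled back from the universal compactified Jacobian over the stack of log curves via the classifying map. The formula of \cite{BNtaut} then applies there, and the dependence on $S$ enters only through the multidegree $\deg(L)$ and $c_1(f^*L)$.

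A secondary point is handling the $B\mathbb{G}_m$ factor of Proposition \ref{smooth}. Its trivial relative cotangent bundle means it contributes no correction.
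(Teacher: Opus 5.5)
Your outline has the same architecture as the paper's proof. You describe $[\mathscr{M}_{\pmb\gamma'}]^{\vir}$ as $e^![\mathfrak{M}^\theta_{S,\pmb\gamma'}]^{\vir}$ via Lemma \ref{cartprod} and Proposition \ref{comp}. You write the pulled-back zero-section class as a sum over strata with tautological and piecewise polynomial coefficients. You then push forward along Diagram \ref{stau} with Corollary \ref{pushvpull}, and the degrees $\deg_{\tau''}$ are tautological as at the end of the proof of Theorem \ref{mthm1}. The gap is the step you flag as the main obstacle, which is where the proof's content lies. The mechanism you propose for it is unjustified and misidentified. Describing the zero section through the exponential of a theta divisor ($[e]=\Theta^g/g!$) is a compact-type phenomenon. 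On $Pic^0_\theta$ over arbitrary nodal curves, Grothendieck--Riemann--Roch produces further terms (higher $\mathrm{ch}_k$, Hodge and $\kappa$ terms, node contributions), and these are not merely ``piecewise polynomial corrections'', so there is no formula of that shape to adapt verbatim. Reducing to a universal compactified Jacobian does not help by itself either. The section $aj$ is built from $f^*L$ and $\alpha$ and is not pulled back from a universal section, so a universal formula for $[e]$ is useful only if it is written in classes whose $aj$-pullback you can compute.

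The missing idea is a Thom--Porteous formula in place of theta. By \cite[Corollary 10]{smstab}, the class of the zero section of $Pic^0_\theta$ is $c_g(-R\pi_*\mathcal{L})\cap[Pic^0_\theta]$, where $\mathcal{L}$ is the universal line bundle on the universal quasistable curve; the zero section is the locus where $h^0(\mathcal{L})\ge 1$. Since $\pi$ is flat and proper, base change (\cite[Lemma 6.2]{BNtaut}) gives $aj^*(-R\pi_*\mathcal{L})=-R\rho_*(f^*L\otimes\mathcal{O}(-\alpha))$. Hence $e_*[\mathscr{M}_{\pmb\gamma'}]^{\vir}=c_g(-R\rho_*(f^*L\otimes\mathcal{O}(-\alpha)))\cap[\mathfrak{M}^\theta_{S,\pmb\gamma'}]^{\vir}$. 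Then \cite[Lemma 5.1]{drroot} expresses these Chern classes through $\rho_*\bigl(c_1(f^*L\otimes\mathcal{O}(-\alpha))^2\bigr)$ and tautological classes from prestable curves.

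Your sorting of the resulting terms also needs a correction. The term $\rho_*(c_1(f^*L)^2)$ is tautological. The term $\rho_*(c_1(\mathcal{O}(-\alpha))^2)$ is the class of a PL function on the log smooth stack $\mathfrak{M}^\theta_{\deg(L)-\sum_i a_iv_i}$ by \cite[Theorem 67]{logDR}. The cross term $2\rho_*\bigl(c_1(f^*L)\,c_1(\mathcal{O}(-\alpha))\bigr)$, however, is neither among your tautological terms nor a piecewise polynomial in $\alpha$ alone. The paper evaluates it by writing $c_1(\mathcal{O}(-\alpha))$ as $\sum a_{\pmb\gamma'',v}\mathfrak{C}_{\pmb\gamma'',v}$, a combination of components of the universal curve over strata. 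This gives $\sum 2a_{\pmb\gamma'',v}\,(c_1(L)\cdot\textbf{A}(v))\,[\mathfrak{M}^\theta_{S,\pmb\gamma''}]^{\vir}$. Only then do you have an expression $\sum_k \fgt^*(\beta_k)P_k\,[\mathfrak{M}^\theta_{S,\pmb\gamma'_k}]^{\vir}$ to which your second step applies.
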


\begin{proof}
By \cite{smstab} Corollary $10$, letting $\mathcal{L}$ be the universal line bundle over the universal quasistable model $\pi: C^\theta_{Pic} \rightarrow Pic^0_{\theta}$, we have:
\[ e_*[\mathfrak{M}_{\pmb\gamma}^{\theta}] = c_g(-R\pi_*\mathcal{L})\cap [Pic^{0}_{\theta}].\]
Moreover, we have the cartesian diagram:

\[\begin{tikzcd}
C^\theta \arrow[r,"s"] \arrow[d,"\rho"] & C^{\theta}_{Pic}\arrow[d,"\pi"] \\
\mathfrak{M}_{S,\pmb\gamma}^{\theta} \arrow[r,"aj"] & Pic^0_{\theta}
\end{tikzcd}.\]
Since $\pi$ is both proper and flat, Lemma $6.2$ of \cite{BNtaut} yields:

\[aj^*(-R\pi_*\mathcal{L})  = -R\rho_*(f^*L\otimes \mathcal{O}(-\alpha)).\]
In particular, we have 
\[c_g(aj^*(-R\pi_*\mathcal{L}))  = c_g(-R\rho_*(f^*L\otimes \mathcal{O}(-\alpha))).\]
Since $[\mathscr{M}_{\pmb\gamma'}]^{\vir} = e^![\mathfrak{M}_{S,\pmb\gamma'}^\theta]^{\vir}$ by Proposition \ref{comp}, we have:
\[e_*([\mathscr{M}_{\pmb\gamma'}]^{\vir}) = [\mathfrak{M}_{S,\pmb\gamma'}^{\theta}]^{\vir}\cap c_g(aj^*(-R\pi_*\mathcal{L})) = [\mathfrak{M}_{S,\pmb\gamma'}^{\theta}]^{\vir}\cap c_g(-R\rho_*(f^*L\otimes \mathcal{O}(-\alpha))).\]

Note now that $f^*L\otimes \mathcal{O}(-\alpha)$ is a line bundle whose first Chern class is contained in the smallest subring of $A^*(\mathfrak{C}^\theta)$ containing piecewise polynomial classes pulled back from the Artin fan and $c_1(f^*L)$. Moreover, by the universal expression given in \cite[Lemma $5.1$]{drroot}, $c(R\pi_*f^*L\otimes \mathcal{O}(-\alpha))$ can be expressed in terms of $\pi_*(c_1(L\otimes \mathcal{O}(-\alpha))^2)$ and tautological classes pulled back from the moduli space of prestable curves. Note that:

\begin{equation}\label{pushexp}
\pi_*(c_1(L\otimes \mathcal{O}(-\alpha))^2) = \pi_*(c_1(L)^2) + 2\pi_*(c_1(L)c_1(\mathcal{O}(-\alpha)) + \pi_*(\mathcal{O}(-\alpha)^2).
\end{equation}

The first term is the pullback of a tautological class on $\mathscr{M}(S)$. Moreover, the third term is the pullback along $\mathfrak{M}^{\theta}_{S,\pmb\gamma'} \rightarrow \mathfrak{M}^\theta_{\deg(L) - \sum_i a_iv_i}$ of the analogous class. Since $\mathfrak{M}^\theta_{\deg(L) - \sum_i a_iv_i}$ is a log smooth algebraic stack, \cite[Theorem $67$]{logDR} shows this class is given by a Cartier divisor associated with a PL function on $\Sigma(\mathfrak{M}^\theta_{\deg(L) - \sum_i a_iv_i})$. For the second term on the righthand side of Equation \ref{pushexp}, note that on the universal curve $\mathfrak{C} \rightarrow \mathfrak{M}^\theta_{\deg(L) - \sum_i a_iv_i}$, $c_1(-\alpha)$ is represented by a Cartier divisor of the form $\sum_{\pmb\gamma'',v} a_{\pmb\gamma'',v}\mathfrak{C}_{\pmb\gamma'',v}$, with $\pmb\gamma''$ a decorated tropical type marked by $\pmb\gamma'$, $v \in V(G_{\gamma''})$ a vertex, and $\mathfrak{C}_{\pmb\gamma'',v}$ the component of the universal curve over $\mathfrak{M}^\theta_{\deg(L) - \sum_i a_iv_i,\pmb\gamma''}$ associated with $v$. By commutativity of virtual pullback with flat pullback, we have $\pi_*(2c_1(L)c_1(-\alpha)\cap\pi^*([\mathfrak{M}^{\theta}_{S,\pmb\gamma'})]^{\vir}) = \sum_{\pmb\gamma'',v \in V(G_{\gamma''})}2a_{\pmb\gamma'',v}(c_1(L)\cdot\textbf{A}(v))[\mathfrak{M}^{\theta}_{S,\pmb\gamma''}]^{\vir}$.



 It follows that $c_g(R\rho_*(f^*L\otimes \mathcal{O}(-\rho)))\cap [\mathfrak{M}^{\theta}_{S,\pmb\gamma'}]^{\vir}$ is a sum of virtual classes of strata capped with tautological classes pulled back along $\mathscr{M}_{\pmb\gamma'} \rightarrow \mathscr{M}_{\pmb\tau}$. That is, there exists an equation:

$$c_g(-R\rho_*(f^*L\otimes \mathcal{O}(-\alpha)))[\mathfrak{M}_{S,\pmb\gamma'}^\theta]^{\vir}= \sum_k\fgt^*(\beta_k)P_k[\mathfrak{M}^{\theta}_{S,\pmb\gamma_k'}]^{\vir}$$
with $\pmb\gamma'_k$ realizable tropical types marked by $\pmb\gamma'$ and $\beta_k\in R^*(\mathscr{M}_{\pmb\tau})$ a polynomial in tautological classes and $P_k \in A^*_{\text{op}}(\mathfrak{M}^{\theta}_{S,\pmb\gamma'_k})$ piecewise polynomial classes. Since $i: \mathfrak{M}^\theta_{deg(L)-\sum_i a_iv_i} \rightarrow \mathfrak{M}_{\pmb\tau}$ is induced by pulling back a subdivision of Artin fans with $\mathfrak{M}_{\pmb\tau}$ smooth over a stratum, we derive the following expression by Corollary \ref{pushvpull} and cartesian diagram \ref{stau}:
 

\[
\begin{split}
\fgt_*(P\cap[\mathscr{M}_{\pmb\gamma'}]^{\vir}) &= i_*((PP_k\sum_ki^*(\beta_k))\cap [\mathfrak{M}^{\theta}_{S,\pmb\gamma'_k}]^{\vir}) \\
&=  \sum_k\sum_{\pmb\tau_k \subset \pmb\tau_k'}\beta_k\deg_{\tau_k'}(PP_kQ_{\gamma_k'})\cap [\mathscr{M}_{\pmb\tau_k'}]^{\vir}
\end{split}
.\]

Since all operational Chow classes acting on $A_*(\mathscr{M}_{\pmb\tau_k'})$ coming from piecewise polynomials can be expressed in terms of tautological classes on $\mathscr{M}_{\pmb\tau_k'}$, the conclusion follows from the expression above. 
\end{proof}


\section{Higher-rank DR with log target and log Gromov-Witten theory of toric bundles}

\subsection{Higher-rank DR with logarithmic target}
Using the expression for the virtual class in Theorem \ref{DRtaut}, we can deduce the analogous result for the higher rank by iterated application of Theorem \ref{DRtaut}. For setup, let $\pmb\tau$ be a tropical type of punctured log map considered in the previous section, and $L_1,\ldots,L_{k}$ be a collection of line bundles on $S$, and for $1\le i\le k$ let $T_i = \oplus_{j=1}^i Spec_{S}(L_j)$ be the total space of the corresponding split vector bundle. The vector bundle $T_i$ comes equipped with a morphism $T_i \rightarrow \mathcal{A}_{\NN^i}$ corresponding to the sum of the Cartier divisors given by the pullback of the zero sections along the projections $\pi_j: T_i \rightarrow L_j$. We equip $T_i$ with the pulled back log structure along the morphism $T_i \rightarrow \mathcal{A}_{\NN^i} \times \mathcal{A}_S$, hence $\Sigma(T_i) = \mathbb{R}^k_{\ge 0} \times \Sigma(S)$. Now let $\sigma_k = \mathbb{R}^k_{\ge 0} \times \{0\}$, $\gamma_{k}$ be the tropical type with a single vertex $v$, $\pmb\sigma(v) = \sigma_k$, and legs with contact orders $c_1,\ldots,c_m$, with $L(c_i)\ge 0$ for all piecewise linear function $L$ pulled back from $\Sigma(S)$. After decorating $v$ with an effective curve class $\textbf{A}(v) = \textbf{A} \in H_2(S)$ satisfying $\textbf{A}\cdot L_i = \sum_j L_i(c_j)$ for all $L_i$, we have an associated moduli stack of punctured log maps $\mathscr{M}(T_{k},\pmb\gamma_k)$, which we will also denote by $\mathscr{M}(S,\pmb\gamma_k)$ owing to the fact that all maps marked by the type $\pmb\gamma_k$ factor through the zero section of $T_k \rightarrow S$. Since $\gamma_k$ is a realizable type, by \cite[Proposition $3.30$]{punc}, we have $\mathscr{M}(S,\pmb\gamma_k)$ is virtually equidimensional and reduced. We call this type a \emph{toric contact cycle type}. Moreover, projection induces maps $\mathscr{M}(S,\pmb\gamma_j) \rightarrow \mathscr{M}(S,\pmb\gamma_i)$ for $i < j$. The forgetful map induces a map $\forget: \mathscr{M}(S,\pmb\gamma_k) \rightarrow \mathscr{M}(S,\pmb\tau)$, and we consider the pushforward:
\begin{equation}\label{drlogt}
[DR_{(L_i,a_i),\textbf{A}}(S,\pmb\tau)]:=\forget_*[\mathscr{M}(S,\pmb\gamma_k)]^{\vir} \in A_*(\mathscr{M}(S,\pmb\tau)),
\end{equation}
which we dub the \emph{toric contact cycle with target log variety}. Additionally, for any type $\pmb\gamma$ marked by $\pmb\gamma_i$, we define the projection $\pmb\tau_{\gamma}$ to be the composite of the rank $1$ projections of $\pmb\gamma$. We claim that the virtual classes associated with types $\pmb\gamma$ considered above yield tautological classes in $\mathscr{M}(S,\pmb\tau_{\gamma})$:

\begin{theorem}\label{hrankt}
For any tropical type $\pmb\gamma$ marked by the toric contact type $\pmb\gamma_k$ projecting to a tropical type $\pmb\tau_{\gamma}$ of decorated punctured tropical map to $\Sigma(S)$, and $P$ a piecewise polynomial function on $\Sigma(\mathscr{M}(S,\pmb\gamma))$, there exist tautological classes $\beta_{\pmb\tau'}$ for all decorated types $\pmb\tau'$ marked by $\pmb\tau$ such that:

\[\forget_*P\cap[\mathscr{M}(S,\pmb\gamma)]^{\vir} = \sum_{\pmb\tau_{\gamma}\subset \pmb\tau'} \beta_{\pmb\tau'}\cap[\mathscr{M}_{\pmb\tau'}]^{\vir} \in A_*(\mathscr{M}(S,\pmb\tau_{\gamma})).\] 
\end{theorem}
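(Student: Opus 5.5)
We argue by induction on the rank $k$, following the paper's announcement that Theorem \ref{hrankt} comes from iterating Theorem \ref{DRtaut}. For $k=1$, the toric contact type $\pmb\gamma_1$ is exactly the double ramification tropical type of the previous section, with base $\pmb\tau$ and line bundle $L_1$. Any type $\pmb\gamma$ marked by $\pmb\gamma_1$ projects to a type $\pmb\tau_\gamma$ marked by $\pmb\tau$. Since $\mathscr{M}(S,\pmb\gamma)\to\mathscr{M}_{\pmb\gamma}$ is finite of degree $|Aut(\pmb\gamma/\pmb\gamma_1)|$ onto its image, the case $k=1$ is precisely Theorem \ref{DRtaut}, with $P$ pulled back from $\mathfrak{M}_{\pmb\gamma}$. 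Here we use that $\mathscr{M}(S,\pmb\gamma)\to\mathfrak{M}_{\pmb\gamma'}$ is strict, so $P$ indeed comes from the Artin fan.

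For the inductive step, write $T_k=\operatorname{Tot}_{T_{k-1}}(\pi^*L_k)$, the total space of $\pi^*L_k$ over $T_{k-1}$. Give $T_{k-1}$ the log structure of the previous step, so $\Sigma(T_k)=\Sigma(T_{k-1})\times\mathbb{R}_{\ge0}$. We then view $\pmb\gamma_k$, or any $\pmb\gamma$ marked by it, as a double ramification tropical type over the log base $T_{k-1}$, with base type $\pmb\gamma^{(k-1)}$. The base type $\pmb\gamma^{(k-1)}$ is the projection of $\pmb\gamma$ along the last coordinate, and it is marked by $\pmb\gamma_{k-1}$. The data check is that $\textbf{A}\cdot L_k=\sum_j L_k(c_j)$, which is exactly the DR degree condition for the last coordinate. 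Since maps of type $\pmb\gamma_{k-1}$ land in the zero section, only $\textbf{A}\in H_2(S)$ matters. Crucially, Theorem \ref{DRtaut} was proven for arbitrary log smooth base and arbitrary realizable base type $\pmb\tau$ with tropical moduli. This is why the induction closes: the base here is $T_{k-1}$, a space with non-trivial log structure, and the type is $\pmb\gamma^{(k-1)}$.

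We first apply Theorem \ref{DRtaut} to the projection $\mathscr{M}(S,\pmb\gamma)\to\mathscr{M}(T_{k-1},\pmb\gamma^{(k-1)})$. This gives
\[\forget_*P\cap[\mathscr{M}(S,\pmb\gamma)]^{\vir}=\sum_{\pmb\gamma^{(k-1)}\subset\pmb\gamma''}\beta_{\pmb\gamma''}\cap[\mathscr{M}_{\pmb\gamma''}]^{\vir},\]
where the $\beta_{\pmb\gamma''}$ are polynomials in tautological classes. Each $\pmb\gamma''$ is marked by $\pmb\gamma_{k-1}$, and its iterated projection to $\Sigma(S)$ is marked by $\pmb\tau_\gamma$.

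The main obstacle is the next step: applying the induction hypothesis to $\beta_{\pmb\gamma''}\cap[\mathscr{M}(S,\pmb\gamma'')]^{\vir}$. The hypothesis only handles piecewise polynomials, so the tautological classes on the intermediate space must be rewritten in that form. Following the end of the proof of Theorem \ref{mthm1}, these classes are of three kinds: $\psi$ classes, pullbacks $\ev^*$ of classes from $T_{k-1}$, and edge/PL classes. We treat them as follows.

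\emph{Edge/PL classes.} These are already piecewise polynomials on $\Sigma(\mathscr{M}(S,\pmb\gamma''))$.

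\emph{Evaluation classes.} Since maps factor through the zero section, $\ev^*c_1(\mathcal{O}(L))$ for $L$ a PL function on $\Sigma(T_{k-1})$ differs from $\ev^*$ of a class on $S$ by PL terms, using the identity $q_e^*c_1(f^*\mathcal{O}(L))+u(e)\psi_e=c_1(\mathcal{O}(L'))$. Classes pulled back from $S$ are pulled back along the map to $\mathscr{M}(S,\pmb\tau_\gamma)$, so by the projection formula they pass through the pushforward.

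\emph{$\psi$ classes.} These are pulled back from $\mathfrak{M}_{g,n}$, and hence also commute with the pushforward.

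To make this rigorous, I would strengthen the inductive statement to allow classes of the form $\fgt^*\beta\cdot P$, with $\beta$ tautological on $\mathscr{M}(S,\pmb\tau)$. Theorem \ref{DRtaut} yields output exactly of this form. Applying the strengthened induction hypothesis and summing over $\pmb\gamma''$ then gives classes $\beta_{\pmb\tau'}$ on strata $\mathscr{M}_{\pmb\tau'}$ with $\pmb\tau_\gamma\subset\pmb\tau'$, which proves the theorem.
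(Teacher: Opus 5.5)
Your proposal is correct and follows the paper's proof. Both argue by induction on the rank, applying Theorem \ref{DRtaut} to the last line bundle $\pi^*L_k$ over the log base $T_{k-1}$, with base type the single-vertex type $\pmb\gamma_{k-1}$, by which your $\pmb\gamma^{(k-1)}$ is marked, and then using the projection formula together with the inductive hypothesis. The one difference is that you state explicitly the strengthened inductive statement for classes $\fgt^*\beta\cdot P$ and sort the tautological output of Theorem \ref{DRtaut} into piecewise-polynomial parts and parts pulled back from the bottom moduli space, whereas the paper simply asserts that the classes $\beta_{\pmb\omega'}$ are pulled back from the base.
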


\begin{proof}
We prove the proposition by induction on the rank $k$, with the case $i=0$ trivial. For the induction step, note the projection map $T_{i} \rightarrow T_{i-1}$ is a line bundle given by the pullback of $L_i$ along the projection map $T_{i-1} \rightarrow S$, and composing with the projection map gives a morphism of moduli stacks $\forget^{i-1}: \mathscr{M}(S,\pmb\gamma_i) \rightarrow \mathscr{M}(S,\pmb\gamma_{i-1})$. By Theorem \ref{DRtaut}, for any tropical type $\pmb\gamma$ marked by $\pmb\gamma_i$ with $\pmb\gamma$ a tropical lift of a type $\pmb\omega$ of log map to $T_{i-1}$ marked by $\pmb\gamma_{i-1}$, there exist tautological classes $\beta_{\pmb\omega'} \in A^*(\mathscr{M}(S,\pmb\omega'))$ for decorated tropical types $\pmb\omega'$ marked by $\pmb\omega$ pulled back from $\mathscr{M}(X,\pmb\tau)$ such that:
$$\forget^{i-1}_*(P\cap [\mathscr{M}(S,\pmb\gamma)]^{\vir}) =\sum_{\pmb\omega\subset \pmb\omega'} \beta_{\pmb\omega'}\cap[\mathscr{M}_{\pmb\omega'}]^{\vir}.$$ By the projection formula and the induction hypothesis, letting $\pmb\tau_{\omega'}$ be the tropical type given by projecting $\pmb\omega'$, there exists tautological classes $\beta_{\pmb\omega',\pmb\tau'} \in A^*(\mathscr{M}(S,\pmb\tau_{\omega'}))$ such that:


\begin{equation}
\begin{split}
\forget_*(P\cap[\mathscr{M}(S,\pmb\gamma)]^{\vir}) &= \sum_{\pmb\omega \subset \pmb\omega'}\sum_{\pmb\tau_{\omega'}\subset \pmb\tau'} \beta_{\pmb\omega'}\beta_{\pmb\omega',\pmb\tau'} \cap[\mathscr{M}_{\pmb\tau'}]^{\vir}\\
&= \sum_{\pmb\tau_{\gamma}\subset \pmb\tau'} (\sum_{\pmb\omega\subset \pmb\omega'}\beta_{\pmb\omega'}\beta_{\pmb\omega',\pmb\tau'})\cap[\mathscr{M}_{\pmb\tau'}]^{\vir}
\end{split}.
\end{equation}
\end{proof}

\subsection{Reconstructing log GW of toric bundles}

We finally shift our focus to the study of the punctured log Gromov-Witten theory of a split toric bundle $X \rightarrow S$, for $S$ a log smooth projective variety. More precisely, with $L_1,\ldots,L_k \in Pic(S)$ line bundles and $T = Tot(\oplus_i L_i)$ the corresponding split vector bundle over $S$ as in the previous section, we let $S^\dagger$ be the stratum of $T$ given by the zero section and $ X = T \times \mathbb{A}^1$. Note in particular that we have $\Sigma(S^\dagger) = \Sigma(S) \times \mathbb{R}_{\ge 0}^{k}$ and $\Sigma(X) = \Sigma(S) \times \mathbb{R}_{\ge 0}^{k+1}$. In particular, letting $\Spec \kk^{\dagger}$ be the standard log point, we have the log morphism $S^{\dagger}\times \kk^{\dagger} \rightarrow X$ given by the inclusion of the zero section. 

Let $\Sigma$ be a fan in $\mathbb{R}^k$ supporting a strictly convex piecewise linear function, and $X_\Sigma$ the corresponding projective toric variety. The line bundles $L_1,\ldots,L_k$ together with the fan $\Sigma$ determine a split toric bundle $X_{S,\Sigma}$ with fiber $X_{\Sigma}$. The following lemma show how the map $X_{S,\Sigma} \rightarrow S$ arises via a log \'etale modificaiton:


\begin{lemma}\label{tbund}
After embedding $i:\mathbb{R}^k \rightarrow \mathbb{R}^{k+1}$ via $i((x_i)) = (1,1+x_1,\ldots,1+x_k)$, consider the polyhedral complex $\mathcal{P} = i(\Sigma)\cap\mathbb{R}_{\ge 0}^{k+1}$, and $\widetilde{\mathbb{R}_{\ge 0}^{k+1}} \rightarrow \mathbb{R}_{\ge 0}^{k+1}$ the cone over $\mathcal{P}$, consider the associated log modification $\widetilde{X} \rightarrow X$ induced by the subdivision $\widetilde{\Sigma(X)} = \Sigma(S) \times \widetilde{\mathbb{R}_{\ge 0}^{k+1}}$. Then the underlying morphism of schemes of the induced log modification $S_{\Sigma} \rightarrow S^\dagger \times \kk^{\dagger}$ is the projective toric bundle $X_{S,\Sigma} \rightarrow S$.
\end{lemma}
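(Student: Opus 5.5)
The argument is a local toric computation. The idea is that the stratum of $\widetilde{X}$ over $S^\dagger\times \kk^\dagger$ corresponding to the new ray is, fibrewise, the toric variety of the star of that ray, and that star is $\Sigma$ itself.

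First I identify the relevant cone. The log morphism $S^\dagger\times\kk^\dagger\rightarrow X$ is the inclusion of the stratum $X_{\sigma}$ with $\sigma = \{0\}\times\mathbb{R}_{\ge 0}^{k+1}$; underlying, this is the zero section $S\times\{0\}\subset T\times\mathbb{A}^1$. Its tropicalization sends the generator of $\kk^\dagger$ to the point $(1,1,\ldots,1)$ of $\mathbb{R}_{\ge 0}^{k+1}$, since the embedding $i$ places the origin of $\mathbb{R}^k$ there. The induced log modification $S_\Sigma\rightarrow S^\dagger\times\kk^\dagger$ is the fibre product $\widetilde{X}\times_X (S^\dagger\times\kk^\dagger)$, taken in fs log schemes. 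Tropically, this fibre product is the restriction of the subdivision $\widetilde{\mathbb{R}_{\ge 0}^{k+1}}$ to the slice at height $1$ in the first coordinate, which is exactly $\mathcal{P}$.

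Next I reduce to a computation over trivializing opens. Take a Zariski open $U\subset S$ on which every $L_j$ is trivial. Over $U$, $X$ is $U\times\mathbb{A}^{k}\times\mathbb{A}^1$, with toric log structure on the last factors. The subdivision is pulled back from the fan $\widetilde{\mathbb{R}_{\ge 0}^{k+1}}$, so $\widetilde{X}|_U = U\times Z_{\widetilde{\mathbb{R}_{\ge0}^{k+1}}}$ and $\widetilde{X}$ is a toric blowup of $\mathbb{A}^{k+1}$.

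The underlying scheme of the fs fibre product over the origin is the union of the strata whose cones meet the slice; because fs base change can introduce nonreduced structure, this union should be taken with its reduced structure. Given this, the fibre over the origin is the closure of the orbit attached to the ray through $(1,\ldots,1)$. This orbit closure is the toric variety of the star fan of the ray, and the star fan is the recession fan of $\mathcal{P}$, which is $\Sigma$ because $\mathcal{P}$ is a translate of $\Sigma$. I also need that $\Sigma$ is complete, so that $\mathcal{P}$ has no other compact pieces, and hence that this orbit closure is the whole fibre. Over $U$ this gives $S_\Sigma|_U\cong U\times X_\Sigma$.

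The main obstacle is the globalization: I must check that the transition functions of the $L_j$ act on $X_\Sigma$ through the torus $\mathbb{G}_m^k$ by the characters $e_j$. The torus $\mathbb{G}_m^{k+1}$ acts on $\mathbb{A}^{k+1}$. Changing the trivialization of $L_j$ rescales the $j$th coordinate, which is the action of the cocharacter $e_{j+1}$. The last coordinate $\mathbb{A}^1$ is never rescaled. The induced action on the orbit closure of the ray $(1,\ldots,1)$ is therefore the torus $\mathbb{G}_m^{k+1}/\langle(1,\ldots,1)\rangle\cong\mathbb{G}_m^k$, which I identify by projecting away the first coordinate; under this identification the cocharacters $e_{j+1}$ map to $e_j$. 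Gluing then gives the toric bundle $(L_1^\times\times_S\cdots\times_S L_k^\times)\times^{\mathbb{G}_m^k} X_\Sigma = X_{S,\Sigma}$. I would close by noting that the result does not depend on whether the structure group acts by the $L_j$ or by their duals, since the two conventions differ by the automorphism $-1$ of $\mathbb{R}^k$, and I would fix the convention to match the definition of $X_{S,\Sigma}$.
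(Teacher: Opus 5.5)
Your route is the paper's. Everything hinges on the ray $\rho$ through $(1,\ldots,1)$ (the paper's $\omega_v$) being the only ray over the interior of $\mathbb{R}^{k+1}_{\ge 0}$, with star fan $\Sigma$. The paper then cites Theorem B of \cite{torbund} for the toric-bundle structure of that stratum. You instead prove this special case by hand: you trivialize the $L_j$ and observe that the transition functions act through $\{1\}\times\mathbb{G}_m^k$, which maps isomorphically onto $\mathbb{G}_m^{k+1}/\Delta$ with $e_{j+1}\mapsto e_j$. That globalization is correct and makes the argument self-contained. Taking the reduced structure matches the paper's use of the stratum attached to $\omega_v$.

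However, both tropical facts you rely on are justified incorrectly.

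First, you describe the fibre as the union of strata whose cones meet the height-one slice, and invoke completeness to say $\mathcal{P}$ has no other compact pieces. Both claims are false. Every cone not contained in $\{x_0=0\}$ meets the slice, so that union is the whole central fibre of $\widetilde{X}\rightarrow\mathbb{A}^1$. Moreover, $\mathcal{P}$ always has further vertices and bounded cells. For example, the corner $(1,0,\ldots,0)$ is a vertex, and its ray gives the strict transform of $T\times\{0\}$. For $k=1$, $\mathcal{P}$ is $[0,1]\cup[1,\infty)$ and $\widetilde{X}$ is locally the blowup of $\mathbb{A}^2$ at the origin.

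What you actually need is the following. The strata over $S\times\{0\}$ are those of cones meeting the interior of $\mathbb{R}^{k+1}_{\ge 0}$. Every face of $((1,\ldots,1)+\sigma)\cap\mathbb{R}^k_{\ge 0}$ not containing the apex lies in a coordinate hyperplane, because faces of an intersection are intersections of faces and nonempty faces of the pointed cone $(1,\ldots,1)+\sigma$ contain the apex. Hence every such cone contains $\rho$, and the reduced fibre is $V(\rho)$. Completeness of $\Sigma$ plays a different role: it makes the cone over $\mathcal{P}$ subdivide the whole orthant, and it makes $V(\rho)$ proper.

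Second, the star of $\rho$ is not the recession fan of $\mathcal{P}$. Since $\mathcal{P}$ is a truncated translate of $\Sigma$, its recession fan is $\{\sigma\cap\mathbb{R}^k_{\ge 0}\}_{\sigma\in\Sigma}$, which is supported on the orthant and describes $\widetilde{X}$ away from $x_0=0$. The star of $\rho$ is the fan of local cones of $\mathcal{P}$ at the vertex $(1,\ldots,1)$. It equals $\Sigma$ because this vertex is interior to the slice, so the truncation is invisible near it. Concretely, $(x_0,x)\mapsto x-x_0(1,\ldots,1)$ identifies $\mathbb{R}^{k+1}/\mathbb{R}\rho$ with $\mathbb{R}^k$ and carries the star onto $\Sigma$. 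This is exactly the paper's observation $\widetilde{\Sigma(S^\dagger)}_{\omega_v}\cong\Sigma$, and it is the same identification you use for the torus.

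There are also smaller slips. The inclusion $S^\dagger\times\kk^\dagger\rightarrow X$ is strict, so its tropicalization is an isomorphism onto $\Sigma(S)\times\mathbb{R}^{k+1}_{\ge 0}$. It does not send a generator to $(1,\ldots,1)$, and the fs base change is not tropically the height-one slice. Also, the choice between $L_j$ and $L_j^{\vee}$ is not immaterial: it amounts to replacing $\Sigma$ by $-\Sigma$, which in general gives a non-isomorphic bundle. So that convention must be read off from the construction, not dismissed. With these repairs, your proposal is a valid and more explicit version of the paper's proof.
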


\begin{proof}
Letting $\omega_v$ be the unique $1$-dimensional cone of $\widetilde{\Sigma(S^{\dagger})}$ mapping to the interior of the unique maximal cone of $\Sigma(S^{\dagger})$ under $\widetilde{\Sigma(S^{\dagger})}  \rightarrow \Sigma(S^{\dagger})$, observe that $\widetilde{\Sigma(S^{\dagger})} _{\omega_v} \cong \Sigma$. The lemma now follows by Theorem $B$ of \cite{torbund}.
\end{proof}

Note that the map $\widetilde{X} \rightarrow X$ is log \'etale, hence for any tropical type $\pmb\tau$ of punctured log map to $X$ and $\pmb\gamma$ any tropical lift of $\pmb\tau$, Lemma \ref{tbund} together with Theorem \ref{mthm1} allows us to express the Chow classes $st_*(\alpha\cap[\mathscr{M}(\widetilde{X},\pmb\gamma)]^{\vir})$ in terms of log Gromov-Witten classes in $A_*(\mathscr{M}(X,\pmb\tau))$. Given a punctured tropical type of map $\overline{\pmb\gamma}$ to $\Sigma$ decorated by curve classes in the toric bundle $X_{S,\Sigma}$, we pick a representative $\Gamma \rightarrow \Sigma$ such that for all vertices $v \in V(G_{\overline{\pmb\gamma}})$, we have $h(v) + (1,1,\ldots,1) \in \mathbb{R}^k_{\ge 0}$. It follows that $h(e) + (1,1,\ldots,1) \in \mathbb{R}^k_{\ge 0}$ for all compact edges $e \in E(G_{\overline{\pmb\gamma}})$. By cutting unbounded legs of $G_{\overline{\pmb\gamma}}$ whose image under $h + (1,1,\ldots,1)$ is not contained in $\mathbb{R}^k_{\ge 0}$, we produce a decorated punctured tropical curve $\Gamma^{\circ} \rightarrow \Sigma(\widetilde{X})$ with associated decorated tropical type $\pmb\gamma$ of punctured log map to $\widetilde{X}$. By combining Theorems \ref{mthm1} and \ref{mthm3}, we will prove Theorem \ref{mthm2}.

\begin{proof}[Proof of Theorem \ref{mthm2}]
By Corollary \ref{puncmcr1} and the projection formula, the desired result for the toric variety bundle associated with the fan $\Sigma$ holds if it holds for the toric variety bundle associated with a subdivision $\widetilde{\Sigma}$. We may therefore assume that $\Sigma$ is a smooth fan. By \cite[Theorem $6.2$]{trglue}, we have $\mathscr{M}(X_{\Sigma,S},\overline{\pmb\gamma}) \cong \mathscr{M}(\widetilde{X}/\Spec\kk^{\dagger},\pmb\gamma)$ with isomorphism respecting obstruction theories. Hence, it suffices to prove the result for the punctured log Gromov-Witten classes produced from $\mathscr{M}(\widetilde{X}/\Spec\kk^{\dagger},\pmb\gamma)$. Since $\widetilde{X} \rightarrow X$ is log \'etale, it follows that Corollary \ref{mcr1} allows us to express the pushforward of any punctured log Gromov-Witten class for $\widetilde{X}$ in terms of punctured log Gromov-Witten classes produced from $\mathscr{M}(X/\Spec\kk^{\dagger},\pmb\tau)$. Another application of \cite[Theorem $6.2$]{trglue} applied to the trivial family $ X = T\times \mathbb{A}^1$ gives an identification $\mathscr{M}(X/\Spec\kk^{\dagger},\pmb\tau)\cong\mathscr{M}(T,\pmb\tau) = \mathscr{M}(S^{\dagger},\pmb\tau)$ which respects obstruction theories. By Theorem \ref{hrankt}, punctured log Gromov-Witten classes produced from the latter moduli stack all pushforward under $\mathscr{M}(S^{\dagger},\pmb\tau') \rightarrow \mathscr{M}(S,\beta)$ to a sum over strata classes of $\mathscr{M}(S,\beta)$ capped with insertions pulled back from $S$ and tautological classes, as required for the first part of Theorem \ref{mthm2}. By this expression, if we additionally assumed that the punctured log Gromov-Witten classes of $S$ push forward to the tautological ring under the stabilization map, it follows that so to do the punctured log Gromov-Witten classes of $\widetilde{X}$ hence $X_{\Sigma,S}$, as required for the second part of Theorem \ref{mthm2}.

\end{proof}

\nocite{*}
\bibliographystyle{amsalpha}
\bibliography{gluing}

\end{document}